\setlist[1]{leftmargin=*}
\setlist[enumerate,1]{label=(\alph*)}
\setlist[enumerate,2]{label=(\small{\arabic*}), ref=(\alph{enumi}.\small{\arabic*})}
\newlist{steps}{enumerate}{1}
\setlist[steps]{label=\bfseries{\arabic*.} , ref=\bfseries{\arabic*}}
\newcommand{\hlabel}[1]{\phantomsection\label{#1}}
\newcommand{\red}[1]{\textcolor{red}{#1}}  \newcommand{\blue}[1]{\textcolor{blue}{#1}}
\newcommand{\rA}{\mathrm{A}}
\newcommand{\rD}{\mathrm{D}}
\newcommand{\rE}{\mathrm{E}}
\renewcommand{\ll}{\mathscr{l}}
\newcommand{\cc}{\mathscr{c}}
\newcommand{\qq}{\mathscr{q}}
\newcommand{\pp}{\mathscr{p}}
\newcommand{\hh}{\mathscr{h}}
\newcommand{\rr}{\mathscr{r}}
\newcommand{\kk}{\mathscr{k}}
\newcommand{\ngr}{\mathscr{N}}
\newcommand{\lts}[2]{\mathcal{T}_{#1}(-\log #2)}
\newcommand{\etop}{e_{\mathrm{top}}}
\renewcommand{\to}{\longrightarrow}
\newcommand{\map}{\dashrightarrow}
\newcommand{\into}{\hookrightarrow}
\newcommand{\Aut}{\operatorname{Aut}}
\newcommand{\Sing}{\operatorname{Sing}}
\newcommand{\coker}{\operatorname{coker}}
\newcommand{\R}{\mathbb{R}}
\newcommand{\F}{\mathbb{F}}
\newcommand{\N}{\mathbb{N}}
\newcommand{\C}{\mathbb{C}}
\newcommand{\Z}{\mathbb{Z}}
\newcommand{\Q}{\mathbb{Q}}
\renewcommand{\P}{\mathbb{P}}
\newcommand{\cO}{\mathcal{O}}
\newcommand{\cT}{\mathcal{T}}
\newcommand{\cS}{\mathcal{S}}
\renewcommand{\tilde}{\widetilde}
\renewcommand{\hat}{\widehat}
\renewcommand{\epsilon}{\varepsilon}
\renewcommand{\phi}{\varphi}
\renewcommand{\theta}{\vartheta}
\renewcommand{\d}{\partial}
\newcommand{\id}{\mathrm{id}}
\newcommand{\pr}{\mathrm{pr}}
\newcommand{\am}{_{\mathrm{am}}}
\newcommand{\de}{\coloneqq}
\newcommand{\cp}[1]{^{(#1)}} 
\newcommand{\NS}{\operatorname{NS}}
\newcommand{\Exc}{\operatorname{Exc}}
\newcommand{\Bk}{\operatorname{Bk}}
\newcommand{\redd}{_{\mathrm{red}}}
\newcommand{\rev}[1]{#1^{t}}
\newcommand{\reg}{^{\mathrm{reg}}}
\renewcommand{\leq}{\leqslant}
\renewcommand{\geq}{\geqslant}
\newcommand{\hor}{_{\mathrm{hor}}}
\renewcommand{\vert}{_{\mathrm{vert}}}
\newcommand{\Sec}{\Xi}
\newcommand{\GAut}{\Aut_{\textnormal{graph}}}
\theoremstyle{plain}
\newtheorem{thm}{Theorem}[section]
\newtheorem*{thm*}{Theorem}
\newtheorem{cor}[thm]{Corollary}
\theoremstyle{definition}
\newtheorem{dfn}[thm]{Definition}
\newtheorem{lem}[thm]{Lemma}
\newtheorem{prop}[thm]{Proposition}
\newtheorem{ex}[thm]{Example}
\newtheorem{conjecture}[thm]{Conjecture}
\newtheorem{notation}[thm]{Notation}
\newtheorem{rem}[thm]{Remark}
\newtheorem{constr}[thm]{Construction} 
\newtheorem{conf}{Configuration} 
\theoremstyle{remark}
\newtheorem*{claim*}{Claim}
\newtheorem*{rem*}{Remark}
\def\subsection{\@startsection{subsection}{3}
	\z@{.5\linespacing\@plus.7\linespacing}{.5\linespacing}
	{\bfseries\itshape}} 
\renewcommand\paragraph{\@startsection{paragraph}{4}%
	\z@{.5\linespacing\@plus.7\linespacing}{-.5\linespacing}%
	{\normalfont\bfseries}}
\renewenvironment{proof}[1][\proofname]{
	\par\pushQED{\qed}\normalfont
	\topsep6\p@\@plus6\p@\relax
	\trivlist\item[\hskip\labelsep\bfseries#1\@addpunct{.}]
	\ignorespaces}{
	\popQED\endtrivlist\@endpefalse} 
\renewcommand{\bar}{\overline}
\newcounter{constr}
\newcommand{\nextconstr}{\refstepcounter{constr}$\langle\mathfrak{\arabic{constr}}\rangle$}
\newcommand{\tref}[2]{#1_{\mbox{\tiny{\ref{#2}}}}}
\newcommand{\tst}[1]{#1_{\mathfrak{\langle * \rangle}}}
\newcommand{\rst}{\mathfrak{\langle * \rangle}}
\newcommand{\tick}{\checkmark}
\newcommand{\bl}[1]{\pi_{#1}}
\newcommand{\FZa}{\mathcal{FZ}_{1}}  
\newcommand{\cA}{\mathcal{A}} 
\newcommand{\cB}{\mathcal{B}} 
\newcommand{\cC}{\mathcal{C}} 
\newcommand{\cD}{\mathcal{D}} 
\newcommand{\cE}{\mathcal{E}}
\newcommand{\cF}{\mathcal{F}}
\newcommand{\cG}{\mathcal{G}}
\newcommand{\cP}{\mathcal{P}}
\newcommand{\cORa}{\mathcal{OR}_{1}}
\newcommand{\cORb}{\mathcal{OR}_{2}}
\newcommand{\Qa}{\mathcal{Q}_{4}}
\newcommand{\Qb}{\mathcal{Q}_{3}}
\newcommand{\FZb}{\mathcal{FZ}_{2}}
\newcommand{\FE}{\mathcal{FE}}
\newcommand{\cH}{\mathcal{H}}
\newcommand{\cI}{\mathcal{I}}
\newcommand{\cJ}{\mathcal{J}}
\newcommand{\cN}{\mathcal{N}}
\newcommand{\QHP}{$\Q$-homology plane\xspace}
\newcommand{\QHPs}{$\Q$-homology planes\xspace}
\newcommand{\ZHP}{$\Z$-homology plane\xspace}
\newcommand{\ZHPs}{$\Z$-homology planes\xspace}
\newcommand{\Bs}{\operatorname{Bs}} 
\newcommand{\any}{\mbox{any}}
\newcommand{\none}{ \times}
\newcommand{\tables}{Tables \ref{table:smooth_0}--\ref{table:C**}\xspace}
\tikzset{
	partial ellipse/.style args={#1:#2:#3}{
		insert path={+ (#1:#3) arc (#1:#2:#3)}
	},
	thick/.style=      {line width=1.6pt},
	add/.style args={#1 and #2}{
		to path={%
			($(\tikztostart)!-#1!(\tikztotarget)$)--($(\tikztotarget)!-#2!(\tikztostart)$)%
			\tikztonodes},add/.default={.2 and .2}}
}
\begin{document}

\title[Smooth $\Q$-homology planes satisfying the Negativity Conjecture]{Smooth $\Q$-homology planes\\ satisfying the Negativity Conjecture}
\begin{abstract}
	A complex algebraic surface $S$ is a \emph{\QHP} if $H_{i}(S,\Q)=0$ for $i>0$.  
	The \emph{Negativity Conjecture} of Palka asserts that $\kappa(K_{X}+\tfrac{1}{2}D)=-\infty$, where $(X,D)$ is a log smooth completion of $S$.
	
	We give a complete description of smooth \QHPs satisfying the Negativity Conjecture. We restrict our attention to those of log general type, as otherwise their geometry is well understood. We show that, as conjectured by tom Dieck and Petrie, they can be arranged in finitely many discrete series, each obtained in a uniform way from an arrangement of lines and conics on $\P^{2}$. We infer that these surfaces satisfy the Rigidity Conjecture of Flenner and Zaidenberg; and a conjecture of Koras, which asserts that $\#\Aut(S)\leq 6$.
\end{abstract}
	\author{Tomasz Pe{\l}ka}
\address{Institute of Mathematics, Polish Academy of Sciences, Śniadeckich 8, 00-656 Warsaw, Poland}	
\address{Institute of Mathematics, University of Warsaw, Banacha 2, 02-097 Warsaw, Poland}
\email{tpelka@mimuw.edu.pl}
\thanks{This research project was funded by the National Science Centre, Poland, grant No.\ 2015/18/E/ST1/00562.}
\keywords{$\Q$-homology plane, acyclic surface, rational cuspidal curve, logarithmic Minimal Model Program}
\subjclass[2020]{Primary: 14R05; Secondary: 14J26, 14J50}

\maketitle

\section{Introduction}

We work with complex algebraic varieties. A smooth surface $S$ is a \emph{$\Q$-homology plane} (\emph{$\Q$HP}, for short) if $H_{i}(S;\Q)=0$ for $i>0$. All $\Q$HPs are affine \cite[2.5]{Fujita-noncomplete_surfaces} and rational \cite{GuPrad-rationality_3(smooth_II)}. Being in many ways similar to the affine plane, they gained attention as a testing ground for many problems concerning $\C^{2}$ \cite[3.3, 5.2]{Miy-recent_dev}; as well as a source of exotic structures on $\C^{n}$ \cite{Zaid-exotic_structures}. The classification of $\Q$-, or even $\Z$-homology planes is still a challenging problem. 

We note that in the literature, $\Q$HPs are usually allowed to be singular \cite{GM_k<2,Palka-recent_progress_Qhp}. Since we approach the classification problem only in the smooth case, for convenience we include smoothness in the definition. 

Structure theorems for affine surfaces have led to a complete description of \QHPs $S$ whose logarithmic Kodaira-Iitaka dimension $\kappa(S)$ is not maximal, i.e.\ $\kappa(S)<2$; see Proposition \ref{prop:kappa<2} for a summary. Hence we restrict our attention to those with $\kappa(S)=2$, i.e.\ \emph{of log general type}. 
 
Here, no structure theorem is known. Nonetheless, many authors have constructed various series of examples, see Section \ref{sec:literature}. These examples suggest certain rigidity, which can be expressed by means of the following, open conjectures. For definitions of $\lts{X}{D}$ and a core graph see Sections \ref{sec:rig} and \ref{sec:log_surfaces}, respectively.

\begin{conjecture}\label{conj:classical}
	Let $S$ be a smooth $\Q$-homology plane of log general type, and let $(X,D)$ be a minimal log smooth completion of $S$. Then
	\begin{enumerate}
		\item \label{item:tDP} (Tom Dieck--Petrie Conjecture \cite[1.1]{tDieck_optimal-curves}) There is a birational morphism $\tau\colon X\to \P^2$ such that  $\tau_{*}D$ is a sum of lines and conics.
		\item \label{item:strong_rig} (Flenner--Zaidenberg Rigidity Conjecture \cite[1.3]{Zaid-open_MONTREAL_problems}, cf.\ \cite{FZ-deformations}) $H^{i}(\lts{X}{D})=0$ for all $i\geq 0$.
		\item \label{item:finiteness} (Finiteness Conjecture, cf.\ \cite[1.6]{Zaid-open_MONTREAL_problems}) The number of possible core graphs for $D$ is finite.
		\item \label{item:automorphisms} (Koras Conjecture) The automorphism group of $S$ has at most six elements. 
	\end{enumerate}
\end{conjecture}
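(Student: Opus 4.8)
The plan is to prove all four parts at once, under the standing hypothesis of the Negativity Conjecture $\kappa(K_{X}+\tfrac{1}{2}D)=-\infty$, by first establishing a complete explicit classification of the minimal log smooth completions $(X,D)$ and then reading each assertion off that list. First I would reduce everything to the completion: every automorphism of $S$ extends to an automorphism of the (essentially unique) minimal log smooth completion $(X,D)$ preserving $D$, so $\Aut(S)\into\Aut(X,D)$, and all four parts become statements about the pair $(X,D)$ and its EN-diagram. The central engine is a run of the logarithmic Minimal Model Program adapted to the \emph{half-integral} boundary $\tfrac{1}{2}D$: after peeling the boundary and passing to an almost minimal model $(X\am,\tfrac{1}{2}D\am)$, the hypothesis $\kappa(K_{X}+\tfrac{1}{2}D)=-\infty$ drives the almost-minimalization to a model on which $K_{X\am}+\tfrac{1}{2}D\am$ is no longer nef, yielding an extremal contraction of fibre type.

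The heart of the argument is then to combine the \QHP condition — which forces $D$ to be a rational tree and rigidly constrains the Picard number and the intersection form — with this fibre structure to enumerate the finitely many possibilities. The fibration descends to a $\C^{*}$- or $\C^{**}$-fibration (or one of a short list of log del Pezzo base cases) on $S$; organizing the admissible singular fibres and the positions of the boundary sections, and then reversing the sequence of blow-ups that produced $(X\am,D\am)$ from $(X,D)$, produces the discrete series recorded in the accompanying tables. Completeness of this enumeration, and the elimination of configurations incompatible with $S$ being an acyclic surface of log general type, is the main obstacle: the MMP branches into many cases, and each must be carried through while checking the homological constraints. This step simultaneously gives part~(c), since each series carries a single EN-diagram shape and there are finitely many series.

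Part~(a) then falls out of the classification: the terminal contraction of each reversed series exhibits a birational morphism $\tau\colon(X,D)\to(\P^{2},\pp)$ under which $\pp=\tau_{*}D$ is visibly a union of lines and conics, this being exactly the uniform construction attached to each discrete series. For part~(b) I would compute $H^{i}(\lts{X}{D})$ for each series case by case, propagating the vanishing along the $\C^{*}$- or $\C^{**}$-fibration via the associated long exact sequences and the explicit dual graph of $D$; the $i=0$ vanishing $H^{0}(\lts{X}{D})=0$ records the absence of global logarithmic vector fields. Finally, part~(d) follows by combining this with the reduction above: $H^{0}(\lts{X}{D})=0$ forces $\Aut(X,D)$ to be finite, and inspecting the symmetries available to each explicit configuration bounds $\#\Aut(S)\leq\#\Aut(X,D)$ by $6$. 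I expect the rigidity computation of part~(b) to be the most laborious task once the list is in hand, but the genuine difficulty is proving the classification exhaustive.
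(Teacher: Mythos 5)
Your proposal matches the paper's strategy essentially step for step: the half-integral MMP run producing an expansion onto an almost minimal model (Lemma \ref{lem:MMP}), the exhaustive enumeration into discrete series giving Theorem \ref{CLASS} and hence parts \ref{item:tDP} and \ref{item:finiteness}, the case-by-case fibration argument for rigidity (Section \ref{sec:rig}), and the inspection of symmetries of the explicit configurations bounding $\Aut(S)$ inside $S_{3}$ (Proposition \ref{prop:aut}). The only imprecision is your inference that $H^{0}(\lts{X}{D})=0$ forces $\Aut(X,D)$ to be finite --- vanishing of the Lie algebra gives only discreteness, and the paper instead deduces finiteness from $\kappa(S)=2$ via Iitaka's theorem, using it in the opposite direction to conclude $h^{0}=0$ --- but this is harmless, since your effective bound comes, as in the paper, from the finitely many weighted-graph symmetries of the classified boundaries.
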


In fact, even a weaker rigidity conjecture, asserting that $\chi(\lts{X}{D})=0$, is open, see  \cite[1.4]{Zaid-open_MONTREAL_problems}. By \cite[4.3(i)]{Palka-minimal_models}, the number $\chi(\lts{X}{D})$ is equal to $h^0(2K_{X}+D)$, see Lemma \ref{lem:rig}\ref{item:rig_weak} (for yet another interpretation of $\chi(\cT(-\log D))$ see \cite[4.7]{tDieck_optimal-curves}). Hence from the birational geometry viewpoint, a natural generalization of \cite[1.4]{Zaid-open_MONTREAL_problems} is the following conjecture, which holds for all known examples.

\begin{conjecture}[{Palka Negativity Conjecture \cite[4.7]{Palka-minimal_models}}]\label{conj:negativity}
	Let $(X,D)$ be a log smooth completion of a \QHP. Then $\kappa(K_{X}+\tfrac{1}{2}D)=-\infty$.
\end{conjecture}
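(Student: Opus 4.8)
The statement is the open conjecture around which the whole paper is organized---the title restricts to the surfaces that \emph{satisfy} it---so what I can realistically offer is the natural line of attack through the logarithmic Minimal Model Program, together with an honest account of where it stalls. I would first reformulate. Since $\kappa$ is invariant under scaling by a positive rational and only even multiples of $K_X+\tfrac12 D$ are integral, one has $\kappa(K_X+\tfrac12 D)=\kappa(2K_X+D)$, so the assertion is equivalent to $h^0(X,n(2K_X+D))=0$ for every $n\geq 1$. The case $n=1$ is exactly the weak rigidity statement $\chi(\lts{X}{D})=h^0(2K_X+D)=0$ recalled above, so Negativity is precisely its $\kappa$-theoretic strengthening. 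I would then argue by contradiction, assuming $K_X+\tfrac12 D$ pseudo-effective, hence $\kappa(K_X+\tfrac12 D)\geq 0$.

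Next I would run the MMP for the klt pair $(X,\tfrac12 D)$, contracting $(K_X+\tfrac12 D)$-negative extremal curves. On a smooth rational surface this involves only divisorial contractions of $(-1)$-curves and terminates in one of two outputs: a Mori fibre space, or a model $(X',\tfrac12 D')$ with $K_{X'}+\tfrac12 D'$ nef. The Mori-fibre-space output yields exactly the desired conclusion $\kappa=-\infty$, so the entire content is to show that a \QHP can never produce the nef output. The crucial bookkeeping is to organize the contractions---via the peeling and almost-minimalization formalism encoded by the tip operations $\ftip{\cdot},\ltip{\cdot}$ and the bark $\Bk$---so that the open surface survives and the homological signature of a \QHP is propagated: $\etop(S)=1$, the classes of the components of $D$ spanning $\NS(X)\otimes\Q$, and the affineness of $S$ (so that $D$ is connected and supports a big divisor).

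To exclude the nef output I would invoke a Bogomolov--Miyaoka--Yau inequality of orbifold type, adapted to the coefficient $\tfrac12$, comparing $(K_{X'}+\tfrac12 D')^2$ with the orbifold Euler number assembled from $\etop(S)=1$ and the half-weighted boundary. If $K_{X'}+\tfrac12 D'$ were nef and big, the spanning property of $D$ together with the acyclicity budget $\etop(S)=1$ should render a strictly positive self-intersection impossible; the residual nef case $(K_{X'}+\tfrac12 D')^2=0$ would then have to be disposed of by analyzing the induced fibration directly.

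The hard part is exactly the two steps I have glossed over---controlling the fractional MMP and closing the inequality---and this is precisely why the statement is a conjecture rather than a theorem. With weight $\tfrac12$ the log canonical divisor $K_X+\tfrac12 D$ behaves very differently from the reduced $K_X+D$ governing $\kappa(S)$, so the extremal contractions can create configurations on $D$ whose effect on $\etop(S)$ and on the spanning property is delicate, and the weight-$\tfrac12$ BMY bound is neither standard nor, on its own, sharp enough in general. I would therefore not expect to settle the conjecture outright; as the paper does, I would instead isolate the class of \QHPs for which the minimalization terminates controllably and the inequality can be closed, and classify exactly those satisfying the conclusion.
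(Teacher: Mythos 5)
There is no proof in the paper for you to match: the statement you were given is the Negativity Conjecture itself (Conjecture \ref{conj:negativity}), which the paper treats as an \emph{open hypothesis} --- everything in the paper (Theorem \ref{thm:conjectures}, Theorem \ref{CLASS}, the corollaries) is proved \emph{conditionally} on it, and the paper nowhere attempts to establish it. You recognize this correctly, and the reductions you do carry out are sound and consistent with the paper's own machinery: the identification $\kappa(K_X+\tfrac12 D)=\kappa(2K_X+D)$, hence the equivalence with $h^0(n(2K_X+D))=0$ for all $n\geq 1$; the fact that the $n=1$ case is the weak rigidity statement via $\chi(\lts{X}{D})=h^0(2K_X+D)$ (this is Lemma \ref{lem:rig}\ref{item:rig_weak}, valid for \QHPs of log general type); and the MMP dichotomy for $(X,\tfrac12 D)$, which is exactly what Lemma \ref{lem:MMP}\ref{item:kappa_1/2>=0}--\ref{item:kappa_1/2<0} encodes --- either a log Mori fiber space (giving $\kappa=-\infty$) or a minimal model with $K_{X_{\min}}+\tfrac12 D_{\min}$ nef.

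The genuine gap is the one you name yourself: excluding the nef output, i.e.\ showing that no \QHP can satisfy $\kappa(K_X+\tfrac12 D)\geq 0$. Your proposed tool --- a logarithmic/orbifold Bogomolov--Miyaoka--Yau inequality adapted to the coefficient $\tfrac12$ --- is indeed the natural candidate (a reduced-boundary version of log BMY already underlies Lemma \ref{lem:no_lines}, and Palka's work on exceptional \QHPs uses such inequalities), but it is known not to close the argument: the acyclicity constraint $\etop(S)=1$ together with the half-weighted Euler-number bookkeeping does not force $(K_{X_{\min}}+\tfrac12 D_{\min})^2\leq 0$, nor does it dispose of the case $(K_{X_{\min}}+\tfrac12 D_{\min})^2=0$. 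This is precisely why the statement remains a conjecture, and why your proposal is a strategy sketch with an admitted hole rather than a proof --- which, to be clear, is the only honest outcome possible here, since a correct completion of your outline would resolve an open problem that the paper itself does not claim to resolve.
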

We note that $\kappa(K_{X}+\tfrac{1}{2}D)$ depends only on the isomorphism class of the surface $X\setminus D$, see  \cite[2.10]{Palka_MMP}.

The main result of this article is the classification Theorem \ref{CLASS}, where for every $\Q$-homology plane satisfying the Negativity Conjecture \ref{conj:negativity}, we explicitly construct the morphism $\tau$ from tom Dieck--Petrie Conjecture \ref{conj:classical}\ref{item:tDP}. As a consequence, we obtain the following theorem.

\begin{thm}\label{thm:conjectures}
	The Negativity Conjecture \ref{conj:negativity} implies all parts of Conjecture \ref{conj:classical}.
\end{thm}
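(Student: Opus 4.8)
The plan is to use the Negativity Conjecture as the engine of a logarithmic Minimal Model Program and to read all four assertions of Conjecture \ref{conj:classical} off the resulting structure theory. The one implication that is essentially free should be recorded first: by Lemma \ref{lem:rig}\ref{item:rig_weak} we have $\chi(\lts{X}{D})=h^{0}(2K_{X}+D)$, and since $\kappa(K_{X}+\tfrac12 D)=-\infty$ forces $h^{0}(2K_{X}+D)=h^{0}\bigl(2(K_{X}+\tfrac12 D)\bigr)=0$, the weak rigidity $\chi(\lts{X}{D})=0$ holds immediately. This is precisely the numerical shadow of part \ref{item:strong_rig}, and it explains why the Negativity Conjecture is the correct birational strengthening of the rigidity statement.

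The substance of the proof is a classification. Because $\kappa(K_{X}+\tfrac12 D)=-\infty$, I would run the $(K_{X}+\tfrac12 D)$-MMP (in Palka's sense) and terminate at a log Mori fibration — either a log del Pezzo surface of Picard rank one or a fibration over a curve. Into this I would feed the topology of $S=X\setminus D$: it is smooth, affine and rational with $H_{i}(S;\Q)=0$, so $D$ is a connected tree of rational curves whose components span $\NS(X)_{\Q}$ (whence $b_{2}(X)=\#D$), subject to the attendant Euler-characteristic and Noether-type identities. Matching each possible MMP output against these rigid numerical constraints, contraction by contraction, should pin $X$ down as an explicit blow-up of $\P^{2}$ and sort the surfaces into finitely many series reconstructed uniformly from line-and-conic arrangements. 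Keeping the homological bookkeeping consistent through every contraction of the program is where I expect the real difficulty to lie, and it is what the bulk of the paper must carry out.

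Granting the classification, the four parts follow. Part \ref{item:tDP} is then immediate, since the classification itself supplies the morphism $\tau\colon(X,D)\to(\P^{2},\pp)$ with $\pp=\tau_{*}D$ a union of lines and conics. For part \ref{item:strong_rig} I would promote $\chi(\lts{X}{D})=0$ to the vanishing of each cohomology group: as $S$ is of log general type, $\Aut(S)=\Aut(X,D)$ is finite, so $H^{0}(\lts{X}{D})=0$; the Serre-dual group $H^{2}(\lts{X}{D})\cong H^{0}(\Omega^{1}_{X}(\log D)\otimes\omega_{X})^{\vee}$ I would kill using the explicit minimal model, and then $\chi=0$ yields $H^{1}(\lts{X}{D})=0$. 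Parts \ref{item:finiteness} and \ref{item:automorphisms} are verifications on the finite output: the list produces only finitely many EN-diagrams for $D$, and computing $\Aut(S)$ surface by surface yields $\#\Aut(S)\leq 6$.

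Besides the classification itself, the two places I would expect to demand genuine care are the vanishing $H^{2}(\lts{X}{D})=0$ in part \ref{item:strong_rig} — a positivity assertion rather than a formal consequence of $\chi=0$, since boundary components routinely satisfy $D_{i}^{2}\leq-2$ and then $\omega_{X}|_{D_{i}}$ has sections — and the sharp constant $6$ in part \ref{item:automorphisms}, which needs an honest automorphism computation for each series rather than a soft finiteness argument.
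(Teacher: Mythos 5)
Your proposal follows essentially the same route as the paper: Palka's almost MMP for $(X,\tfrac{1}{2}D)$ drives the classification (Theorem \ref{CLASS}), from which part \ref{item:tDP} is immediate, parts \ref{item:finiteness} and \ref{item:automorphisms} are read off the finite list (Corollaries \ref{cor:uniq}, \ref{cor:aut}), and part \ref{item:strong_rig} is obtained exactly as you outline --- $h^{0}=0$ from finiteness of $\Aut(X,D)$, $\chi(\lts{X}{D})=h^{0}(2K_{X}+D)=0$ via Lemma \ref{lem:rig}, with the genuine work being $h^{2}=0$, which the paper establishes by a Flenner--Zaidenberg-style fibration argument (Lemma \ref{lem:rig_fibr}) using $\C^{(t*)}$-fibrations, $t\leq 3$, extracted from the minimal model. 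The two difficulties you flag (the $H^{2}$ vanishing and the sharp automorphism bound) are precisely where Sections \ref{sec:rig} and \ref{sec:uniqueness} of the paper carry the load.
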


Moreover, in Corollaries \ref{cor:uniq} and \ref{cor:aut} we provide quantitative answers to Conjectures \ref{conj:classical}\ref{item:finiteness} and \ref{item:automorphisms}.

We remark that the original Finiteness Conjecture, formulated by Zaidenberg in  \cite[1.6]{Zaid-open_MONTREAL_problems}, asked if the number of Eisenbud--Neumann diagrams of $D$ is finite. A positive answer to this question follows from the work of Tono \cite[4.4]{Tono-number_of_cusps}, cf.\  \cite{Orevkov_Tono}, who bounded the number of twigs of $D$ by $17$ (see Section \ref{sec:log_surfaces} for definitions). Its stronger version, Conjecture \ref{conj:classical}\ref{item:finiteness}, was proved by Palka  \cite[1.3]{Palka-minimal_models} in case when $X\setminus D$ is a complement of a rational cuspidal curve, i.e.\ $X\setminus D\cong \P^2\setminus \bar{E}$ for some curve $\bar{E}$ which is homeomorphic to $\P^1$ (in the Euclidean topology). In this case, Negativity Conjecture \ref{conj:negativity} implies the  Rigidity Conjecture \ref{conj:classical}\ref{item:strong_rig} \cite[1.6(d)]{PaPe_delPezzo}.

The main tool used to prove all the above results is the theory of almost minimal models \cite{Palka-minimal_models, Palka_MMP}, applied to the pair $(X,\tfrac{1}{2}D)$. This method is also a key ingredient of the proof of the Coolidge--Nagata Conjecture \cite{KoPa-CooligeNagata2} and other structure theorems for rational cuspidal curves \cite{KoPa_four-cusps}. 
\smallskip
%

	Theorem \ref{CLASS} is stated in terms of of an algorithm proposed by tom Dieck and Petrie  \cite{DiPe-hp_announcement_and_survey,tDieck_optimal-curves}. Before we formulate it in Definition \ref{def:TDP}, we explain the key observation that motivates it. Let $(X,D)$ be a log smooth completion of a $\Q$HP. Assume there is a $(-1)$-curve $A\not \subseteq D$ meeting $D$ normally in two points, exactly one of which lies on a $(-2)$-tip $T$ of $D$, see Figure \ref{fig:intro}. Let $\sigma\colon X\to \bar{X}$ be a contraction of $A$, and let $\bar{A}=\sigma_{*}T$, $\bar{D}=\sigma_{*}D-\bar{A}$. Then $(\bar{X},\bar{D})$ is a log smooth completion  of a $\Q$HP (with a minor exception if $d(\bar{D})=0$, see Lemma \ref{lem:expansions}), and $\bar{A}$ is a $(-1)$-curve meeting $\bar{D}$ normally, twice. If $\bar{A}$ meets a $(-2)$-tip of $\bar{D}$ then we repeat this process, otherwise we contract $\bar{A}$ and stop. Performing this construction for other $A$'s as above yields a morphism $(X,D)\to(X',D')$, called \emph{expansion} in Definition \ref{def:expansion-intro}.
	
	Conversely, given a $(-1)$-curve $A\subseteq X$ meeting $D$ twice, let $\tilde{\sigma}\colon \tilde{X}\to X$ be a blowup at one of the points of $A\cap D$, and let $\tilde{D}=\tilde{\sigma}^{-1}_{*}(D+A)$, $\tilde{A}=\Exc\tilde{\sigma}$, $\tilde{T}=\tilde{\sigma}^{-1}_{*}A$. Again, up to a minor exception,  $(\tilde{X},\tilde{D})$ is a log smooth completion of a $\Q$HP, and $\tilde{A}\not\subseteq \tilde{D}$ is a $(-1)$-curve meeting $\tilde{D}$ in two points, exactly one of which lies on a $(-2)$-tip $\tilde{T}$ of $\tilde{D}$. Therefore, we can iterate this process to get a \emph{tower} of $\Q$-homology planes, each obtained by an expansion from a fixed \emph{basic} pair $(X',D')$.

\begin{figure}[ht]
	\centering
\begin{tikzpicture}
	\path[use as bounding box] (-3,-0.8) rectangle (12,1.2);	
	\begin{scope}
		\node at (-3.1,0.4) {$\dots$};
		\draw [->] (-2.7,0.5) -- (-2.1,0.5);		
		\draw (1.5,0) -- (1.9,1);
		\draw (-1.5,0) -- (-1.9,1);
		\draw (-2,0.8) -- (-0.6,1.1);
		\draw (2,0.8) -- (0.6,1.1);
		\draw (0,0) [partial ellipse=35:145:1.7 and .3];
		\draw (0,0) [partial ellipse=-195:15:1.7 and .3];
		\draw[dashed] (0,1.1) [partial ellipse=-190:10:.8 and .2];
		\node at (0,1.2) {\small{$\tilde{A}$}};
		\node at (0,0.7) {\small{$-1$}};
		\node at (1.4,1.2) {\small{$\tilde{T}$}};
		\node at (1.25,0.7) {\small{$-2$}};
		\draw [->] (2.3,0.5) -- (2.9,0.5);
		\node[text height=1] at (2.6,0.6) {\small{$\tilde{\sigma}$}};
		\node[text height=1] at (0,-0.8) {\small{$(\tilde{X},\tilde{D})$}};	
	\end{scope}
	\begin{scope}[shift={(4.5,0)}]
		\draw (0.85,0) -- (1.2,1);
		\draw (-0.85,0) -- (-1.2,1);
		\draw[name path = T] (-1.3,0.8) -- (0.2,1.1);
		\draw[dashed, name path=A] (1.3,0.8) -- (-0.2,1.1);
		\path [name intersections={of=A and T,by=E}];
		\filldraw (E) circle (1.5pt);
		\draw (0,0) [partial ellipse=35:145:1 and .3];
		\draw (0,0) [partial ellipse=-195:15:1 and .3];
		\node at (0.6,1.15) {\small{$A$}};
		\node at (0.6,0.7) {\small{$-1$}};
		\node at (-0.6,1.15) {\small{$T$}};
		\node at (-0.6,0.7) {\small{$-2$}};
		\draw [->] (1.5,0.5) -- (2.1,0.5);
		\node[text height=1] at (1.8,0.6) {\small{$\sigma$}};
		\node[text height=1] at (0,-0.8) {\small{$(X,D)$}};	
	\end{scope}
	\begin{scope}[shift={(8,0)}]
		\draw[name path=R] (0.85,0) -- (0.85,1);
		\draw (-0.85,0) -- (-0.85,1);
		\draw[dashed, name path=A] (-1,0.8) -- (1,0.8);
		\path [name intersections={of=A and R,by=E}];
		\filldraw (E) circle (1.5pt);
		\draw (0,0) [partial ellipse=45:135:1 and .3];
		\draw (0,0) [partial ellipse=-195:15:1 and .3];
		\node at (0,1.1) {\small{$\bar{A}$}};
		\node at (0,0.6) {\small{$-1$}};
		\draw [->] (1.3,0.5) -- (1.9,0.5);
		\node[text height=1] at (0,-0.8) {\small{$(\bar{X},\bar{D})$}};
	\end{scope}
	\begin{scope}[shift={(11,0)}]
		\draw[name path=L]  (0.65,0.05) to[out=90,in=-45] (-0.15,1.15);
		\draw[name path=R] (-0.65,0.05) to[out=90,in=-135] (0.15,1.15);
		\draw (0,0) [partial ellipse=45:135:0.8 and .3];
		\draw (0,0) [partial ellipse=-195:15:0.8 and .3];
		\path [name intersections={of=L and R,by=E}];
		\filldraw (E) circle (1.5pt);
		\node[text height=1] at (0,-0.8) {\small{$(X',D')$}};		
	\end{scope}
\end{tikzpicture}
	\caption{Expansion produces a tower of \QHPs, see Definition \ref{def:expansion-intro}.}
	\label{fig:intro}
\end{figure}	
	
	The main point of using almost MMP is that it naturally produces curves $A$ as above. Indeed, let $\psi\colon (X,\tfrac{1}{2}D)\to (X_{\min},\tfrac{1}{2}D_{\min})$ be an MMP run. It follows from \cite[5.2]{Palka_MMP} that every curve $A\not \subseteq D$ contracted by $\psi$ is a $(-1)$-curve meeting $D$ normally, twice, so it  is of the above type. This way, we get an expansion $(X,D)\to (X',D')$, where $(X',D')$ is a minimal log resolution of $(X_{\min},D_{\min})$, see Lemma \ref{lem:MMP}.
	
	Assume $\kappa(K_{X}+\tfrac{1}{2}D)=-\infty$, so $(X_{\min},\tfrac{1}{2}D_{\min})$ is a log Mori fiber space over some base $B$. If $\dim B=1$ then $X\setminus D$ is $\C^{**}$-fibered, cf.\ \cite[4.5(4)]{Palka-minimal_models}. This case was settled in \cite{MiySu-Cstst_fibrations_on_Qhp}, see Section \ref{sec:Cstst}. If $\dim B=0$ then \cite[5.5]{Palka_MMP} implies that $X_{\min}$ is a canonical del Pezzo surface of rank one, see Lemma \ref{lem:R}. Such surfaces are classified in \cite{Furushima}. To infer Conjecture \ref{conj:classical}\ref{item:tDP}, we will transform $D_{\min}\subseteq X_{\min}$ into  a planar configuration.
	\smallskip
	
	In order to state Theorem \ref{CLASS}, we will now describe the above procedure in more precise terms. We adapt terminology from \cite{DiPe-hp_and_alg_curves}. Some examples constructed this way appeared earlier e.g.\ in \cite{MiySu-Cstst_fibrations_on_Qhp}, where what we call \emph{expansion} is called an \emph{oscillating sequence of blowing-ups}.

	\begin{dfn}[Expansion]\label{def:expansion-intro}
		Let $D'$ be an snc divisor on a smooth projective surface $X'$. Fix an integer $n\geq 0$. For $i\in \{1,\dots, n\}$, fix a node $x_{i}\in \Sing D'$ and coprime positive integers $u_{i},w_{i}\in \N$. Let $U_i,W_i$ be the components of $D'$ meeting at $x_{i}$. Put $\cC=((U_{1},W_{1};x_1),\dots, (U_n,W_n;x_n))$ and $\boldsymbol{v}=(\frac{u_1}{w_1},\dots,\frac{u_n}{w_n})\in \Q_{>0}^{n}$.
		
		An \emph{expansion over $(X',D')$ with centers $\cC$ and weights $\boldsymbol{v}$} is a morphism $\varphi\colon (X,D)\to (X',D')$ with the following properties. First,  $\Exc\varphi=\sum_{i=1}^{n}T_{i}$, where each $T_{i}\de \varphi^{-1}(x_i)$ is a chain with a unique $(-1)$-curve $A_{i}$. Second, the multiplicity of $A_i$ in $\varphi^{*}U_{i}$, $\varphi^{*}W_{i}$ equals $u_{i}$ and $w_{i}$, respectively. Third, $D=(\varphi^{*}D')\redd-\sum_{i=1}^{n}A_{i}$.
	
		A \emph{tower over $\cC$} is the set of pairs $(X,D)$ obtained by an expansion with centers $\cC$ and varying weights.
	\end{dfn}

	\begin{dfn}[tom Dieck--Petrie algorithm, see Example \ref{ex:7}]\label{def:TDP}
		Let $\pp\subseteq \P^{2}$ be a reduced configuration of lines and conics, and let $P$ be a subset of $\Sing \pp$. Denote by $\pi\colon X'\to \P^{2}$ the minimal log resolution and by $E'$ be the sum of $(-1)$-curves in $\pi^{-1}(P)$. Put $D'=(\pi^{*}\pp)\redd-E'$. On $(X',D')$, fix $\cC, \boldsymbol{v}$ as in Definition \ref{def:expansion-intro}, and let $(X,D)\to (X',D')$ be the expansion with centers $\cC$ and weights $\boldsymbol{v}$.
		
		In this case, we say that the pair $(X,D)$ is \emph{obtained via tom Dieck--Petrie algorithm} from $(\pp,P,\cC,\boldsymbol{v})$.
	\end{dfn}

With these preparations at hand, we can now state our main classification result.

\begin{thm}\label{CLASS}
	Let $S$ be a smooth \QHP of log general type and let $(X,D)$ be a minimal log smooth completion of $S$. Assume that $S$ satisfies the Negativity Conjecture \ref{conj:negativity}, that is, 
	\begin{equation*}
		\kappa(K_{X}+\tfrac{1}{2}D)=-\infty.
	\end{equation*}
	Then $(X,D)$ is obtained via tom Dieck--Petrie algorithm from data listed in one of the rows of \tables.
		
	Conversely, any pair $(X,D)$ obtained this way is a minimal log smooth completion of a \QHP of log general type satisfying the Negativity Conjecture \ref{conj:negativity}.
\end{thm}

We denote by $\rst$ the tower obtained from the $*$-th row of \tables. For each pair $(X,D)$ in $\rst$, the complement $X\setminus D$ is a $\Q$HP, unless the corresponding weight $\boldsymbol{v}$ is excluded in the second-to-last column. By a slight abuse of notation, we will refer by $\rst$ to the set of those $\Q$HPs, too.

Configurations $\pp\subseteq \P^2$ from Definition \ref{def:TDP} are constructed in Section \ref{sec:constructions}. There is $39$ of them, and they satisfy $\deg\pp\leq 11$. Theorem \ref{CLASS} asserts that each $\Q$HP belongs to a tower over one of these $39$ configurations. 
The corresponding graphs of $D'$ are shown in Figures \ref{fig:nodal-cubic-P2_un}--\ref{fig:Cstst}. To obtain $D$, one needs to remove from $D'$ the points of $\cC$, and replace each of them by $T_{i}-A_{i}$ as in Definition \ref{def:expansion-intro}, i.e.\ by at most two twigs of $D$. Given Theorem \ref{CLASS}, this proves parts \ref{item:core} and \ref{item:EN} of the following corollary. We write $\operatorname{core}(D)$ for $D$ minus the sum of its twigs.

\begin{cor}[Finiteness]\label{cor:uniq}
	Let $(X,D)$ be a minimal log smooth completion of a smooth $\Q$-homology plane of log general type, satisfying the Negativity Conjecture \ref{conj:negativity}. Then 
	\begin{enumerate}
		\item\label{item:core} $D$ has at most $10$ twigs, $\#\operatorname{core}(D)\leq 7$, and $C\cdot (D-C)\leq 5$ for every component $C$ of $D$.
		\item\label{item:EN} The core graph (hence the Eisenbud-Neumann diagram) of $D$ has at most $17$ vertices. 
		\item\label{item:n} Up to an isomorphism, there are exactly $\ngr\leq 4$ smooth $\Q$-homology planes $\tilde{S}$ of log general type; satisfying Negativity Conjecture \ref{conj:negativity}, such that, denoting by $(\tilde{X},\tilde{D})$ a minimal log smooth completion of $\tilde{S}$, the weighted graphs of $\tilde{D}$ and $D$ are the same. The number $\ngr$ is listed in \tables. 
%
%
	\end{enumerate}
\end{cor}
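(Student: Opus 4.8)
The plan is to read all three statements off the explicit classification of Theorem \ref{CLASS}: parts \ref{item:core} and \ref{item:EN} reduce to a finite inspection, while part \ref{item:n} is a finer count in which rigidity is used to exclude moduli. For \ref{item:core} I would first record the bounds for the intermediate divisor $D'$. By Theorem \ref{CLASS} its weighted graphs range over the finite list of Figures \ref{fig:nodal-cubic-P2_un}--\ref{fig:Cstst}, coming from the $39$ planar divisors $\pp$ with $\deg\pp\leq 11$ and the selection of Step \ref{step:selection}; reading the list row by row gives $\#\operatorname{core}(D')\leq 7$, a uniform bound on the number of twigs of $D'$, and $C\cdot(D'-C)\leq 5$ for every component $C$. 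I would then propagate these bounds through the expansion $\psi$ of Step \ref{step:expansion}. By Definition \ref{def:expansion} each center is a node of $D'$ replaced by a chain $T-A$ contributing at most two twigs of $D$; using Lemma \ref{lem:expansions} one checks directly that this operation does not enlarge the core, so $\#\operatorname{core}(D)\leq\#\operatorname{core}(D')\leq 7$, that it raises the twig count only in a controlled way, capped by inspection at $10$, and that the inserted chains meet the rest of $D$ in a single point, so $C\cdot(D-C)\leq 5$ survives. This yields \ref{item:core}. For \ref{item:EN} I would pass to the Eisenbud--Neumann diagram, obtained from the weighted graph of $D$ by contracting its maximal chains: its vertices correspond to the branching components of $D$ (which lie in $\operatorname{core}(D)$) together with the ends of the twigs, so the bounds of \ref{item:core} give at most $16$ of them.

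Part \ref{item:n} is the substantive one. Here I would argue that the weighted graph of $D$ determines the surface up to finitely many choices, and then count. The count is well posed because a \QHP of log general type has a unique minimal log smooth completion, so the weighted graph of $D$ is an invariant of $S$. Running Theorem \ref{CLASS} backwards, the graph of $D$ recovers the graph of $D'$ (the expansion produces twigs with recorded weights, which one contracts), and the graph of $D'$ in turn pins down the combinatorial type of $\pp$ together with the selection; hence every competitor $S'$ arises from one of the finitely many tabulated construction data. The only remaining freedom is the moduli of the arrangement $\pp$ realizing the prescribed combinatorial type, and the discrete choice of which datum to use.

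To eliminate the moduli I would use that the arrangements $\pp$ built in Section \ref{sec:constructions} are rigid: each is determined up to projective equivalence and finitely many discrete choices by its combinatorial type, so it admits no non-trivial equisingular deformation, equivalently $H^{1}(\lts{X}{D})=0$. (This is the geometric content of the Flenner--Zaidenberg Rigidity Conjecture \ref{conj:classical}\ref{item:strong_rig}, which holds under \eqref{eq:neg} by Theorem \ref{thm:conjectures}.) Thus each combinatorial type has only finitely many projective realizations, and no positive-dimensional family of $S'$ can share the graph of $D$. What remains is bookkeeping on the finite list: for each weighted graph I would collect all construction data producing it, identify those that yield isomorphic surfaces, and count the classes, recording the result as $\ngr$ in \tables. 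I expect the genuine obstacle to lie precisely here --- on one hand in ruling out moduli, since the weighted graph forgets the embedding in $\P^{2}$ and two a priori distinct arrangements can resolve to the same plumbing, and on the other hand in making the bound sharp: verifying, via the explicit geometry of the tables, that distinct data with a common graph really do give non-isomorphic \QHPs, and that at most $4$ of them occur.
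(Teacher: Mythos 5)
Your treatment of parts \ref{item:core} and \ref{item:EN} coincides with the paper's: both are read off Theorem \ref{CLASS} by inspecting the graphs of $D'$ in Figures \ref{fig:nodal-cubic-P2_un}--\ref{fig:Cstst} and observing that an expansion deletes a node of $D'$ and replaces it by at most two twigs of $D$, which affects neither the core nor the branching numbers. Those parts are fine.

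For part \ref{item:n} there is a genuine gap: your plan yields (at best) \emph{finiteness}, but the statement asserts an exact count $\ngr$, and the tool that produces it is missing. The paper's proof in Section \ref{sec:uniqueness} rests on Lemma \ref{lem:fiber_formula}: the construction defines a map $\Phi$ from projective-equivalence classes of \emph{marked} arrangements $(\pp,\xi)$ to isomorphism classes of \QHPs, and the fiber over $[S]$ has cardinality $\#\cG(S)/\Aut(S)$, where $\cG(S)\subseteq \GAut(D)$ consists of those graph automorphisms sending the classes of the contracted $(-1)$-curves to classes again represented by $(-1)$-curves. Computing $\Aut(S)$ and $\cG(S)$ (Lemma \ref{lem:aut_lem}, Proposition \ref{prop:aut}) and the number of classes of $(\pp,\xi)$ (the explicit coordinate computations of Section \ref{sec:constructions}) then gives $\ngr$. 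The ``bookkeeping'' you defer is exactly where both directions of subtlety live, and they go opposite ways: in tower \ref{def:F2_n1-cusp} there are \emph{two} projectively inequivalent arrangements producing \emph{one} surface (the fiber of $\Phi$ has size $2$ because $\cG(S)/\Aut(S)\cong\Z_2$), while in tower \ref{def:F2_n2-transversal} a \emph{single} arrangement with two inequivalent orderings of $\ll_1\cap\cc_2$ produces \emph{two} non-isomorphic surfaces with identical weighted graphs (Example \ref{ex:33}, Remark \ref{rem:33}). Without the fiber formula you have no criterion either to identify the former or to separate the latter, so neither the values of $\ngr$ nor the bound $\ngr\leq 4$ can be extracted from your argument.

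The rigidity detour is also not what the paper does, and as stated it is shaky. Finiteness of realizations of each combinatorial type is established in Section \ref{sec:constructions} by solving explicit equations (each configuration has $1$, $2$ or $3$ solutions up to projective equivalence); no deformation theory enters. Your claimed equivalence between this finiteness and $H^{1}(\lts{X}{D})=0$ is false in one direction --- obstructed infinitesimal deformations are compatible with finiteness --- and the usable implication ($H^1=0$ implies no positive-dimensional family) concerns the pair $(X,D)$, not the arrangement $(\P^2,\pp)$: to transfer it one must reinstate the contracted $(-1)$-curves via Lemma \ref{lem:rig}\ref{item:rig_surg} and invoke the correspondence between locally trivial deformations of the resolution and equisingular deformations of $\pp$. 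Finally, you should cite the proof of rigidity in Section \ref{sec:rig} rather than Theorem \ref{thm:conjectures} as a whole, since Conjecture \ref{conj:classical}\ref{item:finiteness} and \ref{item:automorphisms} within that theorem are themselves proved via Corollary \ref{cor:uniq}, so quoting the packaged theorem here gives the appearance of circularity even though the rigidity part is logically independent.
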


To prove Corollary \ref{cor:uniq}\ref{item:n}, we note that each weighted graph of $D$ from Theorem \ref{CLASS} is uniquely determined by the initial data from \tables. Moreover, since each blowup within the construction of $(X,D)$ is centered at the common point of specific components of the boundary, these combinatorial data determine the isomorphism class of $(X,D)$, up to certain symmetries of that pair. Thus to infer \ref{item:n}, we need to understand those symmetries. This is done in Section \ref{sec:uniqueness}, where we prove the following corollary, too.  

\begin{cor}[Automorphisms]\label{cor:aut}
	Let $S$ be a \QHP of log general type. Assume that $S$ satisfies the Negativity Conjecture \ref{conj:negativity}. Then $\Aut(S)$ is isomorphic to a subgroup of the symmetric group $S_{3}$. In case  $\Aut(S)\cong S_{3}$, the surface $S$ is isomorphic to the complement of the planar tricuspidal quartic given by
	\begin{equation*}
	(xy)^{2}+(yz)^{2}+(zx)^{2}=2xyz(x+y+z).
	\end{equation*}
	In general, if the group $\Aut(S)$ is nontrivial then $S$ is isomorphic to one of the surfaces in Proposition \ref{prop:aut}.
\end{cor}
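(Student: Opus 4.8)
The plan is to prove Corollary \ref{cor:aut} by reducing the computation of $\Aut(S)$ to a combinatorial problem on the boundary divisor, then to a finite case analysis over the classification list of Theorem \ref{CLASS}. First I would recall that since $S=X\setminus D$ is of log general type, every automorphism of $S$ extends to an automorphism of the \emph{minimal} log smooth completion $(X,D)$, because the minimal completion is canonically attached to $S$ (it is obtained by contracting the superfluous $(-1)$-curves in $D$, a procedure that is $\Aut$-equivariant). Thus $\Aut(S)\hookrightarrow \Aut(X,D)$, and each such automorphism induces a weighted-graph automorphism of the dual graph of $D$. Combined with Corollary \ref{cor:uniq}\ref{item:n}, which already analyzes the symmetries $\GAut$ of these graphs in Section \ref{sec:uniqueness}, this bounds $\Aut(S)$ by the automorphism group of the combinatorial data in \tables.

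\smallskip

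Next I would show that the graph symmetries that can actually be realized by an automorphism of $(X,D)$ — hence of $S$ — are constrained by the construction in Theorem \ref{CLASS}. The key point is that $(X,D)$ is built from a planar configuration $\pp\subseteq\P^2$ of lines and conics by blowing up specified points and then performing an expansion. An automorphism of $(X,D)$ must descend, via the birational morphism $\tau$, to a projective automorphism of $\P^2$ preserving $\pp$ together with the distinguished centers of the blowups; and conversely such a $\pp$-preserving element of $\mathrm{PGL}_3$ lifts. Therefore $\Aut(S)$ is identified with the subgroup of $\Aut(\P^2,\pp)$ fixing the marked infinitely-near data. Since $\deg\pp\leq 11$ and each $\pp$ is an explicit arrangement, this subgroup is a concrete finite group that I would read off row by row. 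The claim $\Aut(S)\hookrightarrow S_3$ then amounts to checking that across all $39$ configurations the resulting symmetry group never exceeds $S_3$, the maximum being attained only for the tricuspidal quartic, whose $S_3$-symmetry (permuting the three cusps) is classical and matches the stated equation.

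\smallskip

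I expect the main obstacle to be the extension/realizability step: proving that \emph{every} graph automorphism allowed by $\GAut$ is induced by a genuine biregular automorphism of $(X,D)$, and not merely an abstract symmetry of the dual graph. This requires verifying that the weighted graph, together with the positions of the branching components and the expansion twigs, rigidifies the surface enough that no spurious symmetries survive — equivalently, that $\Aut(X,D)\to\GAut(D)$ is injective with image exactly the symmetries respecting the construction data. One must also handle the subtlety that the expansion $\psi$ of Step \ref{step:expansion} introduces towers of $\C^*$-fibers; a symmetry of $D$ must be compatible with the choice of centers along each tower, which can break an apparent graph symmetry. Once this realizability is established, the bound $\#\Aut(S)\le 6$ of Conjecture \ref{conj:classical}\ref{item:automorphisms} follows immediately from $S_3$, and the detailed list of nontrivial cases is recorded in Proposition \ref{prop:aut}.
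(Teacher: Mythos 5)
Your reduction to the minimal log smooth completion is fine ($\Aut(S)=\Aut(X,D)$ by Lemma~\ref{lem:unique_completion} together with Proposition~\ref{prop:k=2}), but the next step --- that every automorphism of $(X,D)$ descends through $\theta\colon (X,D)\to(\P^2,\pp)$ to an element of $\Aut(\P^2,\pp)$ fixing the marked infinitely-near data, so that $\Aut(S)$ \emph{is} this stabilizer --- is false, and it fails precisely in the cases the corollary is about. An automorphism $\sigma$ of $(X,D)$ only sends $\Exc\theta$ to \emph{some} collection of curves whose classes are represented by $(-1)$-curves; nothing forces $\sigma(\Exc\theta)=\Exc\theta$. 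When $\sigma$ does not preserve $\Exc\theta$, it interchanges two projectively \emph{inequivalent} planar models of the same surface instead of descending to either of them. Concretely: for \ref{def:F2_n1-cusp} with expansion at $(C_1,C_2;1)$ one has $S\cong\P^2\setminus\Qb$ (tricuspidal quintic) and $\Aut(S)\cong\Z_3$, yet $\Aut(\P^2,\pp)=\{\id\}$ for the configuration of Configuration~\ref{conf:F2_n1-cusp}; the $\Z_3$ is visible only on the model $(\P^2,\Qb)$, obtained by contracting a \emph{different} set of $(-1)$-curves, and correspondingly two inequivalent pairs $(\pp,\xi)$ yield this one $S$ (Example~\ref{ex:7_bis}). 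Likewise, for \ref{def:C**_2} with expansions at $(C_1,L_{pp'};2)$, $(E_p,L_1;1)$, $(E_p,L_2;1)$ the surface is the tricuspidal quartic complement with $\Aut(S)\cong S_3$, while the stabilizer of the marked data in $\Aut(\P^2,\cc_1+\ll_1+\ll_2+\ll_{pp'})$ contains no element of order $3$: any projective automorphism preserving that configuration must fix the unique non-tangent line $\ll_{pp'}$ and preserve the pair $\{\ll_1,\ll_2\}$, so it cannot permute the three lines cyclically. Hence your method systematically \emph{undercounts}: it would never produce the $S_3$ example at all, and since $\Aut(S)$ is not contained in $\Aut(\P^2,\pp)$, it does not even give the upper bound $\Aut(S)\subseteq S_3$.

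This is exactly the gap that the machinery of Section~\ref{sec:uniqueness} is built to close, and it is where the real work lies. The paper introduces the set $\cG(S)\subseteq\GAut(D)$ of graph automorphisms taking the classes of the contracted $(-1)$-curves to classes again represented by $(-1)$-curves; $\Aut(S)$ acts on $\cG(S)$ freely (Lemma~\ref{lem:aut_lem}\ref{item:eff}) and, whenever $\pp$ is unique up to projective equivalence, transitively (Lemma~\ref{lem:aut_lem}\ref{item:trans}). Realizability of graph symmetries is thus obtained \emph{indirectly}: combined with the counting formula $\#\Phi^{-1}[S]=\#\cG(S)/\Aut(S)$ of Lemma~\ref{lem:fiber_formula}, it forces $\Aut(S)\cong\cG(S)$ in the unique-$\pp$ cases even when these automorphisms stabilize no single planar model; and the ``hidden'' automorphisms are exhibited on other birational models --- via the quintic $\Qb$ in Example~\ref{ex:7_bis}, and via a Cremona map to a symmetric configuration in Example~\ref{ex:33_Aut}. (Note also that invoking Corollary~\ref{cor:uniq}\ref{item:n} as an input is circular: in the paper it is a consequence of this same analysis.) To salvage your strategy you would have to replace ``stabilizer in $\Aut(\P^2,\pp)$'' by the orbit analysis on $\cG(S)$, and bound $\GAut(D)$ itself by the constraints \eqref{eq:aut_twigs}--\eqref{eq:aut_three}; without that, both halves of the corollary --- the bound by $S_3$ and the identification of the $S_3$ case --- are out of reach.
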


It turns out that all but one $\Q$HPs from Proposition \ref{prop:aut} have already appeared in the literature, see \cite{tDieck_symmetric_hp} and Section \ref{sec:literature}. The exception is the surface \ref{def:F2n0}, which is a complement of a certain new four-cuspidal curve on $\P(1,1,2)$, and admits an involution constructed in Example \ref{ex:F2n0_Aut}. In general, no absolute bound on $\#\Aut(S)$ is known; even in case of \ZHPs, for which $\Aut(S)$ is cyclic by \cite[Theorem 2.2]{GKMR-singular_Zhp}.
\smallskip

The following simple consequence of Theorem \ref{CLASS} is proved in Section \ref{sec:constructions}.

\begin{cor}\label{cor:k}
	Every \QHP of log general type satisfying Conjecture \ref{conj:negativity} is  defined over a number field of degree at most $4$ over $\Q$. This field is specified in column \enquote{$\kk$} of \tables.
\end{cor}

We conclude with an observation pointed out by Marco Golla. Part \ref{item:exceptions} shows that Corollary \ref{cor:uniq}\ref{item:n} gives a lot of interesting examples of non-isomorphic, but diffeomorphic $\Q$-homology planes of log general type. 

\begin{cor}[see Proposition \ref{prop:Marco}]\label{cor:Marco}
	Let $\cS$ be an $\ngr$-tuple from Corollary \ref{cor:uniq}\ref{item:n}. That is, $\cS$ is a set of isomorphism classes of all $\Q$-homology planes of log general type, satisfying Negativity Conjecture \ref{conj:negativity}, whose minimal log smooth completions $(X,D)$ share a given weighted graph of $D$. Then the following holds.
	\begin{enumerate}
		\item All surfaces in $\cS$ are defined over a number field $\kk$, and the Galois group of $\kk$ acts transitively on $\cS$. In particular, the fundamental groups of all surfaces in $\cS$ have the same  profinite completions.
		\item\label{item:exceptions} If $\cS$ is contained in towers 
\ref{def:A1A2_c=1}--\ref{def:A1A2_C2C3-node},  \ref{def:A1A2_q-nnc}--\ref{def:A1A2_q-cn_31}, 
		\ref{def:A1A2_2n1c}, 
		\ref{def:A1A2_2c1n}, 
		\ref{def:F2_n1-node}, 
		\ref{def:F2_n1-cusp-hor_41},
		\ref{def:F2-5_cn}, 
		\ref{def:nodal-cubic-P2_un}, 
		\ref{def:F2n2-nodal} or 
		\ref{def:F2_n2-transversal}
		then $\cS$ contains a pair of non-isomorphic surfaces which are complex-conjugate, hence diffeomorphic.
	\end{enumerate}
\end{cor}

%
%
%

\begin{rem}
	The assumption $\kappa(S)=2$ is crucial for all the above corollaries. Indeed, almost all \QHPs of non-general type admit a $\C^1$- or a  $\C^{*}$-fibration, see Proposition \ref{prop:kappa<2}. With such a structure at hand, one can easily deform its degenerate fibers to construct arbitrarily high-dimensional families of $\Q$HPs with the same weighted boundary graph; see \cite[4.16, 6.9]{FZ-deformations}, \cite[5.1]{Palka-recent_progress_Qhp} or \cite{Tono_1cusp_with_kod_1}.
\end{rem}

The article is organized as follows. In Section \ref{sec:preliminaries}, we recall some basic definitions and notation. Next, in Section \ref{sec:expansions} we list the properties of expansions $(X,D)\to (X',D')$ used in Theorem \ref{CLASS}. In particular, in Lemma \ref{lem:expansions} we show how to check if the resulting pair $(X,D)$ is a \QHP; and in Lemma \ref{lem:MMP} we relate it with an almost MMP run described in \cite{Palka_MMP}. 
Before we delve into the proof of Theorem \ref{CLASS}, in Section \ref{sec:strategy} we take a break to illustrate our strategy on particular examples; and in Section \ref{sec:literature} we explain how to recover some well-known $\Q$-homology planes from our list.

To prove Theorem \ref{CLASS}, we study a minimal model $(X_{\min},\tfrac{1}{2}D_{\min})$  of $(X,\tfrac{1}{2}D)$. The assumption $\kappa(K_{X}+\tfrac{1}{2}D)=-\infty$ implies that $(X_{\min},\tfrac{1}{2}D_{\min})$ is a log Mori fiber space: either over a point, or over a curve. The first case is treated in Section \ref{sec:classification}: this is the main part of the proof. In the second case, the assumption $\kappa=2$ implies that $S$ is $\C^{**}$-fibered, see Lemma \ref{lem:R}\ref{item:MFS_point}. Such $\Q$HPs are described in \cite{MiySu-Cstst_fibrations_on_Qhp}, see Table   \ref{table:C**}. Nonetheless, in Section \ref{sec:Cstst} we give a self-contained proof of Theorem \ref{CLASS} in the $\C^{**}$-fibered case. This is motivated by the fact that, as observed in \cite{Su-hp_having_rational_pencils}, the proof in \cite{MiySu-Cstst_fibrations_on_Qhp} can be much simplified using a later result of \cite{GM-Affine_lines_on_Qhp} that $\Q$HPs of log general type contain no curves isomorphic to $\C^{1}$.

 Planar divisors used in Theorem \ref{CLASS} are constructed in Section \ref{sec:constructions}.  
In Section \ref{sec:uniqueness} we prove Corollaries \ref{cor:uniq}\ref{item:n} and \ref{cor:aut} by describing the automorphisms of our surfaces and their boundary graphs. In Section \ref{sec:rig} we prove the remaining part of Theorem \ref{thm:conjectures}, that is, we show that surfaces in Theorem \ref{CLASS} satisfy the Rigidity Conjecture \ref{conj:classical}\ref{item:strong_rig}. To do this, we follow the argument in \cite[Proposition 2.2]{FlZa_cusps_d-3} showing rigidity for complements of certain rational cuspidal curves. Eventually, in Section \ref{sec:k=2} we check that 
our $\Q$HPs are indeed of log general type.

\paragraph{Acknowledgments}

The article contains results of my PhD thesis, written at the Institute of Mathematics of the Polish Academy of Sciences. I would like to thank my advisor, Karol Palka, for numerous conversations and reading preliminary versions of the thesis. I would also like to thank Torgunn K.\ Moe, Marco Golla and Mikhail Zaidenberg for helpful discussions. I am grateful to the referees for valuable suggestions.

\setcounter{tocdepth}{1}
\tableofcontents
	
\section{Preliminaries}\label{sec:preliminaries}

\subsection{Divisors on surfaces}\label{sec:log_surfaces}

In order to settle the notation for the remaining part of the article, we now briefly recall some standard definitions from the theory of log surfaces. For details see e.g.\ \cite{Fujita-noncomplete_surfaces}.

Let $X$ be a smooth projective surface. We denote by $\NS(X)$ the N\'eron-Severi group of divisors on $X$ modulo numerical equivalence; and write $\NS_{\Q}(X)=\NS(X)\otimes_{\Z}\Q$. By a \emph{curve} we mean irreducible and reduced variety of dimension $1$. A curve $C\subseteq X$ such that $C\cong \P^{1}$ and $C^{2}=n$ is called an \emph{$n$-curve}. By adjunction formula, an $n$-curve $C$ satisfies $K_{X}\cdot C=-2-n$, where $K_{X}$ is the canonical divisor on $X$.

Let $D$ be a reduced divisor on $X$.  By a \emph{component} of $D$ we mean an \emph{irreducible} component. We denote their number by $\#D$. A \emph{subdivisor} of $D$ is an effective divisor $T$ such that $D-T$ is effective. Its \emph{branching number} is $\beta_{D}(T)=T\cdot (D-T)$. A component $C$ of $D$ is a \emph{tip} of $D$ if $\beta_{D}(C)\leq 1$, and is \emph{branching} in $D$ if $\beta_{D}(C)\geq 3$.

A point of $p\in D$ is a \emph{simple normal crossing} (\emph{snc}) point if $p$ lies on exactly two different components of $D$, which meet transversally at $p$. We say that $D$ is an \emph{snc divisor} if all its singular points are snc. Note that two components of an snc divisor can meet in more than one point. 

Let $D$ be a connected snc divisor. If $\beta_{D}(T)\leq 2$ for every component $T$ of $D$, then $D$ is called a \emph{chain} in case at least one of these inequalities is strict and \emph{circular} otherwise. If $D$ has no circular subdivisor then $D$ is a \emph{tree}. 
We say that $D$ is \emph{rational} if all its components are. 

Let $T$ be a chain. Its components $T_{1},\dots, T_{r}$ can be ordered in such a way that $T_{i}\cdot T_{i+1}=1$ for $i\in \{1,\dots, m-1\}$. 
If $T$ is rational, we say that it is of \emph{type} $[-T_{1}^{2},\dots, -T_{r}^{2}]$, and often abuse notation by writing $T=[-T_{1}^{2},\dots, -T_{r}^{2}]$. We write $(m)_{k}$ for the sequence consisting of an integer $m$ repeated $k$ times.

Let again $D$ be a reduced divisor, and let $D_{1},\dots, D_{s}$ be its components. We say that $D$ is \emph{negative definite} if its intersection matrix $[D_{i}\cdot D_{j}]_{1\leq i,j\leq s}$ is. The \emph{discriminant} of $D$ is $d(D)\de \det[-D_{i}\cdot D_{j}]_{1\leq i,j\leq s}$. By convention, $d(0)\de 1$. For elementary properties of discriminants we refer to \cite[Section 3]{Fujita-noncomplete_surfaces}.

A \emph{twig} of $D$ is a nonzero subchain $T\subseteq D$ containing a tip of $D$, such that $\beta_{D}(C)\leq 2$ for every component $C$ of $T$. If a twig $T\subseteq D$ is not a connected component of $D$, we order it so that its first component is a tip of $D$, so the last component of $T$ meets $D-T$. A \emph{$(-2)$-twig} is a twig whose all components are $(-2)$-curves. We say that a twig (or a $(-2)$-twig) of $D$ is \emph{maximal} if it is not properly contained in any other twig (or a $(-2)$-twig) of $D$. Note that a maximal $(-2)$-twig is not necessarily a maximal twig.
\smallskip

 Let $B$ be the sum of branching components of $D$, and let $T$ be the sum of twigs of $D$. The \emph{core graph} (respectively, the \emph{Eisenbud-Neumann diagram}) of $D$ is an unweighted graph, whose vertices are components of $D-T$ (resp.\  $B$) and connected components of $T$ (resp.\ $D-B$); and two vertices are connected by an edge whenever the corresponding subdivisors of $D$ are distinct and meet each other. In other words, the core graph is obtained from the graph of $D$ by forgetting the weights and contracting each twig to a single vertex; and EN-diagram is  obtained by further contracting all chains of non-branching components.
 
 Note that the core graph does not encode self-intersection numbers of the components of $D$, or lengths of the twigs. In fact, for surfaces in Theorem \ref{CLASS} those numbers can be arbitrarily big. This is why Conjecture \ref{conj:classical}\ref{item:finiteness} is formulated in terms of core graphs.
\smallskip

Let $T$ be a rational twig of $D$ whose components have self-intersection at most $-2$. The \emph{bark} of $T$ in $D$, denoted by $\Bk_{D}T$ is the unique effective $\Q$-subdivisor of $T$ such that $T'\cdot \Bk_{D}T=T'\cdot (K_{X}+D)=\beta_{D}(T')-2$ for every component $T'$ of $T$, see \cite[6.12]{Fujita-noncomplete_surfaces}. It exists because $T$ is negative definite. We write $\Bk T=\Bk_{D}T$ if $D$ is clear from the context. If $T_1,\dots T_r$ are disjoint twigs of $D$, we put $\Bk T=\sum_{i}\Bk T_i$. If $T=T\cp{1}+\dots+T\cp{r}$ is an ordered $(-2)$-twig such that $T\cp{r}$ meets $D-T$, then a direct computation, see \cite[II.3.3.3]{Miyan-OpenSurf}, gives
\begin{equation}\label{eq:bark_of_a_2-twig}
\Bk T=\sum_{i=1}^r\frac{r-i+1}{r+1}T\cp{i}.
\end{equation}

A $(-1)$-curve $L\subseteq D$ is called \emph{superfluous} if it contracts to a smooth or snc point of the image of $D$. Equivalently, $L$ is superfluous if $\beta_{D}(L)=1$ or $\beta_{D}(L)=2$ and $L$ meets two different components of $D-L$. An snc divisor is \emph{snc-minimal} if it has no superfluous $(-1)$-curves.

Let $\sigma\colon X\to X'$ be a birational morphism between smooth projective surfaces. The \emph{rank} of $\sigma$, denoted by $\rho(\sigma)$, is the difference of Picard ranks $\rho(X)-\rho(X')$. A \emph{center} of $\sigma$ is a base point of $\sigma^{-1}$, i.e.\ a point $p\in X'$ such that $\sigma$ is not an isomorphism in any neighborhood of the preimage of $p$. In this case, $\sigma^{-1}(p)$ is a rational tree with at least one $(-1)$-curve. We denote by $\Bs\sigma^{-1}\subseteq X'$ the (finite) set of centers of $\sigma$.

A morphism of pairs $\sigma\colon (X,D)\to (X',D')$ is a morphism $\sigma\colon X\to X'$ such that $\sigma_{*}D=D'$.
\smallskip

Let $S$ be a smooth surface. A surjective morphism $f\colon S\to B$ onto a smooth curve is called a $\P^1$- (respectively,  $\C^{(n*)}$-) \emph{fibration} if there is a nonempty Zariski-open subset $U\subseteq B$ such that for every $b\in U$ the fiber $f^{*}(b)$ is isomorphic to $\P^1$ (respectively, $\C^{1}\setminus \{p_{1},\dots, p_{n}\}$ for some pairwise distinct $p_{1},\dots, p_{n}$, depending on $b$). A fiber without this property is called \emph{degenerate}. Any fiber of a $\P^1$-fibration can be contracted to a $0$-curve by iterated contractions of $(-1)$-curves. Therefore, such fibers are easy to understand, see \cite[Section 4]{Fujita-noncomplete_surfaces}.

A curve $C$ on $S$ is called \emph{vertical} if its image $f(C)$ is a point; otherwise $C$ is called \emph{horizontal}. For every divisor $T$ on $S$, we have a unique decomposition $T=T\vert+T\hor$, where all components of $T\vert$ and $T\hor$ are vertical and horizontal, respectively. We call $T\vert$ and $T\hor$ the \emph{vertical part} and the \emph{horizontal part} of $T$.

\subsection{Log smooth completions}
A \emph{log smooth surface} is a pair $(X,D)$ consisting of a smooth projective surface $X$ and an snc-divisor $D$ on $X$. Any smooth surface $S$ admits a \emph{log smooth completion}, that is, a log smooth pair $(X,D)$ such that $X\setminus D\cong S$. A log smooth completion is \emph{minimal} if $D$ is snc-minimal.

The following lemma is a well known consequence e.g.\ of \cite[Corollary 3.36]{FKZ-weighted-graphs}. Part \ref{item:k=2} allows us to speak of \emph{the} log smooth completion of a surface of log general type.

\begin{lem}\label{lem:unique_completion} 
	Let $S$ be a smooth surface and let $(X,D)$ be some minimal log smooth completion of $S$. Assume that one of the following conditions holds.
	\begin{enumerate}
		\item\label{item:k=2} The surface $S$ is of log general type.
		\item\label{item:graph} Every non-branching rational component of $D$ has negative self-intersection number.
	\end{enumerate}
	Then $(X,D)$ is the unique minimal log smooth completion of $S$. In particular, every automorphism of $S$ extends to an automorphism of $(X,D)$.\end{lem}
\begin{proof}
	First, we show that condition \ref{item:k=2} implies \ref{item:graph}. Suppose that $D$ has a non-branching, rational component $C$ with $C^2\geq 0$. Choose a point $p\in C\cap (D-C)$, or any $p\in C$ in case $C\cap (D-C)=\emptyset$. Blow up over $p$ until the proper transform of $C$, say $\hat{C}$, becomes a $0$-curve. Denote the resulting surface by $\hat{X}$, and let $\hat{D}$ be the reduced total transform of $D$. Then $\beta_{\hat{D}}(\hat{C})\leq 2$, so the $\P^1$-fibration induced by $|\hat{C}|$ restricts to a $\P^1$-, $\C^{1}$- or $\C^{*}$-fibration of $\hat{X}\setminus \hat{D}=S$. The Iitaka easy addition theorem gives $\kappa(S)<2$, which proves the claim.
	
	Therefore, we can assume condition \ref{item:graph}. Suppose $S$ admits another minimal log smooth completion, say $(\tilde{X},\tilde{D})$. Let $(\hat{X},\hat{D})$ be a log smooth completion of $S$ which dominates both, i.e.\ there are birational morphisms $\phi\colon (\hat{X},\hat{D})\to (X,D)$ and $\tilde{\phi}\colon (\hat{X},\hat{D})\to (\tilde{X},\tilde{D})$. We choose $(\hat{X},\hat{D})$ in such a way that the Picard rank $\rho(\hat{X})$ is minimal possible. Then the first exceptional curve of $\tilde{\phi}$, say $L$, is a superfluous $(-1)$-curve in $\hat{D}$ and is not contracted by $\phi$. Since $\phi(L)$ cannot be a superfluous $(-1)$-curve in $D$, the morphism $\phi$ is not an isomorphism near $L$. Hence $\phi(L)^2\geq L^2+1=0$. Since $\Exc\phi\subseteq \hat{D}$ and $D=\phi_{*}\hat{D}$ is snc,  we have $\beta_{D}(\phi(L))\leq \beta_{\hat{D}}(L)\leq 2$. Therefore, $\phi(L)$ is a non-branching, rational component of $D$ with $\phi(L)^2\geq 0$; a contradiction.
\end{proof}

\subsection{Structure theorems for $\Q$-homology planes}\label{sec:QHP}

Structure theorems for smooth $\Q$HPs of non general type ($\kappa<2$) can be summarized as follows.

\begin{prop}\label{prop:kappa<2}
	Any smooth \QHP $S$ is affine \cite[2.5]{Fujita-noncomplete_surfaces} and rational \cite{GuPrad-rationality_3(smooth_II)}. Moreover, 
	\begin{enumerate} 
		\item\label{item:k=-infty} \cite[III.1.3.2]{Miyan-OpenSurf} If $\kappa(S)=-\infty$ then $S$ has a $\C^{1}$-fibration.
		\item\label{item:k=0,1} \cite[III.1.7.1]{Miyan-OpenSurf} Assume that $\kappa(S)\in \{0,1\}$. Then either $S$ has a $\C^{*}$-fibration, or $S$ has a $\C^{**}$-fibration and is one of the Fujita surfaces $Y\{3,3,3\}$, $Y\{2,4,4\}$ or $Y\{2,3,6\}$ \cite[8.64]{Fujita-noncomplete_surfaces} (cf.\ \cite[III.4.4.2, III.4.4.3]{Miyan-OpenSurf}). 
	\end{enumerate}
\end{prop}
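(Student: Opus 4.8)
The statement collects classical structure results for smooth affine surfaces, so my plan is to assemble the quoted theorems into the stated trichotomy rather than to reprove them from scratch. First I would take affineness and rationality as inputs: \cite[2.5]{Fujita-noncomplete_surfaces} deduces affineness from $H_{2}(S;\Q)=0$, which forbids complete curves in $S$ and forces any completion $(X,D)$ to have a connected boundary supporting an ample divisor (Goodman's criterion), while rationality is the separate, substantial result of \cite{GuPrad-rationality_3(smooth_II)}. With $S$ smooth, affine and rational in hand, I would then treat the two ranges of $\kappa$ separately.

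For part \ref{item:k=-infty}, the engine is the Miyanishi--Sugie--Fujita theorem: a smooth affine surface with $\kappa(S)=-\infty$ contains a cylinderlike open set $U\cong C\times\C^{1}$ and hence carries a $\C^{1}$-fibration over a smooth curve. I would combine this with affineness, so that the base is affine, and with the homology vanishing $H_{1}(S;\Q)=0$, so that the base is a rational affine curve; this is exactly \cite[III.1.3.2]{Miyan-OpenSurf}. For part \ref{item:k=0,1}, I would pass to the minimal log smooth completion $(X,D)$ and analyse the logarithmic Iitaka fibration of $K_{X}+D$. When $\kappa(S)=1$ this is a morphism to a curve whose general fiber is a smooth affine curve of log Kodaira dimension $0$, hence $\C^{*}$, and the topological constraints of a \QHP promote this to a genuine $\C^{*}$-fibration of $S$. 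The delicate case is $\kappa(S)=0$, where I would invoke Fujita's classification of surfaces of logarithmic Kodaira dimension zero \cite[8.64]{Fujita-noncomplete_surfaces} and single out those that are \QHPs: most carry a $\C^{*}$-fibration, and the only $\C^{**}$-fibered exceptions compatible with the homology vanishing are the three Fujita surfaces $Y\{3,3,3\}$, $Y\{2,4,4\}$ and $Y\{2,3,6\}$, as recorded in \cite[III.1.7.1]{Miyan-OpenSurf} (cf.\ \cite[III.4.4.2, III.4.4.3]{Miyan-OpenSurf}).

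The main obstacle is precisely the subcase $\kappa(S)=0$ of part \ref{item:k=0,1}: isolating exactly the three exceptional $\C^{**}$-fibered surfaces from the full list of $\kappa=0$ surfaces relies on the fine classification of multiple-fiber configurations together with the Euler-characteristic and Picard-number bookkeeping that the \QHP condition imposes (for instance $\etop(S)=1$ and $b_{2}(S)=0$). Since every ingredient needed already appears in the cited references, I would present the argument as a careful citation-and-assembly proof, reducing each of \ref{item:k=-infty} and \ref{item:k=0,1} to the corresponding statement in \cite{Miyan-OpenSurf} after checking that its hypotheses are met by a smooth \QHP.
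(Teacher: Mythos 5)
Your proposal is correct and matches the paper's treatment: the paper gives no proof at all, presenting Proposition \ref{prop:kappa<2} purely as a summary of known results with the citations (\cite[2.5]{Fujita-noncomplete_surfaces}, \cite{GuPrad-rationality_3(smooth_II)}, \cite[III.1.3.2, III.1.7.1]{Miyan-OpenSurf}, \cite[8.64]{Fujita-noncomplete_surfaces}) embedded directly in the statement. Your citation-and-assembly argument, with brief sketches of what each reference provides, is exactly the intended justification.
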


Similar results for singular $\Q$HPs hold, too, see \cite{Palka-classification1_Qhp} and \cite{Palka-recent_progress_Qhp} for a survey.
\smallskip

The main tool to study surfaces of log general type ($\kappa=2$) is the logarithmic Bogomolov-Miyaoka-Yau inequality \cite{KobNakSak-ball_quotients,Langer_log_Euler_and_appl}, see \cite[Corollary 2.5]{Palka-exceptional_Qhp}. One of its consequences is the following result, which will be of key importance for us. The main idea of its proof appeared in \cite{MiTs-lines_on_qhp} (cf.\  \cite[II.3.11]{Miyan-OpenSurf}), where it was shown that a smooth $\Q$HP contains no curve isomorphic to $\C^{1}$.

\begin{lem}[\cite{GM-Affine_lines_on_Qhp}]\label{lem:no_lines}
	Let $S$ be a smooth \QHP of log general type. Then $S$ contains no topologically contractible curves.
\end{lem}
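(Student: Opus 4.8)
The plan is to argue by contradiction through the logarithmic Bogomolov--Miyaoka--Yau inequality, extending the Euler characteristic argument of Miyanishi--Tsunoda from the case $C\cong\C^{1}$ to arbitrary topologically contractible curves. Suppose that $S$ contains such a curve $C$. Being contractible, $C$ is homotopy equivalent to a point, so $\etop(C)=1$; and since $S$ is a \QHP we have $H_{i}(S;\Q)=0$ for $i>0$, whence $\etop(S)=1$. Using additivity of the topological Euler characteristic for complex algebraic varieties applied to the closed subset $C\subseteq S$, I would first record
\[
\etop(S\setminus C)=\etop(S)-\etop(C)=1-1=0.
\]

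The second step is to observe that $S'\de S\setminus C$ is again a smooth surface of log general type. Smoothness is clear, as $S'$ is open in $S$; and since the logarithmic Kodaira dimension does not decrease when passing to an open subset, $\kappa(S')\geq\kappa(S)=2$, so $\kappa(S')=2$. I would then fix a minimal log smooth completion $(\bar X,\bar D)$ of $S'$, obtained by taking the closure $\bar C$ of $C$ in the minimal log smooth completion $(X,D)$ of $S$, log-resolving $D+\bar C$ to an snc divisor, and contracting superfluous $(-1)$-curves. Applying the logarithmic BMY inequality \cite[Corollary 2.5]{Palka-exceptional_Qhp} to $(\bar X,\bar D)$ should yield
\[
0<\bigl(K_{\bar X}+\bar D-\Bk \bar D\bigr)^{2}\leq 3\,\etop(S'),
\]
the left inequality expressing precisely that $S'$ is of log general type. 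This forces $\etop(S')>0$, contradicting the computation above and completing the proof.

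The step I expect to be the main obstacle is the strict positivity on the left of the displayed inequality, that is, the passage from $\kappa(S')=2$ to $(K_{\bar X}+\bar D-\Bk\bar D)^{2}>0$. This rests on identifying $K_{\bar X}+\bar D-\Bk\bar D$ with the positive part of the Zariski decomposition of $K_{\bar X}+\bar D$, whose self-intersection is positive exactly in the log general type case; here one must set up the completion so that the admissible twigs and their barks entering \eqref{eq:bark_of_a_2-twig} are the correct ones for the refined form of the inequality, and verify that the snc-minimalization producing $(\bar X,\bar D)$ does not affect log general type. The remaining inputs — additivity of $\etop$ and the value $\etop(C)=1$ for a possibly singular but topologically contractible $C$ — are purely topological and routine.
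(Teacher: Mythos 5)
The paper does not actually prove this lemma: it is imported from \cite{GM-Affine_lines_on_Qhp}, with only the remark that it is a consequence of the logarithmic Bogomolov--Miyaoka--Yau inequality and that the main idea goes back to \cite{MiTs-lines_on_qhp}. Your strategy --- additivity of $\etop$ plus log BMY applied to $S'=S\setminus C$ --- is exactly the strategy of those references, and the topological half of your argument is correct and complete: $\etop(S)=1$ and $\etop(C)=1$ give $\etop(S')=0$, while $\kappa(S')=2$ because log Kodaira dimension cannot drop on open subsets.

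The gap is in the displayed inequality, and it is not where you located it. For your $(\bar X,\bar D)$, which is merely snc-minimal, the left-hand inequality fails in general: writing the Zariski decomposition $K_{\bar X}+\bar D=P+N$, for $\kappa\geq 0$ one has $\Bk\bar D\leq N$ (standard in the theory of peeling) and $P\cdot(N-\Bk\bar D)=0$, hence
\[
\bigl(K_{\bar X}+\bar D-\Bk\bar D\bigr)^{2}=P^{2}+\bigl(N-\Bk\bar D\bigr)^{2},
\]
where the second summand is $\leq 0$ because $\Supp N$ is negative definite. Log general type guarantees only $P^{2}>0$, so $(K_{\bar X}+\bar D-\Bk\bar D)^{2}$ may well be non-positive unless $N=\Bk\bar D$, i.e.\ unless the pair is \emph{almost minimal} in the sense of peeling. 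That is a condition about $(-1)$-curves \emph{not} contained in $\bar D$, so snc-minimalization --- which only contracts superfluous curves inside $\bar D$ --- cannot arrange it; by contrast, your stated worry that snc-minimalization might affect log general type is a non-issue, since $\kappa$ depends only on the open surface. To close the argument you must either (i) use a form of log BMY stated for arbitrary smooth pairs in terms of the positive part $P$, in which case positivity is automatic from bigness of $K_{\bar X}+\bar D$ and $0<P^{2}\leq 3\etop(S')=0$ is an immediate contradiction (this is the role of \cite[Corollary 2.5]{Palka-exceptional_Qhp}, and you should verify it is stated in that generality), or (ii) pass to an almost minimal model --- but this contracts curves meeting $S'$, hence changes $S'$ and $\etop(S')$, and you must then additionally check that peeling does not increase $\etop$ of the open part (it does not: each contracted curve meets the boundary, so each step deletes from $S'$ a curve isomorphic to $\C^{1}$ or $\C^{*}$, of non-negative Euler characteristic). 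Supplying this reduction is precisely the nontrivial content of the cited papers, and it is absent from your sketch.
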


\subsection{Del Pezzo surfaces of rank one with $\rA_{k}$-singularities}

	In Lemma \ref{lem:MMP}\ref{item:smoothness}, we will see that the underlying surface $X_{\min}$ of the minimal model of $(X,\tfrac{1}{2}D)$ is a canonical del Pezzo surface of Picard rank one. Such surfaces were classified in \cite{Furushima}, cf.\ \cite{AN-delPezzo,Ye}. We will use the following part of this classification.

\begin{lem}[{\cite[Theorem 2]{Furushima}\footnote{There are some misprints in Table II loc.\ cit. In case $\rA_{2}+\rA_{5}$ there should be $(P_{1}^{3},P_{2}^{4})$ and $3P_{1}\equiv 0, 3P_{2}\equiv 0$ in the third and fourth column, respectively: this can be seen from the required singularity types. The cubic $\tilde{D}_{1}$ in cases $2\rA_1+2\rA_3$ and $4\rA_{2}$ should be nodal: indeed, the equations in  the fourth column have a solution if $\tilde{D}_{1}\reg\cong \mathbb{G}_{m}$, not $\mathbb{G}_{a}$. Moreover, instead of $\rD_{4}+\rA_{3}$ and $\rE_{7}+\rA_{2}$ there should be $\rD_{4}+3\rA_{1}$ and $\rE_{6}+\rA_{2}$, respectively, cf.\ \cite{Ye}.}}]\label{lem:Furushima}
	Let $\bar{Y}\not\cong \P^2,\P(1,1,2)$ be a del Pezzo surface of rank one. Assume that each singular point of $\bar{Y}$ is of type $\rA_{k}$ for some $k\geq 1$.  
	Let $\alpha\colon Y \to \bar{Y}$ be the minimal resolution. Then there is a birational morphism $\theta\colon Y\to \P^{2}$ such that $(\Exc\alpha+\Exc\theta)\redd$ has one of the graphs in Figure \ref{fig:Furushima}, where:
	\begin{itemize}
		\item $\Exc \alpha$ is the union of solid lines (both fine and bold): they are $(-2)$-curves,
		\item $\Exc\theta$ is the union of fine lines (both solid and dashed): the dashed ones represent $(-1)$-curves.
	\end{itemize}
\end{lem}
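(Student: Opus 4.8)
The plan is to reconstruct Furushima's classification through the geometry of the minimal resolution $\alpha\colon X\am\to X_{\min}$. Since every singular point of $X_{\min}$ is du Val, the resolution is crepant, $K_{X\am}=\alpha^{*}K_{X_{\min}}$, so $-K_{X\am}$ is nef and big: thus $X\am$ is a weak del Pezzo surface, and $\Exc\alpha$ is a disjoint union of chains of $(-2)$-curves, one $\rA_{k}$-chain of length $k$ for each singular point of type $\rA_{k}$. First I would pin down the degree $d\de K_{X_{\min}}^{2}=K_{X\am}^{2}$. Because $\alpha$ contracts exactly the independent $(-2)$-curves, $\rho(X\am)=\rho(X_{\min})+\#\Exc\alpha=1+\#\Exc\alpha$; comparing with the weak del Pezzo formula $\rho(X\am)=10-d$ gives $\#\Exc\alpha=9-d$. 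Since $X_{\min}\neq\P^{2}$ forces $X_{\min}$ singular and $X_{\min}\neq\P(1,1,2)$ excludes $X\am=\F_{2}$, we get $1\le d\le 8$ and $X\am$ is a blow-up of $\P^{2}$; in particular a birational morphism to $\P^{2}$ exists, and the content of the lemma is to select the right one. This already bounds the total number of $(-2)$-curves.

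Next I would determine which collections of $\rA_{k}$-chains actually occur. Following Demazure, a general pencil inside $|-K_{X\am}|$ has base locus of length $d$; blowing up its base points produces a rational elliptic surface $Y\to\P^{1}$ on which each $(-2)$-curve of $\Exc\alpha$ is vertical, since it meets $-K_{X\am}$ trivially. Hence every $\rA_{k}$-chain lies inside a reducible fibre, and Kodaira's classification of singular fibres together with the Euler number constraint $\sum_{\text{fibres}}e(F)=12$ restricts the admissible multisets of $\rA_{k}$-singularities to a finite list. The additional requirement $\rho(X_{\min})=1$, i.e.\ that the $(-2)$-curves together with $-K_{X\am}$ span $\NS_{\Q}(X\am)$ of rank $10-d$, rules out those configurations that would force Picard rank $>1$ on $X_{\min}$. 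What survives is precisely the list of rows in Figure \ref{fig:Furushima}.

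With the configuration fixed, I would construct $\theta$ by contracting a chain of $(-1)$-curves. On the weak del Pezzo $X\am$ the $(-1)$-curves are well understood, and one can single out a sequence of $9-d$ of them (including the ones created along the way) whose contraction yields a birational morphism $\theta\colon X\am\to\P^{2}$; the images of the remaining curves are then the lines and conics of the planar configuration. The combined reduced exceptional locus $(\Exc\alpha+\Exc\theta)\redd$ is read off directly and matched against the stated graph. Uniqueness of $\theta$ follows because, once the graph is prescribed, the positions of the $(-1)$-curves relative to the rigid $\rA_{k}$-chains leave no freedom in the contraction pattern.

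The main obstacle is carrying out the last two steps uniformly across all cases: verifying, configuration by configuration, that exactly one contraction pattern to $\P^{2}$ exists and reproduces the tabulated graph. This is a finite but lengthy enumeration, and — as the footnote records — it is exactly where the misprints in Furushima's Table II must be corrected. In practice I would therefore rely on the published classification in \cite{Furushima}, supplemented by those corrections, rather than redo every case by hand.
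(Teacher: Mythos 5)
Your proposal is correct and takes essentially the same route as the paper: the paper does not reprove this lemma but quotes Furushima's classification verbatim (with the footnoted corrections, cross-checked against Ye), which is exactly what you fall back on, and the strategy you sketch---crepant resolution to a weak del Pezzo, Demazure's elliptic pencil in $|-K|$, the $(-2)$- and $(-1)$-curves in singular members, and contraction to $\P^2$---is precisely the classical argument the paper itself summarizes in the paragraph preceding the lemma. There is nothing to flag beyond minor imprecisions (e.g.\ the degree bound $d\leq 8$ can be sharpened to $d\leq 7$ once $\F_2$ is excluded), which are immaterial since both you and the paper ultimately defer to the published case-by-case classification.
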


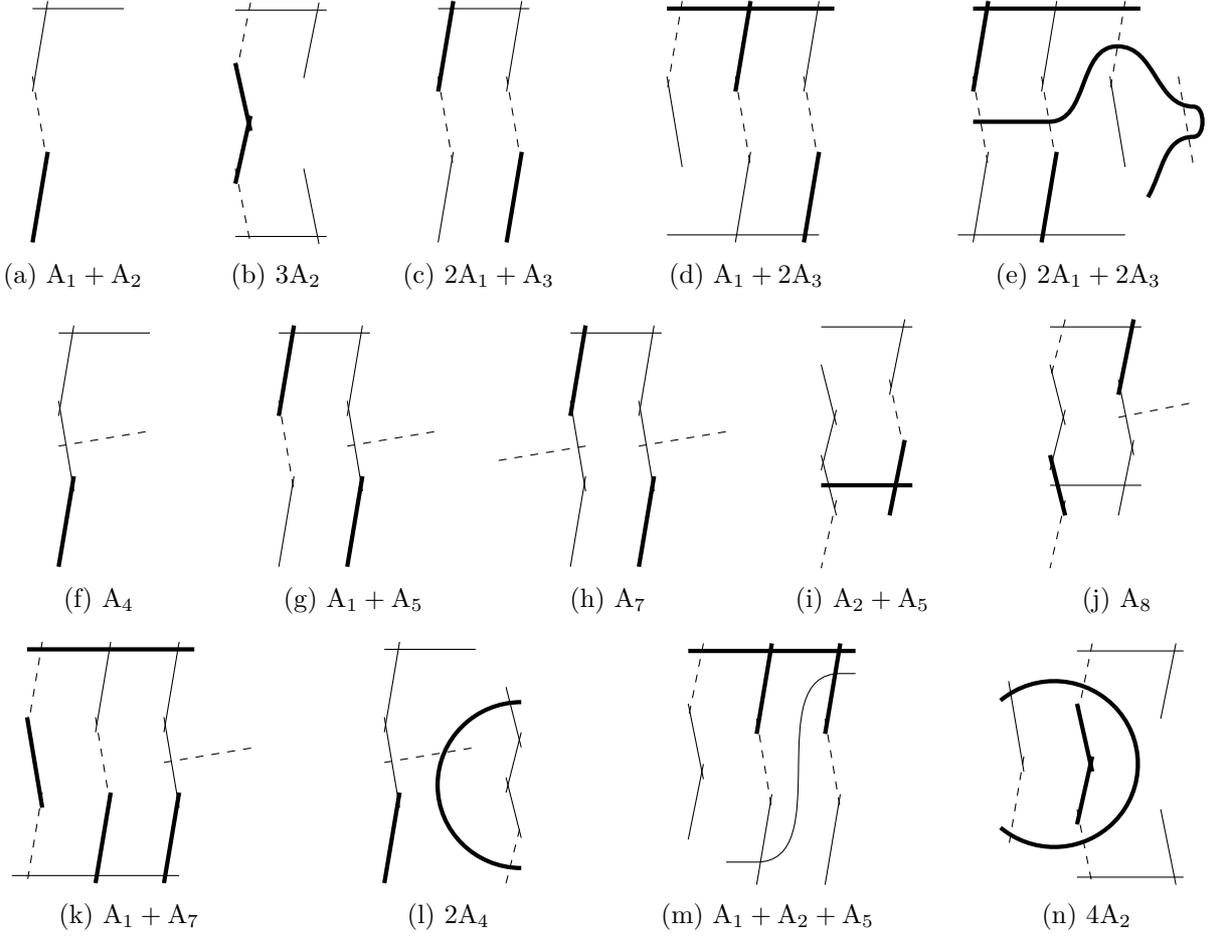
\begin{figure}[ht]
	\subcaptionbox{$\rA_1+\rA_2$ \label{fig:A1+A2}}
	[.15\linewidth]
	{
	\begin{tikzpicture}
		\draw (0,3) -- (1.2,3);
		\draw (0.2,3.1) -- (0,1.9);
		\draw[dashed] (0,2.1) -- (0.2,0.9);
		\draw[thick] (0.2,1.1) -- (0,-0.1);
	\end{tikzpicture}
	}
	\subcaptionbox{$3\rA_2$ \label{fig:3A2}}
	[.15\linewidth]
	{
	\begin{tikzpicture}
		\draw (0,3) -- (1.2,3);
		\draw[dashed] (0.2,3.1) -- (0,2.1);
		\draw[thick] (0,2.3) -- (0.2,1.4);
		\draw[thick] (0.2,1.6) -- (0,0.7);
		\draw[dashed] (0,0.9) -- (0.2,-0.1);
		\draw (1.1,3.1) -- (0.9,2.1);
		\draw (0.9,0.9) -- (1.1,-0.1);
		\draw (0,0) -- (1.2,0);
	\end{tikzpicture}
	}
	\subcaptionbox{$2\rA_1+\rA_3$ \label{fig:2A1+A3}}
	[.15\linewidth]{
	\begin{tikzpicture}
		\draw (0,3) -- (1.2,3);
		\draw[thick] (0.2,3.1) -- (0,1.9);
		\draw[dashed] (0,2.1) -- (0.2,0.9);
		\draw (0.2,1.1) -- (0,-0.1);
		\draw (1.1,3.1) -- (0.9,1.9);
		\draw[dashed] (0.9,2.1) -- (1.1,0.9);
		\draw[thick] (1.1,1.1) -- (0.9,-0.1);		
	\end{tikzpicture}	
	}
	\subcaptionbox{$\rA_1+2\rA_3$ \label{fig:A1+2A3}}
	[.25\linewidth]{
	\begin{tikzpicture}
		\draw[thick] (0,3) -- (2.2,3);
		\draw (0,0) -- (2,0);
		\draw[dashed] (0.2,3.1) -- (0,1.9);
		\draw (0,2.1) -- (0.2,0.9);
		\draw[thick] (1.1,3.1) -- (0.9,1.9);
		\draw[dashed] (0.9,2.1) -- (1.1,0.9);
		\draw (1.1,1.1) -- (0.9,-0.1);		
		\draw (2,3.1) -- (1.8,1.9);
		\draw[dashed] (1.8,2.1) -- (2,0.9);
		\draw[thick] (2,1.1) -- (1.8,-0.1);			
	\end{tikzpicture}	
	}		
	\subcaptionbox{$2\rA_1+2\rA_3$ \label{fig:2A1+2A3}}
		[.25\linewidth]{
		\begin{tikzpicture}
			\draw[thick] (0,3) -- (2.2,3);
			\draw (-0.2,0) -- (2,0);
			\draw[thick] (0.2,3.1) -- (0,1.9);
			\draw[dashed] (0,2.1) -- (0.2,0.9);
			\draw (0.2,1.1) -- (0,-0.1);
			\draw (1.1,3.1) -- (0.9,1.9);
			\draw[dashed] (0.9,2.1) -- (1.1,0.9);
			\draw[thick] (1.1,1.1) -- (0.9,-0.1);		
			\draw[dashed] (2,3.1) -- (1.8,1.9);
			\draw (1.8,2.1) -- (2,0.9);
			\draw[dashed] (2.7,2.1) -- (2.9,0.9);
			\draw[thick] (0,1.5) -- (1,1.5) to[out=0,in=180] (1.9,2.5) to[out=0,in=180] (2.9,1.7)
			to[out=0,in=0] 
			(2.9,1.3) to[out=180,in=60] (2.3,0.5);		
		\end{tikzpicture}	
		}\\
	\bigskip

	\subcaptionbox{$\rA_4$ \label{fig:A4}}
	[.19\linewidth]
	{
		\begin{tikzpicture}
			\draw (0,3) -- (1.2,3);
			\draw (0.2,3.1) -- (0,1.9);
			\draw (0,2.1) -- (0.2,0.9);
			\draw[thick] (0.2,1.1) -- (0,-0.1);
			\draw[dashed] (0,1.5) -- (1.2,1.7);
		\end{tikzpicture}
	}	
	\subcaptionbox{$\rA_1+\rA_5$ \label{fig:A1+A5}}
	[.19\linewidth]{
		\begin{tikzpicture}
			\draw (0,3) -- (1.2,3);
			\draw[thick] (0.2,3.1) -- (0,1.9);
			\draw[dashed] (0,2.1) -- (0.2,0.9);
			\draw (0.2,1.1) -- (0,-0.1);
			\draw (1.1,3.1) -- (0.9,1.9);
			\draw (0.9,2.1) -- (1.1,0.9);
			\draw[thick] (1.1,1.1) -- (0.9,-0.1);
			\draw[dashed] (0.9,1.5) -- (2.1,1.7);		
		\end{tikzpicture}	
	}
	\subcaptionbox{$\rA_7$ \label{fig:A7}}
	[.19\linewidth]{
		\begin{tikzpicture}
			\draw (0,3) -- (1.2,3);
			\draw[thick] (0.2,3.1) -- (0,1.9);
			\draw (0,2.1) -- (0.2,0.9);
			\draw (0.2,1.1) -- (0,-0.1);
			\draw (1.1,3.1) -- (0.9,1.9);
			\draw (0.9,2.1) -- (1.1,0.9);
			\draw[thick] (1.1,1.1) -- (0.9,-0.1);
			\draw[dashed] (0.9,1.5) -- (2.1,1.7);	
			\draw[dashed] (0.2,1.5) -- (-1,1.3);	
		\end{tikzpicture}	
	}	
	\subcaptionbox{$\rA_2+\rA_5$ \label{fig:A2+A5}}
	[.19\linewidth]{
		\begin{tikzpicture}
			\draw (0,3) -- (1.2,3);
			\draw (0,2.5) -- (0.2,1.7);
			\draw (0.2,1.9) -- (0,1.1);
			\draw (0,1.3) -- (0.2,0.5);
			\draw[dashed] (0.2,0.7) -- (0,-0.2);
			\draw (1.1,3.1) -- (0.9,2.1);
			\draw[dashed] (0.9,2.3) -- (1.1,1.3);
			\draw[thick] (1.1,1.5) -- (0.9,0.5);
			\draw[thick] (0,0.9) -- (1.2,0.9);
		\end{tikzpicture}	
	}	
	\subcaptionbox{$\rA_8$ \label{fig:A8}}
	[.19\linewidth]{
		\begin{tikzpicture}
			\draw (0,3) -- (1.2,3);
			\draw[dashed] (0.2,3.1) -- (0,2.3);
			\draw (0,2.5) -- (0.2,1.7);
			\draw (0.2,1.9) -- (0,1.1);
			\draw[thick] (0,1.3) -- (0.2,0.5);
			\draw[dashed] (0.2,0.7) -- (0,-0.2);
			\draw[thick] (1.1,3.1) -- (0.9,2.1);
			\draw (0.9,2.3) -- (1.1,1.3);
			\draw (1.1,1.5) -- (0.9,0.5);
			\draw[dashed] (0.9,1.8) -- (1.9,2);
			\draw (0,0.9) -- (1.2,0.9);
		\end{tikzpicture}	
	} \\
	\bigskip

	\subcaptionbox{$\rA_1+\rA_7$ \label{fig:A1+A7}}
	[.24\linewidth]{
		\begin{tikzpicture}
			\draw[thick] (0,3) -- (2.2,3);
			\draw (-0.2,0) -- (2,0);
			\draw[dashed] (0.2,3.1) -- (0,1.9);
			\draw[thick] (0,2.1) -- (0.2,0.9);
			\draw[dashed] (0.2,1.1) -- (0,-0.1);
			\draw (1.1,3.1) -- (0.9,1.9);
			\draw[dashed] (0.9,2.1) -- (1.1,0.9);
			\draw[thick] (1.1,1.1) -- (0.9,-0.1);
			\draw (2,3.1) -- (1.8,1.9);
			\draw (1.8,2.1) -- (2,0.9);
			\draw[thick] (2,1.1) -- (1.8,-0.1);
			\draw[dashed] (1.8,1.5) -- (3,1.7);
		\end{tikzpicture}	
	}	
	\subcaptionbox{$2\rA_4$ \label{fig:2A4}}
	[.24\linewidth]	{
		\begin{tikzpicture}
			\draw (0,3) -- (1.2,3);
			\draw (0.2,3.1) -- (0,1.9);
			\draw (0,2.1) -- (0.2,0.9);
			\draw[thick] (0.2,1.1) -- (0,-0.1);
			\draw[dashed] (0,1.5) -- (1.2,1.7);
			\draw[dashed] (1.6,-0.1)--(1.8,0.7);
			\draw (1.8,0.5) -- (1.6,1.3);
			\draw (1.6,1.1) -- (1.8,1.9);
			\draw (1.8,1.7) -- (1.6,2.5); 
			\draw[thick] (1.8,1.2) [partial ellipse= 90 : 270 : 1.1 and 1.1];
		\end{tikzpicture}
	}
	\subcaptionbox{$\rA_1+\rA_2+\rA_5$ \label{fig:A1+A2+A5}}
	[.24\linewidth]{
		\begin{tikzpicture}
			\draw[thick] (0,3) -- (2.2,3);
			\draw[dashed] (0.2,3.1) -- (0,2.1);
			\draw (0,2.3) -- (0.2,1.3);
			\draw (0.2,1.5) -- (0,0.5);
			\draw[thick] (1.1,3.1) -- (0.9,1.9);
			\draw[dashed] (0.9,2.1) -- (1.1,0.9);
			\draw (1.1,1.1) -- (0.9,-0.1);
			\draw[thick] (2,3.1) -- (1.8,1.9);
			\draw[dashed] (1.8,2.1) -- (2,0.9);
			\draw (2,1.1) -- (1.8,-0.1);
			\draw (0.5,0.2)-- (0.9,0.2) to[out=0,in=180] (2,2.7) -- (2.2,2.7);
		\end{tikzpicture}	
	}			
	\subcaptionbox{$4\rA_2$ \label{fig:4A2}}
	[.24\linewidth]{
		\begin{tikzpicture}
			\draw (0,3) -- (1.4,3);
			\draw[dashed] (0.2,3.1) -- (0,2.1);
			\draw[thick] (0,2.3) -- (0.2,1.4);
			\draw[thick] (0.2,1.6) -- (0,0.7);
			\draw[dashed] (0,0.9) -- (0.2,-0.1);
			\draw (1.3,3.1) -- (1.1,2.1);
			\draw (1.1,0.9) -- (1.3,-0.1);
			\draw (0,0) -- (1.4,0);
			\draw (-0.9,2.6) -- (-0.7,1.4);
			\draw[dashed] (-0.7,1.6) -- (-0.9,0.4);
			\draw[thick] (-0.3,1.5) [partial ellipse= 130 : -130 : 1.1 and 1.1];						
		\end{tikzpicture}
	}
	\caption{Del Pezzo surfaces of rank one with $\rA_{k}$ singularities, see Lemma \ref{lem:Furushima}}
	\label{fig:Furushima}
\end{figure}

\subsection{Double points on curves}\label{sec:delta-inv}

In Lemma \ref{lem:MMP}\ref{item:mult_2}, we will see that the singularities of $D_{\min}$ lie in $X_{\min}\reg$ and have multiplicity two. It will be convenient not to resolve them. Instead, we will use their $\delta$-invariants, cf.\   \cite[p.\ 151]{Wall_singular_curves}.
\smallskip

Let $p$ be a smooth point of a surface and let $\chi$ be a curve germ of multiplicity $2$ at $p$. Let $r_p$ be the number of branches of $\chi$ at $p$: we call $(p\in \chi)$ a \emph{cusp} if $r_p=1$ and \emph{node} if $r_p=2$. In some local coordinates $(x,y)$ at $p$, $\chi$ is given by $\{x^{2}=y^{2\delta_{p}-r_{p}+2}\}$ for some $\delta_{p}\geq 1$, where $\delta_{p}$ is the \emph{$\delta$-invariant} of $p\in \chi$. If $\delta_{p}=1$ then $p\in \chi$ is called \emph{ordinary} (it can be a cusp or a node). Note that $\delta_{p}-1$ is the minimal number of blowups at $p$ after which the the proper transform of $\chi$ is nc. For the minimal log resolution, one needs two additional blowups if $p\in \chi$ is a cusp and one if $p\in \chi$ is a non-ordinary node, cf.\ \cite[Figure 20]{Hartshorne_AG}. 

If $C$ is a curve whose all singular points have multiplicity two, we put $\delta_{C}=\sum_{p\in \Sing C}\delta_{p}$. For planar curves, we have the following observations, Lemma \ref{lem:P2-curves}\ref{item:P2-adjunction} being a consequence of the adjunction formula.

\begin{lem}[Planar curves with double points]\label{lem:P2-curves}
	Let $\cc\subseteq \P^{2}$ be a planar curve of degree $d$. Let $p\in\cc$ be a point of multiplicity two and let $\ll_{p}$ be a line tangent to $\cc$ at $p$. Then
	\begin{enumerate}
		\item\label{item:P2-tangent} $2<(\ll_{p}\cdot \cc)_{p}\leq d$ and either  $(\ll_{p}\cdot\cc)_{p}=2k$ for some $k\leq \delta_{p}$ or $(\ll_{p}\cdot\cc)_{p}\geq 2\delta_{p}+1$.
		\item\label{item:P2-adjunction} If $\cc$ is rational and all singularities of $\cc$ have multiplicity two then $\delta_{\cc}=\tfrac{1}{2}(d-1)(d-2)$.
	\end{enumerate}	
\end{lem}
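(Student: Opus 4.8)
For part \ref{item:P2-adjunction}, the plan is to invoke the genus formula. A rational curve is irreducible of geometric genus $0$, while a plane curve of degree $d$ has arithmetic genus $p_a(\cc)=\tfrac12(d-1)(d-2)$ by the adjunction formula. Since $p_a(\cc)-g(\cc)=\sum_{p\in\Sing\cc}\delta_p=\delta_\cc$ and $g(\cc)=0$, this immediately yields $\delta_\cc=\tfrac12(d-1)(d-2)$. The only point requiring care is that the integer $\delta_p$ read off from the normal form $\{x^2=y^{2\delta_p-r_p+2}\}$ agrees with the classical $\delta$-invariant $\dim_\C(\tilde\cO_p/\cO_p)$ that enters the genus formula. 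I would check this directly from the two normal forms: in the cusp case ($r_p=1$) using the parametrization $t\mapsto(t^{2\delta_p+1},t^2)$, whose conductor has length $2\delta_p$, and in the node case ($r_p=2$) using that the two smooth branches $x=\pm y^{\delta_p}$ meet with intersection number $\delta_p$.

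For the bounds in part \ref{item:P2-tangent}, the argument is purely local plus B\'ezout. Since $\operatorname{mult}_p\cc=2$ and $\ll_p$ is tangent at $p$, its local intersection with $\cc$ strictly exceeds the multiplicity, so $(\ll_p\cdot\cc)_p>2$; here one uses that $\ll_p$ is not a component of $\cc$, which is also what makes $\ll_p\cdot\cc$ a well-defined effective $0$-cycle of degree $\deg\cc$. As every local contribution is non-negative, $(\ll_p\cdot\cc)_p\le\ll_p\cdot\cc=\deg\cc$, giving the upper bound.

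For the precise value, I would pass to the local normal form and split according to $r_p$. If $p$ is a cusp, then $\cc$ is unibranch of multiplicity $2$ at $p$, so it is analytically an $\rA_{2\delta_p}$ singularity with value semigroup $\langle2,2\delta_p+1\rangle$; the unique tangent line meets the single branch in order $2\delta_p+1$ (the second generator of the semigroup, visible as $\operatorname{ord}_t$ of the tangent coordinate in the parametrization above), so $(\ll_p\cdot\cc)_p=2\delta_p+1$, the odd alternative. If $p$ is a node, then $\cc$ has two smooth branches $B_1,B_2$ and $(\ll_p\cdot\cc)_p=(\ll_p\cdot B_1)_p+(\ll_p\cdot B_2)_p$; the symmetry $x\mapsto-x$ of the normal form $\{x^2=y^{2\delta_p}\}$ exchanges the branches while fixing the common tangent direction, which forces $(\ll_p\cdot B_1)_p=(\ll_p\cdot B_2)_p=:k$, so the contact is $2k$, and $k=(\ll_p\cdot B_1)_p\le(B_1\cdot B_2)_p=\delta_p$, the even alternative.

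The delicate point — and the one I expect to be the main obstacle — is the node case, because the involution realizing the normal form is only analytic and need not fix the genuine line $\ll_p$; a priori the two branch contacts could be unequal, and then the total contact need not be even nor bounded by $2\delta_p+1$. To make the argument rigorous I would therefore track the pair $(\ll_p,\cc)$ through the minimal embedded resolution of $p\in\cc$ described in the paragraph preceding the lemma: each blow-up at the point where the strict transforms still meet lowers $(\ll_p\cdot\cc)_p$ by $\operatorname{mult}\ll_p\cdot\operatorname{mult}\cc=2$ while $\cc$ retains multiplicity two there, and the step at which $\ll_p$ finally separates from $\cc$ is what must be controlled. The hard part is thus to show that, for the curves $\cc$ arising here, $\ll_p$ does not over-osculate a single branch — equivalently that its contacts with $B_1$ and $B_2$ are balanced — so that the separation occurs after at most $\delta_p$ steps and the admissible values reduce exactly to $2k$ with $k\le\delta_p$ and to $2\delta_p+1$; this is the place where the specific geometry of the planar configurations, rather than the bare singularity type, enters.
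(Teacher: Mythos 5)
Your proof of part \ref{item:P2-adjunction} is correct and coincides with the paper's (the paper records that part precisely as a consequence of the adjunction formula): geometric genus zero, $p_{a}(\cc)=\tfrac12(d-1)(d-2)$, $p_{a}-g=\delta_{\cc}$, plus the check that the normal-form parameter $\delta_{p}$ is the classical $\delta$-invariant. The bounds $2<(\ll_{p}\cdot\cc)_{p}\le d$ in part \ref{item:P2-tangent} are also fine.

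The dichotomy in part \ref{item:P2-tangent} is where your argument breaks, and the breakage is real. Your cusp case is false: the tangent line of a unibranch double point need not have contact $2\delta_{p}+1$. On the irreducible quintic $(y-x^2)^2=x^5$ the origin is an $\rA_{4}$-cusp, so $\delta_{p}=2$, the tangent line is $y=0$, and $(\ll_{p}\cdot\cc)_{p}=\operatorname{ord}_{x}(x^4-x^5)=4=2k$ with $k=\delta_{p}$ --- the \emph{even} alternative. The error is exactly the one you flagged for nodes but overlooked for cusps: the analytic coordinates realizing the normal form $x^2=y^{2\delta_{p}+1}$ do not carry the honest projective line $\ll_{p}$ to a coordinate axis, so the parametrization $(t^{2\delta_{p}+1},t^2)$ computes the contact of the wrong curve. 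The correct local argument --- and the one that actually produces \emph{both} alternatives of the lemma --- is to work in coordinates where $\ll_{p}=\{y=0\}$ and parametrize the branch as $x=t^2$, $y=y(t)$: the first odd exponent occurring in $y(t)$ equals $2\delta_{p}+1$ (this is what $\delta_{p}$ means for a multiplicity-two branch), hence $(\ll_{p}\cdot\cc)_{p}=\operatorname{ord}_{t}y(t)=\min(2k,\,2\delta_{p}+1)$, where $2k$ is the first even exponent present.

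In the node case the gap you identify is unbridgeable, because unequal branch contacts genuinely occur, and they wreck both halves of your dichotomy. An ordinary node already realizes the odd value: at the node of $y^2=x^3+x^2$ each tangent line has contact $2+1=3=2\delta_{p}+1$, so ``node $\Rightarrow$ even'' is wrong. Worse, the tacnodal germ $(y-x^2)(y-x^4)=0$ has $\delta_{p}=2$ and tangent-line contact $2+4=6$, which is neither $2k$ with $k\le\delta_{p}$ nor $2\delta_{p}+1$; this germ occurs on the irreducible septic $y^2=y(x^2+x^4)-x^6-x^7$, so no local argument can establish part \ref{item:P2-tangent} for nodes as literally stated --- the assertion itself fails in this generality. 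What is true, and what every application in the paper actually uses, is: for a non-ordinary node both branches are tangent to $\ll_{p}$, and either the two contacts are equal, giving $2k$ with $k\le\delta_{p}$ (since the branches meet each other with multiplicity $\delta_{p}$), or they differ, in which case the smaller contact equals $\delta_{p}$ and $(\ll_{p}\cdot\cc)_{p}\ge 2\delta_{p}+1$. With the second alternative weakened to this inequality the statement is correct, and in the paper it is only ever applied to quartics with $\delta_{p}\in\{2,3\}$, where the B\'ezout bound $(\ll_{p}\cdot\cc)_{p}\le 4$ eliminates the branch $\ge 2\delta_{p}+1$ and forces the value $4$. So the repair is: prove the cusp case by the parametrization above, prove the node case with the inequality, and let the degree bound --- not a local symmetry --- do the final step.
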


\begin{lem}\label{lem:conics}
	Let $\cc_{1},\cc_{2}\subseteq \P^{2}$ be two conics and let $\ll$ be their common tangent line. 
	\begin{enumerate}
		\item \label{item:conics_22} If $(\cc_{1}\cdot \cc_{2})_{p}=(\cc_{1}\cdot \cc_{2})_{q}=2$ for some $p,q \in \P^{2}$, $p\neq q$, then $p\in \ll$ or $q\in \ll$.
		\item \label{item:conics_4} If $(\cc_{1}\cdot \cc_{2})_{p}=4$ for some $p\in \P^{2}$ then $p\in\ll$.
	\end{enumerate}
\end{lem}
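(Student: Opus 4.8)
The plan is to prove both parts at once by a short computation with \emph{dual conics}, after reducing each configuration to a normal form in which the pencil spanned by $\cc_1$ and $\cc_2$ is visibly generated by $\cc_1$ and a double line. Throughout, a genuine (irreducible) conic in $\P^2$ is smooth, so its matrix is invertible. Recall that for a conic written as $\cc_i=\{x^{t}M_ix=0\}$ with $M_i$ symmetric and invertible, a line with covector $u=(u_x,u_y,u_z)$ is tangent to $\cc_i$ if and only if $u^{t}\operatorname{adj}(M_i)u=0$ (the dual conic). Hence a common tangent $u$ of $\cc_1$ and $\cc_2$ must satisfy $u^{t}\operatorname{adj}(M_1)u=0=u^{t}\operatorname{adj}(M_2)u$, and in particular $u^{t}\big(\operatorname{adj}M_2-\operatorname{adj}M_1\big)u=0$.

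First I would fix the pencil. In case \ref{item:conics_22}, since $(\cc_1\cdot\cc_2)_p=2$ at the smooth point $p$, the two conics share their tangent line at $p$ (transverse intersection would give multiplicity $1$), and likewise at $q$. Choose homogeneous coordinates with $p=[0:0:1]$, $q=[1:0:0]$, so that $\overline{pq}=\{y=0\}$. The four linear conditions \enquote{pass through $p$ (resp.\ $q$) with the prescribed tangent} cut out a pencil of conics in $\P^5$ containing both $\cc_1$ and the double line $\{y^2=0\}$; hence $\cc_2=\cc_1+\lambda y^2$ for some $\lambda\neq 0$ (if the coefficient of $\cc_1$ vanished, $\cc_2$ would be the double line, not a conic). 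In case \ref{item:conics_4}, the same analysis with the osculating condition $(\cc_1\cdot\cc_2)_p=4$ and coordinates $p=[0:0:1]$, common tangent $\{y=0\}$, again yields $\cc_2=\cc_1+\lambda y^2$, where now $\{y=0\}$ is the common tangent at $p$ rather than the chord $\overline{pq}$. In matrix terms both cases read $M_2=M_1+\lambda E_{22}$, where $E_{22}$ is the elementary symmetric matrix in the $y$-slot.

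Now I would exploit that $M_2-M_1=\lambda E_{22}$ is rank one, so a direct cofactor computation shows $\operatorname{adj}M_2-\operatorname{adj}M_1$ is supported on very few entries, and the location of these entries is dictated by the zero pattern of $M_1$. In case \ref{item:conics_22} the conic $\cc_1$ passes through both $p$ and $q$, so $M_1$ has vanishing $(x,x)$- and $(z,z)$-entries; consequently only the $(x,z)$-cofactor depends on the altered $(y,y)$-entry, giving $\operatorname{adj}M_2-\operatorname{adj}M_1=-\tfrac12\lambda\gamma\,(E_{xz}+E_{zx})$, where $\gamma$ is the $xz$-coefficient of $\cc_1$. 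In case \ref{item:conics_4} only the $(z,z)$-entry of $M_1$ vanishes, so only the $(z,z)$-cofactor changes, giving $\operatorname{adj}M_2-\operatorname{adj}M_1=\lambda A\,E_{zz}$, where $A$ is the $x^2$-coefficient of $\cc_1$. Evaluating on $u$, the common-tangent condition becomes $\lambda\gamma\,u_xu_z=0$ in case \ref{item:conics_22} and $\lambda A\,u_z^2=0$ in case \ref{item:conics_4}. Here $\lambda\neq0$ as $\cc_1\neq\cc_2$, while $\gamma=0$ (resp.\ $A=0$) would make $\{y=0\}$ a component of $\cc_1$, contradicting irreducibility; so the scalars drop out. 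Thus every common tangent satisfies $u_xu_z=0$ in case \ref{item:conics_22} — forcing $u_x=0$, a line through $q=[1:0:0]$, or $u_z=0$, a line through $p=[0:0:1]$ — and $u_z=0$ in case \ref{item:conics_4}, a line through $p$. Since the given $\ll$ is such a common tangent, this is exactly the assertion.

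The one genuine subtlety is \emph{completeness}: one must capture \emph{every} common tangent, not merely the evident tangent lines at $p$ and $q$. This is precisely what the dual-conic step guarantees, by turning \enquote{tangent to both conics} into two quadratic equations in $u$ whose difference factors after the rank-one reduction. The remaining work is bookkeeping: confirming that the hypotheses $(\cc_1\cdot\cc_2)_p=2$ at $p,q$ (resp.\ $=4$ at $p$) really force the normal form $\cc_2=\cc_1+\lambda y^2$, and checking the nondegeneracy of $\lambda,\gamma,A$, which rests only on irreducibility of the conics and on $p\neq q$.
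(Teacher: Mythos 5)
Your proof is correct, but its second half takes a genuinely different route from the paper's. Both arguments hinge on the same key geometric fact: the double line $2\ll'$ (with $\ll'=\ll_{pq}$ in case \ref{item:conics_22}, resp.\ the tangent to $\cc_1$ at $p$ in case \ref{item:conics_4}) lies in the pencil spanned by $\cc_1$ and $\cc_2$ --- your normal form $M_2=M_1+\lambda E_{22}$ is exactly this statement in coordinates. From there the paper argues by contradiction: assuming $\ll\cap\cc_1\neq\ll\cap\cc_2$, the pencil induces a degree-$2$ map $\ll\to\P^1$ ramified at $\ll\cap\cc_1$, $\ll\cap\cc_2$ and $\ll\cap\ll'$, three distinct points, contradicting the Hurwitz formula. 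You instead dualize: the rank-one perturbation makes $\operatorname{adj}M_2-\operatorname{adj}M_1$ factor as a nonzero scalar times the product of the linear forms cutting out the lines through $q$ and through $p$ (resp.\ the square of the form cutting out lines through $p$), so every common tangent $u$ satisfies $u_xu_z=0$ (resp.\ $u_z^2=0$). Your route is more elementary (linear algebra in place of ramification theory), is direct rather than by contradiction, and proves slightly more: \emph{every} common tangent of $\cc_1,\cc_2$ passes through $p$ or $q$ (resp.\ through $p$), not merely the given $\ll$. The paper's argument is shorter and coordinate-free, and fits the Hurwitz-style arguments used elsewhere in the text.

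Two small points to tighten. First, in case \ref{item:conics_4} your normal form forces \emph{two} zero entries of $M_1$, not one: passing through $p=[0:0:1]$ kills the $(z,z)$-entry, and tangency to $\{y=0\}$ at $p$ kills the $(x,z)$-entry. As written (\enquote{only the $(z,z)$-entry of $M_1$ vanishes}) the cofactor bookkeeping would leave an extra term proportional to $E_{xz}+E_{zx}$; it disappears precisely because the $(x,z)$-entry is zero, after which your formula $\operatorname{adj}M_2-\operatorname{adj}M_1=\lambda A\,E_{zz}$ is correct. Second, the assertion that the four linear conditions cut out \emph{exactly} a pencil (needed to conclude $\cc_2\in\langle\cc_1,y^2\rangle$) deserves one line: either verify independence in coordinates, or note that the linear system of conics containing the length-$4$ scheme $\cc_1\cap\cc_2$ has projective dimension exactly one, since if it had dimension at least two, the subsystem of members through a fixed extra point of $\cc_1$ would be at least a pencil, yet each such member meets $\cc_1$ in length at least $5$ and hence equals $\cc_1$, which is absurd.
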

\begin{proof}
	Suppose the contrary, so $\ll\cap \cc_{1}\neq \ll\cap \cc_{2}$. Let $\ll'$ be the line joining $p$ and $q$ in case \ref{item:conics_22} and the line tangent to $\cc_{1}$ at $p$ in case \ref{item:conics_4}. Then $2\ll'$ is a member of the pencil of conics generated by $\cc_{1}$ and $\cc_{2}$. This pencil induces a map $\ll\to \P^{1}$ of degree $2$, ramified at $\ll\cap\cc_{1}$, $\ll\cap \cc_{2}$ and $\ll\cap \ll'$. This is impossible by the Hurwitz formula; a contradiction.
\end{proof}

\subsection{Hirzebruch surfaces}\label{sec:Fm}

In order to settle the notation, we list some basic properties of Hirzebruch surfaces, cf.\ \cite[V.2]{Hartshorne_AG}.

	For $m\geq 0$ \emph{the $m$-th Hirzebruch surface} is $\F_{m}\de \P(\cO_{\P^{1}}(m)\oplus \cO_{\P^{1}})$. Assume $m>0$. Let $F$ be a fiber of the unique $\P^{1}$-fibration $\pi_{\F_{m}}\colon \F_{m}\to \P^{1}$. Then $\F_{m}$ contains a unique curve of negative self-intersection number, which we call the \emph{negative section} and denote by $\Sec_{m}$. It satisfies $\Sec_m\cong \P^{1}$, $\Sec_m\cdot F=1$ and $\Sec_m^{2}=-m$. In case $m=0$, we have $\F_{m}=\P^{1}\times \P^{1}$, and we let $F$, $\Sec_0$ be a fiber of each projection.
	
	For any $m\geq 0$, the group $\NS(\F_{m})$ is a free abelian group generated by the classes of $\Sec_m$ and a fiber $F$. A divisor $G$ on $\F_{m}$ is said to be \emph{of type} $(a,b)$ for some $a,b\in \Z$ if $G\cdot \Sec_m=a$ and $G\cdot F=b$. In this case, $G\equiv (a+bm)F+b\Sec_{m}$, and $G$ is effective if and only if $G=\Sec_m$ or $a,b\geq 0$. The canonical divisor $K_{\F_{m}}$ is of type $(m-2,-2)$. It follows from the adjunction formula that any divisor of type $(0,1)$ is an $m$-curve. We call it a \emph{positive section} of $\F_{m}$.

We have the following consequence of the adjunction formula, similar to Lemma \ref{lem:P2-curves}\ref{item:P2-adjunction}.

\begin{lem}[Curves with double points on $\F_{m}$]\label{lem:F2-adjunction} 
	If $C\subseteq \F_{m}$ is a rational curve of type $(a,b)$ and all singularities of $C$ have multiplicity $2$ then  $\delta_{C}=(b-1)(a+\tfrac{m}{2}b-1)$.
\end{lem}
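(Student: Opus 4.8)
The plan is to mimic the proof of Lemma \ref{lem:P2-curves}\ref{item:P2-adjunction}, adapting the adjunction formula from $\P^2$ to $\F_m$. Recall that for a curve $C$ of type $(a,b)$ we have $C\equiv (a+bm)F+b\Sec_m$, and the canonical divisor $K_{\F_m}$ is of type $(m-2,-2)$. The genus formula for the (possibly singular, but reduced and irreducible) curve $C$ reads
\[
	p_a(C)=1+\tfrac{1}{2}(C^2+K_{\F_m}\cdot C).
\]
Since $C$ is rational, its geometric genus is $0$, and the difference between arithmetic and geometric genus is exactly the total $\delta$-invariant $\delta_C=\sum_{p\in\Sing C}\delta_p$, which is the correct measure of the singularities regardless of whether they are nodes or cusps. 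Hence $\delta_C=p_a(C)=1+\tfrac{1}{2}(C^2+K_{\F_m}\cdot C)$.

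First I would compute the two intersection numbers explicitly in terms of $a$, $b$, and $m$ using $\Sec_m^2=-m$, $\Sec_m\cdot F=1$, $F^2=0$. For the self-intersection, writing $C\equiv (a+bm)F+b\Sec_m$ gives
\[
	C^2=2(a+bm)b-mb^2=2ab+mb^2,
\]
and for the canonical intersection, $K_{\F_m}$ of type $(m-2,-2)$ means $K_{\F_m}\equiv (m-2+(-2)m)F+(-2)\Sec_m=(-m-2)F-2\Sec_m$, so
\[
	K_{\F_m}\cdot C=(-m-2)b+(-2)(a+bm)\cdot 1+(-2)b(-m)=-2a-mb-2b.
\]
Substituting both into $\delta_C=1+\tfrac{1}{2}(C^2+K_{\F_m}\cdot C)$ yields
\[
	\delta_C=1+\tfrac{1}{2}\big(2ab+mb^2-2a-mb-2b\big)=(b-1)\big(a+\tfrac{m}{2}b-1\big),
\]
which is exactly the claimed formula, as a routine factorization check confirms.

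The only genuine conceptual point—and the step I would flag as requiring care rather than difficulty—is justifying that $p_a(C)-p_g(C)=\delta_C$ with $\delta_C$ as defined in Section \ref{sec:delta-inv}. This is standard for reduced curves on a smooth surface, but here the $\delta$-invariant has been defined locally via the normal form $\{x^2=y^{2\delta_p-r_p+2}\}$ for double points; one must confirm this local invariant coincides with the length of $\cO_{\tilde C}/\cO_C$ at $p$ governing the genus drop. Since all singularities of $C$ are assumed to have multiplicity two, this matching is immediate from the definition, and the fact that $C$ is rational ($p_g=0$) closes the argument. The main obstacle is thus purely bookkeeping in the intersection-theoretic computation; there is no deep geometric input beyond the adjunction formula already invoked for Lemma \ref{lem:P2-curves}\ref{item:P2-adjunction}.
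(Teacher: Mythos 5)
Your proof is correct and is exactly the argument the paper intends: the lemma is stated there without proof as ``a consequence of the adjunction formula,'' and your computation of $C^2=2ab+mb^2$, $K_{\F_m}\cdot C=-2a-mb-2b$, together with $\delta_C=p_a(C)-p_g(C)=p_a(C)$ for rational $C$, fills in precisely those omitted details. The identification of the paper's normal-form $\delta_p$ with the genus-drop length is also fine, since $\{x^2=y^{2\delta_p-r_p+2}\}$ is an $A_k$-singularity whose standard delta-invariant equals $\delta_p$ in both the node and cusp cases.
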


We now introduce some additional notation for $m=2$. It will be useful in the case when $X_{\min}$ is the quadric cone $\P(1,1,2)$, i.e.\ the image of $\F_2$ after contracting $\Sec_2$.

 Fix $p\in \F_{2}\setminus \Sec_2$, and let $F_p$ be the fiber containing $p$. Define a birational map $\sigma_{p}\colon \F_{2}\map \P^{2}$ as a blowup at $p$ followed by the contraction of the proper transform of $F_{p}+\Sec_2$. Write $\Bs\sigma_{p}^{-1}=\{\hat{p}\}$, so $\hat{p}=\sigma(\Sec_2)$. The image of the exceptional curve is a line, we denote it by $\hat\ll_{p}$. For every line $\ll$ not containing $\hat{p}$, its preimage $(\sigma_{p}^{-1})_{*}\ll$ is a positive section passing through $p$. Similarly, for every positive section $H$ not containing $p$, its image $(\sigma_{p})_{*}H$ is a conic tangent to $\hat{\ll}_{p}$ at $\hat{p}$.

	Using the map $\sigma_{p}$, it is easy to see that for any two points $p,q\in \F_{2}\setminus\Sec_2$ there is a pencil of positive sections passing through $p$, $q$. Moreover, if $(p,\chi)$ is a cusp (i.e.\ an irreducible analytic curve germ) of multiplicity sequence $(\mu_{1},\mu_{2},\dots)$ such that $(\chi\cdot F_{p}) _{p}=\mu_{1}$, then there is a unique positive section $L_{\chi}$ such that $(\chi\cdot L_{\chi})_{p}\geq \mu_{1}+\mu_{2}+\mu_{3}$; namely the proper transform of the line tangent to $(\sigma_{p})_{*}\chi$. Such $L_{\chi}$ will be called a \emph{tangent section} to $\chi$.

	\section{Expansions and almost minimal models}\label{sec:expansions}
	
		In this section, we list some useful properties of expansions. In particular, in Lemma \ref{lem:expansions} we show how to check when the pair $(X,D)$ constructed by an expansion is a log smooth completion of a $\Q$HP. In Lemma \ref{lem:MMP}, we relate expansions with an almost minimalization of $(X,\tfrac{1}{2}D)$ in the sense of \cite{Palka_MMP}.
	
		Recall from Definition \ref{def:expansion-intro} that an \emph{expansion} with centers $(x_i;U_i,W_i)$ and weights $v_{i}\in \Q_{>0}$, $i=1,\dots,n$ is a birational morphism $\varphi\colon (X,D)\to (X',D')$ with $\Bs\varphi^{-1}=\{x_1,\dots,x_n\}$, such that $\varphi^{-1}(x_i)$ is a chain determined by $v_i$. We will usually keep the choice of $x_i\in U_i\cap W_i$ implicit; i.e.\ we will call $\phi$ an expansion \emph{with centers $(U_i,W_i)$ and weights $v_i$}, or simply \emph{at} $(U_i,W_i;v_i)$. Note, however, that the intersection $U_i\cap W_i$ can contain more than one center of expansion. In this case, the above list will include $(U_i,W_i)$ more than once.
		
	We begin with a simple remark, which follows directly from Definition \ref{def:expansion-intro}. 
		
			\begin{rem}
				\label{rem:expansion} Let $\phi$ be an expansion with one center $(x;U,W)$ and weight $v=\frac{u}{w}$ for some coprime $u,w\in \N$.
				\begin{enumerate}
					\item  $\Exc\phi=[W',1,U']$. Its first tip meets $\phi^{-1}_{*}U$, last tip meets $\phi^{-1}_{*}W$, and  $d(U')=u$, $d(W')=w$. 
					\item\label{item:v_integer} If $v=1$ then  $\phi$ is a single blowup. If $v\in \N$ then $\Exc\phi=[1,(2)_{v-1}]$ and its first tip meets $\phi^{-1}_{*}U$.
					\item\label{item:1/v} The pair $((x;U,W),v)$ determines the morphism $\phi$ uniquely. On the other hand, pairs $((x;U,W),v)$ and $((x;W,U),v^{-1})$ define the same expansion.
					\item Let $\chi$ be a smooth germ meeting the exceptional $(-1)$-curve of $\phi$ at a general point, transversally. Then $x\in \phi_{*}\chi$ is a cusp described in coordinates $U$, $W$ by a single HN-pair $\binom{u}{w}$ cf.\ \cite[\S 2D]{PaPe_Cstst-fibrations_singularities}.
				\end{enumerate}
			\end{rem}
	
	\begin{lem}\label{lem:sequence_of_expansions}
		Let $\phi_{i}\colon (X_{i},D_{i})\to (X_{i+1},D_{i+1})$ be an expansion, $i\in \{1,\dots,n\}$. If $D_{1}$ is connected, then $\phi_{n}\circ\dots\circ \phi_{1}\colon (X_1,D_1)\to(X_{n+1},D_{n+1})$ is an expansion, too.
	\end{lem}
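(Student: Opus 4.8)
The plan is to reduce to two factors by induction on $n$ (the cases $n\leq 1$ being trivial) and then to control how the exceptional fibres of two consecutive expansions interact. The reduction rests on the observation that connectivity propagates downward along expansions: if $\phi\colon(X,D)\to(X',D')$ is an expansion and $D$ is connected, then $D'=\phi_{*}D$ is connected as well, since $\Supp D'=\phi(\Supp D)$ — the non-exceptional components of $D$ dominate the components of $D'$, while the exceptional components of $D$ are contracted to the centres $x_i$, which already lie on $D'$. Hence $D_1$ connected forces every $D_i$ connected. Setting $\Phi=\phi_n\circ\dots\circ\phi_2\colon(X_2,D_2)\to(X_{n+1},D_{n+1})$, the divisor $D_2$ is connected, so by the inductive hypothesis $\Phi$ is an expansion, and it remains to treat the composite of the two expansions $\phi_1\colon(X_1,D_1)\to(X_2,D_2)$ and $\Phi$, with $D_1$ connected. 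Write $E=D_2$ and $G=D_{n+1}$.

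The heart of the argument, and the step I expect to be the main obstacle, is the claim that no centre of $\phi_1$ lies over a centre of $\Phi$; this is exactly where connectivity of $D_1$ enters. Suppose a centre $y$ of $\phi_1$ lies in $\Exc\Phi$, hence in the fibre $T$ over some centre $z$ of $\Phi$. By Definition \ref{def:expansion} and Remark \ref{rem:expansion}, $T$ is a chain with a unique $(-1)$-curve $A$, with $A\not\subseteq E$ and $T-A\subseteq E$ a union of (at most) two twigs of $E$, one hanging off each of the two components $U,W$ of $G$ meeting at $z$; the far ends of these twigs are tips of $E$, meeting only $A\notin E$. Since $y$ is an ordinary node of $E$ lying on $T$ and $A\not\subseteq E$, the point $y$ lies strictly inside one of these twigs, say the one $U=U^{(0)}-U^{(1)}-\dots-U^{(p)}$ attached to $U$, at a node $y=U^{(i)}\cap U^{(i+1)}$. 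This twig is joined to the rest of $E$ only through $U$, so opening the node $y$ under $\phi_1$ severs the unique path connecting the distal subchain $U^{(i+1)},\dots,U^{(p)}$ to $U$; as the subsequent blow-ups of an expansion only create hanging twigs and never reconnect separated pieces, the proper transforms of $U^{(i+1)},\dots,U^{(p)}$ constitute a connected component of $D_1$ disjoint from the one containing $U$. This contradicts the connectivity of $D_1$, proving the claim.

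With the claim in hand, the remaining verification is bookkeeping carried out fibre by fibre. Since each centre $y_k$ of $\phi_1$ avoids $\Exc\Phi$, the morphism $\Phi$ is a local isomorphism near $y_k$, so $\Phi(y_k)$ is an ordinary node of $G$, and the points $\Phi(y_k)$ together with the centres $z_j$ of $\Phi$ are pairwise distinct; I take these as the centres of the composite, retaining the corresponding weights. Over $\Phi(y_k)$ the composite agrees with $\phi_1$ (as $\Phi$ is an isomorphism there), and over $z_j$ it agrees with $\Phi$ (as $\phi_1$ is an isomorphism near the fibre $T_j$, no centre $y_k$ lying on it); in either case the fibre is a chain with a single $(-1)$-curve carrying the prescribed multiplicities, which settles the first three conditions of Definition \ref{def:expansion}. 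For the last condition, I substitute $E=(\Phi^{*}G)_{\redd}-\sum_j A_j$ into $D_1=(\phi_1^{*}E)_{\redd}-\sum_k A^{\phi_1}_{k}$ and use that $\phi_1$ is an isomorphism near $\Exc\Phi$, so that pullbacks and proper transforms commute as required to give $D_1=((\Phi\circ\phi_1)^{*}G)_{\redd}-\sum(\text{all }(-1)\text{-curves})$. Thus $\Phi\circ\phi_1=\phi_n\circ\dots\circ\phi_1$ is an expansion, completing the induction.
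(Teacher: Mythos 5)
Your proof is correct and takes essentially the same route as the paper: reduce by induction to a composition of two expansions, then use connectivity of $D_1$ to rule out a centre of the earlier expansion lying on $\Exc$ of the later one — the paper phrases this step as such a centre $p$ satisfying $\{p\}=T\cap(D_2-T)$ for some twig $T$ of $D_2$, so that the proper transforms of $T$ and $D_2-T$ would lie in different connected components of $D_1$. One minor imprecision in your key claim: an ordinary node of $D_2$ on the exceptional fibre need not lie \emph{strictly inside} one of its twigs, since it can also be the attachment node where a twig meets the proper transform of $U$ or $W$ (this case is subsumed in the paper's formulation above); your severing argument applies verbatim there, with the severed distal subchain being the entire twig, so nothing essential is lost.
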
	
	\begin{proof}
		Since $D_1$ is connected, so are its images $D_{i}$. Hence by induction, we can assume $n=2$ and that $\phi_{1}$ has only one center, say $p$. Put $E_{i}=\Exc\phi_{i}$, $i\in \{1,2\}$. If $p\not\in E_2$ then $\Exc \phi_2\circ\phi_1=E_1+(\phi_{1}^{-1})_{*}E_2$, so $\phi_2\circ\phi_1$ is an expansion. Suppose $p\in E_2$. Since $p$ is a center of $\phi_1$, $p\in \Sing D_2$. Since $E$ is contained in the sum of twigs of $D_2$, we have $\{p\}=T\cap (D_2-T)$ for some twig $T$ of $D_2$. Then $(\phi_{1}^{-1})_{*}T$ and $(\phi_{1}^{-1})_{*}(D_2-T)$ lie in different connected components of $D_1$, a contradiction.
	\end{proof}
	
	Recall that expansions with fixed centers and varying weights produce a \emph{tower} of pairs $(X,D)$, hence of surfaces $S\de X\setminus D$. 
	The following Lemma \ref{lem:expansions}, which is well known, gives a practical way to decide if a surface $S$ in the tower is a \QHP, and shows that in this case \enquote{most} of them are. For an explicit application of this criterion see Example \ref{ex:7}.
	
	For the readers' convenience, we give a full proof of Lemma \ref{lem:expansions}, mostly following \cite[Theorems B, 3.13]{DiPe-hp_and_alg_curves}. Similar results appear e.g.\ in \cite{MiySu-Cstst_fibrations_on_Qhp}, \cite{DiPe-hp_announcement_and_survey}, \cite[1.10]{GKMR-singular_Zhp} or  \cite[3.1(vii)]{Palka-classification1_Qhp}, see also   \cite{Mumford-surface_singularities}.
	\smallskip
	
	To formulate this result, we introduce the following notation. For a divisor $W$ on a surface $Y$, we denote by $\Z[W]$  the free abelian group generated by the components of $W\redd$. It admits a natural map $r_{W}\colon \Z[W]\to \NS(Y)$, and, for any subdivisor $V$ of $W$, a projection $\pr_{V}\colon \Z[W]\to \Z[V]$.

	\begin{lem}
			\label{lem:expansions}
	Let $(X',D')$ be a log smooth pair. Let $\phi\colon (X,D)\to (X',D')$ be an expansion with $n$ centers, and let  $A=(\phi^{*}D')\redd-D\subseteq \Exc\phi$. Define a $\Z$-linear map
	\begin{equation}\label{eq:m}
		m=\pr_{A}\circ \phi^{*} \colon \ker r_{D'}\to \Z[A].
	\end{equation}
	The surface $S\de X\setminus D$ is a \QHP if and only if the following conditions hold:
	\begin{enumerate}
		\item\label{item:D'_spans} $X'$ is rational and the components of $D'$ generate $\NS_{\Q}(X')$,
		\item\label{item:D_rat-tree} $D$ is a rational tree, or equivalently: all components of $D'$ are rational, and the dual graph of $D'$ becomes a tree after removing the edges corresponding to the centers of $\phi$.
		\item\label{item:expansions_det} $m\otimes_{Z}\Q$ is an isomorphism, or equivalently: 
$
	n=\#D'-\rho(X') 
$ and 
$
\det m\neq 0
$, 
where we view $m$ as an element of $\operatorname{End}(\Z^n)$.		
	\end{enumerate}
	Furthermore, if $S$ is a \QHP then $H_{i}(S;\Z)=0$ for $i\geq 2$, and
	\begin{equation}\label{eq:expansion_H1}
		\#H_{1}(S;\, \Z)=|d(D)|^{1/2}=|\!\det m| \cdot \#\coker r_{D'}.
	\end{equation} 
	\end{lem}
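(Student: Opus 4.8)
The criterion for $S = X \setminus D$ to be a \QHP is a statement about the vanishing of (rational) homology, and since $X$ is a smooth projective rational surface, everything should reduce to linear algebra on $\NS(X)$ together with the combinatorics of $D$. The plan is to analyze the long exact sequence of the pair $(X, D)$ (or equivalently the Gysin/residue sequences relating the cohomology of $X$, $D$, and $S$), and to translate the homological vanishing into the three stated conditions via the intersection pairing. The key observation is that $X$ is obtained from $X'$ by the expansion $\phi$, so $\NS(X)$ decomposes as $\phi^* \NS(X')$ together with the classes of the exceptional curves, and the components of $D$ together with $A = (\phi^* D')\redd - D$ account for (almost) all of these. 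I would set up a commutative diagram relating $\Z[D'] \to \NS(X')$ (the map $r_{D'}$) with $\Z[D] \to \NS(X)$, tracking how $\phi^*$ introduces the exceptional classes.

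\textbf{Key steps.} First I would record that because $S$ is affine (which for a \QHP follows from the general theory, but here must be part of what we prove or assume) and because $X$ is a smooth rational projective surface, the rational homology of $S$ vanishes in degrees $>0$ precisely when the restriction map $H^2(X;\Q) \to H^2(D;\Q)$ is an isomorphism and $b_1(X) = b_1(D) = 0$. Since $X$ is rational, $b_1(X)=0$ automatically, and $H^2(X;\Q) \cong \NS_\Q(X)$; the group $H^2(D;\Q)$ is spanned by the components of $D$ when $D$ is a rational tree (each component is a $\P^1$, and the tree condition kills $H^1(D)$). So the content of \ref{item:D_rat-tree} is exactly that $D$ have no cycles and only rational components, guaranteeing $b_1(D)=0$ and that $H^2(D;\Q)$ is freely spanned by $\#D$ classes. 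Second, I would rephrase the isomorphism $H^2(X;\Q) \to H^2(D;\Q)$ in terms of the cycle-class map $r_D \colon \Z[D] \to \NS(X)$: surjectivity of $r_D \otimes \Q$ is condition \ref{item:D'_spans} pushed up through $\phi$ (the components of $D'$ spanning $\NS_\Q(X')$, together with the exceptional curves in $A$ and the contracted twigs, spanning $\NS_\Q(X)$), while injectivity amounts to a dimension/determinant count. Third, I would pull the whole computation back along $\phi$: using the direct-sum decomposition $\NS_\Q(X) = \phi^* \NS_\Q(X') \oplus \langle \text{exceptional classes}\rangle$ and the fact that $D = (\phi^* D')\redd - A$, the condition that $\{[D_i]\}$ spans and is independent decomposes into the $X'$-part (condition \ref{item:D'_spans}) and the exceptional part, and the latter is governed precisely by the map $m = \pr_A \circ \phi^*$. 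The rank count $n = \#D' - \rho(X')$ in \ref{item:expansions_det} records that the number of centers equals the corank of $r_{D'}$, i.e.\ $\dim \ker r_{D'} = n$, so $m$ is an endomorphism of $\Z^n$ and nondegeneracy of $m \otimes \Q$ is exactly invertibility.

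\textbf{The order formula and integral homology.} Finally, for the quantitative statement \eqref{eq:expansion_H1}, I would work integrally rather than rationally. The group $H_1(S;\Z)$ fits into an exact sequence expressing it as the cokernel of $r_D \colon \Z[D] \to \NS(X)$ (up to torsion bookkeeping coming from $\coker r_{D'}$ on $X'$), and $H_i(S;\Z)=0$ for $i \ge 2$ follows from the affineness of $S$ (which bounds its homological dimension) together with the vanishing of the rational homology. The two equalities in \eqref{eq:expansion_H1} are: (i) $\#H_1(S;\Z) = |d(D)|^{1/2}$, which is the standard relation between the order of $H_1$ of the complement and the discriminant of the boundary for a \QHP whose $D$ is negative-semidefinite modulo the image of $\NS(X)$; and (ii) the factorization $|d(D)|^{1/2} = |\det m| \cdot \#\coker r_{D'}$, which I would prove by the Snake Lemma applied to the commutative diagram comparing $r_D$ on $X$ with $r_{D'}$ on $X'$, the map $m$ being exactly the ``new'' block introduced by the expansion.

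\textbf{The main obstacle.} The hard part will be the integral bookkeeping in \eqref{eq:expansion_H1}: the rational statements (a)--(c) are a clean linear-algebra translation of homological vanishing, but pinning down $\#H_1(S;\Z)$ as a product $|\det m| \cdot \#\coker r_{D'}$ requires carefully relating the cokernel of $r_D$ on $X$ to that of $r_{D'}$ on $X'$ through the expansion, tracking torsion across the sequence of blowups without losing or double-counting factors. Controlling that each blowup in $\phi$ alters the relevant determinant by exactly the predicted local factor (encoded in the weights $v_i = u_i/w_i$ via Remark \ref{rem:expansion}) is where the genuine work lies; the rest is formal.
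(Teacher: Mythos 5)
Your proposal is correct and follows essentially the same route as the paper's proof: the long exact sequence of the pair $(X,D)$ combined with Lefschetz duality, the decomposition $\NS(X)=\NS(X')\oplus\Z[E]$ and $\Z[D]=\Z[D']\oplus\Z[E-A]$ induced by the expansion, the snake lemma producing the exact sequence $0\to H^{1}(S)\to \ker r_{D'}\to \Z[A]\to H^{2}(S)\to \coker r_{D'}\to 0$, and the discriminant of the intersection form restricted to $\Z[D]$ giving $\#H_{1}(S;\Z)=|d(D)|^{1/2}$. The only small inaccuracy is your appeal to affineness of $S$, which is not needed anywhere: both the vanishing $H_{i}(S;\Z)=0$ for $i\geq 2$ and the integral bookkeeping you flag as the main obstacle come directly out of the pair sequence and the snake-lemma diagram, exactly as you set them up.
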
	
\begin{proof}
	Put $S'=X'\setminus D'$, $E=\Exc\phi$. By the universal coefficient theorem, $S$ is a $\Q$HP if and only if $\tilde{H}^{*}(S;\Q)=0$, in which case $H_{1}(S)\cong H^{2}(S)$; where we skip the coefficient ring if it is $\Z$.
	
	Smooth $\Q$HPs are rational by \cite{GuPrad-rationality_3(smooth_II)}, so we can assume that $X'$ and $X$ are rational. Hence $H_{1}(X)=H_{3}(X)=0$, and by the exponential sequence, we get an isomorphism $\NS(X)=H_{2}(X)$, mapping a curve to its fundamental class. By Mayer-Vietoris, $H_{2}(D)=\Z[D]$. 
	By Lefschetz duality $H_{i}(X,D)=H^{4-i}(S)$. The homology exact sequence of the pair $(X,D)$ gives $H^{3}(S)=\tilde{H}_{0}(D)$,  $H^{i}(S)=0$ for $i\geq 4$, and 
 an exact sequence
	\begin{equation}\label{eq:pair}
		\begin{tikzcd}
		0\ar[r] & H^{1}(S) \ar[r] & \Z[D]\ar[r, "r_{D}"] & \NS(X)\ar[r] & H^{2}(S) \ar[r] & H_{1}(D)\ar[r] & 0.
		\end{tikzcd}
	\end{equation}
	 
	Assume that $S$ is a $\Q$HP. Then the sequence \eqref{eq:pair} shows that $\Z[D]$ is a subgroup of $\NS(X)$, of finite index $\#H^{2}(S)$. The intersection product, which is a perfect pairing on $X$, has discriminant $|d(D)|$ when restricted to $\Z[D]$, so $\#H^{2}(S)=|d(D)|^{1/2}$, which proves the first equality in the formula \eqref{eq:expansion_H1}. Moreover, the sequence \eqref{eq:pair} gives $H_{1}(D;\Q)=\tilde{H}_{0}(D;\Q)=0$, i.e.\ $D$ is a rational tree. So if $S$ is a $\Q$HP then condition \ref{item:D_rat-tree} holds.
	
	Assume now that \ref{item:D_rat-tree} holds, i.e.\ $D$ is a rational tree. Then $H_{1}(D)=0$. By induction on the Picard rank $\rho(\phi)\de \rho(X)-\rho(X')$, it is easy to see that the map $(\phi_{*},\pr)$ gives isomorphisms $\NS(X)=\NS(X')\oplus \Z[E]$ and $\Z[D]=\Z[D']\oplus \Z[E-A]$. Thus we get a commutative diagram with exact rows 
	
	\begin{equation*}
		\begin{tikzcd}
		0 \ar[r] & \Z[E-A]\ar[r] \ar[d] &  \Z[D] \ar[r, "\phi_{*}"] \ar[d, "r_{D}"] & \Z[D'] \ar[r] \ar[d, "r_{D'}"] & 0 \\
		0 \ar[r] & \Z[E]\ar[r] &  \NS(X) \ar[r, "\phi_{*}"] & \NS(X') \ar[r] & 0 
		\end{tikzcd}
	\end{equation*}
  	The exact sequence \eqref{eq:pair} shows that $\ker r_{D}=H^{1}(S)$. Since $H_{1}(D)=0$, we get $\coker r_{D}=H^{2}(S)$. Snake lemma yields an exact sequence 
	\begin{equation*}
		\begin{tikzcd}
		0 \ar[r] & H^{1}(S) \ar[r] & \ker r_{D'} \ar[r, "m"] & \Z[A] \ar[r] & H^{2}(S) \ar[r] & \coker r_{D'} \ar[r] & 0.
		\end{tikzcd}
	\end{equation*}
	A diagram chase shows that the connecting map is indeed $m$. We conclude that, if \ref{item:D_rat-tree} holds, $S$ is a $\Q$HP if and only if $\coker r_{D'}\otimes_{\Z}\Q=0$ and $m\otimes_{\Z}\Q$ is an isomorphism, i.e.\ \ref{item:D'_spans} and \ref{item:expansions_det} hold. Under these conditions, we get  $\#H_{1}(S)=\#H^{2}(S)=\#\coker m \cdot \#\coker r_{D'} =|\! \det m | \cdot \#\coker r_{D'}$, i.e.\ the second equality in \eqref{eq:expansion_H1}. 
\end{proof}

\begin{rem}[{cf.\ \cite[III.4.2.1]{Miyan-OpenSurf}}]\label{rem:D_rat-tree}
	Let $(X,D)$ be a log smooth pair. Lemma \ref{lem:expansions} applied to $\phi=\id$ shows that $X\setminus D$ is a $\Q$HP if and only if $X$ is rational, $D$ is a rational tree, and components of $D$ freely  generate $\NS_{\Q}(X)$. 
	
	Indeed, in this case condition \ref{item:D'_spans} asserts that $X$ is rational and the components of $D$ generate $\NS_{\Q}(X)$, and condition \ref{item:D_rat-tree} asserts that $D$ is a rational tree. To interpret condition \ref{item:expansions_det}, note that since $A=0$, the group $\Z[A]$ is trivial, so the map $m\otimes_{\Z} \Q\colon \ker r_{D}\otimes_{\Z}\Q\to \Z[A]\otimes_{\Z}\Q$ is an isomorphism if and only if $\ker r_{D} \otimes \Q$ is trivial, too. Thus condition \ref{item:expansions_det} asserts that the components of $D$ are linearly independent in $\NS_{\Q}(X)$, as claimed.
	
	Moreover, if $S$ is a $\Q$HP with a log smooth completion $(X,D)$, then  $H_{i}(S;\Z)=0$ for $i\geq 2$, and $\# H_{1}(S;\Z)=|d(D)|^{1/2}$ by formula \eqref{eq:expansion_H1}. Thus if $|d(D)|=1$, i.e.\ if $S$ is a $\Z$HP, then by Whitehead theorem $S$ is contractible if and only if $\pi_{1}(S)=\{1\}$. Explicit computations of $\pi_{1}(S)$ can be complicated, see e.g.\ \cite{Aguilar} for $\Q$HPs obtained from arrangements of lines.
\end{rem}

\begin{rem}\label{rem:m_comuptation}
	Let us show how to compute the map $m$ from the formula \eqref{eq:m}, assuming that the condition  \ref{lem:expansions}\ref{item:D'_spans} holds. Choose a basis $(\mathcal{R}_{j})$ of $\ker r_{D'}\subseteq \Z[D']$, and for a component $C$ of $D'$, let $c_{j}(C)$ be the coefficient of $C$ in $\mathcal{R}_{j}$. Say that $A_{i}$ is the exceptional $(-1)$-curve of an expansion at  $(U_{i},W_{i};u_i/w_i)$, $\gcd(u_i,w_i)=1$. Then by Definition \ref{def:expansion-intro}, the matrix of $m$ with respect to bases $(\mathcal{R}_{j})$ and $(A_{i})$ is $[m_{ij}]$, where 
	\begin{equation*}
		m_{ij}=u_{i}\cdot c_{j}(U_{i})+w_{i}\cdot c_{j}(W_{i}).
	\end{equation*}
	 In particular, $m(C)=0$ for every component $C$ of $D'$ which does not pass through a center.
\end{rem}

\begin{rem}In Section \ref{sec:classification}, the pair $(X',D')$ will be a log resolution of $(X_{\min},D_{\min})$, where $X_{\min}$ is a del Pezzo surface, hence it is rational; $\rho(X_{\min})=1$ and $D_{\min}\neq 0$; see Lemma \ref{lem:R}\ref{item:MFS_point}. Thus in this case, the map $r_{D_{\min}}\otimes_{\Z} \Q$ is surjective, which implies  \ref{lem:expansions}\ref{item:D'_spans}.
	
In fact, we will see that if $n=\#D_{\min}-1\geq 1$, the components of $D_{\min}$ generate $\NS(X_{\min})$, hence $r_{D'}$ is surjective over $\Z$, too. Formula \eqref{eq:expansion_H1} in this case gives  $\#H_{1}(X;S)=|\! \det m|$. In the $\C^{**}$-fibered case, treated in Section \ref{sec:Cstst}, the map $r_{D'}$ will turn out to be surjective, too, except for the tower \ref{def:C**_3}, where $\#\coker r_{D'}=3$.
\end{rem}

Formula \eqref{eq:expansion_H1} shows that $\#H_{1}(\, \cdot \, ;\Z)$ varies a lot within a tower. More precisely, we have the following.

\begin{lem}[{cf.\ \cite[p.\ 859]{DiPe-hp_and_alg_curves}}]\label{lem:ZHP}
Fix a log smooth pair $(X',D')$ and $n\geq 1$ centers on $D'$. Let $\cT$ be the corresponding tower. Assume that for some $(X,D)\in\mathcal{T}$, the surface $S\de X\setminus D$ is a \ZHP. Then for every $k\in \Z_{\geq 1}$, $\mathcal{T}$ contains infinitely many pairs $(X,D)$ such that $S$ is a $\Q$HP with $\#H_{1}(S;\Z)=k$.
\end{lem}
\begin{proof}
	Fixing all but one centers and weights, we can assume $n=1$. Remark \ref{rem:m_comuptation} shows that there are $a,b\in \Z$ such that a surface $S$ obtained by expansion with weight $u/w$, $\gcd(u,w)=1$, satisfies $\#H_{1}(S)=au+bw$. Since the tower $\cT$ contains a $\Z$-homology plane,  there are positive integers $u_0,w_0$ such that $au_0+bw_0=1$. It follows that, say, $a>0$ and $b\leq 0$. Fix positive, coprime integers $k,l\in \N$ and put $u=ku_0-lb$, $w=kw_0+la$. Then $u,w> 0$; $au+bw=k$ and $\gcd(u,w)=1$. To see the last statement, note that $\gcd(u,w)|k$ since $au+bw=k$, so $\gcd(u,w)|l$ by definition of $u,w$, and therefore  $\gcd(u,w)=1$ since $l$ was chosen coprime to $k$, as claimed. We conclude that an expansion with weight $u/w$ gives a $\Q$HP $S$ with $\#H_{1}(S)=k$. Varying $l$ we get infinitely many such  weights. By Remark \ref{rem:expansion}\ref{item:1/v}, they give non-isomorphic pairs $(X,D)$.
\end{proof}

\begin{rem} If the surfaces $S=X\setminus D$ in Lemma \ref{lem:ZHP} are of log general type, then by Lemma \ref{lem:unique_completion} each of them uniquely determines, up to an isomorphism, its minimal log smooth completion $(X,D)$. Hence for each $k\in \Z_{\geq 1}$, we get infinitely many pairwise non-isomorphic $\Q$HPs $S$ with $\#H_{1}(S;\Z)=k$.
\end{rem}

In order to view a pair $(X,D)$ as an expansion $(X,D)\to (X',D')$, we need find a $(-1)$-curve $A\subseteq X$ which can serve as the exceptional curve of the first blowdown. Such $A$ are distinguished as follows.

\begin{dfn}[Bubble]\label{def:bubble}
	Let $D$ be a connected reduced divisor on a smooth projective surface $X$. We say  that a $(-1)$-curve $A\subseteq X$ is a \emph{bubble}  on $(X,D)$ if
	\begin{equation*}
		A\not\subseteq D,\quad A\cdot D=2\quad \mbox{and }A \mbox{ meets }D\mbox{ in two points, lying on different components of }D.
	\end{equation*}
	In particular, if $A$ is a bubble then $A$ is a superfluous $(-1)$-curve in $A+D$. 
\end{dfn}

\begin{figure}[htbp]
	\begin{tikzpicture}
	\draw (0,0) -- (1.2,0.2);
	\draw (1,0.2) -- (2.2,0);
	\draw (2,0) -- (3.2,0.2);
	\draw[dashed] (3,0.2) -- (4.6,0);
	\draw (4.4,0) -- (5.6,0.2);
	\draw (5.4,0.2) -- (6.6,0);
	\node at (0.6,0.3) {\small{$U$}};
	\node at (1.4,-0.1) {\small{$-3$}};
	\node at (2.6,-0.1) {\small{$-2$}};
	\node at (3.8,0.3) {\small{$A$}};			
	\node at (3.7,-0.1) {\small{$-1$}};	
	\node at (5,-0.1) {\small{$-3$}};
	\node at (6.1,0.3) {\small{$V$}};
	\draw (-0.2,0) [partial ellipse= 30 : 350 : .6 and .3];	
	\draw (6.8,0) [partial ellipse= 150 : -170 : .6 and .3];
	\draw[->] (8,0)	-- (8.6,0);
	\draw[name path = U] (10,0) -- (11.2,0.2);
	\draw[name path = V] (11,0.2) -- (12.2,0);
	\path [name intersections={of=U and V,by=E}];
	\filldraw (E) circle (1.5pt);
	\node at (10.6,0.3) {\small{$U$}};
	\node at (11.7,0.3) {\small{$V$}};
	\draw (9.8,0) [partial ellipse= 30 : 350 : .6 and .3];	
	\draw (12.4,0) [partial ellipse= 150 : -170 : .6 and .3];													
\end{tikzpicture}
	\caption{A bubble $A$ and an expansion with weight $\frac{3}{5}$ and center $(U,V)$.}
	\label{fig:exp}
\end{figure}
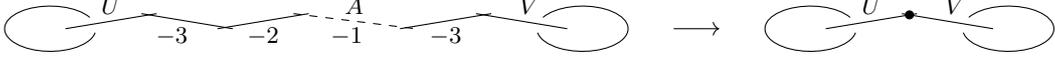	

Assume that some log smooth completion $(\tilde{X},\tilde{D})$ of $S$ is obtained by an expansion $\tilde{\psi}\colon (\tilde{X},\tilde{D})\to (X',D')$. The following observation shows that, in case which is of most interest for us, $\tilde{\psi}$ factors through the \emph{minimal} log smooth completion of $S$. Therefore, even if we allowed arbitrary log smooth completions in Theorem \ref{CLASS}, we would not get a shorter list. 

\begin{lem}\label{lem:other-completion}
	Let $S$ be a smooth surface of log general type which admits no $\C^{**}$-fibration. Let $\tau\colon (\tilde{X},\tilde{D})\to(X,D)$ be a birational morphism between log smooth completions of $S$; and let $\tilde{A}$ be a bubble on $(\tilde{X},\tilde{D})$. Then $\tau(\tilde{A})$ is a bubble on $(X,D)$.
\end{lem}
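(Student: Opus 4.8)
The plan is to induct on the number of blowups contained in $\tau$, that is, on $\rho(\tilde X)-\rho(X)$, so as to reduce to the case where $\tau$ is a single blowdown; the base case $\tau=\id$ is trivial. Since $\tau$ restricts to the identity on $S$, every $\tau$-exceptional curve lies in the boundary, so $\Exc\tau\subseteq\tilde D$; as $\tilde A\not\subseteq\tilde D$ it is not contracted, whence $A\de\tau(\tilde A)=\tau_{*}\tilde A$ is a curve with $\tau|_{\tilde A}\colon\tilde A\to A$ birational. I would then factor off the first blowdown, writing $\tau=\tau_{1}\circ\pi$ where $\pi\colon(\tilde X,\tilde D)\to(X_{1},D_{1})$ contracts a single $(-1)$-curve $E\subseteq\Exc\tau$. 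Because $E$ is blown down to a point of the snc divisor $D_{1}=\pi_{*}\tilde D$, it is superfluous in $\tilde D$, so $(X_{1},D_{1})$ is again a log smooth completion of $S$ and $m\de\beta_{\tilde D}(E)\in\{1,2\}$. Applying the inductive hypothesis to $\tau_{1}$, it suffices to prove that $A_{1}\de\pi_{*}\tilde A$ is a bubble on $(X_{1},D_{1})$.

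So assume $\tau=\pi$ is a single blowdown of a superfluous $(-1)$-curve $E\subseteq\tilde D$, and set $e\de\tilde A\cdot E\in\{0,1,2\}$. Writing $\pi^{*}D_{1}=(\tilde D-E)+mE$ and using the projection formula together with $\tilde A^{2}=-1$, I compute
\[
A_{1}^{2}=\tilde A^{2}+e^{2}=-1+e^{2},\qquad A_{1}\cdot D_{1}=\tilde A\cdot\pi^{*}D_{1}=2+(m-1)e.
\]
The value $e=2$ is immediately impossible: then $\tilde A\cdot(\tilde D-E)=0$, so both points of $\tilde A\cap\tilde D$ would lie on the single component $E$, contradicting the bubble condition that they lie on \emph{different} components of $\tilde D$.

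The heart of the argument is ruling out $e=1$. In that case $A_{1}$ is a smooth rational curve with $A_{1}^{2}=0$, so, $X_{1}$ being rational, the pencil $|A_{1}|$ defines a $\P^{1}$-fibration $X_{1}\to\P^{1}$. A general fibre $F\cong\P^{1}$ satisfies $F\cdot D_{1}=A_{1}\cdot D_{1}=m+1\in\{2,3\}$ and meets $D_{1}$ transversally in that many points, so $F\cap S\cong\C^{*}$ if $m=1$ and $F\cap S\cong\C^{**}$ if $m=2$. The first case yields a $\C^{*}$-fibration of $S$, contradicting $\kappa(S)=2$ through Iitaka's Easy Addition Theorem exactly as in the proof of Lemma \ref{lem:unique_completion}; the second yields a $\C^{**}$-fibration, contradicting the hypothesis on $S$. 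Hence $e=0$. Then $\pi$ is an isomorphism near $\tilde A$, so $A_{1}\cong\tilde A$ is a $(-1)$-curve with $A_{1}\cdot D_{1}=2$, meeting $D_{1}$ in the images of the two points of $\tilde A\cap\tilde D$; these images lie on two distinct components of $D_{1}$, since $\pi$ merges no components and neither point lies on $E$. Thus $A_{1}$ is a bubble, and the induction closes.

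The main obstacle is precisely this $e=1$ dichotomy, where both hypotheses on $S$ are consumed: one must confirm that the induced $\P^{1}$-fibration genuinely restricts to a $\C^{*}$- or $\C^{**}$-fibration. This relies on the standard genericity statement that a general member of $|A_{1}|$ avoids $\Sing D_{1}$ and meets $D_{1}$ transversally in exactly $A_{1}\cdot D_{1}$ points, which determines the fibre type and must be invoked carefully. The surrounding bookkeeping—verifying that the first-contracted $E$ is superfluous so that $D_{1}$ remains snc and $m\le 2$, and that no components are identified when $e=0$—is routine.
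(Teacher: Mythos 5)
Your proof is correct and is essentially the paper's own argument: the paper likewise inducts on $\rho(\tau)$ to reduce to the case where $\tilde{A}$ meets the contracted $(-1)$-curve $U$ (your case $e=1$; your $e=0$ case is the inductive step, and $e=2$ is excluded just as the paper deduces $\tilde{A}\cdot U=1$ from the bubble condition), and then derives the contradiction from the same $\P^1$-fibration — the paper's system $|U+\tilde{A}|$ on $\tilde{X}$ is precisely the pullback of your $|A_1|$ on $X_1$ — bounding $F\cdot\tilde{D}=\beta_{\tilde{D}}(U)+U^{2}+\tilde{A}\cdot\tilde{D}\leq 3$ via superfluousness and then invoking easy addition together with the no-$\C^{**}$ hypothesis. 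The one blemish is your parenthetical ``$X_1$ being rational'', which is not among the hypotheses of the lemma; it is also unnecessary, since a smooth rational curve with self-intersection $0$ on any smooth projective surface is a full fiber of a $\P^1$-fibration onto some smooth curve (Stein factorization of the map given by the moving part of $|mA_1|$ for $m\gg 0$), and your contradiction is insensitive to the base of that fibration.
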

\begin{proof}
	Suppose the contrary. Then $\tau$ is not an isomorphism in any neighborhood of $\tilde{A}$. By induction on $\rho(\tau)$, we can assume that $\tilde{A}$ meets a $(-1)$-curve $U\subseteq \Exc \tau$, so $\tilde{A}\cdot U=1$ by Definition \ref{def:bubble}. Since $D=\tau_{*}\tilde{D}$ is snc, $U$ is superfluous in $D$. The linear system of $U+\tilde{A}=[1,1]$ induces a $\P^1$-fibration $\pi$ of $\tilde{X}$ whose fiber $F$ satisfies $F\cdot \tilde{D}=(U+\tilde{A})\cdot \tilde{D}=\beta_{\tilde{D}}(U)+U^2+\tilde{A}\cdot \tilde{D}\leq 2-1+2=3$. Since $\kappa(S)=2$, the Iitaka easy addition theorem implies that $F\cdot \tilde{D}=3$, so $\pi|_{S}$ is a $\C^{**}$-fibration, a contradiction.
\end{proof}

We explain the effect of expansions on Kodaira dimensions. Lemma \ref{lem:kappa_expansion}\ref{item:kappa_1/2_expansion} shows that $\kappa(K_{X}+\tfrac{1}{2}D)$ is preserved. In fact, by \cite[Lemma 5.9(2),(4)]{Palka_MMP} a bubble $A$ on $(X,D)$ is,  usually,  almost log exceptional for $(X,\tfrac{1}{2}D)$, i.e.\ some MMP run for $(X,\tfrac{1}{2}D)$ contracts $A$, see  \cite[Definitions 3.6, 3.17]{Palka_MMP}. The very design of MMP requires $\kappa(K_{X}+\tfrac{1}{2}D)$ to be preserved, see \cite[Corollary 3.2, Remark 3.16]{Palka_MMP}. 

On the other hand, MMP for $(X,D)$ does not see $A$: in fact, by Lemma \ref{lem:no_lines} and \cite[Lemmas 4.3, 4.5]{Palka_MMP} it contracts only twigs of $D$. And indeed, expansions may change $\kappa(K_{X}+D)$. However, Lemma \ref{lem:kappa_expansion}\ref{item:kappa_expansion} shows that if some expansion produces a surface of log general type, then all further ones do so, too. In particular, in order to verify that $\Q$HPs in Theorem \ref{CLASS} are of log general type, which we do in Proposition \ref{prop:k=2}, we only need to consider expansions of low Picard rank; usually one.

\begin{lem}[$\kappa$ does not drop within a tower]\label{lem:kappa_expansion}
	Let $\phi\colon(X,D)\to (X',D')$ be an expansion. Then
	\begin{enumerate}
		\item\label{item:kappa_1/2_expansion} $\kappa(K_{X}+\tfrac{1}{2}D)=\kappa(K_{X'}+\tfrac{1}{2}D')$,
		\item\label{item:kappa_expansion} Assume that $\tilde{\phi}\colon (\tilde{X},\tilde{D})\to (X',D')$ is an expansion with the same centers as $\phi$, such that $\tilde\phi=\phi\circ\eta$ for some birational morphism $\eta$. Then $\kappa(\tilde{X}\setminus \tilde{D})\geq \kappa(X\setminus D)$.
	\end{enumerate}
\end{lem}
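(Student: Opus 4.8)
The plan is to compare the relevant log canonical divisors upstairs and downstairs, exploiting that every blow-up occurring in an expansion is centred at a \emph{node} of the reduced total transform of the boundary. Indeed, by Remark \ref{rem:expansion} the divisor $\Exc\phi$ is, over each centre $x_i$, a chain attached to $D'$ at both of its ends, and such a chain is produced by successively blowing up nodes of the developing snc boundary. Hence every centre of (each elementary blow-up composing) $\phi$ lies on exactly two branches of the current reduced total transform. Since blowing up a node is log crepant, setting $D^{+}:=(\phi^{*}D')\redd=D+\sum_i A_i$ we obtain the exact identity $K_X+D^{+}=\phi^{*}(K_{X'}+D')$.

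For \ref{item:kappa_1/2_expansion} I would then write $K_X=\phi^{*}K_{X'}+\sum_E a_E E$, the sum running over the $\phi$-exceptional curves $E$, all of which satisfy $a_E\geq 1$. Combining with the crepance identity,
\[
2(K_X+\tfrac12 D)=2K_X+D=(K_X+D^{+})+K_X-\sum_i A_i=\phi^{*}(2K_{X'}+D')+\Delta,
\]
where $\Delta=\sum_E a_E E-\sum_i A_i=\sum_{E\neq A_i}a_E E+\sum_i(a_{A_i}-1)A_i$. As $a_E\geq 1$, the divisor $\Delta$ is effective, integral and $\phi$-exceptional, so $\phi_{*}\cO_X(m\Delta)=\cO_{X'}$ for all $m\geq 0$; the projection formula gives $h^{0}(m(2K_X+D))=h^{0}(m(2K_{X'}+D'))$, whence $\kappa(K_X+\tfrac12 D)=\kappa(K_{X'}+\tfrac12 D')$.

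For \ref{item:kappa_expansion} I read the hypothesis as producing a birational morphism $\eta\colon(\tilde X,\tilde D)\to(X,D)$ with $\tilde\phi=\phi\circ\eta$, so that $\tilde X$ dominates $X$. Applying the crepance identity to both expansions, $K_X+D^{+}=\phi^{*}(K_{X'}+D')$ and $K_{\tilde X}+\tilde D^{+}=\tilde\phi^{*}(K_{X'}+D')$ with $\tilde D^{+}=\tilde D+\sum_j\tilde A_j$, and pulling back the first by $\eta$, I get
\[
K_{\tilde X}+\tilde D=\tilde\phi^{*}(K_{X'}+D')-\sum_j\tilde A_j=\eta^{*}(K_X+D)+\Big(\eta^{*}A-\sum_j\tilde A_j\Big),\qquad A:=\sum_i A_i.
\]
If the bracketed term $\Xi:=\eta^{*}A-\sum_j\tilde A_j$ is effective, then $h^{0}(m(K_{\tilde X}+\tilde D))\geq h^{0}(m\,\eta^{*}(K_X+D))=h^{0}(m(K_X+D))$ for all $m$, and since $\kappa(\tilde X\setminus\tilde D)=\kappa(K_{\tilde X}+\tilde D)$ and likewise for $X$, the desired inequality follows.

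The main obstacle is precisely the effectiveness of $\Xi$, which I expect to reduce to the statement that \emph{every bubble $\tilde A_j$ of $\tilde\phi$ lies over the bubble divisor $A$ of $\phi$}: for then each $\tilde A_j$ occurs in $\eta^{*}A$ with multiplicity $\geq 1$ (as a strict transform for the centres untouched by $\eta$, and as an exceptional curve over a point of some $A_i$ otherwise), which forces $\Xi\geq 0$ component by component. To establish this I would argue from the chain structure: over a touched centre $x_i$, both $\phi^{-1}(x_i)$ and $\tilde\phi^{-1}(x_i)$ are chains with a unique $(-1)$-curve ($A_i$, respectively $\tilde A_i$), and passing from the former to the latter by blow-ups while keeping a \emph{single} $(-1)$-curve in the chain is only possible by blowing up on the current $(-1)$-curve (blowing up a node off it would create a second $(-1)$-curve, and blowing up an interior point would break the chain). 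Hence the extra blow-ups are infinitely near $A_i$ and $\tilde A_i$ lies over $A_i$. Making this last point fully rigorous — ruling out intermediate configurations with two $(-1)$-curves that are later resolved — is the delicate step; I would handle it by induction on $\rho(\eta)$, contracting the unique $(-1)$-curve of $\tilde\phi^{-1}(x_i)$ first and invoking Lemma \ref{lem:other-completion} to keep track of bubbles, or alternatively by appealing directly to the continued-fraction description of the chains in Remark \ref{rem:expansion}.
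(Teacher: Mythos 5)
Your proof of part \ref{item:kappa_1/2_expansion} is correct and takes a genuinely different route from the paper. You use the log-crepancy identity $K_X+(\phi^{*}D')\redd=\phi^{*}(K_{X'}+D')$ (valid because every blowup in an expansion is centred at a node of the developing boundary), the fact that all discrepancies $a_E$ are $\geq 1$, and the projection formula for the effective exceptional divisor $\Delta$. The paper instead argues purely with linear systems: since $A\cdot\bigl(a(2K_X+D)+bA\bigr)=-b<0$, the bubble $A$ lies in the fixed part of $|a(2K_X+D)+bA|$, and a double induction (on $b$, then on $\rho(\phi)$) identifies $|a(2K_X+D)|$ with $|a(2K_{X'}+D')|$. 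Both are complete; yours is shorter and more standard, the paper's avoids any discussion of discrepancies and pushforwards.

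In part \ref{item:kappa_expansion} your reduction is the right one — the two crepancy identities give $K_{\tilde X}+\tilde D=\eta^{*}(K_X+D)+\Xi$ with $\Xi=\eta^{*}A-\sum_j\tilde A_j$, and everything hinges on $\Xi\geq 0$, i.e.\ on every bubble of $\tilde\phi$ lying over $A$ — but this is exactly the step you leave open, and neither of your suggested repairs closes it. Lemma \ref{lem:other-completion} is the wrong tool: it assumes log general type and the absence of $\C^{**}$-fibrations, hypotheses unavailable (and generally false) for an arbitrary expansion; and invoking ``the continued-fraction description'' is not by itself an argument. The gap can, however, be closed cleanly. At every intermediate stage of the blowup sequence $\eta$, the exceptional divisor over a fixed centre $x_i$ is negative definite, hence never contains two \emph{adjacent} $(-1)$-curves; consequently a blowup at a node kills at most one existing $(-1)$-curve in that chain while creating exactly one, so the number of $(-1)$-curves over each centre is non-decreasing along $\eta$. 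Since $\tilde\phi^{-1}(x_i)$ has a \emph{unique} $(-1)$-curve, the count must stay equal to $1$ throughout; this forces every centre of $\eta$ to lie on the current $(-1)$-curve, which inductively lies over $A_i$, and $\Xi\geq 0$ follows. This also explains how the paper sidesteps your worry about configurations with two $(-1)$-curves that are ``later resolved'': it inducts on $\rho(\eta)$ down to a \emph{single} blowup, where there is no ``later'', so the centre must be a point of $A\cap D\subseteq D\reg$, giving directly $K_{\tilde X}+\tilde D=\eta^{*}(K_X+D)+\eta^{-1}_{*}A$.

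One caveat your argument makes visible and which you should state explicitly: effectiveness of $\Xi$ (and the lemma itself) requires $\phi$ and $\tilde\phi$ to have the \emph{same} centres, which is what ``within a tower'' means. If $\tilde\phi$ is allowed an extra centre at a node of $D$ away from $\Exc\phi$, the corresponding bubble contributes $-1$ to $\Xi$, and the conclusion genuinely fails: for $D'$ a triangle of lines in $\P^{2}$, $\phi=\id$ and $\tilde\phi$ the blowup of one vertex, $\kappa$ drops from $0$ (for $\C^{*}\times\C^{*}$) to $-\infty$. The paper's proof carries the same implicit hypothesis (its assertion $\eta(E)\in A\cap D$ presupposes it), so this is not a defect of your approach relative to the paper's, but it is worth recording.
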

\begin{proof}
	By induction, we can assume that $\phi$ has only one center. Let $A\subseteq \Exc\phi$ be the bubble.
	
	\ref{item:kappa_1/2_expansion} For any integers $a,b\geq 1$, we have $A\cdot (a(2K_{X}+D)+bA)=-b<0$, so $A$ is in the fixed part of $|a(2K_{X}+D)+bA|$. By induction on $b$, we conclude that $|a(2K_{X}+D)|$ and $|a(2K_{X}+D+A)|$ define the same map, which contracts $A$. Hence  $\kappa(K_{X}+\tfrac{1}{2}D)=\kappa(K_{X}+\tfrac{1}{2}(D+A))=\kappa(K_{X'}+\tfrac{1}{2}D')$ by induction on $\rho(\phi)$.
	
	\ref{item:kappa_expansion} Put $E=\Exc\eta$. By induction, we can assume that $E=[1]$. The point $\eta(E)$ lies in $A\cap D\subseteq D\reg$, so $K_{\tilde{X}}+\tilde{D}=(\eta^{*}K_{X}+E)+\eta^{-1}_{*}(D+A)=\eta^{*}(K_{X}+D)+\eta^{-1}_{*}A$, which gives an injective map $|a(K_{X}+D)|\into |a(K_{\tilde{X}}+\tilde{D})|$ for all $a\geq 1$.
\end{proof}	

	We will now show how the theory of almost minimal models \cite{Palka_MMP} recovers all $\Q$HPs by expansions from particularly simple pairs. The remaining task will be to relate those pairs with planar configurations.
	\smallskip
	
	Let $(X,D)$ be the minimal log smooth completion of a $\Q$HP of log general type. Let $A$ be a bubble on $(X,D)$. Contract $A$ and all superfluous $(-1)$-curves in the subsequent images of $D$, passing through the image of $A$; and repeat this until the image of $(X,D)$ has no bubbles. The process must end since $\rho(X)$ drops at each step. Since $D$ is connected,  Lemma \ref{lem:sequence_of_expansions} implies that the resulting morphism 
	\begin{equation*}
		\psi\colon (X,D)\to (X',D')
	\end{equation*}
	is an expansion. Let $\alpha'\colon (X',\tfrac{1}{2}D')\to (X_{\min},\tfrac{1}{2}D_{\min})$ be a log MMP run. Let $\upsilon \colon X'\to X\am$ be its \emph{almost minimalization} \cite[Definition 3.1]{Palka_MMP}, that is,  a $K_{X'}$-MMP run over $X_{\min}$. Then $\alpha'=\alpha\circ \upsilon$ for some morphism $\alpha\colon X\am\to X_{\min}$. The characterization of (almost) log exceptional curves on $(X,\tfrac{1}{2}D)$, given in \cite[\S 5]{Palka_MMP} implies the following.
	
\begin{lem}\label{lem:MMP}
	Let $(X,D)$ be the minimal log smooth completion of a $\Q$HP $S$ of log general type. Let
	\begin{equation}\label{eq:MMP}
	\begin{tikzcd}
		(X,D) \ar[r, "\psi"] & (X',D')\ar[r, "\upsilon"] &(X\am,D\am)\ar[r, "\alpha"] &(X_{\min},D_{\min})
	\end{tikzcd}
	\end{equation}
	be as above. Let $\Delta\am$ be the sum of all maximal $(-2)$-twigs of $D\am$, and $R\am=D\am-\Delta\am$. Then 
	\begin{enumerate}
	\item\label{item:Si_affine} $S'\de X'\setminus D'$ is an affine open subset of $S$,
	\item\label{item:Si_no-lines} $\kappa(S')=2$ and $S'$ contains no topologically contractible curves,	
	\item\label{item:alpha_peeling} $\Exc\alpha=\Delta\am$ and $\alpha^{*}(2K_{X_{\min}}+D_{\min})=2K_{X\am}+D\am-\Bk\Delta\am$, 	
	\item\label{item:smoothness} $X\am$ is smooth, and each singularity of $X_{\min}$ is  canonical cyclic (type $\rA_{k}$),
	\item\label{item:D_squeezed} $\Exc\upsilon\subseteq D'$, and every $(-1)$-curve $U$ in $D\am$ satisfies $\beta_{D\am}(U)\geq 4$ or $\beta_{R\am}(U)\geq 3$,
	\item\label{item:no_bubbles} There are no bubbles on $(X',D')$ or $(X\am,D\am)$,
	\item\label{item:mult_2} All singularities of $D\am$ and $D_{\min}$ are of multiplicity $2$; and $\Sing D_{\min}\subseteq X_{\min}\reg$.
	\end{enumerate}
\end{lem}
\begin{proof}
	\ref{item:Si_affine} By \cite[2.5]{Fujita-noncomplete_surfaces}, $S$ is affine. Since $S'$ equals $S$ minus a sum of bubbles, $S'$ is affine, too. 
	
	\ref{item:Si_no-lines} This follows from \ref{item:Si_affine} and Lemma \ref{lem:no_lines}.

	\ref{item:alpha_peeling} By \cite[Definition 3.10]{Palka_MMP}, $\alpha$ is a pure peeling, so \ref{item:alpha_peeling} follows from \cite[Corrolary 5.5]{Palka_MMP}. Indeed, by \ref{item:Si_affine} $D\am$ has no negative definite connected components, so the divisor $\Gamma+\Lambda$ defined in loc.\ cit.\ is zero in this case.
	
	\ref{item:smoothness} By construction, the surface $X'$ is smooth, so the surface $X\am$, which is an outcome of an  $K_{X'}$-MMP run is smooth, too. By \ref{item:alpha_peeling}, all singularities of $X_{\min}$ are images of $(-2)$-chains in $\Delta\am$, so they are of type $\rA_{k}$.
	
	\ref{item:D_squeezed}, \ref{item:no_bubbles} Let $\bar{\upsilon}\colon (X',\tfrac{1}{2}D')\to (\bar{X}\am,\tfrac{1}{2}\bar{D}\am)$ be an $\alpha'$-squeezing, that, is, a contraction of all $(-1)$-curves in $D'$ and its images which are contracted by $\alpha'=\alpha\circ\upsilon$. By construction, $(X',D')$ has no bubbles, so $(\bar{X}\am,\bar{D}\am)$ has no bubbles either. Indeed, suppose $\bar{A}$ is a bubble on $(\bar{X}\am,\bar{D}\am)$. Since $D'$ is snc-minimal and has no negative definite connected components, we have  $\Bs\bar{\upsilon}^{-1}\subseteq \Sing \bar{D}\am$. In particular, $\bar{\upsilon}$ is an isomorphism in a neighborhood of $A'\de \bar{\upsilon}^{-1}_{*}A$. Thus $A'$ is a bubble on $(X',D')$; a contradiction.
	
	Suppose $\bar{\upsilon} \neq \upsilon$. Then $(\bar{X}\am,\tfrac{1}{2}\bar{D}\am)$ contains an almost log exceptional curve $A$. By \cite[Lemma 5.9]{Palka_MMP}, $A$ is a $(-1)$-curve such that either $A\cdot \bar{D}\am\leq 1$, which is impossible by \ref{item:Si_no-lines}, or $A$ is a bubble on $(\bar{X}\am,\bar{D}\am)$, which is impossible by the above claim. Thus $\bar{\upsilon}=\upsilon$, so $(X\am,D\am)$ has no bubbles and $\Exc\upsilon\subseteq D'$. The second part of \ref{item:D_squeezed} follows from the characterization of squeezing in \cite[Lemma 5.2]{Palka_MMP}.
	
	\ref{item:mult_2} This follows from \cite[Remark 5.7]{Palka_MMP}.
\end{proof}

In our case, minimality of $(X_{\min},\tfrac{1}{2}D_{\min})$ implies the following.
	
\begin{lem}\label{lem:R}
	We keep the notation and assumptions from Lemma \ref{lem:MMP}. Assume further that $S$ satisfies the Negativity Conjecture \ref{conj:negativity} and admits no $\C^{**}$-fibration. Then
	\begin{enumerate}
		\item\label{item:MFS_point} $\rho(X_{\min})=1$ and $-(K_{X_{\min}}+\tfrac{1}{2}D_{\min})$ is ample.
		\item\label{item:Xmin-del-Pezzo} $X_{\min}$ is a del Pezzo surface of rank one, with only $\rA_{k}$-type singularities.
		\item\label{item:R}	$\#R\am=n+1$, and $R\am$ has exactly $2n$ nodes.	
	\end{enumerate}
\end{lem}
\begin{proof}
	\ref{item:MFS_point} By assumption, $\kappa(K_{X}+\tfrac{1}{2}D)=-\infty$, so $\kappa(K_{X'}+\tfrac{1}{2}D')=-\infty$ by Lemma \ref{lem:kappa_expansion}\ref{item:kappa_1/2_expansion}. Log abundance in dimension two, cf.\ \cite[p.\ 127]{Matsuki_MMP_intro}, shows that $K_{X_{\min}}+\tfrac{1}{2}D_{\min}$ is not nef. So there is a curve $F\subseteq X_{\min}$ such that $F\cdot (2K_{X_{\min}}+D_{\min})<0$. By the contraction theorem \cite[1-3-5]{Matsuki_MMP_intro}, $F$ is contracted by some morphism $\phi\colon X_{\min}\to Z$ with $\rho(\phi)=1$. Because $(X_{\min},\tfrac{1}{2}D_{\min})$ is minimal, $F^2\geq 0$, so $\dim Z\leq 1$. Suppose $\dim Z=1$. Then $F^2=0$, so $F\cdot (2K_{X_{\min}}+D_{\min})<0$ implies that $F\cong \P^1$ and $F\cdot D_{\min}\leq 3$. Since $\kappa(X_{\min}\setminus D_{\min})=2$, we have $F\cdot D_{\min}=3$ by the Iitaka easy addition theorem, so $\phi$ pulls back to a $\C^{**}$-fibration of $S$; a contradiction. Therefore, $Z$ is a point, which implies \ref{item:MFS_point}.
	
	\ref{item:Xmin-del-Pezzo} This follows from \ref{item:MFS_point} and Lemma \ref{lem:MMP}\ref{item:smoothness}.
	
	\ref{item:R} By Lemma \ref{lem:MMP}\ref{item:alpha_peeling}, $R\am=\alpha^{-1}_{*}D_{\min}$, so by \ref{item:MFS_point}, $\#R\am-1=\#D_{\min}-\rho(X_{\min})=\#D+n-\rho(X)=n$ since $\NS_{\Q}(X)=\Z[D]\otimes \Q$ by Remark \ref{rem:D_rat-tree}. Thus $\#R\am=n+1$. To prove the second equality, we introduce the following notation. For a reduced divisor $B\neq 0$, let $\nu_{B}$ be the number of nodes of $B$, and let $e_{B}=\nu_{B}-\#B$. Clearly, $e_{B}$ does not change under snc-minimization or a contraction of a tip of $B$. It is easy to see that resolving a double point does not change $e_{B}$, either. In turn, adding a bubble to $B$ increases $e_{B}$ by one. We get $e_{R\am}=e_{D\am}=e_{D'}=e_{(\psi^{*}D')\redd}=e_{D}+n=-1+n$ since $D$ is a tree. Now $\nu_{R\am}=\#R\am+e_{R\am}=2n$.
\end{proof}

	We conclude this preliminary section by a simple application of Lemma \ref{lem:MMP}, which shows that the Negativity Conjecture \ref{conj:negativity} is closely related to another variant of tom Dieck--Petrie Conjecture \ref{conj:classical}\ref{item:tDP}.

	Let $(X,D)$ be a minimal log smooth completion of a $\Q$HP $S$. We say that a divisor $B$ on a surface $Y$ is \emph{optimal} for $S$ if there is an expansion $(X,D)\to (Y',B')$, where $(Y',B')\to (Y,B)$ is a minimal log resolution, see \cite{tDieck_optimal-curves}. In other words, $(X,D)$ is obtained by the tom Dieck--Petrie algorithm  with $P=\emptyset$, but with $(\P^2,\pp)$ replaced by $(Y,B)$.
	Thus $D_{\min}\subseteq X_{\min}$ in Lemma \ref{lem:MMP} is an optimal divisor for $S$. 
	
	Tom Dieck proposed the following conjecture, see \cite[Conjecture 1.2]{tDieck_optimal-curves}. 
	\begin{conjecture}\label{conj:optimal}
	A smooth \ZHP of log general type has an optimal divisor on $\P^2$ or $\F_{m}$.
	\end{conjecture}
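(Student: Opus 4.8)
The plan is to combine Lemma \ref{lem:MMP} with Furushima's description of rank-one del Pezzo surfaces (Lemma \ref{lem:Furushima}), using the hypothesis $\#H_{1}(S;\Z)=1$ to descend the optimal divisor $D_{\min}$ to $\P^{2}$ or $\F_{m}$. Assuming the Negativity Conjecture \ref{conj:negativity}, so that $\kappa(K_{X}+\tfrac{1}{2}D)=-\infty$, I would first apply Lemma \ref{lem:MMP} to the minimal log smooth completion $(X,D)$ of $S$. This produces $(X,D)\toin{\psi}(X',D')\to(X\am,D\am)\toin{\alpha}(X_{\min},D_{\min})$ with $\psi$ an expansion and $(X',D')\to(X_{\min},D_{\min})$ a minimal log resolution; in particular $D_{\min}$ is already an optimal divisor for $S$, but a priori only on the possibly singular surface $X_{\min}$. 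By Lemma \ref{lem:MMP}\ref{item:kappa_1/2<0} there are two cases: either $\rho(X_{\min})=1$ and $X_{\min}$ is one of the del Pezzo surfaces of Lemma \ref{lem:Furushima} (together with $\P^{2}$ and $\P(1,1,2)$), or $\rho(X_{\min})=2$ and $X_{\min}$ carries a $\P^{1}$-fibration with $F\cdot D_{\min}=3$.

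The easy cases are those where $X_{\min}$ is already smooth. If $X_{\min}\cong\P^{2}$ then $D_{\min}$ is an optimal divisor on $\P^{2}$ and we are done. If $X_{\min}\cong\P(1,1,2)$ then its minimal resolution is $X\am\cong\F_{2}$, the exceptional $(-2)$-curve lies in $D\am$ by Lemma \ref{lem:MMP}\ref{item:alpha_peeling}, and $(X',D')\to(\F_{2},D\am)$ is again a minimal log resolution; hence $D\am$ is an optimal divisor on $\F_{2}=\F_{m}$. In the fibered case $\rho(X_{\min})=2$, the surface $S$ is $\C^{**}$-fibered (Lemma \ref{lem:MMP}\ref{item:MFS_curve} together with $\kappa(S)=2$), and I would use the relative minimal model of the pulled-back $\P^{1}$-fibration on the smooth surface $X\am$ to identify a Hirzebruch surface $\F_{m}$; pushing $D\am$ forward and taking the corresponding minimal log resolution yields an optimal divisor on $\F_{m}$. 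This is the source of the $\F_{m}$ alternative in the statement, and it matches the explicit $\C^{**}$-fibered list of Section \ref{sec:Cstst}.

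The substantive case is when $X_{\min}$ carries genuine $\rA_{k}$ singularities, i.e.\ one of the configurations of Figure \ref{fig:Furushima}. Here Lemma \ref{lem:Furushima} supplies a morphism $\theta\colon X\am\to\P^{2}$ contracting the fine $(-1)$-curves of $\Exc\theta$, and I would attempt to realise $\theta_{*}D\am$ as an optimal planar divisor. The difficulty, which I expect to be the main obstacle, is that $\Exc\theta$ generally contains $(-1)$-curves that do not lie in $D\am$; contracting them corresponds to a nonzero $E''$ in Step \ref{step:selection} of Theorem \ref{CLASS}, so the resulting planar divisor is in general not optimal. The key point is that the \ZHP hypothesis should forbid this. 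Concretely, by Lemma \ref{lem:expansions} a $\Z$HP forces $\#H_{1}(S;\Z)=|d(D)|^{1/2}=|\!\det m|\cdot\#\coker r_{D'}=1$, so $\Z[D]=\NS(X)$ is unimodular and $\coker r_{D'}=0$. I would feed this integrality constraint, together with the explicit intersection data of the $(-2)$-chains $\Delta\am$ resolving the $\rA_{k_{i}}$ points read off from Figure \ref{fig:Furushima}, into the bookkeeping relating $d(D)=1$ to the discriminants $k_{i}+1$, in order to rule out the singular configurations for $\Z$HPs — equivalently, to produce a sequence of contractions of \emph{boundary} curves only, terminating on $\P^{2}$. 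Since by Theorem \ref{CLASS} there are only finitely many surfaces to consider, in the worst case this reduction can be verified configuration by configuration against \tables, checking in each $\Z$HP case that $E''$ may be taken to be $0$. The delicate part is precisely this unimodularity argument guaranteeing that the optimal divisor descends all the way to $\P^{2}$ rather than stopping at a singular model.
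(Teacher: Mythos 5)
You set out to prove a statement that the paper does not prove; in fact the paper \emph{refutes} it. The statement labelled \ref{conj:optimal} is tom Dieck's conjecture, quoted from the literature, and right after stating it the paper exhibits counterexamples: for the towers \ref{def:A1A2_c=3}--\ref{def:A1A2_2c1n} and \ref{def:nodal-cubic-P2_un} one has $X_{\min}\cong\P(1,2,3)$, and the construction shows that the only bubbles on $(X,D)$ are those contracted by $(X,D)\to(X_{\min},D_{\min})$; combined with Lemma \ref{lem:other-completion} (which rules out non-minimal completions, since these $S$ are not $\C^{**}$-fibered), every optimal divisor for such $S$ lives on $\P(1,2,3)$ or a surface dominating it, never on $\P^{2}$ or $\F_{m}$. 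Several of these towers (e.g.\ \ref{def:A1A2_C2C3-cusp-41}, \ref{def:A1A2_3-cusp}, \ref{def:A1A2_q-cn_31}) genuinely contain \ZHPs --- infinitely many, by Lemma \ref{lem:ZHP} --- and they are of log general type by Proposition \ref{prop:k=2}. So the conjecture is false, unconditionally, and no argument can close your gap.

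The step at which your outline breaks is precisely the one you flag as delicate: the hope that unimodularity, $\#H_{1}(S;\Z)=|\!\det m|\cdot\#\coker r_{D'}=1$, forces the $\rA_{1}+\rA_{2}$ case to descend to $\P^{2}$. This cannot work, because the two conditions live on different parts of the data: by Remark \ref{rem:m_comuptation} the determinant $\det m$ is a function of the \emph{weights} of the expansion, which vary freely within a tower, whereas the singularity type of $X_{\min}$ (the $(-2)$-chains $\Delta\am$) is fixed by the tower itself; Lemma \ref{lem:ZHP} shows that weights achieving $|\det m|=1$ exist in any tower containing one \ZHP, so unimodularity carries no information whatsoever about the singularities of $X_{\min}$. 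The parts of your proposal that are sound --- the reduction via Lemma \ref{lem:MMP}, the smooth cases $X_{\min}\cong\P^{2}$ and $\P(1,1,2)$ (where $D\am$ is optimal on $\F_{2}$), and the $\C^{**}$-fibered case --- reproduce the paper's discussion, and together with Lemma \ref{lem:U} (which cuts Furushima's list down to $\P^{2}$, $\P(1,1,2)$, $\P(1,2,3)$ via the no-bubbles condition, a step your proposal skips) they yield exactly what the paper actually proves: the unnumbered corollary that, under the Negativity Conjecture \ref{conj:negativity}, every smooth \QHP of log general type admits an optimal divisor on $\P^{2}$, $\F_{0}$, $\F_{2}$ or $\P(1,2,3)$, with $\P(1,2,3)$ not removable.
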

	We remark that the definition of an optimal divisor in \cite{tDieck_optimal-curves} does not require the log smooth completion $(X,D)$ of $S$ to be minimal. But Lemma \ref{lem:other-completion} shows that we can add this assumption if $S$ is not $\C^{**}$-fibered. In the other case, it is easy to construct an optimal divisor for $S$ on $\P^2$ or on $\F_0=\P^1\times \P^1$: we will do so in Section \ref{sec:Cstst}, see Figure \ref{fig:Cstst} and \cite{MiySu-Cstst_fibrations_on_Qhp}. Therefore, Conjecture \ref{conj:optimal} holds for $\C^{**}$-fibered $\Z$HPs, and thus is equivalent to  \cite[Conjecture 1.2]{tDieck_optimal-curves}. 
	
	Assume that $S$ has no $\C^{**}$-fibration, and satisfies the Negativity Conjecture \ref{conj:negativity}. Then the optimal divisor $D_{\min}\subseteq X_{\min}$ is as in Lemma \ref{lem:R}\ref{item:Xmin-del-Pezzo}: hence \emph{$S$ admits an optimal divisor on a del Pezzo surface of rank one, with at most $\rA_{k}$ singularities}.
	
	In fact, in Proposition \ref{prop:U} we will see that $X_{\min}$ can be chosen to be $\P^2$, $\P(1,1,2)$, or $\P(1,2,3)$. We conclude that Negativity Conjecture implies the following slight modification of Conjecture \ref{conj:optimal}.
	
	\begin{cor}
		Every smooth \QHP of log general type, satisfying the Negativity Conjecture \ref{conj:negativity}, admits an optimal divisor on $\P^2$, $\F_0$, $\F_2$ or $\P(1,2,3)$.
	\end{cor}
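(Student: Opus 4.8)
The plan is to reduce the statement, via the MMP factorization \eqref{eq:MMP} of Lemma \ref{lem:MMP}, to identifying the minimal model $X_{\min}$, and then to appeal to the (forthcoming) classification of the surfaces that can occur. Since Lemma \ref{lem:MMP}\ref{item:alpha_peeling} already exhibits $D_{\min}\subseteq X_{\min}$ as an optimal divisor for $S$, everything comes down to showing that $X_{\min}$ — or a surface birational to it in a controlled way — lies in the list $\{\P^2,\F_0,\F_2,\P(1,2,3)\}$. I would split according to the dichotomy in Lemma \ref{lem:MMP}\ref{item:kappa_1/2<0}, which the Negativity Conjecture \ref{conj:negativity} makes available.

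First I would dispose of the case where $S$ carries no $\C^{**}$-fibration. Here I claim alternative \ref{item:MFS_curve} of Lemma \ref{lem:MMP}\ref{item:kappa_1/2<0} cannot hold: its fiber $F$ satisfies $F\cong\P^1$, $F\subseteq X_{\min}\reg$ and $F\cdot D_{\min}=3$, so a general such $F$ gives $F\cap S\cong \C^{**}$, a contradiction. Thus we land in \ref{item:MFS_point}, where $X_{\min}$ is a log del Pezzo surface of Picard rank one with only $\rA_k$ singularities. At this point I would invoke Lemma \ref{lem:U}, which pins $X_{\min}$ down to one of $\P^2$, $\P(1,1,2)$, $\P(1,2,3)$; the cases $\P^2$ and $\P(1,2,3)$ already belong to the list, so $D_{\min}$ is the desired optimal divisor.

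The remaining point is to replace $\P(1,1,2)$ by $\F_2$. Let $\rho\colon \F_2\to\P(1,1,2)$ be the minimal resolution, contracting the negative section $\Sec_2$. Because $\Sing D_{\min}\subseteq X_{\min}\reg$ by Lemma \ref{lem:MMP}\ref{item:mult_2}, all double points of $D_{\min}$ avoid the vertex, so $\rho$ is an isomorphism near them; I would then set $B\de\rho^{-1}_{*}D_{\min}+\Sec_2$ and observe that the minimal log resolution of $(\F_2,B)$ only has to resolve the double points of $\rho^{-1}_{*}D_{\min}$, and so coincides with the minimal log resolution of $(\P(1,1,2),D_{\min})$, the exceptional $(-2)$-curve of the surface resolution being exactly $\Sec_2$. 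Consequently the same expansion witnessing that $D_{\min}$ is optimal on $\P(1,1,2)$ witnesses that $B$ is optimal on $\F_2$. Finally, when $S$ is $\C^{**}$-fibered one is in alternative \ref{item:MFS_curve}, and I would simply quote the explicit construction of Section \ref{sec:Cstst}, which realizes an optimal divisor for $S$ on $\P^2$ or on $\F_0=\P^1\times\P^1$.

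I expect the genuinely substantive input to be external to this argument — namely Lemma \ref{lem:U} (restricting the rank-one minimal models to three surfaces) and the $\C^{**}$-fibered construction of Section \ref{sec:Cstst}. Within the corollary itself, the only step requiring care is the passage from $\P(1,1,2)$ to $\F_2$: I must check that no extra blow-ups intervene over $\Sec_2$, i.e.\ that a component of $D_{\min}$ passing through the vertex meets $\Sec_2$ transversally after resolution, so that the two minimal log resolutions really agree and the notion of \emph{optimal} transfers verbatim.
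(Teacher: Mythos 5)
Your proposal is correct and follows essentially the same route as the paper: the corollary is deduced, exactly as in the discussion preceding it, by splitting into the $\C^{**}$-fibered case (handled by the explicit constructions of Section \ref{sec:Cstst}, giving an optimal divisor on $\P^2$ or $\F_0$) and the non-fibered case, where Lemma \ref{lem:MMP}\ref{item:MFS_point} together with Lemma \ref{lem:U} pins $X_{\min}$ down to $\P^2$, $\P(1,1,2)$ or $\P(1,2,3)$. Your explicit passage from $\P(1,1,2)$ to $\F_2$ --- which the paper leaves implicit --- is also sound: your divisor $B=\rho^{-1}_{*}D_{\min}+\Sec_2$ is precisely $D\am$ on $X\am=\F_2$, and the transversality you flag at the end is automatic from Lemma \ref{lem:MMP}\ref{item:alpha_peeling}, since $\Delta\am=\Sec_2$ is a $(-2)$-twig of $D\am$, forcing $R\am\cdot \Sec_2=1$.
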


	The singular surface $\P(1,2,3)$ cannot be replaced by $\F_m$. Indeed, if $S$ is as in \ref{def:A1A2_c=3}--\ref{def:A1A2_2c1n} or \ref{def:nodal-cubic-P2_un}, then $X_{\min}=\P(1,2,3)$, and our construction shows that the only bubbles on $(X,D)$ are those contracted by $(X,D)\to (X_{\min},D_{\min})$. Thus Conjecture \ref{conj:optimal} fails for $\Z$HPs in these towers.

\section{General strategy and notation}\label{sec:strategy}

In order to settle the notation for Section \ref{sec:classification}, we now review our strategy of the proof of Theorem \ref{CLASS}, and explain it on a typical example.

Recall that the tom Dieck--Petrie algorithm, introduced in Definition \ref{def:TDP}, takes as an input: a configuration of lines and conics $\pp\subseteq \P^{2}$, its minimal resolution $\pi\colon X'\to \P^2$ and a subdivisor $D'\subseteq (\pi^{*}\pp)\redd$; and constructs from it a pair $(X,D)$ by an expansion $\psi\colon (X,D)\to (X',D')$.

We need to show that, conversely, for any $\Q$HP satisfying the assumptions of Theorem \ref{CLASS}, its minimal log smooth completion $(X,D)$ is constructed this way. To do this, we take for $\psi$ the expansion from Lemma \ref{lem:MMP}. By \cite{MiySu-Cstst_fibrations_on_Qhp} or by Proposition \ref{prop:C**}, we can assume that $S$ has no $\C^{**}$-fibrations, so $(X_{\min},D_{\min})$ is as in Lemma \ref{lem:R}. In particular, $\rho(X_{\min})=1$, and $n\de \#D_{\min}-1$ is the number of centers of $\psi$. 

To recover $\pi\circ \psi$, we find $\#\pp-n-1$ curves on $X'$ such that adding them to $D'$ gives a divisor with the same weighted graph as $\pi^{*}\pp$, hence contractible to a configuration with the same graph as $\pp$, on a smooth, projective rational surface of Picard rank $\rho(X_{\min})+(\#D'-\#D_{\min})-(\#(\pi^{*}\pp)\redd-\#\pp)=1$, that is, on $\P^{2}$. The graph of $\pp$ does not always determine $\pp\subseteq \P^2$ uniquely, nonetheless, if one $\pp$ realizing such graph is allowed in Theorem \ref{CLASS}, then so are all of them, see Section \ref{sec:constructions}. Thus we get $\pi\circ\psi$ as required. 

A major part of the proof consists of transforming given planar configurations (e.g.\ $D_{\min}$ if $X_{\min}=\P^2$) to configurations of lines and conics using Cremona maps. For this, we introduce the following notation. 

\begin{notation}\label{not:P2}
	Curves on $\P^{2}$ will be denoted by lowercase script letters $\cc_{\star},\ll_{\star},\dots$ with some subscripts $\star$. Their proper transforms on $X'$ will be denoted by corresponding uppercase letters $C_{\star},L_{\star},\dots$. For two points $p,q\in \P^{2}$, the line joining them will be denoted by $\ll_{pq}$. For a smooth curve germ $(p,\chi)$ on $\P^{2}$ the line tangent to $\chi$ at $p$ will be denoted by $\ll_{\chi,p}$ or simply $\ll_{p}$ if $\chi$ is clear from the context. 
\end{notation}

\begin{notation}\label{not:bl}
For a point $r$ on a smooth surface we denote by $\pi_{r}$ a blow up at $r$. Fix a birational morphism $\sigma$ such that $\Bs\sigma^{-1}=\{r\}$, and $\sigma^{-1}(r)$ contains a unique $(-1)$-curve. Then the components of $\sigma^{-1}(r)$ are naturally ordered as exceptional curves of subsequent blowups in the decomposition of $\sigma$. We denote them by  $E_{r},E_{r}',\dots$, so $E_{r}$ is the $(-1)$-curve. We write $U_{r},U_{r}',\dots$ for the same curves in the reverse order.
\end{notation}

\begin{rem}\label{rem:notation-graphs}
	Notation \ref{not:P2} and \ref{not:bl} is also used in Figures \ref{fig:nodal-cubic-P2_un}--\ref{fig:Cstst}, which show graphs of $\pi^{*}\pp$ encountered in the course of the proof. There, Notation \ref{not:bl} refers always to the morphism $X'\to \P^{2}$, not to $X'\to X_{\min}$.
	
	Dashed lines in these figures correspond to components of $E'\de (\pi^{*}\pp)\redd-D'$. By definition, they are $(-1)$-curves not contained in $D'$, and meet $D'$ normally. According to Notation \ref{not:bl}, we denote them by $E_p$, where the sub-index refers to a point $p\in \P^2$ which is the image of $E_p$. 
	
	Sometimes, introducing too many dashed lines could obscure the figure. In such cases, instead of drawing a curve $E_p$, we just mark the points where it meets $D'$ with some fixed symbol, say \enquote{$\bullet$}, and add a comment \enquote{$\bullet \mapsto p$}. For example, in the top-right picture of Figure \ref{fig:7} below, the symbol \enquote{$\bullet$} refers to a curve $E_{p}$ meeting $D'$ in $L_1$, $L_2$ and $C_2$. Similarly, curves $E_{p_1}$ and $E_{r}$ meet $D'$ in points marked by \enquote{$\boldsymbol{\diamond}$} and \enquote{$\boldsymbol{\circ}$}, respectively.
\end{rem}

\begin{ex}\label{ex:7}
	In order to illustrate the tom Dieck--Petrie algorithm, we follow it closely in the case of tower \ref{def:F2_n1-cusp}. For another example of this kind see \cite[p. 132]{tDieck_optimal-curves}, where tower \ref{def:F2n2-cuspidal} is constructed (from a planar divisor slightly different than ours).
	
	We take a planar divisor $\pp=\cc_{1}+\cc_{2}+\ll_{1}+\ll_{2}+\ll_{p'}$ from Configuration \ref{conf:F2_n1-cusp}, that is, $\ll_{1}$, $\ll_{2}$, $\ll_{p'}$ are lines tangent to the conic $\cc_{1}$; and $\cc_{2}$ is a conic tangent to $\cc_{1}$ with multiplicity $2$, passing through $\ll_{1}\cap \ll_{2}=\{p\}$, $\ll_{1}\cap \cc_{1}=\{p_{1}\}$ and tangent to $\ll_{p'}$ at $\ll_{2}\cap \ll_{p'}=\{r\}$, see Figure \ref{fig:7}. We take $P=\{p,p_1,r\}$.
	
	Let $\pi\colon X'\to\P^2$ be the minimal resolution of $\pp$, and let $E'\de E_p+E_{p_1}+E_r$ be the sum of $(-1)$-curves over $P$. The divisor $D'\de (\pi^{*}\pp)\redd-E'$ is shown in the top-right of Figure \ref{fig:7}. It meets $E'$ in points denoted by $\bullet$, $\boldsymbol{\diamond}$ and $\boldsymbol{\circ}$ (this notation is explained in Remark \ref{rem:notation-graphs}). We use Notation \ref{not:P2} for the components of $D'$.
	
	Now, $C_1+C_2+E_q$ is a unique circular subdivisor of $D'$: thus an expansion $(X,D)\to (X',D')$ such that $D$ is a tree must have a unique center, which is a node of $C_1+C_2+E_q$. Each of these choices leads to a pair $(X,D)$ with a different graph of $D$. 
	\smallskip
	
	We now use Lemma \ref{lem:expansions} to check when $S=X\setminus D$ is a \QHP. We compute that the map $r_{D'}\colon \Z[D']\to \NS(X')$ is surjective, and $\ker r_{D'}$ is generated by one relation, say $\mathcal{R}$. By Remark \ref{rem:m_comuptation}, $m(\mathcal{R})=m(\mathcal{R}_0)$, where $\mathcal{R}_0\de \pr_{C_{1}+C_2+E_q}(\mathcal{R})$. We compute that, up to a sign:
	\begin{equation*}
	\mathcal{R}_0=C_{1}-6C_{2}-10E_{q}.
	\end{equation*}
	Write the weight of our expansion as $\frac{u}{w}$, $\gcd(u,w)=1$.  Consider the center $(C_{1},C_{2})$. Then  $m(\mathcal{R}_0)=u-6w$, see Remark \ref{rem:m_comuptation}. Hence $S$ is a $\Q$HP if and only if $\frac{u}{w}\neq 6$. In this case formula \eqref{eq:expansion_H1} gives $\#H_{1}(S;\Z)=|u-6w|$, so we get infinitely many $\Z$HPs, for weights $6\pm \tfrac{1}{w}$, $w\in \N$. For weight $1$ we get $\#H_{1}(S;\Z)=5$: as we will see in Example \ref{ex:reduction} below, this $S$ is in fact a complement of a planar quintic.
	
	Similarly, for $(C_{1},E_{q})$ we have $m(\mathcal{R}_0)=u-10w$, so $S$ is a $\Q$HP if and only if $u/w\neq 10$, and we get infinitely many $\Z$HPs. But for $(C_{2},E_{q})$, $m(\mathcal{R}_0)=6u+10w\geq 16$, so all $S$'s are $\Q$HPs and none is a $\Z$HP.
\end{ex}
\begin{figure}[htbp]
\centering
				\begin{tikzpicture}
				\begin{scope}
					\draw (0.2,0) -- (0,1.4);
					\draw (0,1.2) -- (0.2,2.6);
					\draw (0.2,2.4) -- (0,4);
					\draw (0,3.8) -- (0.2,5.2);
					\node at (-0.2,0.6) {\small{$-2$}};
					\node at (-0.2,2) {\small{$-1$}};
					\node at (-0.2,3.2) {\small{$-3$}};
					\node at (-0.2,4.6) {\small{$-2$}};			
				\end{scope}
				\begin{scope}[shift={(1.4,0)}]
					\draw (0.2,0) -- (0,1.4);
					\draw (0,1.2) -- (0.2,2.6);
					\draw (0.2,2.4) -- (0,4);
					\draw (0,3.8) -- (0.2,5.2);
					\node at (-0.2,0.6) {\small{$-2$}};
					\node at (-0.2,2) {\small{$-1$}};
					\node at (-0.2,3.2) {\small{$-3$}};
					\node at (-0.2,4.6) {\small{$-2$}};			
				\end{scope}
				\begin{scope}[shift={(2.8,0)}]
					\draw (0.2,0) -- (0,1.4);
					\draw (0,1.2) -- (0.2,2.6);
					\draw (0.2,2.4) -- (0,4);
					\draw (0,3.8) -- (0.2,5.2);
					\node at (-0.2,0.6) {\small{$-2$}};
					\node at (-0.2,2) {\small{$-1$}};
					\node at (-0.2,3.2) {\small{$-3$}};
					\node at (-0.2,4.6) {\small{$-2$}};			
				\end{scope}	
				\draw (-0.2,1.7) -- (4.4,1.7);
				\draw[dashed] (2.6,3) -- (3.2,3) to[out=0,in=90] (4.2,2) -- (4.2,1.5);
				\node at (4,3) {\small{$-1$}};
				\node at (3.7,2.6) {\small{$A$}};
				\node at (3.5,1.9) {\small{$-5$}};
				\draw[->] (5,2.8) -- (7,2.8);
				\node at (6,3) {$\psi$};
				\node at (6,2.6) {\tiny{expansion}};
			\begin{scope}[shift={(8,0)}]
				\draw (0.2,0) -- (0,1.4);
				\draw (0,1.2) -- (0.2,2.6);
				\draw (0.2,2.4) -- (0,4);
				\draw (0,3.8) -- (0.2,5.2);
				\node at (-0.25,4.6) {\small{$-2$}};
				\node at (0.4,4.6) {\small{$L_{1}$}};
				\node at (-0.25,3.2) {\small{$-3$}};
				\node at (0.45,3.2) {\small{$L_{p'}$}};
				\node at (-0.2,2) {\small{$-1$}};
				\node at (0.45,2) {\small{$E_{p'}$}};
				\node at (-0.2,0.6) {\small{$-2$}};
				\node at (0.45,0.6) {\small{$U_{p'}$}};
				\node at (0.17,5) {\large{$\bullet$}};
				\node at (0.12,4.6) {\large{$\boldsymbol{\diamond}$}};
				\node at (0.1,3.2) {\large{$\boldsymbol{\circ}$}};			
			\end{scope}
			\begin{scope}[shift={(9.4,0)}]
				\draw (0.2,0) -- (0,1.4);
				\draw (0,1.2) -- (0.2,2.6);
				\draw (0.2,2.4) -- (0,4);
				\draw (0,3.8) -- (0.2,5.2);
				\node at (-0.25,4.6) {\small{$-2$}};
				\node at (0.4,4.6) {\small{$U_{r}$}};
				\node at (-0.25,3.2) {\small{$-3$}};
				\node at (0.4,3.2) {\small{$L_2$}};
				\node at (-0.2,2) {\small{$-1$}};
				\node at (0.45,2) {\small{$E_{p_2}$}};
				\node at (-0.2,0.6) {\small{$-2$}};
				\node at (0.45,0.6) {\small{$U_{p_2}$}};
				\node at (0.12,4.6) {\large{$\boldsymbol{\circ}$}};
				\node at (0.1,3.2) {\large{$\bullet$}};
			\end{scope}
			\begin{scope}[shift={(10.8,0)}]
				\draw (0.2,0) -- (0,1.4);
				\draw (0,1.2) -- (0.2,2.6);
				\draw (0.2,2.4) -- (0,4);
				\draw (0,3.8) -- (0.2,5.2);
				\node at (-0.25,4.6) {\small{$-2$}};
				\node at (0.5,4.6) {\small{$U_{p_1}$}};
				\node at (-0.25,3.2) {\small{$-2$}};
				\node at (0.4,3.2) {\small{$C_2$}};
				\node at (-0.2,2) {\small{$-1$}};
				\node at (0.35,2) {\small{$E_q$}};
				\node at (-0.2,0.6) {\small{$-2$}};
				\node at (0.35,0.6) {\small{$U_{q}$}};
				\node at (0.12,4.6) {\large{$\boldsymbol{\diamond}$}};
				\node at (0.1,3.2) {\large{$\bullet$}};
				\node at (0.07,3.5) {\large{$\boldsymbol{\circ}$}};													
			\end{scope}	
		\begin{scope}[shift={(8,0)}]
			\draw (-0.2,1.7) -- (3.8,1.7) to[out=0,in=0] (3.8,2.8) -- (2.6,2.8);
			\node at (3.8,2.2) {\small{$-4$}};
			\node at (4.4,2.2) {\small{$C_1$}};
			\node at (0.8,1.7) {\large{$\boldsymbol{\diamond}$}};
			\node[right] at (4.2,4) {\small{$\bullet\mapsto p$}};
			\node[right] at (4.2,3.7) {\small{$\boldsymbol{\diamond}\mapsto p_1$}};
			\node[right] at (4.2,3.4) {\small{$\boldsymbol{\circ}\mapsto r$}};
		\end{scope}
	\draw[->] (2,0.2) -- (2,-0.8);
	\node[left] at (2,-0.4) {$\tau$};
	\draw[->] (10,0.2) -- (10,-0.8);
	\node[left] at (10,-0.4) {$\pi$};
	\node[right] at (10,-0.25) {\tiny{minimal}};
	\node[right] at (10,-0.55) {\tiny{log resolution}};
	\begin{scope}[shift={(0.5,-4)}]
				\draw[semithick] (0,0) to[out=30,in=-90] (1.5,2.6) to[out=-90,in=150] (3,0) to[out=150,in=30] (0,0);
				\draw[dashed] (1.5,2.8) -- (1.5,0.2);
				\node at (2.8,0.5) {\small{$\mathcal{Q}_3$}};
	\end{scope}
	\begin{scope}[shift={(9.9,-3.3)}]
	\draw[name path=c1] (0,0) circle (1);
	\coordinate (P) at (0,2); 
	\node at ($(P)+(0,0.3)$) {\small{$p$}};
	\node at (P.center) {$\large{\bullet}$};
	\coordinate (R) at (1.732,-1);
	\coordinate (R') at (-1.732,-1);
	\node at ($(R)+(0.1,0.2)$) {\small{$r$}}; 
	\node at (R.center) {$\boldsymbol{\circ}$};
	\coordinate (P') at (0,-1);
	\node at ($(P')+(0,0.2)$) {\small{$p'$}}; \filldraw (P') circle (0.04);
	\coordinate (Q) at (0,1);
	\node at ($(Q)+(0,0.2)$) {\small{$q$}}; \filldraw (Q) circle (0.04);
	\coordinate (P1) at (-0.866,0.5);
	\node at ($(P1)+(-0.3,0)$) {\small{$p_1$}};
	\node at (P1.center) {$\boldsymbol{\diamond}$};
	\coordinate (P2) at (0.866,0.5);
	\node at ($(P2)+(0.3,0)$) {\small{$p_2$}}; \filldraw (P2) circle (0.04);
	\node at (-1.5,-0.2) {\small{$\ll_{1}$}};
	\node at (1.5,-0.2) {\small{$\ll_{2}$}};
	\node at (-0.7,-0.2) {\small{$\cc_{1}$}};
	\draw[add= 0.1 and 0.05, name path = L1] (P) to (R');
	\draw[add= 0.1 and 0.05, name path = L2] (P) to (R);
	\draw[add= 0.1 and 0.3, name path = LP'] (R') to (R);
	\node at (2.6,-0.8) {\small{$\ll_{p'}$}};
	\draw (P) to[out=180,in=135] (P1) to[out=-45,in=-120]
	($(Q)+(-0.4,-0.4)$)
	to[out=60,in=180]
	(Q) 
	to[out=0,in=100]
	($(Q)+(0.4,-0.4)$) 
	to[out=-80,in=180] (R) to[out=0,in=0] (P);
	\node at ($(P2)+(1.3,0)$) {\small{$\cc_{2}$}};
	\end{scope}
			\end{tikzpicture}
	\caption{Surface obtained in \ref{def:F2_n1-cusp} by an expansion with center $(C_{1},C_{2})$ and weight one becomes the complement of the tricuspidal quintic $\Qb$.}
	\label{fig:7}
\end{figure}
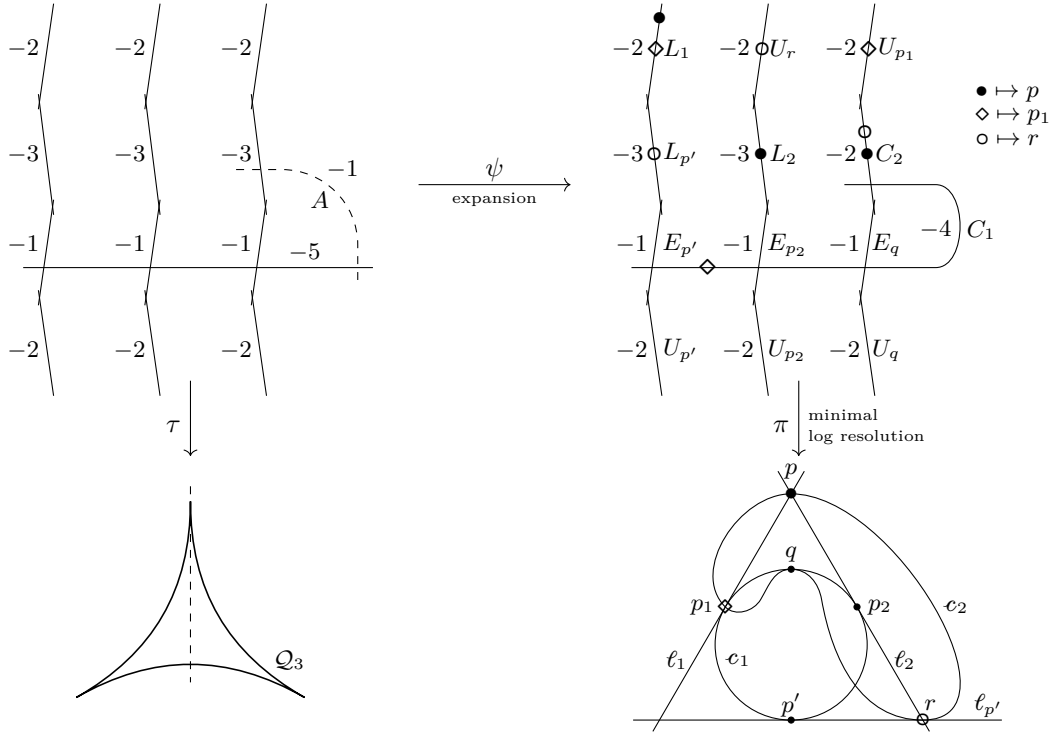

\begin{ex}
	[Different minimal models in a tower]\label{ex:reduction}
	In Lemma \ref{lem:MMP}\ref{item:no_bubbles}, we have chosen $(X',D')$ with no bubbles. This choice is not necessary for the application of MMP as in Lemma \ref{lem:MMP}. However, it will be very useful to restrict the possible types of $(X_{\min},D_{\min})$. To see this, we consider $S$ be as in Example \ref{ex:7} above.
	
	The expansion $\psi$ maps $(X,D)$ to $(X',D')$ shown in the top-right of Figure \ref{fig:7}. Then the almost minimalization  $\upsilon\colon (X',D')\to (X\am,D\am)$ contracts  $D'-(C_{1}+C_{2}+U_{p_1})$, so $X\am=\F_2$, with negative section $\Delta\am=\upsilon(U_{p_1})$, fiber $\upsilon(C_2)$ and a $3$-section $\upsilon(C_1)$. Thus $X_{\min}=\P(1,1,2)$. 
	
	Consider a particular case when $\psi$ is centered at $(C_1,C_2;1)$, so $\Exc\psi$ is the bubble $A$ in Figure \ref{fig:7}. 
	Let $\tau$ be the contraction of $D-\psi^{-1}_{*}C_1$. Then $\tau_{*}D$ is the tricuspidal quintic $\Qb\subseteq \P^2$ from  \cite[2.3.10.8]{Namba_geometry_of_curves}, see Table \ref{table:rcc}. The pair $(\P^2,\tfrac{1}{2}\Qb)$ satisfies Lemma \ref{lem:R}\ref{item:MFS_point}, so we could have taken $(X_{\min},\tfrac{1}{2}D_{\min})=(\P^2,\tfrac{1}{2}\Qb)$ for our minimal model of $(X,\tfrac{1}{2}D)$. In fact, $\tau$ is the unique MMP run for $(X,\tfrac{1}{2}D)$ as in \cite[Lemma 3.11]{Palka_MMP}. Nonetheless, in Lemma \ref{lem:MMP} we have decided to contract $A$ anyway. 
	
	Note that the image of $A$ is visible on the minimal model $(\P^{2},\tfrac{1}{2}\Qb)$: it is the line tangent to $\Qb$ at one of its cusps. This suggests that the lack of bubbles restricts the geometry of $D_{\min}$. It is indeed the case, and we will use it frequently, see claims \eqref{eq:F2_delta} or \eqref{eq:P2_ordinary}. For instance, claim \eqref{eq:P2_ordinary} -- roughly speaking -- shows that whenever $(X',D')$ has no bubbles, lines like $\tau(A)$ should lie in $D_{\min}$. Moreover, a lot of potential bubbles is produced by Lemma \ref{lem:Furushima}, see Figure \ref{fig:Furushima}: we will exploit them in Proposition \ref{prop:U} to restrict the singularity types of $X_{\min}$.
\end{ex}	

\begin{ex}[Corollaries \ref{cor:uniq}\ref{item:n} and \ref{cor:aut} for \ref{def:F2_n1-cusp}]\label{ex:7_uniq} 
	We will now outline the proof of Corollary \ref{cor:uniq}\ref{item:n} for surfaces considered in Example \ref{ex:7}; that is, we explain why the weighted graph of $D$ uniquely determines the isomorphism class of $(X,D)$, hence of $S$. We will make this explanation more rigorous in Example \ref{ex:7_bis}, after introducing suitable notation. For another illustration of this argument see Example \ref{ex:33}.
	
	First, note that the computation of Configuration \ref{conf:F2_n1-cusp} in Section \ref{sec:constructions} shows that, up to a projective equivalence, there are exactly two planar divisors $\pp\subseteq \P^{2}$ as in Example \ref{ex:7}, see Proposition \ref{prop:conf_uniq}\ref{item:conf_number}.
	
	Since the centers of the blowups in the decomposition of $\pi$ are determined uniquely by $\pp$, we get a one-to-one correspondence between isomorphism classes of pairs $(\P^{2},\pp)$ and $(X',D'+E')$, \emph{each with fixed order of the boundary components}. For $(\P^2,\pp)$, this order is uniquely determined by the combinatorics of $\pp$. But the weighted graph $D'+E'$ has an involution $\iota$ 
	interchanging the subchains $[2,3,1,2]$ of $D'$, see Figure \ref{fig:7}. 
	The two possible orders of the components of $D'+E'$ correspond to two classes of projective equivalence of $\pp\subseteq \P^{2}$. We conclude that $(X',D'+E')$ is actually unique up to an isomorphism. Now since the centers of the blowups in the decomposition of $\psi'$ are determined uniquely by $(X',D'+E')$, we infer that $(X,D)$ is unique, as claimed.
	 
	The graph of $D$ has an involution coming from $\iota$, but we have seen that it is not induced by automorphism of $S$. However, for an expansion at $(C_{1},C_{2};1)$, shown in Figure \ref{fig:7}, the graph of $D$ has another automorphism, of order three, which does correspond to an automorphism of $S$. Indeed, we have seen in Example \ref{ex:reduction} that in this case $S$ is the complement of the tricuspidal quintic $\Qb$, so $\Aut(S)=\Z_{3}$ is generated by the rotation of $\P^{2}$, see   \cite[Proposition 2.9]{tDieck_optimal-curves}, \cite[Remark 4.8]{PaPe_delPezzo} or Example \ref{ex:7_bis}.
\end{ex}

\section{Comparison with some results in the literature} \label{sec:literature}
We now comment how some well-known examples fit into our classification. As in Section \ref{sec:expansions}, we denote by $\rst$ the tower obtained in Theorem \ref{CLASS} from $*$-th row of \tables. 
\smallskip

The first example of a \emph{topologically contractible} surface which is not isomorphic to $\C^{2}$ was given by Ramanujam \cite[\S 3]{Ramanujam_C2}. We recover it in \ref{def:C**_2} by expansions with centers $(L_{pp'},C_1)$, $(L_{pp'},C_1)$, $(E_{p},L_{2})$ and weights $(\frac{1}{2},1,1)$. Originally, this surface was distinguished from $\C^{2}$ using the fundamental group at infinity. In fact, by loc.\ cit.\ $\C^{2}$ is the only contractible surface for which this group is trivial. Another way to distinguish it is to show that its Kodaira dimension is two. It was done by Gurjar and Miyanishi in \cite[\S 4]{GM_k<2}. In loc.\ cit, another, similar example  of a contractible surface of log general type was constructed, we recover it in  \ref{def:P2n2_cuspidal} by expansions with centers $(E_{p'},C_{1})$, $(E_{p_2},C_{1})$ and weights $(1,1)$, cf.\  \cite[3.15]{DiPe-hp_and_alg_curves}.

Some generalizations of Ramanujam's construction are given in \cite{Sugie_Ram}. In the Appendix to loc.\ cit, Miyanishi and Sugie constructed a series of \ZHPs of log general type. We recover them in  \ref{def:C**_1} by expansions with centers  $(L_{1},L_{2})$, $(E_{q_{1}},L_{q_{1}q_{2}})$, $(L_{1}',L_{1})$, $(L_{2}',L_{2})$ and weights $(m^{-1},n-2,r+1,(n-m)^{-1})$, where $r=m(n-1)$ or $m(n-1)-2$. By Theorem 2 loc.\ cit, the surfaces with $r=m(n-1)$ are contractible.	
\smallskip

As an illustration of their algorithm, in  \cite{tDieck_optimal-curves}  tom Dieck and Petrie constructed $14$ towers  $A,B,\dots,N$. In our classification, they are called \ref{def:C**_1}, \ref{def:P2n3}, \ref{def:C**_2}, \ref{def:P2n2_cuspidal}, \ref{def:F2_n2-tangent}, \ref{def:F2_n2-transversal}, \ref{def:nodal-cubic_P2}, \ref{def:F2n2-cuspidal}, \ref{def:P2n1-cuspidal}, \ref{def:F2-hor-ccc-41}, \ref{def:P2n1-nodal}, \ref{def:F2_n1-cusp}, \ref{def:F2n2-nodal}, \ref{def:C**_3}, respectively. Proposition 2.2 loc.\ cit.\ shows that for towers $A$, $B$, $C$, the initial planar divisor $\pp\subseteq \P^{2}$ can be an arrangement of lines. It is not so e.g.\ for the tower $I$, see \cite{tDieck_symmetric_hp}.  All line arrangements leading to $\Q$HPs of log general type are classified in \cite{DiPe_line-arragements}; those consisting of lines and one conic were studied in \cite{Neusel}. 
Unfortunately, exhaustive lists in each case are rather long, which makes it difficult to study the resulting $\Q$HPs directly. Therefore, we will not attempt to compare them with Theorem \ref{CLASS} (which shows that, assuming Negativity Conjecture \ref{conj:negativity}, a much shorter list of planar arrangements is sufficient). 
\smallskip

A plenty of examples of \QHPs is given by surfaces $S=\P^2\setminus \bar{E}$, where $\bar{E}$ is a \emph{rational cuspidal curve}, i.e.\ a curve homeomorphic to $\P^1$ in the Euclidean topology. Classification of such curves, up to a projective equivalence, is still an open problem, with many interesting connections to low-dimensional topology and singularity theory, see e.g.\ \cite{Bobadilla-LHN_Cuspidal,KoPa_four-cusps} and references there. 

Those rational cuspidal curves whose complements are of log general type and satisfy the Negativity Conjecture \ref{conj:negativity} were classified by Palka and the author in \cite{PaPe_Cstst-fibrations_singularities,PaPe_delPezzo}. Table \ref{table:rcc} shows how to recover\footnote{
	To be precise, it shows how to construct the pair $(X,D)$, where $\pi\colon (X,D)\to (\P^{2},\bar{E})$ is the minimal log resolution. This pair is the minimal log smooth completion of $\P^{2}\setminus \bar{E}$ in all cases except $\cA$-$\cD$ with $\gamma=1$, where one needs to contract a superfluous $(-1)$-curve $\pi^{-1}_{*}\bar{E}\subseteq D$. And indeed, the respective weights from Table \ref{table:rcc} are not admissible (see Table \ref{table:C**}). In these cases, the minimal log smooth completion is obtained in  \ref{def:C**_1} by expansion with centers $(L_{1},L_{2})$, $(E_{p_{1}'},L_{p_{1}'p_{2}'})$, $(L_{1},E_{p_{1}'})$, $(L_{2},E_{p_{2}'})$ and weights as follows. In case $\cA$: $(1,\tfrac{2}{2s-1},p-1,p-1+\tfrac{1}{s})$, $\cB$: $(1,\tfrac{2}{2s-1},\tfrac{s}{ps+p-1},p-1)$, $\cC$: $(1,\tfrac{1}{s},p-1,p-1+\tfrac{1}{2s+1})$ and $\cD$: $(1,\tfrac{1}{s},p-1+\tfrac{4}{2s+1},p-1)$. A similar adjustment is made in the proof of Proposition \ref{prop:C**}.
	\label{note:rcc}
} these complements from Theorem \ref{CLASS}. Note that the towers listed there have $\ngr=1$, that is, by Corollary \ref{cor:uniq}\ref{item:n}, they are uniquely determined by the weighted graph of $D$. On the other hand, for the complements of rational cuspidal curves, that graph depends only on the topology of the cusps. It therefore follows from Theorem \ref{CLASS} that any rational cuspidal curve whose complement is of log general type and satisfies the Negativity Conjecture, is uniquely determined, up to a projective equivalence, by the topological type of its singularities. In \cite[\S 4]{PaPe_delPezzo}, this result is deduced by combining known results about rational cuspidal curves. The proof of Corollary \ref{cor:uniq}\ref{item:n}, given in Section \ref{sec:uniqueness}, provides an independent, uniform argument.

\begin{rem}[$\Z$HPs]\label{rem:ZHP}
	Those towers from Theorem \ref{CLASS} which contain \ZHPs are marked by \enquote{$\tick$} in the column \enquote{$\Z$} of the respective table. By Lemma \ref{lem:ZHP}, every such tower contains infinitely many \QHPs $S$ with $\# H_{1}(S;\Z)=k$ for any positive integer $k$. Therefore, the tom Dieck--Petrie approach is well suited for the study of \QHPs, but not of \ZHPs in particular.
\end{rem}

\section{Proof of Theorem \ref{CLASS}}\label{sec:classification}

Let $S$ be a smooth \QHP, and let $(X,D)$ be the minimal log smooth completion of $S$. To prove Theorem \ref{CLASS} we can, and will, assume that
	\begin{equation}\label{eq:assumption}
		\kappa(S)=2,\quad 
		\kappa(K_{X}+\tfrac{1}{2}D)=-\infty \mbox{ and } S \mbox{ admits no $\C^{**}$-fibration.}
\end{equation}

Indeed, if $S$ is $\C^{**}$-fibered then Theorem \ref{CLASS} follows from \cite{MiySu-Cstst_fibrations_on_Qhp}, see Section \ref{sec:Cstst}. Let
\begin{equation*}
	\begin{tikzcd}
		(X,D) \ar[r, "\psi"] & (X',D')\ar[r, "\upsilon"] &(X\am,D\am)\ar[r, "\alpha"] &(X_{\min},D_{\min})
	\end{tikzcd}
\end{equation*}
be as in Lemma \ref{lem:MMP}. By definition, $\psi$ is an expansion with, say, $n$ centers. By Lemma \ref{lem:MMP}\ref{item:no_bubbles}
	\begin{equation}\label{eq:no_bubbles}
		\mbox{there are no bubbles on $(X',D')$ and on $(X\am,D\am)$.}
	\end{equation}
	By assumption \eqref{eq:assumption} and Lemma \ref{lem:R}\ref{item:MFS_point}, $-(2K_{X_{\min}}+D_{\min})$ is ample and $\rho(X_{\min})=1$. As in Lemma \ref{lem:MMP}, write 
	\begin{equation}\label{eq:Delta_R}
		D\am=R\am+\Delta\am,\quad\mbox{where $\Delta\am$ is the sum of all $(-2)$-twigs of $D\am$,}
	\end{equation}
	Then by Lemma \ref{lem:MMP}\ref{item:alpha_peeling}, for any curve $U\subseteq X\am$ not contained in $\Delta\am$ we have 
	\begin{equation}\label{eq:ampleness}
		0> \alpha(U)\cdot (2K_{X_{\min}}+D_{\min})=U\cdot (2K_{X\am}+D\am-\Bk\Delta\am).
	\end{equation}
	Furthermore, by Lemma \ref{lem:R}\ref{item:R} 
	\begin{equation}\label{eq:R}
	\#R\am=n+1,\quad \mbox{and}\quad R\am \mbox{ has exactly } 2n \mbox{ nodes}.
	\end{equation} 

In this section, we show that all \QHPs satisfying assumption \eqref{eq:assumption} are arranged in 39 towers obtained by the tom Dieck--Petrie algorithm from arrangements $\pp\subseteq \P^{2}$ of lines and conics, listed in Section \ref{sec:constructions}.

Conversely, by Lemma \ref{lem:kappa_expansion}\ref{item:kappa_1/2_expansion}, any $\Q$HP whose log smooth completion $(X,D)$ is obtained by an expansion $(X,D)\to(X',D')$ from a minimal log resolution of $(X_{\min},D_{\min})$ as in Lemma \ref{lem:R}\ref{item:MFS_point}, automatically satisfies $\kappa(K_{X}+\tfrac{1}{2}D)=-\infty$. Thus our $\Q$HPs satisfy the Negativity Conjecture \ref{conj:negativity}. In Proposition \ref{prop:k=2}, we will show they are of log general type, and thus complete the proof of Theorem \ref{CLASS}.

Note that the assumption \eqref{eq:assumption} and the Iitaka easy addition theorem imply that
\begin{equation}\label{eq:assumption_fibr}
	X_{\min}\setminus D_{\min}=X\am\setminus D\am \mbox{ admits no  $\C^{1}$, $\C^{*}$- or $\C^{**}$-fibration.}
\end{equation}

\subsection{Possible types of $X_{\min}$}\label{sec:exclude}
By assumption \eqref{eq:assumption}, the pair $(X_{\min},\tfrac{1}{2}D_{\min})$ is as in Lemma \ref{lem:R}\ref{item:MFS_point}. In particular, $-K_{X_{\min}}$ is ample, $\rho(X_{\min})=1$ and $X_{\min}$ has at most $\rA_{k}$-singularities, so $X_{\min}\cong \P^2,\P(1,1,2)$, or $X_{\min}$ is as in Lemma \ref{lem:Furushima}. In the latter case, Figure \ref{fig:Furushima} shows some $(-1)$-curves on $X\am$. In Proposition \ref{prop:U} below, we will see that unless $X_{\min}\cong \P(1,2,3)$, at least one of these curves is a bubble on $(X\am,D\am)$, contrary to condition \eqref{eq:no_bubbles}. Later, we will see that lack of bubbles restricts the geometry of $D\am$, too, see Example \ref{ex:reduction}.

\begin{prop}
\label{prop:U}
Let $(X_{\min},\tfrac{1}{2}D_{\min})$ be as above. 
Write $D\am=R\am+\Delta\am$ as in formula \eqref{eq:Delta_R}. Then 
\begin{enumerate}
	\item\label{item:Xmin} The surface $X_{\min}$ is either: $\P^{2}$, or a quadric cone $\P(1,1,2)$, or $\P(1,2,3)$.
	\item\label{item:A1A2} Assume that $X_{\min}\cong \P(1,2,3)$ and let $U\subseteq X\am$ be the proper transform of the line joining its singular points, see Figure \ref{fig:A1+A2}. Then $U\not\subseteq D\am$, $U\cdot R\am=1$ and $U$ meets $\Delta\am$ in tips of $D\am$.
\end{enumerate}
\end{prop}
\begin{proof}
Assume $X_{\min}\not\cong \P^2,\P(1,1,2)$. By Lemma \ref{lem:R}\ref{item:MFS_point}, $X_{\min}$ is one of the surfaces in Lemma \ref{lem:Furushima}, and $-(2K_{X_{\min}}+D_{\min})$ is ample. Let $U\subseteq X\am$ be a $(-1)$-curve. The inequality \eqref{eq:ampleness} and the adjunction formula give  
$
0>U\cdot (2K_{X\am}+R\am+\Delta\am-\Bk\Delta\am)=-2+U\cdot R\am+U\cdot (\Delta\am-\Bk\Delta\am), 
$
so 
\begin{equation}\label{eq:U}
U\cdot R\am<2-U\cdot (\Delta\am-\Bk\Delta\am)\leq 2,\quad \mbox{in particular}\quad U\cdot R\am\leq 1.
\end{equation}

\begin{claim*}\label{cl:U}
	Assume that $U$ meets exactly one connected component $T$ of $\Delta\am$. The following \emph{cannot} hold:
	\begin{enumerate}
		\item\label{item:UR=1} $U\cdot T=1$ and $U$ does not meet a tip of $T$.
		\item\label{item:loops} $U\cdot T=2$, $\#T\geq 2$ and $U$ meets both tips of $T$.
	\end{enumerate}
\end{claim*}
\begin{proof}
	Suppose \ref{item:UR=1} or \ref{item:loops} holds. Then $U\not\subseteq R\am$, since $R\am$ meets $T$ once, in a tip. Consider case \ref{item:UR=1}. Then $U\cdot R\am\geq 1$ since by Lemma \ref{lem:MMP}\ref{item:Si_no-lines} $X\am\setminus D\am$ contains no contractible curves. By the formula  \eqref{eq:U}, $U\cdot R\am=1$, hence $U$ is a bubble on $(X\am,D\am)$, contrary to the condition \eqref{eq:no_bubbles}. In case \ref{item:loops}, formula \eqref{eq:bark_of_a_2-twig} gives $U\cdot \Bk\Delta\am=1$, so $U\cdot R\am=0$ by formula \eqref{eq:U}, and again $U$ is a bubble; a contradiction.
\end{proof}
	Conditions \ref{item:UR=1} and \ref{item:loops} above exclude $X_{\min}$ shown in Figures \ref{fig:A4}--\ref{fig:A8} and \ref{fig:A1+A7}--\ref{fig:4A2}, respectively. 
	
	Suppose $X_{\min}$ is of type $2\rA_1+2\rA_3$, see Figure \ref{fig:2A1+2A3}. Then $\theta_{*}D\am=\qq+\ll_1+\ll_2+\ll_3$, where: $\qq$ is a cubic with a node, say $q\in\qq$, $\ll_1$ is a line tangent to $\qq$ at its inflection point, say $p_1$, and $\ll_2$, $\ll_3$ are lines such that for some $p_2,p_3\in \qq\reg$ we have $p_1\in \ll_3$, $p_3\in\ll_2$ and $(\qq\cdot \ll_2)_{p_2}=(\qq\cdot \ll_3)_{p_3}=2$, see \cite[p.\ 15]{Furushima}. Now the proper transform of the line joining $q$ with $p_3$ satisfies condition \ref{item:loops} above; a contradiction.
	
	Suppose that $X_{\min}$ is of one of the types 
$3\rA_2$, 
	$2\rA_1+\rA_3$ or $\rA_1+2\rA_3$, see Figures \ref{fig:3A2}--\ref{fig:A1+2A3}. Then $\Delta\am$ has a connected component $T=[2,2,2]$ or $[2,2]$ such that each of its tips, say $T^{\pm}$ meet a $(-1)$-curve, say  $U^{\pm}\subseteq \Exc\theta$. Since $R\am$ meets only one tip of $T$, say $T^{-}$, we have $U^{+}\not\subseteq R\am$. Formula \eqref{eq:bark_of_a_2-twig} gives $U^{+}\cdot \Bk T=1-\tfrac{1}{\# T+1}$. Looking at Figures \ref{fig:3A2}--\ref{fig:A1+2A3} again, we see that $U^{+}\cdot \Delta\am=2$, and $U^{+}\cdot \Bk(\Delta\am-T)\geq \tfrac{1}{\#T+1}$, so $U^{+}\cdot (\Delta\am-\Bk\Delta\am)\geq 1$. By formula \eqref{eq:U}, $U^{+}\cdot R\am=0$, so $U^{+}$ is a bubble; a contradiction with condition  \eqref{eq:no_bubbles}.
	
	Thus $X_{\min}$ is of type $\rA_1+2\rA_2$, i.e.\ $X_{\min}\cong \P(1,2,3)$. Let $U$ be the $(-1)$-curve in Figure \ref{fig:A1+A2}. Suppose some $(-2)$-tip of $D\am$ does not meet $U$. Then $U\cdot \Bk\Delta\am=\tfrac{5}{6}$, so $U\cdot R\am\leq 0$ by formula \eqref{eq:U}. If $U\subseteq R\am$ then $\beta_{R\am}(U)=U\cdot R\am-U^2\leq 1$ and $\beta_{D\am}(U)=\beta_{R\am}(U)+U\cdot \Delta\am\leq 3$, contrary to   Lemma \ref{lem:MMP}\ref{item:D_squeezed}. Hence $U\not\subseteq R\am$, so $U\cdot R\am=0$ and thus $U$ is a bubble on $(X\am,D\am)$; a contradiction with condition \eqref{eq:no_bubbles}.
	
	We have shown that $U$ meets both $(-2)$-tips of $D\am$, in particular, $U\not\subseteq D\am$. Now $U\cdot R\am\leq 1$ by formula \eqref{eq:U}, so in fact $U\cdot R\am=1$ by condition \eqref{eq:no_bubbles}, as claimed.
\end{proof}

Proposition \ref{prop:U} shows that, under our assumptions, $X_{\min}$ is $\P(1,2,3)$, $\P(1,1,2)$, or $\P^{2}$. We will treat these cases in Sections \ref{sec:A1A2}, \ref{sec:F2} and \ref{sec:P2}, respectively. Our aim is to restrict the possible divisors $D_{\min}$ using condition \eqref{eq:no_bubbles}, and to transform the remaining ones to $\pp\subseteq \P^{2}$, constructed in Section \ref{sec:constructions}.  If $X_{\min}\not\cong \P^{2}$ we shall encounter some cases with $n=0$, i.e.\ when $(X,\tfrac{1}{2}D)$ is already almost minimal. We call them \emph{sporadic}, since their towers consist of single elements. It is convenient to study them together in Section \ref{sec:sporadic}.

\subsection{Case \texorpdfstring{$X_{\min}\cong \P(1,2,3)$}{Xmin=P(1,2,3)}}\label{sec:A1A2}
 In this case, $\Delta\am$ consists of two twigs, $T_1=[2]$ and $T_2=[2,2]$. Let $U\subseteq X\am$ be as in Proposition \ref{prop:U}\ref{item:A1A2}, that is, $U$ is a $(-1)$-curve, $U\not\subseteq D\am$, $U\cdot R\am=1$, $U\cdot\Delta\am=2$ and $U$ meets both $(-2)$-tips of $D\am$. As in Lemma \ref{lem:Furushima}, let $\theta\colon X\am\to \P^{2}$ be the contraction of $U+T_2=[1,2,2]$, see Figure \ref{fig:A1+A2}. Now $\ll\de \theta_{*}T_1$ is a line and meets $\rr\de\theta_{*}R\am$ in two points, say $p$, $q$; $(\ll\cdot\rr)_{p}=1$ and $q\in \rr$ is an ordinary node with one branch  tangent to $\ll$ with multiplicity $3$. In particular, $\deg \rr= 5$. By formula \eqref{eq:R}, we have $n=\#\rr-1\leq 2$. 
\medskip

Consider the case $n=2$. Then $\rr$ is a sum of a cubic $\qq$ and two lines $\ll_{p}\ni p$ and $\ll_{q}\ni q$. Write $\{s\}=\Sing\qq$. Suppose $s\in \qq$ is a cusp. Then $\#\ll'\cap \qq\geq 2$ for every line $\ll'\neq \ll$, so $\#(\qq\cap(\ll_{p}\cup\ll_{q})\setminus \ll) \geq 3$. In fact, the equality holds since $\rr$ has $2n=4$ nodes off $\ll$, and one of them is $\ll_{p}\cap\ll_{q}$. Hence $\ll_{q}$ is tangent to $\qq$ at some $r\neq q$. The projection from $q$ gives a morphism $(\bl{q}^{-1})_{*}\qq\to \P^{1}$ of degree $2$, ramified at $\bl{q}^{-1}(r)$, $\bl{q}^{-1}(s)$ and at the point infinitely near to $q$; a contradiction with the Hurwitz formula. 

Hence $s\in \qq$ is a node. The fact that $\rr$ has four nodes off $\ll$ implies now that $(\ll_{p}\cdot \qq)_{r}=3$ for some $r\neq q,s$. Perform a quadratic Cremona map centered at $q$, $r$, $s$, that is, blow up these points and contract the proper transforms of the lines joining them. Denote the images of
$\qq$, $\ll$, $\ll_{p}$, $\ll_{q}$ and the exceptional curves over
$q$, $r$, $s$ by 
$\cc_{1}$, $\ll_{1}$, $\ll_{2}$, $\ll$ and $\ll_{1}'$, $\ll_{2}'$, $\ll_{r_{1}r_{2}}$. Then  $\cc_{1}+\ll_{1}+\ll_{2}+\ll_{1}'+\ll_{2}'+\ll_{r_{1}r_{2}}+\ll$, and hence $S$, is as in  \ref{def:nodal-cubic-P2_un}. The restrictions on weights of expansions come from Lemma \ref{lem:expansions}.

	\begin{figure}[htbp]
			\begin{tikzpicture}
			\draw (0,-0.2) -- (-0.2,1.2);
			\node at (-0.4,0.4) {\small{$-2$}};
			\draw (-0.2,1) -- (0.1,2.6);
			\node at (-0.35,1.8) {\small{$-1$}};
			\node at (0.2,1.8) {\small{$E_s$}};
			\draw (0,2.5) -- (1.8,2.9);
			\node at (1,2.5) {\small{$-2$}};
			\node at (1,2.9) {\small{$L$}};
			\node at (1.3,2.75) {$\bullet$};	
			\node at (1.3,1.5) {$\bullet$};		
			\draw (1.6,2.9) -- (3.4,2.5);
			\node at (2.5,2.5) {\small{$-2$}};
			\node at (2.6,2.9) {\small{$L_2$}};
			\draw (0.4,2) -- (0.8,4.3);
			\node at (0.4,3.8) {\small{$-2$}};
			\node at (0.95,3.8) {\small{$L_1'$}};
			\draw (0.7,4.2) -- (2.1,4.4);
			\node at (1.6,4.15) {\small{$-2$}};
			\draw[dashed] (1.9,4.4) -- (4,4.1) to[out=0,in=90] (4.6,1.3);
			\node at (4.35,1.8) {\small{$-1$}};
			\node at (4.95,1.8) {\small{$E_{p_1}$}};
			\draw (3.3,-0.2) -- (3.1,1.2);
			\node at (2.9,0.4) {\small{$-2$}};
			\draw (3.1,1) -- (3.3,2.6);
			\node at (2.9,1.8) {\small{$-1$}};
			\node at (3.5,1.8) {\small{$E_{p_2}$}};	
			\draw (3,2.4) -- (3.3,4.3);
			\node at (2.9,3.8) {\small{$-2$}};
			\node at (3.5,3.8) {\small{$L_1$}};
			\node at (3.1,3) {$\bullet$};
			\draw[dashed] (2.1,2.6) to[out=90,in=0] (2.1,3.3) to[out=180,in=0] (0.6,3.3) to[out=180,in=90] (-1.7,2.6) -- (-1.8,1.9);
			\node at (-1.4,2.6) {\small{$E_{r_2}$}};
			\node at (-2,2.6) {\small{$-1$}};
			\draw (3.4,-0.1) -- (1.6,0.3);
			\node at (2.2,0.4) {\small{$-2$}};
			\node at (2.3,-0.2) {\small{$L_2'$}};
			\draw (-3.8,1.5) -- (4.8,1.5);
			\node at (-3.5,1.75) {\small{$C_1$}};
			\node at (-3.5,1.25) {\small{$-3$}};
			\draw (-1.8,1.4) [partial ellipse=190:-10 : 0.8 and 0.8];
			\node at (-2,1.7) {\small{$L_{r_1 r_2}$}};
			\node at (-2.7,1.9) {\small{$-1$}};
			\node at (-1.14,1.8) {$\bullet$};
			\draw[dashed] (0.3,2.1) -- (1.4,2.3) to[out=0,in=90] (1.7,0.1);
			\node at (1.6,1.8) {\small{$-1$}};
			\node at (2.2,1.8) {\small{$E_{p'}$}};
			\node at (-2.5,0.5) {\small{$\bullet\mapsto r_1$}};	
		\end{tikzpicture}
		\caption{The graph of $D'$ in case $X_{\min}\cong \P(1,2,3)$, $n=2$. In this case, $S$ is as in \ref{def:nodal-cubic-P2_un}.}
		\label{fig:nodal-cubic-P2_un}
	\end{figure}
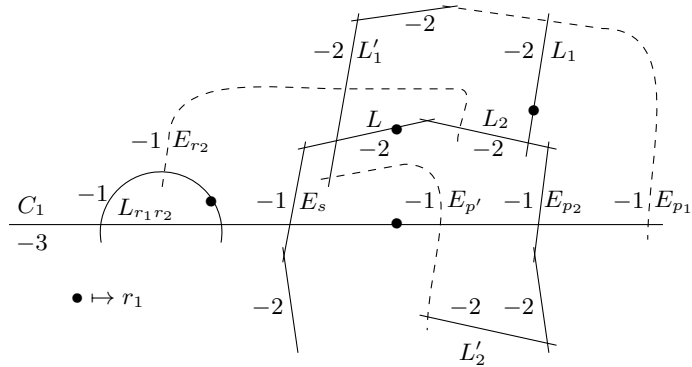
\medskip

Consider the case $n=1$. Then $\rr$ has two components, say $\qq$, $\cc$, and $\deg\qq\geq 3$. Now $\rr$ has $2n=2$ nodes off $\ll$, so $\qq$ has $\nu\leq 1$ nodes off $\ll$ and  meets $\cc\setminus \{q\}$ in $2-\nu$ points. Put $\tilde{D}''=(\theta\circ\upsilon)^{*}(\rr+\ll)\subseteq X'$. Using Notation \ref{not:P2}, we have  $\Delta\am=L+U_{q}+U_{q}'$, $L=[2]$, $U_{q}'+U_{q}=[2,2]$. The $(-1)$-curve $E_{q}$ meets $L$, $U_{q}'$ and $R$, once each. We need to find a divisor $D''$ containing $\tilde{D}''$, whose weighted graph is the same as the one of the preimage, on the minimal log resolution, of $\pp\subseteq \P^{2}$ as in Table \ref{table:smooth_1}.
\smallskip 

Consider the case when $\qq$ is a cubic, so $\cc$ is a conic. Then $\ll$ meets $\cc$ transversally at $p$, $q$ and $(\ll\cdot \qq)_{q}=3$. Let $s$ be the singular point of $\qq$. We have $\#(\cc\cap \qq)=3-\nu$, so $(\cc\cdot \qq)_{r}\geq 3$ for some $r\neq q$.  Let $\ll_{r}$ be the line tangent to $\qq$ at $r$. Then $L_{qr},L_{rs},L_{sq},L_{r}\subseteq X'$ are disjoint $(-1)$-curves and 
\begin{itemize}
	\item $L_{qr}\cdot \tilde{D}''=3$ and $L_{qr}$ meets $\tilde{D}''$ in $Q$, $U_{q}$ and $U_{r}$,
	\item $L_{rs}\cdot \tilde{D}''=4$ and $L_{rs}$ meets $\tilde{D}''$ in $C$, $L$, $U_{r}$ and $U_{s}$,
	\item $L_{sq}\cdot \tilde{D}''=3$ and $L_{sq}$ meets $\tilde{D}''$ in $C$, $U_{s}$ and $U_{q}$,
	\item $L_{r}\cdot \tilde{D}''=3$ and $L_{r}$ meets $\tilde{D}''$ in $Q$, $L$ and $U_{r}'$.
\end{itemize} 

Assume that $s\in \qq$ is a node. Then $\cc\cap \qq=\{q,r\}$ and $(\cc\cdot \qq)_{r}=5$. It follows that $D''=\tilde{D}''+L_{qr}+L_{rs}+L_{sq}$, and hence $S$, is as in  \ref{def:A1A2_C2C3-node}: indeed, the curves 
$Q$, $C$, $L$, $U_{q}$, $U_{r}$, $U_{s}$, $L_{qr}$, $L_{rs}$, $L_{sq}$, $E_{q}$
defined above correspond there to 
$C_{1}$, $C_{2}$, $L_{2}$, $L_{2}'$, $L_{1}'$, $L_{qt}$, $E_{p'}$, $E_{q}$, $E_{t}$, $E_{p_{2}}$,
respectively. 
Assume that $s\in \qq$ is a cusp. Then $\cc\cap \qq=\{q,r,r'\}$ for some $r'\neq q,r$ and $(\cc\cdot \qq)_{r}\in \{3,4\}$. It follows that $D''=\tilde{D}''+L_{rs}+L_{r}$, and hence $S$, is as in  \ref{def:A1A2_C2C3-cusp-41} in case $(\cc\cdot \qq)_{r}=4$ and  \ref{def:A1A2_C2C3-cusp-32} in case $(\cc\cdot \qq)_{r}=3$. Indeed, the curves 
$Q$, $L$, $C$, $U_{r}'$, $U_{s}$, $L_{rs}$, $L_{r}$, $E_{q}$ 
defined above correspond there to 
$C_{1}$, $C_{2}$, $C_{3}$, $L_{qp'}$, $L_{1}$, $E_{q}$, $E_{p'}$, $E_{p_{2}}$, 
respectively.

We claim that in case \ref{def:A1A2_C2C3-cusp-41}, we can take the weight of expansion at $(C_1,C_3)$ to be different from $1$: remaining restrictions in Table \ref{table:smooth_1} come from Lemma \ref{lem:expansions}. Suppose it is $1$. We keep Notation \ref{not:P2}, \ref{not:bl} for Configuration \ref{conf:31}\ref{item:A1A2_C2C3-cusp-41} defining \ref{def:A1A2_C2C3-cusp-41}, and use it for curves on $X$, too. We will find disjoint $(-1)$-curves $A_1,A_2,G\subseteq X$ such that $A_i\cdot D=2$, $G\cdot D=3$; $A_1$ meets $C_1,U_{p_2}$; $A_2$ meets $C_1,E_{r}'$; and $G$ meets $L_1$, $C_2$, $E_r$. Then after contraction of $A_1$, $A_2$ the image of $D+G$ is as in Figure \ref{fig:F2n2-cuspidal}, with the image of $G$ playing the role of $E_{q}$. Hence $S$ is as in \ref{def:F2n2-cuspidal}; with expansions at $(C_1,E_{p_2};1)$ and $(C_1,E_r;1)$, and therefore we can exclude such $S$ from \ref{def:A1A2_C2C3-cusp-41}. 

Write $\cc_1\cap \cc_3=\{p',p_2,r,s\}$. Then $A_1$ is the proper transform of the unique conic passing through $p_2,p',s$ and tangent to $\ll_1$ at $q$, see Figure \ref{fig:A1A2_C2C3-cusp-41_conf}. Consider a $\P^1$-fibration induced by $|2U_{p_2}+2A_1+U_{p_2}'+C_3|$. The horizontal part $D\hor$ of $D$ consists of a $2$-section $C_1$ and $1$-sections $E_r$, $C_2$. Let $F_{T}$ be the fiber containing $T\de L_{qs}+U_{q}+L_{qr}+U_{r}=[1,2,2,2]$. Since $T$ meets $C_1$, $C_2$ and $E_r$, we have $1=(F_{T}-T)\cdot D\hor=(F_{T}-T)\cdot C_1$. In particular, $F_{T}$ is disjoint from $D-D\hor-T$; and since every component of $F_{T}-T$ meets $T$ at most once, Lemma \ref{lem:no_lines} implies that $A_2\de F_{T}-T$ has the required properties. Arguing similarly for the fiber $F$ containing $L_{1}+E_{p_1}+E_{p_1}'=[3,1,2]$, we infer the existence of $G$; which ends the proof of the claim.

		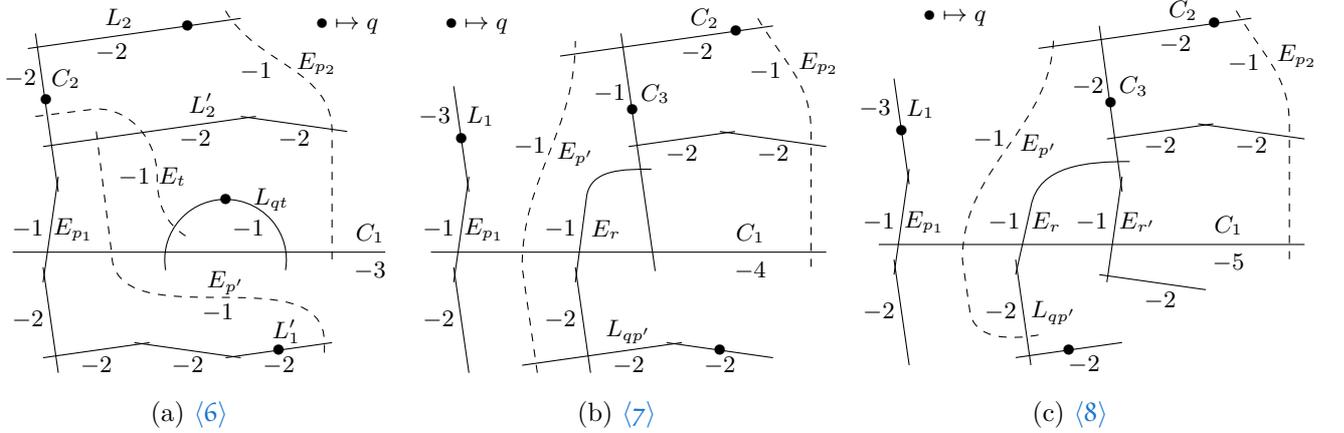
\begin{figure}[htbp]
			\begin{tabular}{ccc}
				\begin{subfigure}[b]{0.3\textwidth}
						\begin{tikzpicture}
						\draw (0,-0.2) -- (1.4,0);
						\node at (0.7,-0.3) {\small{$-2$}};
						\draw (1.2,0) -- (2.6,-0.2);
						\node at (1.9,-0.3) {\small{$-2$}};
						\draw (2.4,-0.2) -- (3.8,0);
						\node at (3.2,0.15) {\small{$L_{1}'$}};
						\node at (3.1,-0.3) {\small{$-2$}};
						\node at (3.1,-0.1) {$\bullet$};
						\draw (0.2,-0.4) -- (0,1);
						\node at (-0.2,0.3) {\small{$-2$}};
						\draw (0,0.8) -- (0.2,2.2);
						\node at (-0.2,1.5) {\small{$-1$}};
						\node at (0.4,1.5) {\small{$E_{p_1}$}};
						\draw (0.2,2) -- (-0.1,4.1);
						\node at (-0.3,3.5) {\small{$-2$}};
						\node at (0.3,3.5) {\small{$C_2$}};	
						\node at (0.04,3.22) {$\bullet$};
						\draw (0,2.6) -- (2.8,3);
						\node at (2.1,3.15) {\small{$L_{2}'$}};
						\node at (2,2.7) {\small{$-2$}};
						\draw (-0.2,3.9) -- (2.6,4.3);
						\node at (1,4.3) {\small{$L_{2}$}};
						\node at (0.9,3.85) {\small{$-2$}};
						\node at (1.9,4.2) {$\bullet$};
						\draw (2.6,3) -- (4,2.8);
						\node at (3.3,2.7) {\small{$-2$}};
						\draw[dashed] (2.4,4.4) to[out=-60,in=90] (3.8,2.8) -- (3.8,1);
						\node at (2.8,3.6) {\small{$-1$}};
						\node at (3.6,3.7) {\small{$E_{p_2}$}};				
						\draw[dashed] (-0.1,3) -- (0.7,3.1) to[out=0,in=90] (1.5,2.2) to[out=-90,in=150] (1.9,1.4);
						\node at (1.2,2.2) {\small{$-1$}};
						\node at (1.7,2.2) {\small{$E_{t}$}};		
						\draw[dashed] (0.7,2.8) -- (0.9,1.2) to[out=-90,in=90] (3.7,0) -- (3.7,-0.2);
						\node at (2.4,0.8) {\small{$E_{p'}$}};
						\node at (2.3,0.4) {\small{$-1$}};
						\draw (-0.4,1.2) -- (4.5,1.2);
						\node at (4.3,1.45) {\small{$C_1$}};
						\node at (4.3,0.95) {\small{$-3$}};
						\draw (2.4,1.1) [partial ellipse=190:-10 : 0.8 and 0.8];	
						\node at (3,1.9) {\small{$L_{qt}$}};
						\node at (2.7,1.5) {\small{$-1$}};
						\node at (2.4,1.9) {$\bullet$};
						\node at (4,4.2) {\small{$\bullet\mapsto q$}};			
					\end{tikzpicture}
					\caption{\ref{def:A1A2_C2C3-node}}
					\label{fig:A1A2_C2C3-node}
				\end{subfigure}
				&
				\begin{subfigure}[b]{0.32\textwidth}
						\begin{tikzpicture}
						\draw (-1.4,-0.3) -- (0.7,0);
						\node at (0,-0.3) {\small{$-2$}};
						\draw (0.5,0) -- (1.9,-0.2);
						\node at (1.2,-0.3) {\small{$-2$}};
						\node at (0,0.15) {\small{$L_{qp'}$}};
						\node at (1.2,-0.1) {$\bullet$};
						\draw (-0.5,-0.4) -- (-0.7,1);
						\node at (-0.9,0.3) {\small{$-2$}};
						\draw (-0.7,0.8) -- (-0.55,1.95) to[out=80,in=180] (0.3,2.3); 
						\node at (-0.9,1.5) {\small{$-1$}};
						\node at (-0.3,1.5) {\small{$E_{r}$}};
						\draw (0.35,0.95) -- (-0.1,4.1);
						\node at (-0.25,3.3) {\small{$-1$}};
						\node at (0.35,3.3) {\small{$C_3$}};	
						\node at (0.05,3.08) {$\bullet$};
						\draw (0,2.6) -- (1.4,2.8);
						\node at (0.7,2.5) {\small{$-2$}};
						\draw (-0.9,3.8) -- (1.9,4.2);
						\node at (1,4.3) {\small{$C_{2}$}};
						\node at (0.9,3.85) {\small{$-2$}};
						\node at (1.41,4.13) {$\bullet$};
						\draw (1.2,2.8) -- (2.6,2.6);
						\node at (1.9,2.5) {\small{$-2$}};
						\draw[dashed] (1.7,4.3) to[out=-60,in=90] (2.4,2.8) -- (2.4,1);
						\node at (1.8,3.6) {\small{$-1$}};
						\node at (2.5,3.7) {\small{$E_{p_2}$}};				
						\draw[dashed] (-1.2,-0.4) -- (-1.4,1) to[out=90,in=-90] (-0.7,4);
						\node at (-1.3,2.6) {\small{$-1$}};
						\node at (-0.7,2.5) {\small{$E_{p'}$}};		
						\draw (-2.1,-0.4) -- (-2.3,1);
						\node at (-2.5,0.3) {\small{$-2$}};
						\draw (-2.3,0.8) -- (-2.1,2.2); 
						\node at (-2.5,1.5) {\small{$-1$}};
						\node at (-1.9,1.5) {\small{$E_{p_1}$}};
						\draw (-2.1,2) -- (-2.3,3.4);
						\node at (-2.55,3) {\small{$-3$}};
						\node at (-1.95,3) {\small{$L_{1}$}};
						\node at (-2.2,2.7) {$\bullet$};		
						\draw (-2.6,1.2) -- (2.6,1.2);
						\node at (1.6,1.45) {\small{$C_1$}};
						\node at (1.6,0.95) {\small{$-4$}};
						\node at (-2,4.2) {\small{$\bullet\mapsto q$}};			
					\end{tikzpicture}
					\caption{\ref{def:A1A2_C2C3-cusp-41}}
					\label{fig:A1A2_C2C3-cusp-41}
				\end{subfigure}
				&
				\begin{subfigure}[b]{0.34\textwidth}
						\begin{tikzpicture}
						\draw (-1.2,-0.3) -- (0.2,-0.1);
						\node at (-0.3,-0.4) {\small{$-2$}};
						\node at (-0.5,-0.2) {$\bullet$};
						\draw (-1,-0.4) -- (-1.2,1);
						\node at (-1.4,0.3) {\small{$-2$}};
						\node at (-0.7,0.3) {\small{$L_{qp'}$}};		
						\draw (-1.2,0.8) -- (-1,1.7) to[out=80,in=180] (0.3,2.3); 
						\node at (-1.35,1.5) {\small{$-1$}};
						\node at (-0.8,1.5) {\small{$E_{r}$}};
						\draw (0.2,1.9) -- (-0.1,4.1);
						\node at (-0.25,3.3) {\small{$-2$}};
						\node at (0.35,3.3) {\small{$C_3$}};	
						\node at (0.05,3.08) {$\bullet$};
						\draw (0.2,2.1) -- (0,0.7);
						\node at (-0.2,1.5) {\small{$-1$}};
						\node at (0.4,1.5) {\small{$E_{r'}$}};
						\draw (-0.1,0.8) -- (1.3,0.6);
						\node at (0.7,0.45) {\small{$-2$}};
						\draw (0,2.6) -- (1.4,2.8);
						\node at (0.7,2.5) {\small{$-2$}};
						\draw (-0.9,3.8) -- (1.9,4.2);
						\node at (1,4.3) {\small{$C_{2}$}};
						\node at (0.9,3.85) {\small{$-2$}};
						\node at (1.41,4.13) {$\bullet$};
						\draw (1.2,2.8) -- (2.6,2.6);
						\node at (1.9,2.5) {\small{$-2$}};
						\draw[dashed] (1.7,4.3) to[out=-60,in=90] (2.4,2.8) -- (2.4,1);
						\node at (1.8,3.6) {\small{$-1$}};
						\node at (2.5,3.7) {\small{$E_{p_2}$}};				
						\draw[dashed] (-0.9,0) to[out=190,in=-80] (-1.8,0.3) -- (-1.9,1) to[out=90,in=-90] (-0.7,4);
						\node at (-1.55,2.6) {\small{$-1$}};
						\node at (-0.9,2.5) {\small{$E_{p'}$}};		
						\draw (-2.6,-0.4) -- (-2.8,1);
						\node at (-3,0.3) {\small{$-2$}};
						\draw (-2.8,0.8) -- (-2.6,2.2); 
						\node at (-3,1.5) {\small{$-1$}};
						\node at (-2.4,1.5) {\small{$E_{p_1}$}};
						\draw (-2.6,2) -- (-2.8,3.4);
						\node at (-3.05,3) {\small{$-3$}};
						\node at (-2.45,3) {\small{$L_{1}$}};
						\node at (-2.7,2.7) {$\bullet$};		
						\draw (-3,1.2) -- (2.6,1.2);
						\node at (1.6,1.45) {\small{$C_1$}};
						\node at (1.6,0.95) {\small{$-5$}};
						\node at (-2,4.2) {\small{$\bullet\mapsto q$}};			
					\end{tikzpicture}
					\caption{\ref{def:A1A2_C2C3-cusp-32}}
					\label{fig:A1A2_C2C3-cusp-32}
				\end{subfigure}
			\end{tabular}
			\caption{Cases when $X_{\min}\cong \P(1,2,3)$, $n=1$, $\qq$ is a cubic and $\cc$ is a conic.}
			\label{fig:A1A2_32}
			\end{figure}

\smallskip
Consider now the case when $\deg\qq=4$, so $\cc$ is a line. By Lemma \ref{lem:P2-curves}\ref{item:P2-adjunction}, $\delta_{\qq}=3$. 
If $a\neq b$ are two singular points of $\qq$ then the line $\ll_{ab}$ joining them meets $\qq$ only at $a$, $b$ with multiplicity $2$, hence $L_{ab}^{2}=-1$ and $L_{ab}$ meets $C$, $U_{a}$ and $U_{b}$. 
 If $q\in \{a,b\}$ then $L_{ab}\cdot \tilde{D}''=3$, otherwise  $L_{ab}\cdot \tilde{D}''=4$ and $L_{ab}$ meets $L$.

Consider the case when $\qq$ has three ordinary singular points. The projection from $q$ induces a morphism $(\bl{q}^{-1})_{*}\qq\to\P^{1}$ of degree two if $q\not\in \cc$ and three otherwise, ramified at the point infinitely near to $q$ and at $\bl{q}^{-1}(\Sing \qq\setminus \{q\})$. Since $\etop((\bl{q}^{-1})_{*}\qq)=2-\nu\geq 1$, it follows from the Hurwitz formula that, say, $\Sing \qq=\{q,a,b\}$, $a\in \qq$ is a node and $b\in \qq$ is a cusp. In particular, $\cc$ meets $\qq$ in a single point, with multiplicity $4$. Therefore, $D''\de \tilde{D}''+L_{ab}+L_{bq}+L_{qa}$, and hence $S$, is as in  \ref{def:A1A2_q-nnc}. Indeed, the curves 
$Q$, $C$, $L$, $U_{q}$, $U_{a}$, $U_{b}$, $L_{ab}$, $L_{bq}$, $L_{qa}$, $E_{q}$
defined above correspond there to 
$C_{1}$, $C_{2}$, $L_{1}$, $L_{p_{1}q_{2}}$, $L_{pr}$, $L_{2}$, $E_{p}$, $E_{q_{2}}$, $E_{r}$, $E_{p_{1}}$, respectively.

	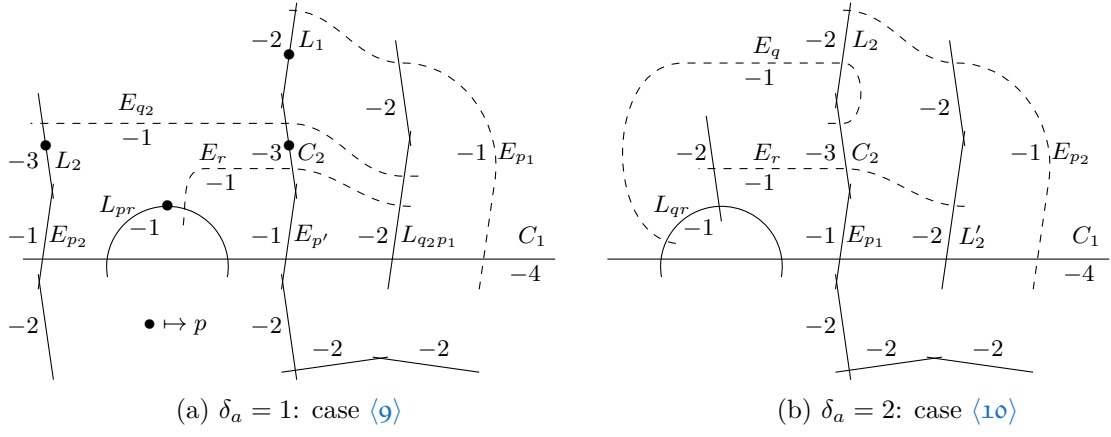
\begin{figure}[htbp]
			\begin{tabular}{cc}
				\begin{subfigure}[b]{0.45\textwidth}
						\begin{tikzpicture}
						\draw (-2.6,-0.4) -- (-2.8,1);
						\node at (-3,0.3) {\small{$-2$}};
						\draw (-2.8,0.8) -- (-2.6,2.2); 
						\node at (-3,1.5) {\small{$-1$}};
						\node at (-2.4,1.5) {\small{$E_{p_2}$}};
						\draw (-2.6,2) -- (-2.8,3.4);
						\node at (-3.0,2.5) {\small{$-3$}};
						\node at (-2.4,2.5) {\small{$L_{2}$}};
						\node at (-2.7,2.7) {$\bullet$};		
						\draw (-3,1.2) -- (4,1.2);
						\node at (3.7,1.5) {\small{$C_1$}};
						\node at (3.6,1) {\small{$-4$}};
						\draw (-1.1,1.1) [partial ellipse=190:-10 : 0.8 and 0.8];
						\node at (-1.75,1.9) {\small{$L_{pr}$}};
						\node at (-1.4,1.6) {\small{$-1$}};
						\node at (-1.1,1.9) {$\bullet$};
						\draw (0.6,-0.4) -- (0.4,1);
						\node at (0.2,0.3) {\small{$-2$}};
						\draw (0.4,0.8) -- (0.6,2.2); 
						\node at (0.2,1.5) {\small{$-1$}};
						\node at (0.8,1.5) {\small{$E_{p'}$}};
						\draw (0.6,2) -- (0.4,3.4);
						\node at (0.2,2.6) {\small{$-3$}};
						\node at (0.8,2.6) {\small{$C_{2}$}};
						\node at (0.5,2.7) {$\bullet$};		
						\draw (0.4,3.2) -- (0.6,4.6);
						\node at (0.2,4.1) {\small{$-2$}};
						\node at (0.8,4.1) {\small{$L_1$}};
						\node at (0.5,3.9) {$\bullet$};
						\draw (0.4,-0.3) -- (1.8,-0.1);
						\node at (1,0) {\small{$-2$}};
						\draw (1.6,-0.1) -- (3,-0.3);
						\node at (2.4,0) {\small{$-2$}};		
						\draw (1.8,0.8) -- (2.1,2.9); 
						\node at (1.6,1.5) {\small{$-2$}};
						\node at (2.35,1.5) {\small{$L_{q_{2}p_{1}}$}};
						\draw (2.1,2.7) -- (1.9,4.1);
						\node at (1.7,3.2) {\small{$-2$}};
						\draw[dashed] (2.2,2.3) -- (2,2.3) to[out=180,in=0] (0.5,3) -- (-2.9,3);
						\node at (-1.5, 3.25) {\small{$E_{q_{2}}$}};		
						\node at (-1.5, 2.8) {\small{$-1$}};
						\draw [dashed] (0.5,4.5) to[out=0,in=180] (2,3.8) to[out=0,in=80] (3.2,2.2) -- (3,0.8);  
						\node at (2.9,2.6) {\small{$-1$}};
						\node at (3.5,2.6) {\small{$E_{p_1}$}};
						\draw[dashed] (2.1,1.9) -- (2.1,1.9) to[out=180,in=0] (0.5,2.4) -- (-0.6,2.4) to[out=180,in=80] (-0.9,1.6);
						\node at (-0.5, 2.6) {\small{$E_{r}$}};		
						\node at (-0.4, 2.2) {\small{$-1$}};
						\node at (-1,0.3) {\small{$\bullet\mapsto p$}};				
					\end{tikzpicture}
					\caption{$\delta_{a}=1$: case \ref{def:A1A2_q-nnc}}
					\label{fig:A1A2_q-nnc}
				\end{subfigure}
				&
				\begin{subfigure}[b]{0.45\textwidth}
						\begin{tikzpicture}
						\draw (-2.6,1.2) -- (4,1.2);
						\node at (3.7,1.5) {\small{$C_1$}};
						\node at (3.6,1) {\small{$-4$}};
						\draw (-1.1,1.1) [partial ellipse=190:-10 : 0.8 and 0.8];
						\node at (-1.75,1.9) {\small{$L_{qr}$}};
						\node at (-1.4,1.6) {\small{$-1$}};
						\draw (-1.1,1.7) -- (-1.3,3.1);
						\node at (-1.5,2.6) {\small{$-2$}};
						\draw[dashed] (2.1,1.9) -- (2.1,1.9) to[out=180,in=0] (0.5,2.4) -- (-1.4,2.4);
						\node at (-0.5, 2.6) {\small{$E_{r}$}};		
						\node at (-0.6, 2.2) {\small{$-1$}};
						\draw (0.6,-0.4) -- (0.4,1);
						\node at (0.2,0.3) {\small{$-2$}};
						\draw (0.4,0.8) -- (0.6,2.2); 
						\node at (0.2,1.5) {\small{$-1$}};
						\node at (0.8,1.5) {\small{$E_{p_1}$}};
						\draw (0.6,2) -- (0.4,3.4);
						\node at (0.2,2.6) {\small{$-3$}};
						\node at (0.8,2.6) {\small{$C_{2}$}};;		
						\draw (0.4,3.2) -- (0.6,4.6);
						\node at (0.2,4.1) {\small{$-2$}};
						\node at (0.8,4.1) {\small{$L_2$}};
						\draw (0.4,-0.3) -- (1.8,-0.1);
						\node at (1,0) {\small{$-2$}};
						\draw (1.6,-0.1) -- (3,-0.3);
						\node at (2.4,0) {\small{$-2$}};		
						\draw (1.8,0.8) -- (2.1,2.9); 
						\node at (1.6,1.5) {\small{$-2$}};
						\node at (2.2,1.5) {\small{$L_{2}'$}};
						\draw (2.1,2.7) -- (1.9,4.1);
						\node at (1.7,3.2) {\small{$-2$}};
						\draw [dashed] (0.5,4.5) to[out=0,in=180] (2,3.8) to[out=0,in=80] (3.2,2.2) -- (3,0.8); 
						\node at (2.9,2.6) {\small{$-1$}};
						\node at (3.5,2.6) {\small{$E_{p_2}$}};
						\draw [dashed] (0.3,3) -- (0.5,3) to[out=0,in=0] (0.5,3.8) -- (0.3,3.8) to[out=180,in=0] (-1.6,3.8)
						to[out=180,in=90] (-2.4,2.6) to[out=-90,in=180] (-1.6,1.4);
						\node at (-0.5, 4) {\small{$E_{q}$}};		
						\node at (-0.6, 3.6) {\small{$-1$}};
					\end{tikzpicture}
					\caption{$\delta_{a}=2$: case \ref{def:A1A2_q-nc}}
					\label{fig:A1A2_q-nc}
				\end{subfigure}
			\end{tabular}	
		\caption{Cases when $X_{\min}\cong \P(1,2,3)$, $n=1$, $a\in \qq$ is a node and $\delta_{a}\leq 2$.}
		\label{fig:A1A2_a-node}
	\end{figure}

Consider now the case when $a\in\qq$ has $\delta_{a}\in \{2,3\}$, so $a\neq q$. Denote by $\ll_{a}$ the line tangent to $\qq$ at $a$. Lemma \ref{lem:P2-curves}\ref{item:P2-tangent} gives $(\ll_{a}\cdot \qq)_{a}=4$. Hence $L_{a}^{2}=-1$, $L_{a}\cdot \tilde{D}''=3$ and $L_{a}$ meets $C$, $L$ and $U_{a}'$. 

	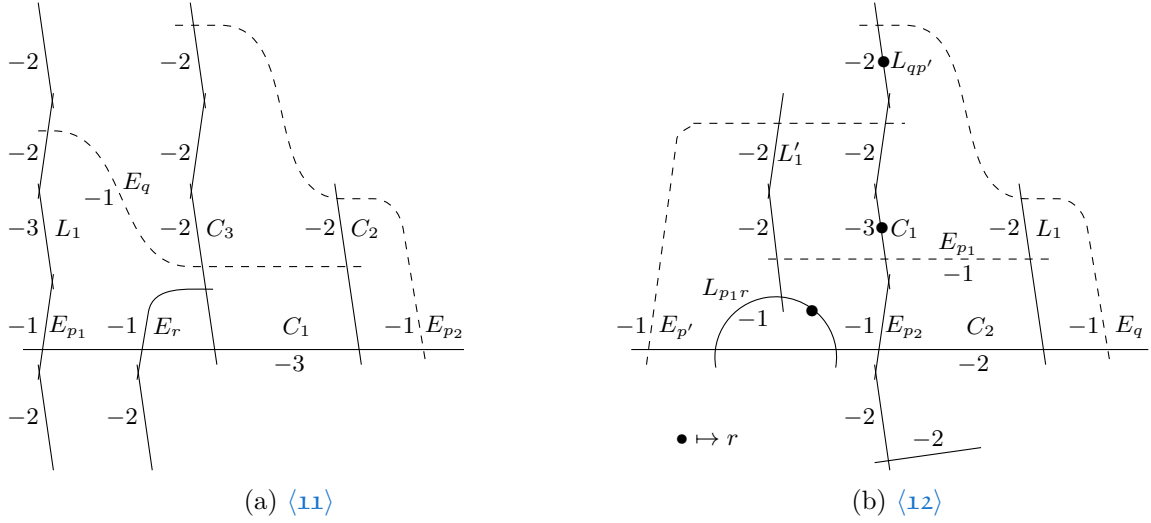
\begin{figure}[htbp]
		\begin{tabular}{cc}
			\begin{subfigure}[b]{0.45\textwidth}
			\begin{tikzpicture}
				\draw (-2.6,-0.4) -- (-2.8,1);
				\node at (-3,0.3) {\small{$-2$}};
				\draw (-2.8,0.8) -- (-2.6,2.2); 
				\node at (-3,1.5) {\small{$-1$}};
				\node at (-2.4,1.5) {\small{$E_{p_1}$}};
				\draw (-2.6,2) -- (-2.8,3.4);
				\node at (-3.0,2.8) {\small{$-3$}};
				\node at (-2.4,2.8) {\small{$L_{1}$}};
				\draw (-2.8,3.2) -- (-2.6,4.6);
				\node at (-3.0,3.8) {\small{$-2$}};
				\draw (-2.6,4.4) -- (-2.8,5.8);	
				\node at (-3.0,5) {\small{$-2$}};	
				\draw (-1.3,-0.4) -- (-1.5,1);
				\node at (-1.7,0.3) {\small{$-2$}};
				\draw (-1.5,0.8) -- (-1.35,1.7) to[out=80,in=180] (-0.5,2); 
				\node at (-1.7,1.5) {\small{$-1$}};
				\node at (-1.1,1.5) {\small{$E_{r}$}};								
				\draw (-0.45,1) -- (-0.8,3.4);
				\node at (-1.0,2.8) {\small{$-2$}};
				\node at (-0.4,2.8) {\small{$C_{3}$}};
				\draw (-0.8,3.2) -- (-0.6,4.6);
				\node at (-1.0,3.8) {\small{$-2$}};
				\draw (-0.6,4.4) -- (-0.8,5.8);	
				\node at (-1.0,5) {\small{$-2$}};
				\draw[dashed] (-2.8,4.1) -- (-2.6,4.1) to[out=0,in=180] (-0.8,2.3) -- (1.5,2.3);
				\node at (-2,3.2) {\small{$-1$}};
				\node at (-1.5,3.4) {\small{$E_q$}};
				\draw[dashed] (-1,5.5) -- (-0.4,5.5) to[out=0,in=180] (1.2,3.2) -- (1.6,3.2) to[out=0,in=100] 		(2,2.8) -- (2.3,1);
				\node at (1.95,1.5) {\small{$-1$}};
				\node at (2.55,1.5) {\small{$E_{p_2}$}};	
				\draw (1.45,1) -- (1.1,3.4);
				\node at (0.9,2.8) {\small{$-2$}};
				\node at (1.5,2.8) {\small{$C_{2}$}};			
				\draw (-3,1.2) -- (2.8,1.2);
				\node at (0.6,1.5) {\small{$C_1$}};
				\node at (0.5,1) {\small{$-3$}};
			\end{tikzpicture}	
				\caption{\ref{def:A1A2_3-cusp}}
				\label{fig:A1A2_3-cusp}
			\end{subfigure}
			&
			\begin{subfigure}[b]{0.45\textwidth}
			\begin{tikzpicture}
				\draw (-4,1.2) -- (2.8,1.2);
				\node at (0.6,1.5) {\small{$C_2$}};
				\node at (0.5,1) {\small{$-2$}};
				\draw (-2.1,1.1) [partial ellipse=190:-10 : 0.8 and 0.8];
				\node at (-1.62,1.7) {\large{$\bullet$}};
				\node at (-2.75,2) {\small{$L_{p_{1} r}$}};
				\node at (-2.4,1.6) {\small{$-1$}};
				\draw (-2,1.7) -- (-2.2,3.4);
				\node at (-2.4,2.8) {\small{$-2$}};
				\draw (-2.2,3.2) -- (-2,4.6);
				\node at (-2.4,3.8) {\small{$-2$}};
				\node at (-1.9,3.8) {\small{$L'_1$}};
				\node at (-4,1.5) {\small{$-1$}};
				\node at (-3.4,1.5) {\small{$E_{p'}$}};				
				\draw[dashed] (-3.8,1) -- (-3.4,4) to[out=80,in=0] (-3.2,4.2) -- (-0.4,4.2);
				\node at (0.3,2.6) {\small{$E_{p_1}$}};
				\node at (0.3,2.2) {\small{$-1$}};
				\draw[dashed] (-2.2,2.4) -- (1.5,2.4);
				\draw (-0.8,-0.3) -- (0.6,-0.1);
				\node at (-0.1,0) {\small{$-2$}};				
				\draw (-0.6,-0.4) -- (-0.8,1);
				\node at (-1,0.3) {\small{$-2$}};
				\draw (-0.8,0.8) -- (-0.6,2.2); 
				\node at (-1,1.5) {\small{$-1$}};
				\node at (-0.4,1.5) {\small{$E_{p_2}$}};			
				\draw (-0.6,2) -- (-0.8,3.4);
				\node at (-1.0,2.8) {\small{$-3$}};
				\node at (-0.4,2.8) {\small{$C_{1}$}};
				\node at (-0.7,2.8) {\large{$\bullet$}};
				\draw (-0.8,3.2) -- (-0.6,4.6);
				\node at (-1.0,3.8) {\small{$-2$}};
				\draw (-0.6,4.4) -- (-0.8,5.8);	
				\node at (-1.0,5) {\small{$-2$}};
				\node at (-0.3,5) {\small{$L_{qp'}$}};
				\node at (-0.68,5) {\large{$\bullet$}};
				\draw[dashed] (-1,5.5) -- (-0.4,5.5) to[out=0,in=180] (1.2,3.2) -- (1.6,3.2) to[out=0,in=100] 		(2,2.8) -- (2.3,1);
				\node at (1.95,1.5) {\small{$-1$}};
				\node at (2.55,1.5) {\small{$E_{q}$}};	
				\draw (1.45,1) -- (1.1,3.4);
				\node at (0.9,2.8) {\small{$-2$}};
				\node at (1.5,2.8) {\small{$L_{1}$}};
				\node at (-3,0) {\small{$\bullet\mapsto r$}};										
			\end{tikzpicture}
				\caption{\ref{def:A1A2_3-node}}
				\label{fig:A1A2_3-node}
			\end{subfigure}
		\end{tabular}
		\caption{Cases when $X_{\min}\cong \P(1,2,3)$, $n=1$ and $\delta_{a}=3$.}
		\label{fig:A1A2_3}
	\end{figure}

Consider the case $\delta_{a}=3$, so $\Sing \qq=\{a\}$. Assume that $a\in \qq$ is a cusp. Then $\cc$ meets $\qq\setminus \{q\}$ in two points, with multiplicities $2$, $1$. Thus $D''\de \tilde{D}''+L_{a}$, and hence $S$, is as in  \ref{def:A1A2_3-cusp}. Indeed, the curves 
$Q$, $L$, $C$, $U_{a}''$, $L_{a}$, $E_{q}$
defined above correspond there to
$C_{1}$, $C_{2}$, $C_{3}$, $L_{1}$, $E_{q}$, $E_{p_2}$,
respectively.  
Assume that $a\in \qq$ is a node, so $\cc$ meets $\qq\setminus \{q\}$ in one point, with multiplicity $3$.   
The pencil of conics tangent to $\qq$ at $a$, $q$  contains a unique member, say $\cc_{a}$, such that $(\cc_{a}\cdot \qq)_{a}=6$. The only degenerate members of this pencil are $\ll_{a}+\ll$ and $2\ll_{aq}$, so $\cc_{a}$ is smooth. Thus $C_{a}\subseteq X'$ is a $(-1)$-curve, $C_{a}\cdot \tilde{D}''=3$ and $C_{a}$ meets $\tilde{D}''$ in $U_{q}'$, $C$ and $U_{a}'$. Now $D''\de \tilde{D}''+L_{a}+L_{aq}+C_{a}$, and hence $S$, is as in  \ref{def:A1A2_3-node}. Indeed, the curves 
$Q$, $C$, $L$, $U_{q}'$, $U_{a}$, $U_{a}''$, $L_{a}$, $L_{aq}$, $C_{a}$, $E_{q}$
defined above correspond there to 
$C_{2}$, $C_{1}$, $L_{1}$, $L_{qp'}$, $L_{1}'$, $L_{rp_{1}}$, $E_{p_{1}}$, $E_{p'}$, $E_{r}$, $E_{q}$, respectively.

		\begin{figure}[htbp]
	\begin{tabular}{cc}
		\begin{subfigure}[b]{0.45\textwidth}
						\begin{tikzpicture}
				\draw (-2.6,-0.4) -- (-2.8,1);
				\node at (-3,0.3) {\small{$-2$}};
				\draw (-2.8,0.8) -- (-2.6,2.2); 
				\node at (-3,1.5) {\small{$-1$}};
				\node at (-2.4,1.5) {\small{$E_{p_2}$}};
				\draw (-2.6,2) -- (-2.8,3.4);
				\node at (-3.0,2.8) {\small{$-3$}};
				\node at (-2.4,2.8) {\small{$L_{2}$}};
				\node at (-2.7,2.8) {\large{$\bullet$}};
				\draw (-2.8,3.2) -- (-2.6,4.6);
				\node at (-3.0,3.8) {\small{$-2$}};	
				\draw (-1.3,-0.4) -- (-1.5,1);
				\node at (-1.7,0.3) {\small{$-2$}};
				\draw (-1.5,-0.3) -- (-0.1,-0.1);
				\node at (-0.8,0) {\small{$-2$}};
				\draw (-1.5,0.8) -- (-1.35,1.7) to[out=80,in=180] (-0.5,2); 
				\node at (-1.7,1.5) {\small{$-1$}};
				\node at (-1.1,1.5) {\small{$E_{p'}$}};								
				\draw (-0.45,1) -- (-0.8,3.4);
				\node at (-1.0,2.8) {\small{$-2$}};
				\node at (-0.4,2.8) {\small{$C_{2}$}};
				\node at (-0.7,2.8) {\large{$\bullet$}};
				\draw (-0.8,3.2) -- (-0.6,4.6);
				\node at (-1.0,3.8) {\small{$-2$}};
				\node at (-0.4,3.8) {\small{$L_1$}};
				\node at (-0.7,3.8) {\large{$\bullet$}};
				\draw[dashed] (-2.8,4.1) -- (-2.6,4.1) to[out=0,in=180] (-0.8,2.3) -- (1.5,2.3);
				\node at (-2,3.2) {\small{$-1$}};
				\node at (-1.5,3.4) {\small{$E_r$}};
				\draw[dashed] (-1,4.4) -- (1.4,4.4) to[out=0,in=100] 	(2,4.2) -- (2.4,1);
				\node at (2.05,1.5) {\small{$-1$}};
				\node at (2.65,1.5) {\small{$E_{p_1}$}};	
				\draw (1.45,1) -- (1.1,3.4);
				\node at (0.9,2.8) {\small{$-2$}};
				\node at (1.6,2.8) {\small{$L_{rp_1}$}};
				\draw (1.1,3.2) -- (1.3,4.6);
				\node at (0.9,3.8) {\small{$-2$}};			
				\draw (-3,1.2) -- (2.8,1.2);
				\node at (0.6,1.5) {\small{$C_1$}};
				\node at (0.5,1) {\small{$-3$}};
				\node at (2,0) {\small{$\bullet\mapsto p$}};					
			\end{tikzpicture}
			\caption{\ref{def:A1A2_q-cn_31}}
			\label{fig:A1A2_q-cn_31}
		\end{subfigure}
		&
		\begin{subfigure}[b]{0.45\textwidth}
			\begin{tikzpicture}
				\draw (-2.6,-0.4) -- (-2.8,1);
				\node at (-3,0.3) {\small{$-2$}};
				\draw (-2.8,0.8) -- (-2.6,2.2); 
				\node at (-3,1.5) {\small{$-1$}};
				\node at (-2.4,1.5) {\small{$E_{p_2}$}};
				\draw (-2.6,2) -- (-2.8,3.4);
				\node at (-3,2.9) {\small{$-3$}};
				\node at (-2.5,2.9) {\small{$L_{2}$}};
				\draw (-2.8,3.2) -- (-2.6,4.6);
				\node at (-3.0,3.8) {\small{$-2$}};	
				\node at (-2.67,4.1) {\large{$\bullet$}};
				\draw (-1,-0.4) -- (-1.2,1);
				\node at (-1.4,0.3) {\small{$-2$}};
				\draw (-1.2,0.8) -- (-1,2.2); 
				\node at (-1.4,1.5) {\small{$-1$}};
				\node at (-0.8,1.5) {\small{$E_{p'}$}};
				\draw (1,-0.4) -- (0.8,1);
				\node at (0.6,0.3) {\small{$-2$}};
				\draw (0.8,0.8) -- (1,2.2); 
				\node at (0.6,1.5) {\small{$-1$}};
				\node at (1.2,1.5) {\small{$E_{p''}$}};
				\draw (-1.2,2.1) -- (1.2,2.1);
				\node at (0.2,1.9) {\small{$-3$}};
				\node at (0.2,2.3) {\small{$C_2$}};
				\node at (0.7,2.1) {\large{$\bullet$}};
				\draw (-0.6,2) -- (-0.8,3.4);
				\node at (-1,2.9) {\small{$-2$}};
				\node at (-0.5,2.9) {\small{$L_{1}$}};
				\draw[dashed] (-2.8,2.6) -- (-0.5,2.6) to[out=0,in=100] (-0.3,2.35) -- (-0.25,2);
				\node at (-1.8,2.4) {\small{$-1$}};
				\node at (-1.8,2.8) {\small{$E_p$}};
				\draw (2.4,0.8) -- (2.6,2.2); 
				\node at (2.2,1.5) {\small{$-2$}};
				\node at (2.85,1.45) {\small{$L_{rp_1}$}};
				\node at (2.54,1.7) {\large{$\bullet$}};
				\draw (2.6,2) -- (2.4,3.4);
				\node at (2.2,2.7) {\small{$-2$}};
				\draw[dashed] (-1,3.2) -- (3,3.2) to[out=0,in=100] 	(3.2,3) -- (3.4,1.1);
				\node at (1,3) {\small{$-1$}};
				\node at (1,3.4) {\small{$E_{p_1}$}};	
				\draw (-3,1.2) -- (3.5,1.2);
				\node at (0,1.4) {\small{$C_1$}};
				\node at (-0.1,1) {\small{$-4$}};
				\node at (2,0) {\small{$\bullet\mapsto r$}};					
			\end{tikzpicture}
			\caption{\ref{def:A1A2_q-cn_22}}
			\label{fig:A1A2_q-cn_22}
		\end{subfigure}
	\end{tabular}
	\caption{Cases when $X_{\min}\cong \P(1,2,3)$, $n=1$, $a$ is a cusp with $\delta_{a}=2$, and $q\not\in\cc$.}
	\label{fig:A1A2_2s}
\end{figure}
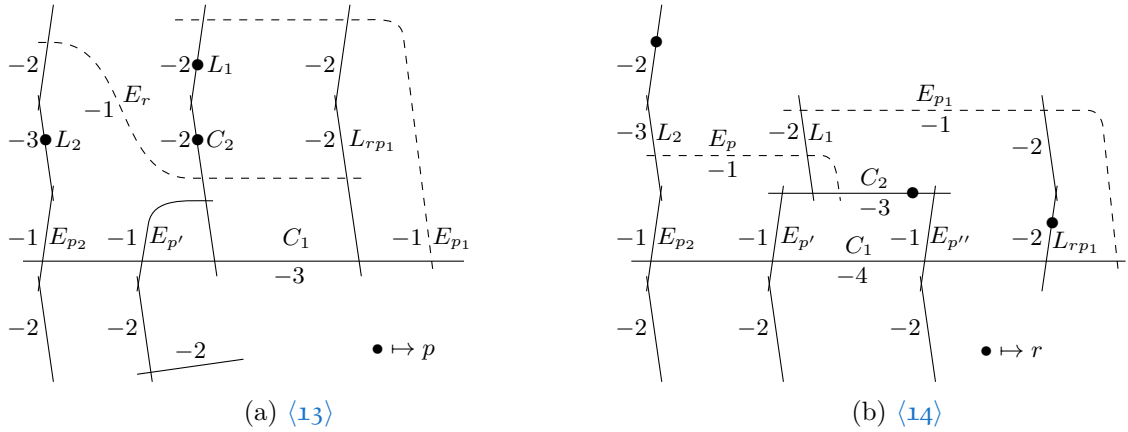

Consider now the case $\delta_{a}=2$ and $q\in \qq$ is a node, i.e.\ $q\not\in\cc$. Put $D''= \tilde{D}''+L_{a}+L_{aq}$. Assume further that $a$ is a node. Then $\cc$ meets $\qq$ in a single point. In this case, $D''$, and hence $S$, is as in  \ref{def:A1A2_q-nc}. Indeed,  the curves 
$Q$, $C$, $L$, $U_{q}$, $U_{a}'$, $L_{a}$, $L_{aq}$, $E_{q}$
defined above correspond there to
$C_{1}$, $C_{2}$, $L_{2}$, $L_{2}'$, $L_{qr}$, $E_{q}$, $E_{r}$, $E_{p_{2}}$, 
respectively. 
Assume now that $a\in \qq$ is a cusp, so $\qq$ meets $\cc$ in two points, with multiplicities $3$, $1$ or $2$, $2$. Then $D''$, and hence $S$, is as in  \ref{def:A1A2_q-cn_31} in the first case and \ref{def:A1A2_q-cn_22} in the second case. Indeed, the curves 
$Q$, $C$, $L$, $U_{q}$, $U_{a}'$, $L_{a}$, $L_{aq}$, $E_{q}$  
defined above correspond there to
$C_{1}$, $C_{2}$, $L_{1}$, $L_{rp_{1}}$, $L_{2}$, $E_{p}$, $E_{r}$, $E_{p_{1}}$, 
respectively. Note that in the second case we can assume that the expansion has center on $E_{p'}$: indeed, after renaming $p'$ and $p''$ we get (non-isomorphic) \QHPs with the same boundary graphs.

We claim that in case \ref{def:A1A2_q-cn_31}, we can assume that the weight of the expansion is not $1$. Suppose it is $1$. We keep the notation from Configuration \ref{conf:A1A2_q-cn}\ref{item:A1A2_q-cn_31} used in \ref{def:A1A2_q-cn_31}. Put $\tilde{D}=D+E_{p_1}+E_r\subseteq X$, see Figure \ref{fig:A1A2_q-cn_31}. Write $\cc_1\cap\cc_2=\{p',q\}$, and let $\ll_{qp}$, $\ll_{qp_1}$ be the lines joining the point $q$ with $p$ and $p_1$, respectively. Their proper transforms  $L_{qp_1},L_{qp}\subseteq X$ are $(-1)$-curves such that $L_{qp_1}\cdot \tilde{D}=3$, $L_{qp}\cdot \tilde{D}=2$; $L_{qp_1}$ meets $U_{p_1}$, $L_2$, and $C_2$; and $L_{qp}$ meets $C_1$ and $L_{rp_1}$. After the contraction of $L_{qp}$, the image of $\tilde{D}+L_{qp_1}$ is as in Figure \ref{fig:F2_n1-node}, with the image of $L_{qp_1}$ playing the role of $E_p$. Hence $S$ is as in \ref{def:F2_n1-node}, with expansion at $(C_1,C_2;1)$. Therefore, we can exclude this $S$ from the tower \ref{def:A1A2_q-cn_31}, as claimed.
\smallskip

		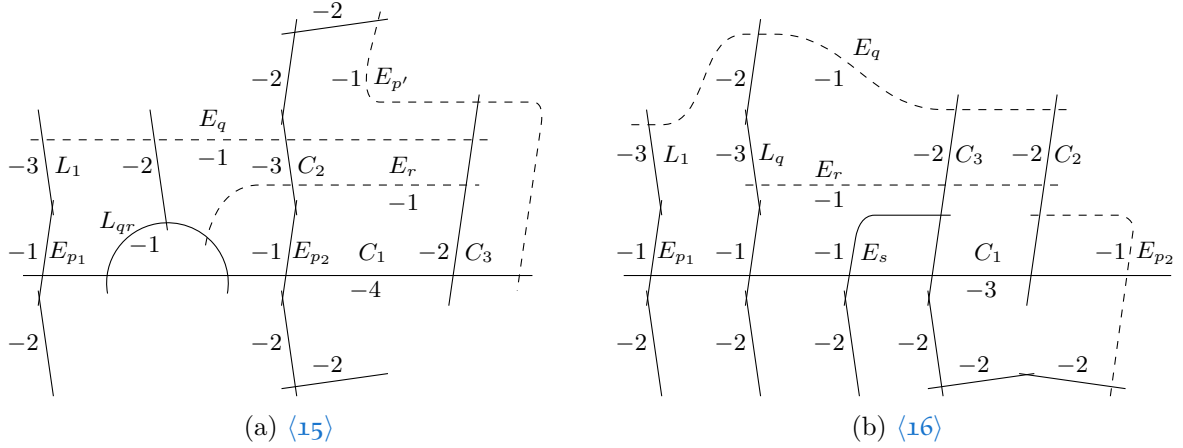
\begin{figure}[htbp]
	\begin{tabular}{cc}
		\begin{subfigure}[b]{0.45\textwidth}
			\begin{tikzpicture}
				\draw (-2.6,-0.4) -- (-2.8,1);
				\node at (-3,0.3) {\small{$-2$}};
				\draw (-2.8,0.8) -- (-2.6,2.2); 
				\node at (-3,1.5) {\small{$-1$}};
				\node at (-2.4,1.5) {\small{$E_{p_1}$}};
				\draw (-2.6,2) -- (-2.8,3.4);
				\node at (-3.0,2.65) {\small{$-3$}};
				\node at (-2.4,2.65) {\small{$L_{1}$}};		
				\draw (-3,1.2) -- (3.7,1.2);
				\node at (1.6,1.5) {\small{$C_1$}};
				\node at (1.5,1) {\small{$-4$}};
				\draw (-1.1,1.1) [partial ellipse=190:-10 : 0.8 and 0.8];
				\node at (-1.75,1.9) {\small{$L_{qr}$}};
				\node at (-1.4,1.6) {\small{$-1$}};
				\draw (-1.1,1.8) -- (-1.33,3.4);
				\node at (-1.5,2.65) {\small{$-2$}};
				\draw (0.6,-0.4) -- (0.4,1);
				\node at (0.2,0.3) {\small{$-2$}};
				\draw (0.4,0.8) -- (0.6,2.2); 
				\node at (0.2,1.5) {\small{$-1$}};
				\node at (0.8,1.5) {\small{$E_{p_2}$}};
				\draw (0.6,2) -- (0.4,3.4);
				\node at (0.2,2.65) {\small{$-3$}};
				\node at (0.8,2.65) {\small{$C_{2}$}};	
				\draw (0.4,3.2) -- (0.6,4.6);
				\node at (0.2,3.8) {\small{$-2$}};
				\draw (0.4,-0.3) -- (1.8,-0.1);
				\node at (1,0) {\small{$-2$}};
				\draw (0.4,4.4) -- (1.8,4.6);
				\node at (1,4.7) {\small{$-2$}};
				\draw (2.6,0.8) -- (3,3.6); 
				\node at (2.4,1.5) {\small{$-2$}};
				\node at (3,1.5) {\small{$C_3$}};
				\node at (1.25,3.8) {\small{$-1$}};
				\node at (1.85,3.8) {\small{$E_{p'}$}};
				\draw[dashed] (1.7,4.7) -- (1.55,4.1) to[out=-100,in=180] (1.7,3.5)-- (3.7,3.5)
				to[out=0,in=80] (3.8,3.1) -- (3.5,1);
				\draw[dashed] (-2.9,3) -- (3.2,3);
				\node at (-0.5,3.25) {\small{$E_{q}$}};
				\node at (-0.5,2.75) {\small{$-1$}};
				\draw[dashed] (-0.6,1.6) to[out=80,in=180] (0.1,2.4) -- (3,2.4);
				\node at (2,2.65) {\small{$E_{r}$}};
				\node at (2,2.15) {\small{$-1$}};
			\end{tikzpicture}
			\caption{\ref{def:A1A2_2n1c}}
			\label{fig:A1A2_2n1c}
		\end{subfigure}
		&
		\begin{subfigure}[b]{0.45\textwidth}
		\begin{tikzpicture}
			\draw (-3.9,-0.4) -- (-4.1,1);
			\node at (-4.3,0.3) {\small{$-2$}};
			\draw (-4.1,0.8) -- (-3.9,2.2); 
			\node at (-4.3,1.5) {\small{$-1$}};
			\node at (-3.7,1.5) {\small{$E_{p_1}$}};
			\draw (-3.9,2) -- (-4.1,3.4);
			\node at (-4.3,2.8) {\small{$-3$}};
			\node at (-3.7,2.8) {\small{$L_{1}$}};	
			\draw (-2.6,-0.4) -- (-2.8,1);
			\node at (-3,0.3) {\small{$-2$}};
			\draw (-2.8,0.8) -- (-2.6,2.2); 
			\node at (-3,1.5) {\small{$-1$}};
			\draw (-2.6,2) -- (-2.8,3.4);
			\node at (-3.0,2.8) {\small{$-3$}};
			\node at (-2.45,2.8) {\small{$L_{q}$}};
			\draw (-2.8,3.2) -- (-2.6,4.6);
			\node at (-3.0,3.8) {\small{$-2$}};	
			\draw (-1.3,-0.4) -- (-1.5,1);
			\node at (-1.7,0.3) {\small{$-2$}};
			\draw (-1.5,0.8) -- (-1.35,1.7) to[out=80,in=180] (-1.1,2)--(-0.1,2); 
			\node at (-1.7,1.5) {\small{$-1$}};
			\node at (-1.1,1.5) {\small{$E_{s}$}};								
			\draw (-0.4,-0.3) -- (1,-0.1);
			\node at (0.2,0) {\small{$-2$}};
			\draw (0.8,-0.1) -- (2.2,-0.3);
			\node at (1.5,0) {\small{$-2$}};
			\draw (-0.2,-0.4) -- (-0.4,1);
			\node at (-0.6,0.3) {\small{$-2$}};
			\draw (-0.4,0.8) -- (0,3.6);
			\node at (-0.4,2.8) {\small{$-2$}};
			\node at (0.15,2.8) {\small{$C_{3}$}};
			\draw[dashed] (2,-0.4) -- (2.3,1.8) to[out=80,in=0] (2.1,2) -- (0.9,2);
			\node at (2,1.5) {\small{$-1$}};
			\node at (2.6,1.5) {\small{$E_{p_2}$}};
			\draw[dashed] (-2.8,2.4) -- (1.4,2.4);
			\node at (-1.7,2.2) {\small{$-1$}};
			\node at (-1.7,2.6) {\small{$E_{r}$}};
			\draw[dashed] (-4.3,3.2) -- (-3.9,3.2) to[out=0,in=180] (-2.8,4.4) -- (-2.4,4.4) to[out=0,in=180] (-0.2,3.4) -- (1.5,3.4);
			\node at (-1.7,3.8) {\small{$-1$}};
			\node at (-1.2,4.2) {\small{$E_{q}$}};
			\draw (0.9,0.8) -- (1.3,3.6);
			\node at (0.9,2.8) {\small{$-2$}};
			\node at (1.45,2.8) {\small{$C_{2}$}};			
			\draw (-4.4,1.2) -- (2.8,1.2);
			\node at (0.4,1.5) {\small{$C_1$}};
			\node at (0.3,1) {\small{$-3$}};
		\end{tikzpicture}
			\caption{\ref{def:A1A2_21}}
			\label{fig:A1A2_21}
		\end{subfigure}
	\end{tabular}
	\caption{Cases when $X_{\min}\cong \P(1,2,3)$, $n=1$, $a,b$ are cusps, $\delta_{a}=2$, $q\in\cc$.}
	\label{fig:A1A2_2n}
\end{figure}

Eventually, consider the case when $\delta_{a}=2$ and $q\in \qq$ is smooth, so $\qq$ has some other singular point $b$ with $\delta_{b}=1$, and the line $\cc$ passes through $q$. Assume that $b\in \qq$ is a cusp and put $D''=\tilde{D}''+L_{a}+L_{ab}$. Now $\qq$ meets $\cc\setminus \{q\}$ at one point (with multiplicity $3$) if $a\in\qq$ is a node and in two points (with multiplicities $2$, $1$), if $a\in \qq$ is a cusp. Thus $D''$, and hence $S$, is as in \ref{def:A1A2_2n1c} and \ref{def:A1A2_21}, respectively. Indeed, the curves 
$Q$, $L$, $C$, $U_{a}'$, $U_{b}$, $L_{a}$, $L_{ab}$, $E_{q}$
defined above correspond there to
$C_{1}$, $C_{2}$, $C_{3}$, $\hat{L}_{q}$, $L_{1}$, $E_{r}$, $E_{q}$, $E_{p_{2}}$, 
respectively; where $\hat{L}_{q}=L_{q}$ if $a\in\qq$ is a node and $\hat{L}_{q}=L_{qr}$ if $a\in \qq$ is a cusp.

		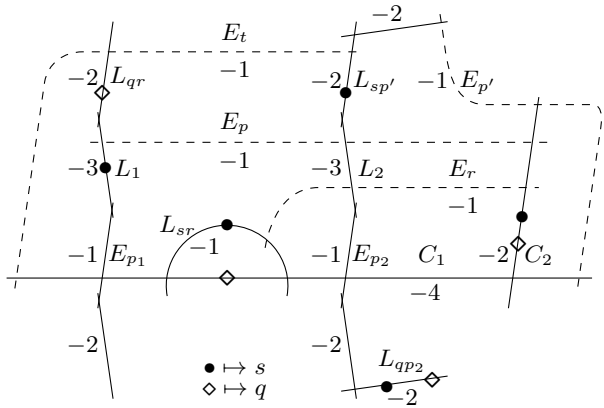
\begin{figure}[htbp]
				\begin{tikzpicture}
		\draw (-2.6,-0.4) -- (-2.8,1);
		\node at (-3,0.3) {\small{$-2$}};
		\draw (-2.8,0.8) -- (-2.6,2.2); 
		\node at (-3,1.5) {\small{$-1$}};
		\node at (-2.4,1.5) {\small{$E_{p_1}$}};
		\draw (-2.6,2) -- (-2.8,3.4);
		\node at (-3.0,2.65) {\small{$-3$}};
		\node at (-2.4,2.65) {\small{$L_{1}$}};
		\node at (-2.7,2.65) {\large{$\bullet$}};	
		\draw (-2.8,3.2) -- (-2.6,4.6);
		\node at (-3.0,3.85) {\small{$-2$}};
		\node at (-2.4,3.85) {\small{$L_{qr}$}};
		\node at (-2.74,3.65) {\large{$\boldsymbol{\diamond}$}};		
		\draw (-4,1.2) -- (3.7,1.2);
		\node at (1.6,1.5) {\small{$C_1$}};
		\node at (1.5,1) {\small{$-4$}};
		\node at (-1.1,1.2) {\large{$\boldsymbol{\diamond}$}};
		\draw (-1.1,1.1) [partial ellipse=190:-10 : 0.8 and 0.8];
		\node at (-1.75,1.9) {\small{$L_{sr}$}};
		\node at (-1.4,1.6) {\small{$-1$}};
		\node at (-1.1,1.9) {\large{$\bullet$}};
		\node at (-1,0) {\small{$\bullet\mapsto s$}};
		\node at (-1,-0.3) {\small{$\boldsymbol{\diamond}\mapsto q$}};
		\draw (0.6,-0.4) -- (0.4,1);
		\node at (0.2,0.3) {\small{$-2$}};
		\draw (0.4,0.8) -- (0.6,2.2); 
		\node at (0.2,1.5) {\small{$-1$}};
		\node at (0.8,1.5) {\small{$E_{p_2}$}};
		\draw (0.6,2) -- (0.4,3.4);
		\node at (0.2,2.65) {\small{$-3$}};
		\node at (0.8,2.65) {\small{$L_{2}$}};	
		\draw (0.4,3.2) -- (0.6,4.6);
		\node at (0.2,3.8) {\small{$-2$}};
		\node at (0.85,3.8) {\small{$L_{sp'}$}};
		\node at (0.46,3.65) {\large{$\bullet$}};
		\draw (0.4,-0.3) -- (1.8,-0.1);
		\node at (1.2,0.1) {\small{$L_{qp_2}$}};
		\node at (1,-0.25) {\large{$\bullet$}};
		\node at (1.6,-0.15) {\large{$\boldsymbol{\diamond}$}};
		\node at (1.2,-0.4) {\small{$-2$}};
		\draw (0.4,4.4) -- (1.8,4.6);
		\node at (1,4.7) {\small{$-2$}};
		\draw (2.6,0.8) -- (3,3.6); 
		\node at (2.4,1.5) {\small{$-2$}};
		\node at (3,1.5) {\small{$C_2$}};
		\node at (2.78,2) {\large{$\bullet$}};
		\node at (2.73,1.65) {\large{$\boldsymbol{\diamond}$}};
		\node at (1.6,3.8) {\small{$-1$}};
		\node at (2.2,3.8) {\small{$E_{p'}$}};
		\draw[dashed] (1.7,4.7) to[out=-80,in=180] (2.3,3.5)-- (3.7,3.5)
		to[out=0,in=80] (3.8,3.1) -- (3.5,1);
		\draw[dashed] (-2.9,3) -- (3.2,3);
		\node at (-1,3.25) {\small{$E_{p}$}};
		\node at (-1,2.75) {\small{$-1$}};
		\draw[dashed] (-0.6,1.6) to[out=80,in=180] (0.1,2.4) -- (3,2.4);
		\node at (2,2.65) {\small{$E_{r}$}};
		\node at (2,2.15) {\small{$-1$}};
		\draw[dashed] (0.6,4.2) -- (-3,4.2) to[out=180,in=80] (-3.5,3.8) -- (-3.9,1);
		\node at (-1,4.45) {\small{$E_t$}};
		\node at (-1,3.95) {\small{$-1$}};
		
	\end{tikzpicture}
	\caption{Case \ref{def:A1A2_2c1n}: $X_{\min}\cong \P(1,2,3)$, $n=1$, $a$ is a cusp, $\delta_{a}=2$, $q\not\in\cc$ and $b$ is a node.}
	\label{fig:A1A2_2c1n}
\end{figure}

Assume now that $b\in \qq$ is a node, so $a\in \qq$ is a cusp and $\qq$ meets $\cc\setminus \{q\}$ at one point, say $r$, with multiplicity $3$. The pencil of conics tangent to $\qq$ at $a$ and passing through $q,r$ contains a unique member $\cc_{b}$ passing through $b$. The singular members $\ll_{a}+\cc$ and $\ll_{aq}+\ll_{ar}$ do not contain $\{a,b\}\subseteq \cc_{b}$, so $\cc_{b}$ is smooth. Put $D''=\tilde{D}''+L_{a}+L_{ab}+L_{aq}+L_{ar}+C_{b}$. Then $D''-\tilde{D}''$ consists of disjoint $(-1)$-curves and
\begin{itemize}
	\item $L_{aq}\cdot\tilde{D}''=4$ and $L_{aq}$ meets  $\tilde{D}''$ in $Q$, $L$, $U_{a}$ and $U_{q}$,
	\item $L_{ar}\cdot\tilde{D}''=4$ and $L_{ar}$ meets  $\tilde{D}''$ in $Q$, $L$, $U_{a}$ and $U_{r}$,
	\item $C_{b}\cdot \tilde{D}''=5$ and $C_{b}$ meets  $\tilde{D}''$ in $L$, $U_{q}$, $U_{r}$, $U_{a}'$ and $U_{b}$.
\end{itemize}

Thus $D''$, and hence $S$, is as in  \ref{def:A1A2_2c1n}. Indeed, the curves 
$Q$, $L$, $C$, $U_{a}$, $U_{a}'$, $U_{b}$, $U_{q}$, $U_{r}$,
$L_{a}$, $L_{ab}$, $L_{aq}$, $L_{ar}$, $C_{b}$, $E_{q}$ 
defined above correspond there to 
$C_{1}$, $C_{2}$, $L_{2}$, $L_{rq}$, $L_{1}$, $L_{sr}$, $L_{sp'}$, $L_{qp_{2}}$,
$E_{p}$, $E_{r}$, $E_{t}$, $E_{q}$, $E_{s}$, $E_{p'}$, respectively.
\smallskip

At last, consider the case $n=0$. Let $\phi\colon X\am\to \F_2$ be the contraction of $U+T_1$. Then $\phi(T_2)$ is the negative section, and $R\de \phi(R\am)$ is as in Proposition \ref{prop:n0}\ref{item:n0_A1A2} below. Hence $S$ is as in  is as in \ref{def:A1A2_c=3}, \ref{def:A1A2_c=1}, \ref{def:A1A2_c=2_41} or  \ref{def:A1A2_c=2_32}.
	
\subsection{Case \texorpdfstring{$X_{\min}\cong \P(1,1,2)$}{Xmin=P(1,1,2)}}\label{sec:F2}
	In this case, $\alpha\colon X\am=\F_2\to \P(1,1,2)=X_{\min}$ is a blowup at the vertex, so $\Delta\am$ is the negative section, see Section \ref{sec:Fm}. Let $F$ be the fiber of $\F_2$, so $F\cdot \Delta\am=1$. Condition \eqref{eq:assumption_fibr} gives $F\cdot R\am=F\cdot D\am-1\geq 3$. By formula  \eqref{eq:ampleness}, 
	$0>F\cdot (2K_{X\am}+R\am+\tfrac{1}{2}\Delta\am)=F\cdot R\am-3-\tfrac{1}{2}\geq -\tfrac{1}{2}$, 
 	hence $F\cdot R\am=3$. 
	Recall that by formula \eqref{eq:R} $R\am$ has $2n$ nodes and $n+1$ components. By Lemma \ref{lem:expansions}\ref{item:D_rat-tree}, all components of $R\am$ are rational, and by Lemma  \ref{lem:MMP}\ref{item:mult_2} and all singularities of $R\am$ have multiplicity $2$. 
	
	Because $\Delta\am$ is a tip of $D\am=R\am+\Delta\am$, the divisor $R\am$ is of type $(1,3)$, see Section \ref{sec:Fm}. We claim that 
	\begin{equation}\label{eq:F2_delta}
	\mbox{if a singular point } p \mbox{ of } (R\am)\hor\mbox{ has }\delta_{p}\geq 3\mbox{ then }p\mbox{ lies on a positive section in }(R\am)\hor.
	\end{equation}
	
	Suppose the contrary. Then $p\not \in \Delta\am$ since $\Delta\am$ is a tip of $D\am$. Let $L_{p}$ be the tangent section to $R\am$ at $p$, see Section \ref{sec:Fm}. Then $L_{p}\cdot \Delta\am=0$ and $L_{p}\cdot R\am=7$ because $L_{p}$ and $R\am$ are of type $(1,3)$ and $(0,1)$, respectively. Moreover, the condition $\delta_{p}\geq 3$ implies that $(L_{p}\cdot R\am)_{p}\geq 6$. By assumption, $L_{p}\not\subseteq D\am$. By Lemma \ref{lem:MMP}\ref{item:Si_no-lines}, the surface $X\am\setminus D\am$ contains no contractible curves, so $L_{p}$ meets $D\am\setminus \{p\}$. Hence $L_{p}$ meets $D\am$ at $p$ with multiplicity $6$ and at some smooth point of $R\am$ with multiplicity $1$. It follows that the proper transform $L_{p}'$ of $L_{p}$ on $X'$ satisfies $(L_{p}')^2=L_{p}^{2}-3=-1$, $L_{p}'\cdot D'=2$ and $L_{p}'$ meets the proper transform of $R\am$ and an exceptional curve over $p$. Hence $L_{p}'$ is a bubble on $(X',D')$; a contradiction with \eqref{eq:no_bubbles}. This proves \eqref{eq:F2_delta}.

	\smallskip
	Assume first that $D\am$ contains a fiber, say $F_{0}$. By formula \eqref{eq:R} $n=\#(R\am-F_{0})$, so $n\leq (R\am-F_{0})\cdot F=3$. Suppose $n=3$. Then $R\am-F_{0}$ consists of three positive sections, say $C_{1},C_{2},C_{3}$. In particular, $C_{i}\cdot C_{j}=2$. Because the components of $R\am$ meet in $2n=6$ double points, three of them being $F_{0}\cap C_{i}$, the three remaining ones are the points where $C_{i}$ is tangent to $C_{j}$, $i\neq j$, with multiplicity $2$. Write $F_{0}\cap C_{3}=\{p\}$ and let $\sigma_{p}\colon \F_{2}\map \P^{2}$ be as in Section \ref{sec:Fm}. Then for $i\in \{1,2\}$ we have $((\sigma_{p})_{*}C_{i})^{2}=C_{i}^{2}+2=4$, so $(\sigma_{p})_{*}C_{i}$ are conics tangent to each other at two points with multiplicity $2$, and $((\sigma_{p})_{*}C_{3})^{2}=C_{3}^{2}-1=1$, so $(\sigma_{p})_{*}C_{3}$ is a line tangent to those conics outside their common points. This is a contradiction with Lemma \ref{lem:conics}\ref{item:conics_22}. 
	
	Consider the case $n=2$. Then $R\am-F_{0}$ consists of a positive section $H_{1}$ and a curve $H_{2}$ of type $(0,2)$. We have $H_{1}\cdot H_{2}=4$. By Lemma \ref{lem:F2-adjunction}, $H_{2}$ has a unique singular point $p$, which is ordinary. Because $R\am$ has $4$ self-intersection points, if $H_{2}$ is nodal then meets $H_{1}$ and $F_{0}$ in one point each; and if $H_{2}$ is cuspidal it meets $F_{0}+H_{1}$ in three points, either one, or two of them contained in $F_{0}$. Let $\sigma_{p}\colon \F_{2}\map \P^{2}$ be as in Section \ref{sec:Fm}. In case $\#(H_{1}\cap H_{2})=2$ put $\ll_{1}=\hat{\ll}_{p}\subseteq \P^{2}$, $\ll_{q}=(\sigma_{p})_{*}F_{0}$. Then for $i\in \{1,2\}$, $\cc_{i}\de (\sigma_{p})_{*}H_{3-i}$ are conics meeting with multiplicities $3$, $1$ off $\ll_{1}+\ll_{q}$, and $\cc_{1}+\cc_{2}+\ll_{1}+\ll_{q}$, and hence $S$, is as in  \ref{def:F2n2-cuspidal}. Note that if the expansions is at $(C_2,C_1;1)$ and $(L_q,C_2;1)$, then the proper transform of $|C_2+3E_{p_2}+2E_{p_2}'+E_{p_2}''|$ induces a $\P^1$-fibration which restricts to a $\C^{**}$-fibration of $S$, contrary to assumption \eqref{eq:assumption}.
	
	Consider the case $\#(H_{1}\cap H_{2})=1$. Put $p_{2}=\hat{p}\in \P^{2}$, $\ll_{2}=\hat{\ll}_{p}\subseteq \P^{2}$, $\ll_{2}'=(\sigma_{p})_{*}F_{0}$. Then for $i\in \{1,2\}$, $\cc_{i}\de (\sigma_{p})_{*}H_{i}$ are conics meeting with multiplicity $4$ off $\ll_{2}+\ll_{2}'$, and $\ll_{2}$, $\ll_{2}'$ are lines such that  $(\ll_{2}\cdot \cc_{1})_{p_{2}}=2$, $(\ll_{2}'\cdot \cc_{1})_{p_{2}}=1$.  By Lemma \ref{lem:conics}\ref{item:conics_4} $\ll_{2}$ is not tangent to $\cc_{2}$. This means that $H_{2}$ is nodal, so it is tangent to $F_{0}$, hence $(\ll_{2}'\cdot \cc_{2})_{r}=2$ for some $r\neq p_{2}$. Thus $\cc_{1}+\cc_{2}+\ll_{2}+\ll_{2}'$, and hence $S$, is as in  \ref{def:F2n2-nodal}.

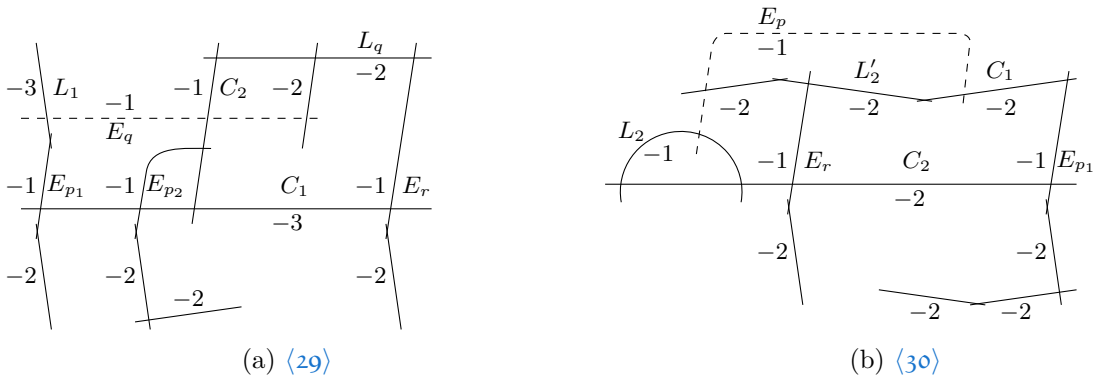
\begin{figure}[htbp]
	\begin{tabular}{cc}
		\begin{subfigure}[b]{0.45\textwidth}
					\begin{tikzpicture}
				\draw (-2.6,-0.4) -- (-2.8,1);
				\node at (-3,0.3) {\small{$-2$}};
				\draw (-2.8,0.8) -- (-2.6,2.2); 
				\node at (-3,1.5) {\small{$-1$}};
				\node at (-2.4,1.5) {\small{$E_{p_1}$}};
				\draw (-2.6,2) -- (-2.8,3.4);
				\node at (-3.0,2.8) {\small{$-3$}};
				\node at (-2.4,2.8) {\small{$L_{1}$}};	
				\draw (-1.3,-0.4) -- (-1.5,1);
				\node at (-1.7,0.3) {\small{$-2$}};
				\draw (-1.5,-0.3) -- (-0.1,-0.1);
				\node at (-0.8,0) {\small{$-2$}};
				\draw (-1.5,0.8) -- (-1.35,1.7) to[out=80,in=180] (-0.5,2); 
				\node at (-1.7,1.5) {\small{$-1$}};
				\node at (-1.1,1.5) {\small{$E_{p_2}$}};								
				\draw (-0.75,1) -- (-0.4,3.4);
				\node at (-0.8,2.8) {\small{$-1$}};
				\node at (-0.2,2.8) {\small{$C_{2}$}};
				\draw (0.7,2) -- (0.9,3.4);
				\node at (0.5,2.8) {\small{$-2$}};
				\draw (1.8,0.8) -- (2.2,3.4);
				\node at (1.6,1.5) {\small{$-1$}};
				\node at (2.2,1.5) {\small{$E_{r}$}};
				\draw (1.8,1) -- (2,-0.4);
				\node at (1.6,0.3) {\small{$-2$}};				
				\draw (-0.6,3.2) -- (2.4,3.2);
				\node at (1.6,3.4) {\small{$L_q$}};
				\node at (1.6,3) {\small{$-2$}};
				\draw[dashed] (-3,2.4) -- (0.9,2.4);
				\node at (-1.7,2.2) {\small{$E_q$}};
				\node at (-1.7,2.6) {\small{$-1$}};
				\draw (-3,1.2) -- (2.4,1.2);
				\node at (0.6,1.5) {\small{$C_1$}};
				\node at (0.5,1) {\small{$-3$}};
			\end{tikzpicture}
			\caption{\ref{def:F2n2-cuspidal}}
			\label{fig:F2n2-cuspidal}
		\end{subfigure}
		&
		\begin{subfigure}[b]{0.45\textwidth}
		\begin{tikzpicture}
			\draw (-2.2,1.1) [partial ellipse=190:-10 : 0.8 and 0.8];
			\node at (-2.85,1.9) {\small{$L_{2}$}};
			\node at (-2.5,1.6) {\small{$-1$}};
			\draw (-0.8,0.8) -- (-0.5,2.7);
			\node at (-1,1.5) {\small{$-1$}};
			\node at (-0.4,1.5) {\small{$E_{r}$}};
			\draw (-0.8,1) -- (-0.6,-0.4);
			\node at (-1,0.3) {\small{$-2$}};	
			\draw (2.6,0.8) -- (2.9,2.7);
			\node at (2.4,1.5) {\small{$-1$}};
			\node at (3,1.5) {\small{$E_{p_1}$}};
			\draw (2.6,1) -- (2.8,-0.4);
			\node at (2.4,0.3) {\small{$-2$}};
			\draw (3,-0.2) -- (1.6,-0.4);
			\node at (2.2,-0.5) {\small{$-2$}};
			\draw (1.8,-0.4) -- (0.4,-0.2);
			\node at (1,-0.5) {\small{$-2$}};				
			\draw (-0.8,2.6) -- (-2.2,2.4);
			\node at (-1.5,2.2) {\small{$-2$}};
			\draw (-1,2.6) -- (1.1,2.3);
			\node at (0.2,2.2) {\small{$-2$}};
			\node at (0.25,2.7) {\small{$L_{2}'$}};
			\draw (0.9,2.3) -- (3,2.6);
			\node at (2,2.2) {\small{$-2$}};
			\node at (2,2.7) {\small{$C_1$}};
			\draw[dashed] (-2,1.6) -- (-1.8,3) to[out=80,in=180] (-1.6,3.2) -- (1.4,3.2) to[out=0,in=100] (1.6,3.1) -- (1.5,2.2);
			\node at (-1,3.4) {\small{$E_p$}};
			\node at (-1,3) {\small{$-1$}};
			\draw (-3.2,1.2) -- (3,1.2);
			\node at (0.9,1.5) {\small{$C_2$}};
			\node at (0.8,1) {\small{$-2$}};
		\end{tikzpicture}
			\caption{\ref{def:F2n2-nodal}}
			\label{fig:F2n2-nodal}
		\end{subfigure}
	\end{tabular}
	\caption{Cases when $X\am\cong \F_{2}$, $D\am$ contains a fiber, and $n=2$.}
	\label{fig:F2n2_vert}
\end{figure}
		
	Consider now the case $n=1$. Then $H\de R\am-F_{0}$ is a curve of type $(0,3)$. Because $R\am$ has $2n=2$ nodes $H$ meets $F_{0}$ in $2-\nu$ points, where $\nu\leq 1$ is the number of nodes of $H$. The morphism $\pi_{\F_{2}}|_{H}\colon H\to \P^{1}$ has degree $3$ and is ramified at least at each singularity of $H$, with index at least $2$, and at the point of tangency of $F_{0}$ and $H$, with index $2+\nu$. By the Hurwitz formula, $2-\nu \leq 3\cdot 2-\#\Sing H-(1+\nu)$, so $\#\Sing H\leq 3$.
	
	Lemma \ref{lem:F2-adjunction} gives $\delta_{H}=4$, so by condition \eqref{eq:F2_delta} $H$ has either two singular points with $\delta=2$ or one with $\delta=2$ and two ordinary. Choose a point $p\in \Sing H$ with $\delta_{p}=2$, a node if possible. Let $\sigma_{p}\colon \F_{2}\map \P^{2}$ be as in Section \ref{sec:Fm}, $\ll_{0}\de (\sigma_{p})_{*}F_{0}$, $\hh\de (\sigma_{p})_{*}H$  and let $p'\in \hh$ be the point infinitely near to $p$. Then $\ll_{0}$ is a line meeting $\hat{\ll}_{p}$ at $\hat{p}$, and $\hh$ is a quartic such that $(\hat{\ll}_{p}\cdot \hh)_{\hat{p}}=(\hat{\ll}_{p}\cdot \hh)_{p'}=2$, $\hat{p}\in\hh$ is smooth and $\delta_{p'}=1$.
	
	Assume that $\#\Sing \hh=2$, so $\Sing \hh=\{p',q\}$, where $\delta_{q}=2$. The line $\ll_{qp'}$ joining $q$ and $p'$ meets $\hh$ only at $q$, $p'$, and the line $\ll_{q}$ tangent to $\hh$ at $q$ meets $\hh$ only at $q$, with multiplicity $4$, see Lemma \ref{lem:P2-curves}\ref{item:P2-tangent}. Consider the quadratic transformation centered at $p'$, $q$ and its infinitely near point on the proper transform of $\ll_{q}$, that is, blow up these points and contract the proper transforms of $\ll_{qp'}$, $\ll_{q}$ and the first exceptional curve over $q$. Let $\cc_{1}$, $\cc_{2}$, $\ll_{1}$, $\ll_{2}$, $\ll$ be the images of $\hh$, $\hat{\ll}_{p}$, $\ll_{0}$ and the exceptional curves over $q$ and $p'$, respectively.  Now  $\cc_{1}+\cc_{2}+\ll_{1}+\ll_{2}+\ll$, and hence $S$, is as in \ref{def:F2_n1-node} if $p'$ is a node and as in \ref{def:F2_n1-cusp} if $p'$ is a cusp; where the line $\ll$ correspond to $\ll_{r}$ in \ref{def:F2_n1-node} and to $\ll_{p'}$ in \ref{def:F2_n1-cusp}.
	
	Assume now that $\hh$ has three (ordinary) singular points $p'$, $q_{1}$, $q_{2}$. At most one of them is a node. We order them so that $q_{2}$ is a cusp. Consider the standard quadratic transformation centered at $p'$, $q_1$ and $q_2$. Let $\cc_{1}$, $\cc_{2}$, $\ll_{1}$, $\ll_{2}$, $\ll_{pq}$, $\ll_{r}'$ be the images of $\hh$, $\ll_{0}$, $\hat{\ll}_{p}$, and the exceptional curves over $p'$, $q_1$, $q_2$, respectively. The lines $\ll_{1}$, $\ll_{r}'$ and $\ll_{pq}$ pass through one point (the image of $\ll_{q_{1}q_{2}}$) and $\ll_{1}$, $\ll_{r}'$ are tangent to $\cc_{1}$, so $\ll_{pq}$ is not. This means that  $q_{1}\in\hh$ is a node, so $p'\in\hh$ is a cusp. It follows that $\cc_{1}+\cc_{2}+\ll_{1}+\ll_{2}+\ll_{pq}+\ll_{r}'$, and hence $S$, is as in  \ref{def:F2_n1-node-3}.

\begin{figure}[htbp]
	\begin{tabular}{cc}
		\begin{subfigure}[b]{0.4\textwidth}
			\begin{tikzpicture}
	\draw (-4,1.2) -- (1.4,1.2);
	\node at (0.2,1.5) {\small{$C_1$}};
	\node at (0.1,1) {\small{$-3$}};
	\draw (-2.1,1.1) [partial ellipse=190:-10 : 0.8 and 0.8];
	\node at (-1.62,1.7) {\large{$\bullet$}};
	\node at (-2.65,2) {\small{$L_r$}};
	\node at (-2.4,1.6) {\small{$-1$}};
	\draw (-2,1.7) -- (-2.2,3.4);
	\node at (-2.4,2.8) {\small{$-2$}};
	\node at (-1.8,2.8) {\small{$L_1$}};
	\draw (-0.8,-0.3) -- (0.6,-0.1);
	\node at (-0.1,0) {\small{$-2$}};				
	\draw (-0.6,-0.4) -- (-0.8,1);
	\node at (-1,0.3) {\small{$-2$}};
	\draw (-0.8,0.8) -- (-0.6,2.2); 
	\node at (-1,1.5) {\small{$-1$}};
	\node at (-0.4,1.5) {\small{$E_{p'}$}};			
	\draw (-0.6,2) -- (-0.8,3.4);
	\node at (-1.0,2.8) {\small{$-3$}};
	\node at (-0.4,2.8) {\small{$C_{2}$}};
	\node at (-0.7,2.8) {\large{$\bullet$}};
	\draw (-0.8,3.2) -- (-0.6,4.6);
	\node at (-1.0,3.8) {\small{$-2$}};
	\draw (1.2,-0.4) -- (1,1);
	\node at (0.8,0.3) {\small{$-2$}};
	\draw (1,0.8) -- (1.2,2.2); 
	\node at (0.8,1.5) {\small{$-1$}};
	\node at (1.4,1.5) {\small{$E_{p_2}$}};			
	\draw (1.2,2) -- (1,3.4);
	\node at (0.8,2.8) {\small{$-3$}};
	\node at (1.4,2.8) {\small{$L_{2}$}};
	\node at (1.1,4) {\large{$\bullet$}};
	\draw (1,3.2) -- (1.2,4.6);
	\node at (0.8,3.8) {\small{$-2$}};
	\draw[dashed] (-2.2,2.4) -- (1.2,2.4);
	\node at (0.2,2.6) {\small{$E_p$}}; 
	\node at (0.2,2.2) {\small{$-1$}};
	\draw[dashed] (-3.6,1) -- (-3.3,3.1) to[out=80,in=180] (-3.2,3.2) -- (-1.8,3.2) to[out=0,in=-100] (-1.7,3.3) -- (-1.5,4.3) to[out=80,in=180] (-1.4,4.4) -- (-0.4,4.4);
	\node at (-3.8,1.5) {\small{$-1$}};
	\node at (-3.2,1.5) {\small{$E_{p_1}$}};	
	\node at (-3,0) {\small{$\bullet\mapsto r$}};										
\end{tikzpicture}
			\caption{\ref{def:F2_n1-node}}
			\label{fig:F2_n1-node}
		\end{subfigure}
		&
		\begin{subfigure}[b]{0.4\textwidth}
			\begin{tikzpicture}
				\draw (0.2,-0.4) -- (0,1);
				\node at (-0.2,0.3) {\small{$-2$}};
				\draw (0,0.8) -- (0.2,2.2); 
				\node at (-0.2,1.5) {\small{$-1$}};
				\node at (0.4,1.5) {\small{$E_{p'}$}};			
				\draw (0.2,2) -- (0,3.4);
				\node at (-0.2,2.8) {\small{$-3$}};
				\node at (0.4,2.8) {\small{$L_{p'}$}};
				\node at (0.1,2.6) {\large{{$\boldsymbol{\circ}$}}};
				\draw (0,3.2) -- (0.2,4.6);
				\node at (-0.2,3.8) {\small{$-2$}};
				\node at (0.4,3.8) {\small{$L_{1}$}};
				\node at (0.1,4) {\large{$\bullet$}};
				\draw (1.6,-0.4) -- (1.4,1);
				\node at (1.2,0.3) {\small{$-2$}};
				\draw (1.4,0.8) -- (1.6,2.2); 
				\node at (1.2,1.5) {\small{$-1$}};
				\node at (1.8,1.5) {\small{$E_{p_2}$}};			
				\draw (1.6,2) -- (1.4,3.4);
				\node at (1.2,2.8) {\small{$-3$}};
				\node at (1.8,2.8) {\small{$L_{2}$}};
				\node at (1.5,2.6) {\large{$\bullet$}};
				\draw (1.4,3.2) -- (1.6,4.6);
				\node at (1.2,3.8) {\small{$-2$}};
				\node at (1.5,4) {\large{{$\boldsymbol{\circ}$}}};
				\draw (3,-0.4) -- (2.8,1);
				\node at (2.6,0.3) {\small{$-2$}};
				\draw (2.8,0.8) -- (3,2.2); 
				\node at (2.6,1.5) {\small{$-1$}};
				\node at (3.2,1.5) {\small{$E_{q}$}};			
				\draw (3,2) -- (2.8,3.4);
				\node at (2.6,2.8) {\small{$-2$}};
				\node at (3.2,2.8) {\small{$C_{2}$}};
				\node at (2.9,2.6) {\large{$\bullet$}};
				\node at (2.88,2.9) {\large{{$\boldsymbol{\circ}$}}};
				\draw (2.8,3.2) -- (3,4.6);
				\node at (2.6,3.8) {\small{$-2$}};
				\draw (-0.2,1.2) -- (3.8,1.2) to[out=0,in=0] (3.8,2.4) -- (2.6,2.4);
				\node at (3.8,1.8) {\small{$-4$}};
				\node at (4.4,1.8) {\small{$C_1$}};
				\node at (4,0.5) {\small{$\bullet\mapsto p$}};
				\node at (4,0.2) {\small{ $\boldsymbol{\circ}\mapsto r$}};
				\draw[dashed] (3.8,2.2) -- (3.5,4.2) to[out=100,in=0] (3.2,4.4) -- (2.2,4.4) to[out=180,in=0] (1.6,4.8) to[out=180,in=0] (1,4.4) -- (-0.2,4.4);
				\node at (3.35,3.4) {\small{$-1$}};
				\node at (3.95,3.4) {\small{$E_{p_1}$}};
			\end{tikzpicture}
			\caption{\ref{def:F2_n1-cusp}, see Example \ref{ex:7}.}
			\label{fig:F2_n1-cusp}
		\end{subfigure}
		\end{tabular}
		\begin{subfigure}[b]{0.4\textwidth}
					\begin{tikzpicture}
			\draw (-4,1.2) -- (2.6,1.2);
			\node at (0.2,1.5) {\small{$C_1$}};
			\node at (0.1,1) {\small{$-5$}};
			\draw (-2.1,1.1) [partial ellipse=190:-10 : 0.8 and 0.8];
			\node at (-2.4,1.45) {\small{$L_{pq}$}};
			\node at (-3.1,1.5) {\small{$-1$}};
			\draw (-0.8,-0.3) -- (0.6,-0.1);
			\node at (-0.1,0) {\small{$-2$}};				
			\draw (-0.6,-0.4) -- (-0.8,1);
			\node at (-1,0.6) {\small{$-2$}};
			\draw (-0.8,0.8) -- (-0.6,2.2); 
			\node at (-1,1.5) {\small{$-1$}};
			\node at (-0.4,1.5) {\small{$E_{p'}$}};			
			\draw (-0.6,2) -- (-0.85,3.75);
			\node at (-1.0,2.7) {\small{$-3$}};
			\node at (-0.4,2.7) {\small{$C_{2}$}};
			\draw (-0.85,3.55) -- (-0.65,4.95);
			\node at (-1.0,4.3) {\small{$-2$}};
			\draw (1.2,-0.4) -- (1,1);
			\node at (0.8,0.3) {\small{$-2$}};
			\draw (1,0.8) -- (1.2,2.2); 
			\node at (0.8,1.5) {\small{$-1$}};		
			\draw (1.2,2) -- (0.95,3.75);
			\node at (0.8,2.7) {\small{$-3$}};
			\node at (1.35,2.7) {\small{$L_{r}'$}};
			\draw (0.95,3.55) -- (1.2,4.95);
			\node at (0.8,4.3) {\small{$-2$}};
			\node at (1.35,4.3) {\small{$L_{1}$}};
			\draw (2.5,-0.4) -- (2.3,1);
			\node at (2.1,0.3) {\small{$-2$}};
			\draw (2.3,0.8) -- (2.5,2.2); 
			\node at (2.1,1.5) {\small{$-1$}};		
			\node at (2.7,1.5) {\small{$E_{p_2}$}};
			\draw (2.5,2) -- (2.25,3.75);
			\node at (2.1,2.7) {\small{$-3$}};
			\node at (2.65,2.7) {\small{$L_{2}$}};
			\draw[dashed] (-0.8,2.4) -- (2.8,2.4);
			\node at (0.2,2.6) {\small{$E_r$}}; 
			\node at (0.2,2.2) {\small{$-1$}};
			\draw[dashed] (-3.6,1) -- (-3.1,4.2) to[out=80,in=180] (-2.8,4.7) -- (1.4,4.7);
			\node at (-3.4,4) {\small{$-1$}};
			\node at (-2.8,4) {\small{$E_{p_1}$}};
			\draw[dashed] (-2.5,1.7) -- (-2.3,3.1) to[out=80,in=180] (-2.1,3.4) -- (-0.2,3.4) to[out=0,in=180] (0.8,4) -- (1.4,4) to[out=0,in=180] (2.2,3.4) -- (2.5,3.4);
			\node at (-2.6,2.85) {\small{$-1$}};
			\node at (-2.1,2.8) {\small{$E_{p}$}};	
			\draw[dashed] (-1.7,1.7) -- (-1.5,2.9) to[out=80,in=180] (-1.3,3.1) -- (1.3,3.1);
			\node at (-1.9,2.05) {\small{$-1$}};
			\node at (-1.4,2) {\small{$E_{q}$}};						
		\end{tikzpicture}
			\caption{\ref{def:F2_n1-node-3}}
			\label{fig:F2_n1-node-3}
		\end{subfigure}
	\caption{Cases when $X\am\cong \F_{2}$, $D\am$ contains a fiber, and $n=1$.}
	\label{fig:F2n1_vert}
\end{figure}
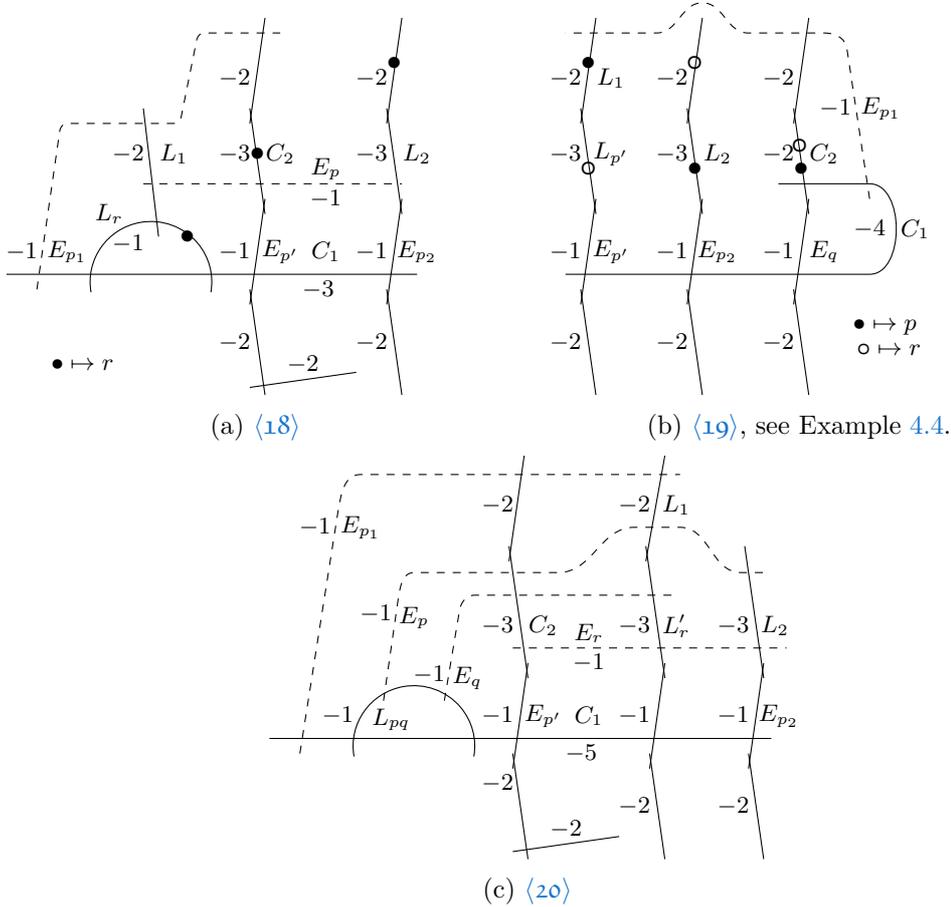

	\medskip
	
	Now consider the case when $R\am$ is horizontal. Like in the previous case, we have $n+1=\#R\am\leq R\am\cdot F=3$ for a fiber $F$, so $n\leq 2$.  Consider the case $n=2$. Then $R\am$ consists of three $1$-sections, say, $H$ of type $(1,1)$ and $H_{1}$, $H_{2}$ of type $(0,1)$. We have $H\cdot H_{1}=H\cdot H_{2}=3$ and $H_{1}\cdot H_{2}=2$. The divisor $R\am$ contains $2n=4$ self-intersection points, so by renaming $H_{1}$ with $H_{2}$ if necessary, we can assume that $H_{1}$ meets $H$ with multiplicity $3$, at, say, $p$, and $H_{2}$ meets $H$ with multiplicities $2$, $1$ if $H_{2}$ is tangent to $H_{1}$ and with multiplicity $3$ otherwise. Let $\sigma_{p}\colon\F_{2}\map\P^{2}$ be as in Section \ref{sec:Fm}. Note that $(\sigma_{p})_{*}H$, $(\sigma_{p})_{*}H_{2}$ are conics meeting at $p'$ with multiplicity $2$; $(\sigma_{p})_{*}H_{2}$ is tangent to $\hat{\ll}_{p}$ at $\hat{p}$, and $(\sigma_{p})_{*}H_{1}$ is a line tangent to $(\sigma_{p})_{*}H$ at $p'$. If $H_{1}$ is tangent to $H_{2}$ put $\cc_{2}\de (\sigma_{p})_{*}H$, $\cc_{1}\de (\sigma_{p})_{*}H_{2}$, $\ll_{1}\de (\sigma_{p})_{*}H_{1}$ and $\ll_{2}\de \hat{\ll}_{p}$. Then $\cc_{1}+\cc_{2}+\ll_{1}+\ll_{2}$, and hence $S$, is as in  \ref{def:F2_n2-tangent}. In the other case, put $\cc_{1}\de (\sigma_{p})_{*}H$, $\cc_{2}\de (\sigma_{p})_{*}H_{2}$, $\ll_{1}\de (\sigma_{p})_{*}H_{1}$, $\ll_{1}'\de \hat{\ll}_{p}$. Then $\cc_{1}+\cc_{2}+\ll_{1}+\ll_{1}'$, hence $S$, is as in  \ref{def:F2_n2-transversal}. In this case, we can assume that the weight at the preimage of $r\in \ll_1\cap \cc_2$ is not $1$: indeed, otherwise the proper transform of $\ll_{rp_2}$ on $X$ is a bubble, and contracting it instead of the one over $r$, we see that $S$ is as in \ref{def:nodal-cubic_P2}. Moreover, if both points of $\ll_1\cap \cc_2$ are centers of expansions with weights $v,w$, we can, by symmetry of the graph, take $v\geq w$, cf.\ Example \ref{ex:33}.
		\begin{figure}[htbp]
			\begin{tabular}{cc}
				\begin{subfigure}[b]{0.5\textwidth}
					\begin{tikzpicture}
						\draw (-1.8,0.8) -- (-1.4,3.6); 
						\node at (-1.9,2.6) {\small{$-2$}};
						\draw[dashed] (-1.6,3.4) -- (-1.4,3.4) to[out=0,in=180] (0,4.4) -- (0.4,4.4) to[out=0,in=180] (1.6,1.7) -- (2,1.7);
						\node at (0.75,2.6) {\small{$-1$}};
						\node at (1.35,2.6) {\small{$E_{p_2}$}};
						\draw (0.2,-0.4) -- (0,1);
						\node at (-0.2,0.3) {\small{$-3$}};
						\node at (0.35,0.3) {\small{$L_{1}$}};
						\draw (0,0.8) -- (0.2,2.2); 
						\node at (-0.2,1.5) {\small{$-1$}};
						\node at (0.35,1.5) {\small{$E_{p}$}};			
						\draw (0.2,2) -- (0,3.4);
						\node at (-0.2,2.6) {\small{$-2$}};
						\draw (0,3.2) -- (0.2,4.6);
						\node at (-0.2,3.8) {\small{$-2$}};
						\node at (0.35,3.8) {\small{$L_{2}$}};			
						\draw (-0.7,-0.4) -- (2.1,0);
						\node at (1.1,-0.4) {\small{$-1$}};
						\node at (1.1,0.1) {\small{$E_{p_1}$}};
						\draw (-0.5,-0.4) -- (-1.9,-0.2);
						\node at (-1.3,-0.1) {\small{$-2$}};
						\draw (2,-0.2) -- (1.7,1.9);
						\node at (1.6,0.3) {\small{$-2$}};
						\node at (2.2,0.3) {\small{$C_{1}$}};
						\draw (1.7,0.7) -- (2.6,0.7) to[out=0,in=-100] (2.7,0.8) -- (2.9,2.2);
						\node at (2.5,1.5) {\small{$-1$}};
						\node at (3.1,1.5) {\small{$E_{p'}$}};
						\draw (2.9,2) -- (2.7,3.4);
						\node at (2.5,2.6) {\small{$-2$}};	
						\draw (-2,1.2) -- (2.9,1.2);
						\node at (-1,1.45) {\small{$C_2$}};
						\node at (-1.1,0.95) {\small{$-1$}};
					\end{tikzpicture}
					\caption{\ref{def:F2_n2-tangent}}
					\label{fig:F2_n2-tangent}
				\end{subfigure}
				&
				\begin{subfigure}[b]{0.4\textwidth}
					\begin{tikzpicture}
						\draw (-1.8,0.8) -- (-1.4,3.6); 
						\node at (-1.9,2.6) {\small{$-2$}};
						\draw[dashed] (-1.6,3.4) -- (-1.4,3.4) to[out=0,in=180] (0,4.4) -- (0.4,4.4)  to[out=0,in=180] (1.4,4.6) to[out=0,in=100] (2.1,4.3) -- (2.6,0.4);
						\node at (2.05,2.6) {\small{$-1$}};
						\node at (2.65,2.6) {\small{$E_{p'}$}};
						\draw (-0.05,0.4) -- (0.2,2.2); 
						\node at (-0.2,1.5) {\small{$-1$}};
						\node at (0.4,1.5) {\small{$E_{p_1}$}};			
						\draw (0.2,2) -- (0,3.4);
						\node at (-0.2,2.6) {\small{$-2$}};
						\draw (0,3.2) -- (0.2,4.6);
						\node at (-0.2,3.8) {\small{$-2$}};
						\node at (0.35,3.8) {\small{$L_{1}'$}};			
						\draw (1.15,0.4) -- (1.4,2.2); 
						\node at (1,1.5) {\small{$-1$}};
						\node at (1.6,1.5) {\small{$E_{p_2}$}};			
						\draw (1.4,2) -- (1.2,3.4);
						\node at (1,2.6) {\small{$-2$}};
						\draw (1.2,3.2) -- (1.38,4.4);
						\node at (1,3.8) {\small{$-2$}};				
						\draw (-0.2,0.6) -- (0,0.6) to[out=0,in=90] (0.6,0) to[out=-90,in=0] (0,-0.6) -- (-0.4,-0.6);
						\node at (-0.2,-0.8) {\small{$-1$}};
						\node at (-0.2,-0.3) {\small{$L_{1}$}};
						\draw (2.8,0.6) -- (1,0.6) to[out=180,in=90] (0.4,0) to[out=-90,in=180] (1,-0.6) -- (1.4,-0.6);
						\node at (1.2,-0.8) {\small{$-1$}};
						\node at (1.2,-0.3) {\small{$C_{2}$}};
						\draw (-2,1.2) -- (1.6,1.2);
						\node at (-1,1.45) {\small{$C_1$}};
						\node at (-1.1,0.95) {\small{$-2$}};
					\end{tikzpicture}
					\caption{\ref{def:F2_n2-transversal}}
					\label{fig:F2_n2-transversal}
				\end{subfigure}
			\end{tabular}
			\caption{Cases when $X\am\cong \F_{2}$, $D\am$ is horizontal, and $n=2$.}
			\label{fig:F2n2_hor}
		\end{figure}
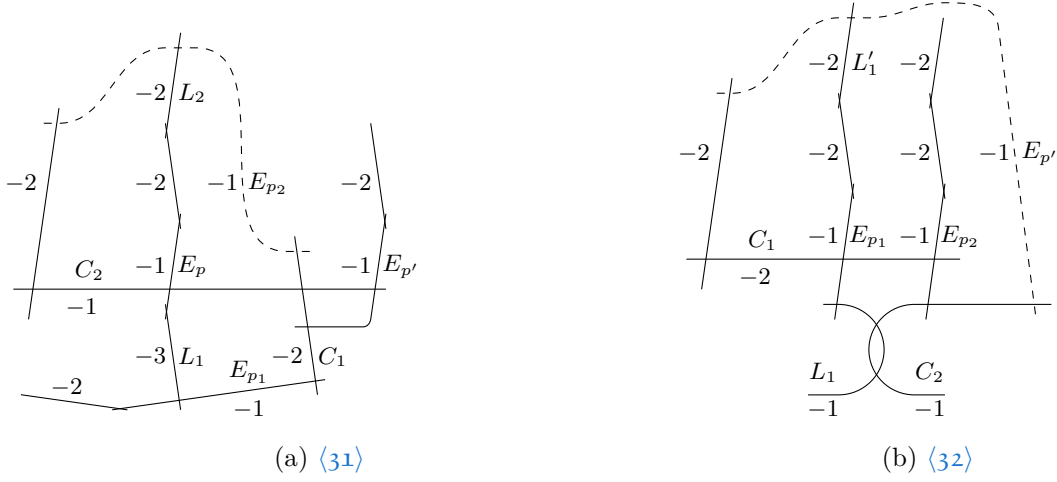

	Consider the case $n=1$. Then $R\am=H_{1}+H_{2}$, where $H_{1}$ is a $1$-section and $H_{2}$ is a $2$-section. Exactly one of the $H_{i}$'s meet $\Delta\am$ and $H_{1}$ meets $H_{2}$ in at most two points. In fact, $H_{1}$ is a positive section. Indeed,  otherwise $H_{1}$, $H_{2}$ are of type $(1,1)$ and $(0,2)$, respectively, so $H_{1}\cdot H_{2}=6$, and by condition \eqref{eq:no_bubbles} $H_{1}$ meets $H_{2}$ in at least three points, which is impossible. It follows that $H_{2}$ is of type $(1,2)$. In particular, $H_{2}\cdot \Delta\am=1$ and $H_{2}\cdot H_{1}=5$. By Lemma \ref{lem:F2-adjunction} we have $\delta_{H_{2}}=2$. Choose $p\in \Sing H_{2}$, a node if possible. Let $\sigma_{p}\colon \F_{2}\map\P^{2}$ be as in Section \ref{sec:Fm}. Then $\hh_{1}\de (\sigma_{p})_{*}H_{1}$ is a conic tangent to $\hat{\ll}_{p}$ at $\hat{p}$; $\hh_{2}\de (\sigma_{p})_{*}H_{2}$ is a cubic such that $(\hh_{2}\cdot \hat{\ll}_{p})_{p'}=2$, where $p'\in \hh_{2}$ is the point infinitely near to $p$; $(\hh_{2}\cdot \hat{\ll}_{p})_{\hat{p}}=1$, and for some $r,r'\in \P^{2}$ we have $(\hh_{2}\cdot \hh_{1})_{r}=5$ if $\hh_{2}$ is nodal and $((\hh_{2}\cdot \hh_{1})_{r},(\hh_{2}\cdot \hh_{1})_{r'})\in \{(4,1),(3,2)\}$ if $\hh_{2}$ is cuspidal. Let $s$ be the singular point of $\hh_{2}$ and let $\sigma\colon \P^{2}\map \P^{2}$ be the quadratic transformation centered at $\hat{p}$, $r$, $s$.	
			\begin{figure}[htbp]
		\begin{subfigure}[b]{0.4\textwidth}			
			\begin{tikzpicture}
				\draw (-2.7,1.1) -- (-2.4,3.2); 
				\node at (-2.9,2.6) {\small{$-2$}};
				\node at (-2.25,2.65) {\small{$L_{2}$}};		
				\draw (-3.8,1.2) -- (2.1,1.2);
				\node at (-3.2,1.45) {\small{$C_2$}};
				\node at (-3.2,1) {\small{$-1$}};
				\draw (-1.1,1.1) [partial ellipse=190:-10 : 0.8 and 0.8];
				\node at (-1.8,1.9) {\small{$L_{2}'$}};
				\node at (-1.4,1.6) {\small{$-1$}};
				\draw (-1.1,1.8) -- (-0.9,3.2);
				\node at (-1.3,2.6) {\small{$-2$}};
				\draw (0.4,2.4) -- (2.1,2.4);
				\node at (1.4,2.2) {\small{$-3$}};
				\node at (1.4,2.6) {\small{$C_{1}$}};
				\draw (1.8,1) -- (2.1,3.1); 
				\node at (1.6,1.5) {\small{$-1$}};
				\node at (2.2,1.5) {\small{$E_{p_1}$}};
				\draw (2.1,2.9) -- (1.85,4.75);
				\node at (1.7,3.6) {\small{$-2$}};	
				\draw (2.1,4.65) -- (0.7,4.45);
				\node at (1.4,4.35) {\small{$-2$}};
				\draw (0.9,4.45) -- (-0.5,4.65);
				\node at (0.1,4.35) {\small{$-2$}};
				\draw (-0.3,4.65) -- (-2.4,4.35);
				\node at (-1.3,4.35) {\small{$-2$}};
				\node at (-1.3,4.7) {\small{$L_{1}'$}};
				\draw[dashed] (-2.8,2.2) -- (0.4,2.2) to[out=0,in=-100] (0.6,2.6);
				\node at (-0.1,2) {\small{$-1$}};
				\node at (-0.1,2.4) {\small{$E_{p_2}$}};
				\draw[dashed] (-0.6,1.6) -- (0.6,1.6) to[out=0,in=-80] (0.8,1.8) -- (1,3.2) to[out=80,in=0] (0.8,3.4) -- (-0.6,3.4) to[out=180,in=-100] (-0.7,3.5) -- (-0.8,4.8);
				\node at (-0.1,3.2) {\small{$-1$}};
				\node at (-0.1,3.6) {\small{$E_{p'}$}};
				\draw[dashed] (-3.7,1) -- (-3.4,2.9) to[out=80,in=180] (-3.3,3)  -- (-2.1,3) to[out=0,in=-100] (-2,3.1) -- (-1.8,4.7);
				\node at (-2.2,3.6) {\small{$-1$}};
				\node at (-1.7,3.6) {\small{$E_{q}$}};
			\end{tikzpicture}
			\caption{\ref{def:F2-5}}
			\label{fig:F2-5}
		\end{subfigure}
			\begin{tabular}{cc}
		\begin{subfigure}[b]{0.6\textwidth}
			\begin{tikzpicture}
				\draw (-2.9,1.7) -- (-2.4,4.5); 
				\node at (-3,2.8) {\small{$-2$}};
				\node at (-2.5,2.8) {\small{$L_{2}$}};
				\node at (-2.54,3.6) {\large{$\bullet$}};			
				\draw (-3.8,1.9) -- (3,1.9);
				\node at (-3.2,2.15) {\small{$C_2$}};
				\node at (-3.2,1.7) {\small{$-3$}};
				\node at (2,5) {\small{$\bullet\mapsto p_2$}};									
				\draw (0.4,2.6) -- (3,2.6);
				\node at (1.7,2.4) {\small{$-3$}};
				\node at (1.7,2.8) {\small{$C_{1}$}};
				\node at (2.2,2.6) {\large{$\bullet$}};	
				\draw (2.7,1.7) -- (2.9,3.1); 
				\node at (2.5,2.2) {\small{$-1$}};
				\node at (3,2.2) {\small{$E_{s}$}};
				\draw (2.9,2.9) -- (2.7,4.3);
				\node at (2.5,3.6) {\small{$-2$}};
				\draw (0.4,1.7) -- (0.6,3.1); 
				\node at (0.2,2.2) {\small{$-1$}};
				\node at (0.8,2.2) {\small{$E_{p_1}$}};
				\draw (0.6,2.9) -- (0.4,4.3);
				\node at (0.2,3.6) {\small{$-2$}};
				\draw (0.4,4.1) -- (0.65,5.75);
				\node at (0.2,5) {\small{$-2$}};
				\node at (0.8,5) {\small{$L_{1}'$}};
				\draw (-1.5,1.7) -- (-1.3,3.1); 
				\node at (-1.7,2.2) {\small{$-1$}};
				\draw (-1.3,2.9) -- (-1.5,4.3);
				\node at (-1.7,3.6) {\small{$-3$}};
				\node at (-1.2,3.6) {\small{$L_{2}'$}};
				\draw (-1.5,4.1) -- (-1.15,5.75);
				\node at (-1.6,5) {\small{$-2$}};
				\node at (-1.22,5.45) {\large{$\bullet$}};	
				\draw (-1.5,2.6) -- (-0.5,2.6);
				\node at (-0.8,2.4) {\small{$-2$}};
				\draw[dashed] (1,2.4) to[out=80,in=-100] (1.1,3) -- (0.9,4.5) to[out=100,in=0] (0.8,4.6) -- (-0.4,4.6) to[out=180,in=0] (-0.8,3.9) -- (-1.6,3.9);
				\node at (-0.85,4.2) {\small{$-1$}};
				\node at (-0.3,4.2) {\small{$E_{p'}$}};
				\draw[dashed] (-3.7,1.7) -- (-3.2,4.3) to[out=80,in=180] (-3.1,4.4)  -- (-2.2,4.4) to[out=0,in=-100] (-2.1,4.5) -- (-1.8,5.9) to[out=-80,in=180] (-1.7,6) -- (-0.8,6) to[out=0,in=180] (-0.7,5.6) -- (0.8,5.6);
				\node at (-0.4,5.4) {\small{$-1$}};
				\node at (-0.4,5.8) {\small{$E_{q}$}};
			\end{tikzpicture}
			\caption{\ref{def:F2_n1-cusp-hor_32}}
			\label{fig:F2_n1-cusp-hor_32}
		\end{subfigure}
		&
		\begin{subfigure}[b]{0.35\textwidth}
			\begin{tikzpicture}
				\draw (-2.9,1.7) -- (-2.4,4.5); 
				\node at (-3,2.8) {\small{$-2$}};
				\node at (-2.5,2.8) {\small{$L_{2}$}};
				\node at (-2.54,3.6) {\large{$\bullet$}};			
				\draw (-3.8,1.9) -- (1.8,1.9);
				\node at (-3.2,2.15) {\small{$C_2$}};
				\node at (-3.2,1.7) {\small{$-2$}};
				\node at (2,5) {\small{$\bullet\mapsto p_2$}};									
				\draw (0.4,2.6) -- (1.6,2.6) to[out=0,in=100] (1.7,2.5) -- (1.6,1.7);
				\node at (1.4,2.2) {\small{$-3$}};
				\node at (2,2.2) {\small{$C_{1}$}};
				\node at (1.4,2.6) {\large{$\bullet$}};	
				\draw (0.4,1.7) -- (0.6,3.1); 
				\node at (0.2,2.2) {\small{$-1$}};
				\node at (0.8,2.2) {\small{$E_{p_1}$}};
				\draw (0.6,2.9) -- (0.4,4.3);
				\node at (0.2,3.6) {\small{$-2$}};
				\draw (0.4,4.1) -- (0.6,5.5);
				\node at (0.2,4.8) {\small{$-2$}};
				\draw (0.6,5.3) -- (0.4,6.7);
				\node at (0.2,6.2) {\small{$-2$}};
				\node at (0.7,6.2) {\small{$L_{1}'$}};				
				\draw (-1.5,1.7) -- (-1.3,3.1); 
				\node at (-1.7,2.2) {\small{$-1$}};
				\draw (-1.3,2.9) -- (-1.5,4.3);
				\node at (-1.7,3.6) {\small{$-3$}};
				\node at (-1.2,3.6) {\small{$L_{2}'$}};
				\draw (-1.5,4.1) -- (-1.3,5.6);
				\node at (-1.7,4.8) {\small{$-2$}};
				\node at (-1.3,5.4) {\large{$\bullet$}};	
				\draw (-1.5,2.6) -- (-0.5,2.6);
				\node at (-0.8,2.4) {\small{$-2$}};
				\draw[dashed] (1,2.4) to[out=80,in=-90] (1.1,3.1) -- (1.1,5.7) to[out=90,in=0] (1,5.8) -- (-0.3,5.8) to[out=180,in=80] (-0.4,5.7) -- (-0.65,4) to[out=-100,in=0] (-0.7,3.9) -- (-1.7,3.9);
				\node at (-0.9,4.25) {\small{$-1$}};
				\node at (-0.3,4.2) {\small{$E_{p'}$}};
				\draw[dashed] (-3.7,1.7) -- (-3.2,4.3) to[out=80,in=180] (-3.1,4.4)  -- (-2.2,4.4) to[out=0,in=-100] (-2.1,4.5) -- (-1.8,6.4) to[out=-80,in=180] (-1.7,6.5) -- (0.7,6.5);
				\node at (-2.1,6) {\small{$-1$}};
				\node at (-1.6,6) {\small{$E_{q}$}};
			\end{tikzpicture}
			\caption{\ref{def:F2_n1-cusp-hor_41}}
			\label{fig:F2_n1-cusp-hor_41}
		\end{subfigure}
	\end{tabular}
	\caption{Cases when $X\am\cong \F_{2}$, $D\am$ is horizontal, $n=1$ and $\delta_{p}=2$.}
	\label{fig:F2n1_hor_2}
\end{figure}
	Consider first the case $\delta_{p}=2$, so $s=p'\in \hat{\ll}_{p}$. Put $\cc_{1}=\sigma_{*}\hh_{1}$, $\cc_{2}=\sigma_{*}\hh_2$ and $\ll_{2}=(\sigma^{-1})^{*}(\hat{p})$, $\ll_{1}'=(\sigma^{-1})^{*}(r)$, $\ll_{2}'=(\sigma^{-1})^{*}(s)$, that is, $\ll_{2}$, $\ll_{1}'$, $\ll_{2}'$ are the images of the exceptional curves over $\hat{p}$, $r$, $s$, respectively. Then $\cc_{1}$, $\cc_{2}$ are conics meeting with multiplicity $(\hh_{1}\cdot\hh_{2})_{r}-1$ at some point $p_{1}\in \ll_{1}'$; $\cc_{1}$ is tangent to $\ll_{2}$ at $\ll_{2}'\cap \ll_{2}$ and passes through $\ll_{1}'\cap\ll_{2}'$; and $\cc_{2}$ passes through $\ll_{2}\cap \ll_{1}'$. If $p\in H_{2}$ was a node then $(\cc_{1}\cdot \cc_{2})_{p_{1}}=4$ and $S$ is as in  \ref{def:F2-5}. If $p\in H_{2}$ was a cusp then $\ll_{2}'$ is tangent to $\cc_{1}$ and $S$ is as in  \ref{def:F2_n1-cusp-hor_32} if $(\cc_{1}\cdot\cc_{2})_{p_{1}}=3$ and \ref{def:F2_n1-cusp-hor_41} otherwise.
	
			\begin{figure}[htbp]
	\begin{tabular}{cc}
		\begin{subfigure}[b]{0.45\textwidth}
			\begin{tikzpicture}
				\node at (-3,0) {\small{$\bullet\mapsto s$}};	
				\draw (-5,1.2) -- (0.8,1.2);
				\node at (-4.35,1.45) {\small{$C_1$}};
				\node at (-4.4,1) {\small{$-3$}};
				\draw (-2.1,1.1) [partial ellipse=190:-10 : 0.8 and 0.8];
				\node at (-2.4,1.45) {\small{$L_{qs}$}};
				\node at (-3,1.6) {\small{$-1$}};
				\node at (-2.1,1.9) {\large{$\bullet$}};
				\draw (-0.6,-0.4) -- (-0.8,1);
				\node at (-1,0) {\small{$-2$}};
				\draw (-0.8,0.8) -- (-0.6,2.2); 
				\node at (-1,1.5) {\small{$-1$}};
				\node at (-0.4,1.5) {\small{$E_{p_2}$}};			
				\draw (-0.6,2) -- (-0.8,3.4);
				\node at (-1.0,2.7) {\small{$-3$}};
				\node at (-0.4,2.7) {\small{$L_{2}$}};
				\draw (0.4,1) -- (2.2,1);
				\node at (1.4,0.8) {\small{$-2$}};
				\draw (0.6,0.8) -- (0.8,2.2); 
				\node at (0.4,1.5) {\small{$-1$}};
				\node at (1,1.5) {\small{$E_{p_1}$}};			
				\draw (0.8,2) -- (0.6,3.4);
				\node at (0.4,2.7) {\small{$-3$}};
				\node at (1,2.7) {\small{$C_{2}$}};
				\node at (0.7,2.7) {\large{$\bullet$}};
				\draw (2,0.8) -- (2.2,2.2); 
				\node at (1.8,1.5) {\small{$-2$}};			
				\draw (2.2,2) -- (2,3.4);
				\node at (1.8,2.7) {\small{$-2$}};
				\draw (2,3.2) -- (2.2,4.6);
				\node at (1.8,3.9) {\small{$-2$}};
				\node at (2.3,3.9) {\small{$L_{1}'$}};
				\draw (-4,1) -- (-3.6,3.8);
				\node at (-4.1,2.2) {\small{$-2$}};
				\node at (-3.5,2.2) {\small{$L_{rp'}$}};
				\node at (-3.75,2.75) {\large{$\bullet$}};
				\draw[dashed] (-4.8,0.8) -- (-4.4,3.5) to[out=80,in=180] (-4.3,3.6) -- (-2.5,3.6) to[out=0,in=180] (-2,4.4) -- (2.5,4.4);
				\node at (0.4,4.2) {\small{$-1$}};
				\node at (0.4,4.6) {\small{$E_{p'}$}};
				\draw[dashed] (-3.8,3.2) -- (0.8,3.2);
				\node at (-1.8,3) {\small{$-1$}};
				\node at (-1.8,3.4) {\small{$E_r$}};
				\draw[dashed] (2.3,3.6) -- (1.8,3.6) to[out=180,in=0] (1.1,2.4) -- (-1.4,2.4) to[out=180,in=80] (-1.9,1.7);	
				\node at (-2,2.15) {\small{$-1$}};
				\node at (-1.5,2.1) {\small{$E_q$}};
			\end{tikzpicture}
			\caption{\ref{def:F2-5_cn}}
			\label{fig:F2-5_cn}
		\end{subfigure}
		&
		\begin{subfigure}[b]{0.45\textwidth}
			\begin{tikzpicture}
				\node at (-4,0) {\small{$\bullet\mapsto q_1$}};
				\node at (-4,-0.3) {\small{$\boldsymbol{\circ}\mapsto q_2$}};	
				\draw (-5,1.2) -- (0.9,1.2);
				\node at (-4.35,1.45) {\small{$C_1$}};
				\node at (-4.4,1) {\small{$-4$}};
				\draw (-2.6,-0.4) -- (-2.8,1);
				\node at (-3,0) {\small{$-2$}};
				\draw (-2.8,0.8) -- (-2.6,2.2); 
				\node at (-3,1.5) {\small{$-1$}};
				\node at (-2.4,1.5) {\small{$E_{p_1}$}};			
				\draw (-2.6,2) -- (-2.8,3.4);
				\node at (-3,2.7) {\small{$-3$}};
				\node at (-2.4,2.7) {\small{$L_{1}$}};
				\node at (-2.75,3.05) {\large{$\bullet$}};
				\node at (-1.35,3.05) {$\boldsymbol{\circ}$};
				\draw (-1.2,-0.4) -- (-1.4,1);
				\node at (-1.6,0) {\small{$-2$}};
				\draw (-1.4,0.8) -- (-1.2,2.2); 
				\node at (-1.6,1.5) {\small{$-1$}};
				\node at (-1,1.5) {\small{$E_{p_2}$}};			
				\draw (-1.2,2) -- (-1.4,3.4);
				\node at (-1.6,2.7) {\small{$-3$}};
				\node at (-1,2.7) {\small{$L_{2}$}};
				\draw (0.6,1) -- (2.2,1);
				\node at (1.4,0.8) {\small{$-2$}};
				\draw (0.7,0.8) -- (0.85,2); 
				\node at (0.5,1.5) {\small{$-1$}};
				\node at (1.1,1.5) {\small{$E_{p'}$}};			
				\draw (0.8,3.4) -- (1,2) to[out=-80,in=0] (0.9,1.9) -- (-0.2,1.9) to[out=180,in=80] (-0.3,1.8) -- (-0.45,0.95);
				\node at (0.6,2.7) {\small{$-3$}};
				\node at (1.2,2.7) {\small{$C_{2}$}};
				\node at (0.1,1.9) {\large{$\bullet$}};
				\node at (0.4,1.9) {$\boldsymbol{\circ}$};
				\draw (2,0.8) -- (2.2,2.2); 
				\node at (1.8,1.5) {\small{$-2$}};			
				\draw (2.2,2) -- (2,3.4);
				\node at (1.8,2.7) {\small{$-2$}};
				\node at (2.45,2.7) {\small{$L_{pp'}$}};
				\draw (-4,1) -- (-3.6,3.8);
				\node at (-4.1,2.2) {\small{$-2$}};
				\node at (-3.65,2.2) {\small{$L$}};
				\node at (-3.75,2.75) {\large{$\bullet$}};
				\node at (-3.7,3) {$\boldsymbol{\circ}$};
				\draw[dashed] (-4.8,0.8) -- (-4.4,3.5) to[out=80,in=180] (-4.3,3.6) -- (1.2,3.6) to[out=0,in=180] (1.4,3.2) -- (2.2,3.2);
				\node at (-0.2,3.4) {\small{$-1$}};
				\node at (-0.2,3.8) {\small{$E_{p''}$}};
				\draw[dashed] (-2.8,2.4) -- (2.4,2.4);
				\node at (-0.2,2.2) {\small{$-1$}};
				\node at (-0.2,2.6) {\small{$E_{p}$}};
			\end{tikzpicture}
			\caption{\ref{def:F2-hor-ccc-41}}
			\label{fig:F2-hor-ccc-41}
		\end{subfigure}
	\end{tabular}
	\caption{Cases when $X\am\cong \F_{2}$, $D\am$ is horizontal, $n=1$ and $\delta_{p}=1$.}
	\label{fig:F2n1_hor_1}
\end{figure}
	Consider now the case $\delta_{p}=1$, so $s\in \hh_{2}\setminus \hat{\ll}_{p}$ is the image of $\Sing H_{2}\setminus \{p\}$, which is a cusp. Put $\cc_{1}= \sigma_{*}\hh_{2}$, $\cc_{2}= \sigma_{*}\hh_{1}$ and $\ll_{2}= (\sigma^{-1})^{*}(s)$. Then again $\cc_{1}$, $\cc_{2}$ are conics meeting with multiplicity $(\hh_{1}\cdot\hh_{2})_{r}-1$ at some point of the line $(\sigma^{-1})^{*}(r)$, call this point $\hat{r}$. Moreover, $\ll_{2}$ is a line tangent to $\cc_{1}$, $\sigma_{*}\hat{\ll}_{p}$ is a line meeting $\cc_{2}$ at $\hat{q}\de \ll_{2}\cap  (\sigma^{-1})^{*}(r)$, and the points $\cc_{1}\cap (\sigma^{-1})^{*}(r)\setminus \{\hat{r}\}$, $\cc_{2}\cap \sigma_{*}\hat{\ll}_{p}\setminus \{\hat{q}\}$ and $\cc_{2}\cap \ll_{2}\setminus \{\hat{q}\}$ lie on $(\sigma^{-1})^{*}(\hat{p})$. If $p\in H_{2}$ was a node then $(\cc_{1}\cdot \cc_{2})_{\hat{r}}=4$ and putting $\ll_{1}'\de (\sigma^{-1})^{*}(r)$, $\ll_{rp'}\de (\sigma^{-1})^{*}(\hat{p})$, $\ll_{qs}\de\sigma_{*}\hat{\ll}_{p}$ we see that $S$ is as in  \ref{def:F2-5_cn}. If $p\in H_{2}$ was a cusp then $\sigma_{*}\hat{\ll}_{p}$ is tangent to $\cc_{1}$, and  putting $\ll_{pp'}\de (\sigma^{-1})^{*}(r)$, $\ll\de (\sigma^{-1})^{*}(\hat{p})$ and $\ll_{1}\de\sigma_{*}\hat{\ll}_{p}$ we see that $S$ is as in  \ref{def:F2-hor-ccc-41} (in particular, $(\cc_{1}\cdot \cc_{2})_{\hat{r}}=3$).
\smallskip

	Eventually, consider the case $n=0$. Then $R\am\subseteq \F_2$ is a rational cuspidal curve, whose cusps have multiplicity two. By condition \eqref{eq:F2_delta}, $\delta$-invariants of those cusps are at most two. Hence $R\am$ is as in Proposition \ref{prop:n0}\ref{item:n0_F2} below, so $S$ is as in \ref{def:F2n0}.
	
\subsection{Case \texorpdfstring{$X_{\min}\cong \P^2$}{Xmin=P2}}\label{sec:P2} 

Recall that by formula \eqref{eq:R}, $D_{\min}$ has $2n$ self intersection points and $n+1$ components, all of which are rational by Lemma \ref{lem:expansions}\ref{item:D_rat-tree}; and all singularities of $D_{\min}$ are of multiplicity $2$ by Lemma \ref{lem:MMP}\ref{item:mult_2}. By Lemma \ref{lem:R}\ref{item:MFS_point}, $-(2K_{X_{\min}}+D_{\min})$ is ample, so $\deg D_{\min}\leq 5$. Suppose $\deg D_{\min}\leq 4$. If $\deg D_{\min}=4$ then since all components of $D_{\min}$ are rational, $D_{\min}$ has a singular point $q$; if $\deg D_{\min}\leq 3$ fix any $q\in D_{\min}$. The projection from $q$ restricts to a $\C^{1}$-, $\C^{*}$- or a $\C^{**}$-fibration of $\P^{2}\setminus D_{\min}$, contrary to condition \eqref{eq:assumption_fibr}. 

Thus $\deg D_{\min}=5$. We note that 
\begin{equation}\label{eq:P2_ordinary}
\mbox{if } p\in \Sing D_{\min} \mbox{ is not ordinary (i.e.\ $\delta_{p}>1$) then } p \mbox{ lies on a line in } D_{\min}.
\end{equation}
Indeed, otherwise by Lemma \ref{lem:no_lines} the line $\ll_{p}$ tangent to $D_{\min}$ at $p$ satisfies $(\ll_{p}\cdot  D_{\min})_{p}=4$, so its proper transform is a bubble on $(X',D')$, contrary to condition \eqref{eq:no_bubbles}. It follows that
\begin{equation}\label{eq:P2_lines}
\mbox{exactly one component of } D_{\min} \mbox{ is not a line.}
\end{equation}
Indeed, if $D_{\min}$ has two components $\cc$, $\cc'$ which are not lines then by condition  \eqref{eq:P2_ordinary} $\cc$ meets $\cc'$  in $\cc\cdot \cc'\geq 4$ points off $D_{\min}-\cc-\cc'$. Since $\#D_{\min}=n+1$ and  $D_{\min}$ has $2n$ nodes, we get $2n\geq \#D_{\min}-2+\cc\cdot \cc'\geq n+3$, hence $4\leq \#D_{\min}\leq \deg (D_{\min}-\cc-\cc')+2\leq 3$, which is impossible. On the other hand, if $D_{\min}$ is a sum of lines then $n+1=\#D_{\min}=\deg D_{\min}=5$, and  $D_{\min}$ has $10=2n$ nodes, which is false.

In particular, condition \eqref{eq:P2_lines} implies $n\leq 3$. Suppose $n=0$. Then $D_{\min}$ is a rational cuspidal  quintic, so $\#\Sing D_{\min}=6$ by condition \eqref{eq:P2_ordinary} and  Lemma  \ref{lem:P2-curves}\ref{item:P2-adjunction}. The projection from a cusp $p\in D_{\min}$ induces a map $(\bl{p}^{-1})_{*}D_{\min}\to \P^{1}$ of degree $3$, ramified at the preimages of the five remaining cusps; a contradiction with  the Hurwitz formula. Thus $n\in \{1,2,3\}$. 
We use Notation \ref{not:P2} for curves on $\P^{2}$ and their proper transforms on $X'$.

\begin{figure}[htbp]
	\begin{tikzpicture}[scale=0.7]
		\draw (0,0) circle (1);
		\draw (0, -0.8) -- (0,-2.6);
		\node at (-0.4,-1.6) {\small{$-1$}};
		\node at (0.4,-1.6) {\small{$E_{p_1}$}};
		\draw (-0.2,-2.4) -- (1.6,-2.4);
		\node at (0.7,-2.7) {\small{$-2$}};
		\draw (-3.7,-2) -- (3.7,-2);
		\node at (-2.2,-1.7) {\small{$L_{1}$}};
		\node at (-2.2,-2.3) {\small{$-1$}};
		\draw (-0.7,0.4) -- (-2.25,1.2);
		\node at (-1.2,0.9) {\small{$-1$}};
		\node at (-1.4,0.4) {\small{$E_{p_3}$}};
		\draw (-2,1.3) -- (-2.85,-0.15);
		\node at (-2.9,0.5) {\small{$-2$}};
		\draw (-3.5,-2.1) -- (0.1,4);
		\node at (-0.3,2.7) {\small{$L_3$}};
		\node at (-1.1,2.8) {\small{$-1$}};  
		\draw (0.7,0.4) -- (2.25,1.2);
		\node at (1.1,1) {\small{$E_{p_2}$}};
		\node at (1.4,0.4) {\small{$-1$}};
		\draw (2.2,0.9) -- (1.25,2.5);
		\node at (1.9,2) {\small{$-2$}};
		\draw (3.5,-2.1) -- (-0.1,4);
		\node at (2.5,-1) {\small{$L_2$}};
		\node at (3,-0.7) {\small{$-1$}};
		\node at (0,0.75) {\small{$-2$}};
		\node at (0,1.25) {\small{$C_1$}};
	\end{tikzpicture}
	\caption{Case \ref{def:P2n3}: $X_{\min}\cong \P^{2}$ and $n=3$. Then $D_{\min}$ is a conic inscribed in a triangle.}
	\label{fig:P2n3}
\end{figure}
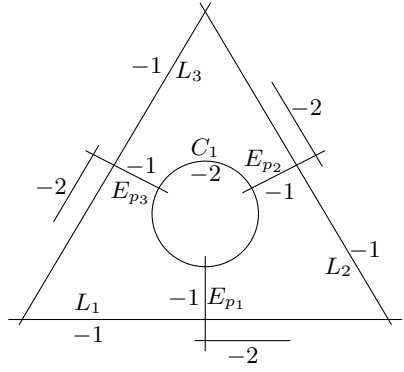
Consider the case $n=3$. Then $D_{\min}=\cc_1+\ll_{1}+\ll_{2}+\ll_{3}$, where $\cc_1$ is a conic inscribed in a triangle $\ll_{1}+\ll_{2}+\ll_{3}$ as in  \ref{def:P2n3}. Write $\{p_i\}=\cc_1\cap \ll_{i}$ for $i\in \{1,2,3\}$. If the expansion $\psi$ has a center at a preimage of $\ll_{i}\cap \ll_{j}$ for some $i,j\in \{1,2,3\}$, then the corresponding weight cannot be equal to one. Indeed, otherwise the pencil of lines through $\ll_{i}\cap \ll_{j}$  restricts to a $\C^{**}$-fibration of $S$, which is impossible by assumption \eqref{eq:assumption}.

We will now use the action of $\Aut(X',D')=\Aut(\P^2,D_{\min})\cong S_3$ to prove that the centers of $\psi$, say $p,q,r$, are as in Table \ref{table:P2n3}. For $\{i,j,k\}=\{1,2,3\}$ define $\tau_{ij}\in \Aut(X',D')$ by  $\tau_{ij}(L_i)=L_j$, $\tau_{ij}(L_{k})=L_{k}$.

Put $T=L_1+L_2+L_3$. Since $T$ is circular, one of the centers of $\psi$ is a node of $T$, say $\{p\}=L_1\cap L_2$. More precisely, the first center is $(L_1,L_2;u)$ for some $u\in \Q_{>0}\setminus \{1\}$. Assume that $q,r\not\in \Sing T$, so $q\in E_{p_i}$, $r\in E_{p_j}$ for some $\{i,j\}\subseteq \{1,2,3\}$, see Figure \ref{fig:P2n3}. Since $D$ is connected, we have $i\neq j$, so, say, $i<j$. Using the action of $\tau_{12}$, we can assume that $i=1$. Furthermore, if $j=2$ then using $\tau_{12}$ again we can assume that $u>1$ if $q,r\in C_1$ or $q,r\not\in C_1$; and $q\in C_1$, $r\not\in C_1$ otherwise. 

Assume now that $q$ is a node of $T$. Using $\tau_{12}$ we can assume that $\{q\}=L_2\cap L_3$. If $r\not\in \Sing T$ then since $D$ is connected, we have $r\in E_{p_1}\cup E_{p_3}$, and using $\tau_{13}$ we can assume $r\in E_{p_1}$. Assume $r\in \Sing T$, so $\{r\}=L_1\cap L_3$. Then the expansion $\psi$ is centered at  $(L_1,L_2;u)$, $(L_2,L_3;v)$, $(L_3,L_1;w)$ for some $u,v,w\in \Q_{>0}\setminus \{1\}$. The action of $\tau_{12}$ replaces the triple $(u,v,w)$ by $(\frac{1}{u},\frac{1}{w},\frac{1}{v})$, so we can assume that, say, $u,v>1$. Using the $3$-cycle in $\Aut(X',D')\cong S_3$, we can assume that $u\geq v,w$, as needed.
			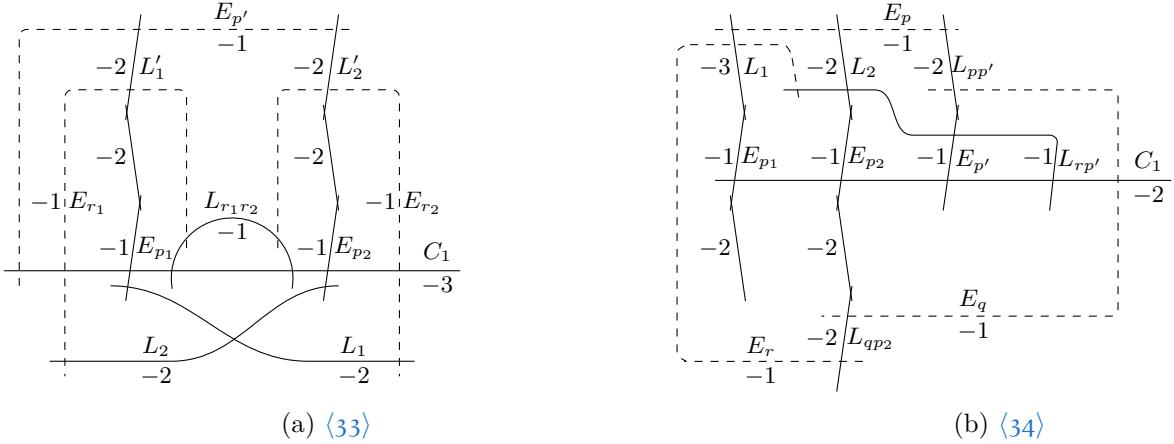
\begin{figure}[htbp]
	\begin{tabular}{cc}
		\begin{subfigure}[b]{0.5\textwidth}
			\begin{tikzpicture}	
				\draw (-5,1.2) -- (1,1.2);
				\node at (0.7,1.45) {\small{$C_1$}};
				\node at (0.7,1) {\small{$-3$}};
				\draw (-2,1.1) [partial ellipse=190:-10 : 0.8 and 0.8];
				\node at (-2,2.1) {\small{$L_{r_1 r_2}$}};
				\node at (-2,1.7) {\small{$-1$}};
				\draw (-0.8,0.8) -- (-0.6,2.2); 
				\node at (-0.95,1.5) {\small{$-1$}};
				\node at (-0.4,1.5) {\small{$E_{p_2}$}};			
				\draw (-0.6,2) -- (-0.8,3.4);
				\node at (-1,2.7) {\small{$-2$}};
				\draw (-0.8,3.2) -- (-0.6,4.6);
				\node at (-1,3.9) {\small{$-2$}};
				\node at (-0.45,3.9) {\small{$L_{2}'$}};
				\draw (-3.4,0.8) -- (-3.2,2.2); 
				\node at (-3.55,1.5) {\small{$-1$}};
				\node at (-3,1.5) {\small{$E_{p_1}$}};			
				\draw (-3.2,2) -- (-3.4,3.4);
				\node at (-3.6,2.7) {\small{$-2$}};
				\draw (-3.4,3.2) -- (-3.2,4.6);
				\node at (-3.6,3.9) {\small{$-2$}};
				\node at (-3.05,3.9) {\small{$L_{1}'$}};
				\draw (-3.6,1) to[out=0,in=180] (-1,0) -- (0.4,0);
				\node at (-3,0.2) {\small{$L_2$}};
				\node at (-3,-0.2) {\small{$-2$}};
				\draw (-0.6,1) to[out=180,in=0] (-2.8,0) -- (-4.4,0);
				\node at (-0.4,0.2) {\small{$L_1$}};
				\node at (-0.4,-0.2) {\small{$-2$}};
				\draw[dashed] (-4.8,1) -- (-4.8,4.3) to[out=90,in=180] (-4.7,4.4) -- (-0.4,4.4);
				\node at (-2,4.2) {\small{$-1$}};
				\node at (-2,4.6) {\small{$E_{p'}$}};
				\draw[dashed] (-2.6,1.5) -- (-2.6,3.5) to[out=90,in=0] (-2.7,3.6) -- (-4.1,3.6) to[out=180,in=90] (-4.2,3.5) -- (-4.2,-0.2);
				\node at (-4.45,2.1) {\small{$-1$}};
				\node at (-3.9,2.1) {\small{$E_{r_1}$}};
				\draw[dashed] (-1.4,1.5) -- (-1.4,3.5) to[out=90,in=180] (-1.3,3.6) -- (0.1,3.6) to[out=0,in=90] (0.2,3.5) -- (0.2,-0.2);
				\node at (-0.05,2.1) {\small{$-1$}};
				\node at (0.5,2.1) {\small{$E_{r_2}$}};
			\end{tikzpicture}
			\caption{\ref{def:nodal-cubic_P2}}
			\label{fig:nodal-cubic_P2}
		\end{subfigure}
		&
		\begin{subfigure}[b]{0.5\textwidth}
						\begin{tikzpicture}	
				\draw (-1,1.2) -- (5,1.2);
				\node at (4.7,1.45) {\small{$C_1$}};
				\node at (4.7,1) {\small{$-2$}};
				\draw (-0.6,-0.4) -- (-0.8,1);
				\node at (-1,0.3) {\small{$-2$}};
				\draw (-0.8,0.8) -- (-0.6,2.2); 
				\node at (-0.95,1.5) {\small{$-1$}};
				\node at (-0.4,1.5) {\small{$E_{p_1}$}};			
				\draw (-0.6,2) -- (-0.8,3.4);
				\node at (-1,2.7) {\small{$-3$}};
				\node at (-0.45,2.7) {\small{$L_1$}};
				\draw (0.6,-1.6) -- (0.8,-0.2);
				\node at (0.4,-0.9) {\small{$-2$}};
				\node at (1.05,-0.9) {\small{$L_{qp_2}$}};
				\draw (0.8,-0.4) -- (0.6,1);
				\node at (0.4,0.3) {\small{$-2$}};
				\draw (0.6,0.8) -- (0.8,2.2); 
				\node at (0.45,1.5) {\small{$-1$}};
				\node at (1,1.5) {\small{$E_{p_2}$}};			
				\draw (0.8,2) -- (0.6,3.4);
				\node at (0.4,2.7) {\small{$-2$}};
				\node at (0.95,2.7) {\small{$L_2$}};
				\draw (2,0.8) -- (2.2,2.2); 
				\node at (1.85,1.5) {\small{$-1$}};
				\node at (2.4,1.45) {\small{$E_{p'}$}};			
				\draw (2.2,2) -- (2,3.4);
				\node at (1.8,2.7) {\small{$-2$}};
				\node at (2.4,2.7) {\small{$L_{pp'}$}};
				\draw (-0.1,2.4) -- (1.1,2.4) to[out=0,in=180] (1.6,1.8) -- (3.4,1.8) to[out=0,in=80] (3.5,1.7) -- (3.4,0.8);
				\node at (3.25,1.5) {\small{$-1$}};
				\node at (3.8,1.45) {\small{$L_{rp'}$}};
				\draw[dashed] (0.1,2.3) -- (0,2.9) to[out=-80,in=0] (-0.1,3) -- (-1.4,3) to[out=0,in=90] (-1.5,2.9) -- (-1.5,-1.1) to[out=-90,in=0] (-1.4,-1.2) -- (1,-1.2);
				\node at (-0.4,-1) {\small{$E_r$}};
				\node at (-0.4,-1.4) {\small{$-1$}};
				\draw[dashed] (1.8,2.4) -- (4.2,2.4) to[out=0,in=90] (4.3,2.3) -- (4.3,-0.5) to[out=-90,in=0] (4.2,-0.6) -- (0.4,-0.6);
				\node at (2.4,-0.4) {\small{$E_q$}}; 
				\node at (2.4,-0.8) {\small{$-1$}};
				\draw[dashed] (-1,3.2) -- (2.2,3.2);
				\node at (1.4,3.4) {\small{$E_{p}$}};
				\node at (1.4,3) {\small{$-1$}};
			\end{tikzpicture}
			\caption{\ref{def:P2n2_cuspidal}}
			\label{fig:P2n2_cuspidal}
		\end{subfigure}
	\end{tabular}
	\caption{Cases when $X_{\min}\cong \P^{2}$ and $n=2$.}
	\label{fig:P2n2}
\end{figure}
		
Consider the case $n=2$. Then by condition \eqref{eq:P2_lines}, $D_{\min}$ consists of a rational cubic $\qq$ and two lines $\ll_{1}$, $\ll_{2}$. The cubic $\qq$ does not pass through $\ll_{1}\cap\ll_{2}$, so $\#\qq\cap\ll_{1}=1$ and $\qq$ meets $\ll_{2}$ in one point if $\qq$ is nodal and in two if $\qq$ is cuspidal. Let $\sigma\colon \P^{2}\map \P^{2}$ be a standard quadratic transformation centered at $p\de \Sing \qq$, $p_{1}\de \ll_{1}\cap \qq$ and the tangency point $p_{2}$ of $\ll_{2}$ and $\qq$. In other words, we blow up these points and contract the proper transforms of the lines joining them. Then $(\sigma^{-1})^{*}(\qq+\ll_{1}+\ll_{2})$ is as in  \ref{def:nodal-cubic_P2} if $\qq$ is nodal and \ref{def:P2n2_cuspidal} if $\qq$ is cuspidal. Indeed, $\sigma_{*}\qq$, $\sigma_{*}\ll_{1}$ and $\sigma_{*}\ll_{2}$ correspond there to $\cc_{1}$, $\ll_{2}$ and $\ll_{1}$ if $\qq$ is nodal or to $\ll_{rp'}$ if $\qq$ is cuspidal. The images of the exceptional curves over $p$, $p_{1}$, $p_{2}$ correspond there to $\ll_{r_{1}r_{2}}$, $\ll_{2}'$, $\ll_{1}'$ if $\qq$ is nodal and to $\ll_{1}$, $\ll_{qp_{2}}$, $\ll_{pp'}$ if $\qq$ is cuspidal. Note that in case \ref{def:nodal-cubic_P2} the weight $w$ corresponding to $(L_{1},L_{2})$ is different than $1$, for otherwise the pencil of conics through $p'$ $p$, $r_{1}$, $r_{2}$ would induce a $\C^{**}$-fibration of $S$. By symmetry we can assume $w>1$, and the centers are as in Table \ref{table:smooth_23}. In case \ref{def:P2n2_cuspidal}, the weight corresponding to $(C_{1},L_{rp'})$ or $(L_{2},L_{rp'})$ is different than $1$, since otherwise the pencil of conics through $p$, $q$, $r$ and $\cc_{1}\cap \ll_{rp'}$ or $\ll_{2}\cap \ll_{rp'}$ would induce a $\C^{**}$-fibration of $S$, contrary to assumption \eqref{eq:assumption}.
\smallskip

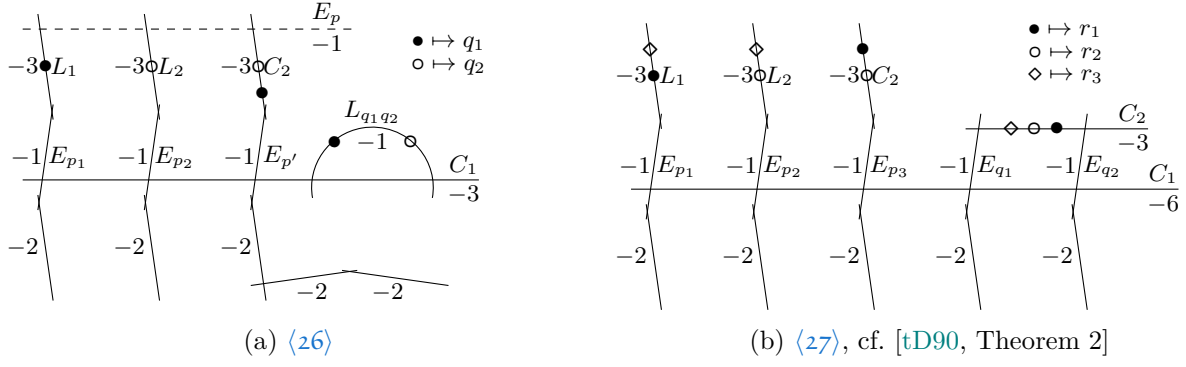
\begin{figure}[htbp]
	\begin{tabular}{cc}
		\begin{subfigure}[b]{0.45\textwidth}
					\begin{tikzpicture}	
			\node at (-1,3) {\small{$\bullet\mapsto q_1$}};
			\node at (-1,2.7) {\small{$\boldsymbol{\circ}\mapsto q_2$}};	
			\draw (-6.6,1.2) -- (-0.6,1.2);
			\node at (-0.8,1.4) {\small{$C_1$}};
			\node at (-0.8,1) {\small{$-3$}};
			\draw (-2,1.1) [partial ellipse=190:-10 : 0.8 and 0.8];
			\node at (-2,2.1) {\small{$L_{q_1 q_2}$}};
			\node at (-2,1.7) {\small{$-1$}};
			\node at (-2.5,1.7) {\large{$\bullet$}};
			\node at (-1.5,1.7) {$\boldsymbol{\circ}$};
			\draw (-3.4,-0.4) -- (-3.6,1);
			\node at (-3.8,0.3) {\small{$-2$}};
			\draw (-3.6,0.8) -- (-3.4,2.2); 
			\node at (-3.75,1.5) {\small{$-1$}};
			\node at (-3.2,1.5) {\small{$E_{p'}$}};			
			\draw (-3.4,2) -- (-3.6,3.4);
			\node at (-3.8,2.7) {\small{$-3$}};
			\node at (-3.25,2.7) {\small{$C_2$}};
			\node at (-3.5,2.7) {$\boldsymbol{\circ}$};
			\node at (-3.45,2.35) {\large{$\bullet$}};
			\draw (-3.6,-0.2) -- (-2.2,0);
			\node at (-2.8,-0.3) {\small{$-2$}};
			\draw (-2.4,0) -- (-1,-0.2);
			\node at (-1.8,-0.3) {\small{$-2$}};
			\draw (-4.8,-0.4) -- (-5,1);
			\node at (-5.2,0.3) {\small{$-2$}};
			\draw (-5,0.8) -- (-4.8,2.2); 
			\node at (-5.15,1.5) {\small{$-1$}};
			\node at (-4.6,1.5) {\small{$E_{p_2}$}};			
			\draw (-4.8,2) -- (-5,3.4);
			\node at (-5.2,2.7) {\small{$-3$}};
			\node at (-4.65,2.7) {\small{$L_2$}};
			\node at (-4.9,2.7) {$\boldsymbol{\circ}$};
			\draw (-6.2,-0.4) -- (-6.4,1);
			\node at (-6.6,0.3) {\small{$-2$}};
			\draw (-6.4,0.8) -- (-6.2,2.2); 
			\node at (-6.55,1.5) {\small{$-1$}};
			\node at (-6,1.5) {\small{$E_{p_1}$}};			
			\draw (-6.2,2) -- (-6.4,3.4);
			\node at (-6.6,2.7) {\small{$-3$}};
			\node at (-6.05,2.7) {\small{$L_1$}};
			\node at (-6.3,2.7) {\large{$\bullet$}};
			\draw[dashed] (-6.6,3.2) -- (-2.2,3.2);
			\node at (-2.6,3) {\small{$-1$}};
			\node at (-2.6,3.4) {\small{$E_p$}};
		\end{tikzpicture}
			\caption{\ref{def:P2n1-nodal}}
			\label{fig:P2n1-nodal}
		\end{subfigure}
		&
		\begin{subfigure}[b]{0.5\textwidth}
		\begin{tikzpicture}	
			\node[right] at (-1.5,3.3) {\small{$\large{\bullet}\mapsto r_1$}};
			\node[right] at (-1.5,3) {\small{$\boldsymbol{\circ}\mapsto r_2$}};
			\node[right] at (-1.5,2.7) {\small{$\boldsymbol{\diamond}\mapsto r_3$}};	
			\draw (-6.6,1.2) -- (0.6,1.2);
			\node at (0.4,1.4) {\small{$C_1$}};
			\node at (0.4,1) {\small{$-6$}};
			\draw (-3.4,-0.4) -- (-3.6,1);
			\node at (-3.8,0.3) {\small{$-2$}};
			\draw (-3.6,0.8) -- (-3.4,2.2); 
			\node at (-3.75,1.5) {\small{$-1$}};
			\node at (-3.2,1.5) {\small{$E_{p_3}$}};			
			\draw (-3.4,2) -- (-3.6,3.4);
			\node at (-3.8,2.7) {\small{$-3$}};
			\node at (-3.25,2.7) {\small{$C_2$}};
			\node at (-3.5,2.7) {$\boldsymbol{\circ}$};
			\node at (-3.55,3.05) {\large{$\bullet$}};
			\draw (-4.8,-0.4) -- (-5,1);
			\node at (-5.2,0.3) {\small{$-2$}};
			\draw (-5,0.8) -- (-4.8,2.2); 
			\node at (-5.15,1.5) {\small{$-1$}};
			\node at (-4.6,1.5) {\small{$E_{p_2}$}};			
			\draw (-4.8,2) -- (-5,3.4);
			\node at (-5.2,2.7) {\small{$-3$}};
			\node at (-4.65,2.7) {\small{$L_2$}};
			\node at (-4.9,2.7) {$\boldsymbol{\circ}$};
			\node at (-4.95,3.05) {$\boldsymbol{\diamond}$};
			\draw (-6.2,-0.4) -- (-6.4,1);
			\node at (-6.6,0.3) {\small{$-2$}};
			\draw (-6.4,0.8) -- (-6.2,2.2); 
			\node at (-6.55,1.5) {\small{$-1$}};
			\node at (-6,1.5) {\small{$E_{p_1}$}};			
			\draw (-6.2,2) -- (-6.4,3.4);
			\node at (-6.6,2.7) {\small{$-3$}};
			\node at (-6.05,2.7) {\small{$L_1$}};
			\node at (-6.3,2.7) {\large{$\bullet$}};
			\node at (-6.35,3.05) {$\boldsymbol{\diamond}$};
			\draw (-2,-0.4) -- (-2.2,1);
			\node at (-2.4,0.3) {\small{$-2$}};
			\draw (-2.2,0.8) -- (-2,2.2); 
			\node at (-2.35,1.5) {\small{$-1$}};
			\node at (-1.8,1.5) {\small{$E_{q_1}$}};
			\draw (-0.6,-0.4) -- (-0.8,1);
			\node at (-1,0.3) {\small{$-2$}};
			\draw (-0.8,0.8) -- (-0.6,2.2); 
			\node at (-0.95,1.5) {\small{$-1$}};
			\node at (-0.4,1.5) {\small{$E_{q_2}$}};
			\draw (-2.2,2) -- (0.2,2);
			\node at (0,2.2) {\small{$C_2$}};	
			\node at (0,1.8) {\small{$-3$}};
			\node at (-1.6,2) {$\boldsymbol{\diamond}$};
			\node at (-1.3,2) {$\boldsymbol{\circ}$};
			\node at (-1,2) {\large{$\bullet$}};
		\end{tikzpicture}
			\caption{\ref{def:P2n1-cuspidal}, cf.\ \cite[Theorem 2]{tDieck_symmetric_hp}}
			\label{fig:P2n1-cuspidal}
		\end{subfigure}
	\end{tabular}
	\caption{Cases when $X_{\min}\cong \P^{2}$ and $n=1$.}
	\label{fig:P2n1}
\end{figure}
Consider the case $n=1$. Then by condition \eqref{eq:P2_lines}, $D_{\min}$ consists of a line $\ll$ and a rational quartic $\qq$ with $2-\#(\ll\cap\qq)$ nodes. Lemma \ref{lem:P2-curves}\ref{item:P2-adjunction} and condition \eqref{eq:P2_ordinary} imply that $\qq$ has three singular points. Let $\sigma\colon \P^{2}\map \P^{2}$ be a standard quadratic transformation centered at $\Sing \qq$. The total transform of $\Sing \qq$ is a triangle inscribed in a conic $\sigma_{*}\ll$ and the conic $\sigma_{*}\qq$ is tangent to two of its sides if $\qq$ is nodal and three if $\qq$ is cuspidal. In the first case, $\#(\sigma_{*}\ll\cap\sigma_{*}\qq)=\#(\ll\cap \qq)=1$, so $(X',D')$ is as in  \ref{def:P2n1-nodal}. In the second case, write $\ll\cap \qq=\{p,q\}$ for some $p\neq q$. If $(\ll\cdot \qq)_{p}=1$, $(\ll\cdot \qq)_{q}=3$ then $|(\bl{p})^{-1}_{*}\ll|$ induces a $\P^{1}$-fibration which restricts to a morphism $(\bl{p})^{-1}_{*}\qq\to \P^{1}$ of degree $4$, ramified at thee cusps of $(\bl{p})^{-1}_{*}\qq$ and, with index $3$, at $\bl{p}^{-1}(q)$, which is impossible by the Hurwitz formula. Hence $(\ll\cdot \qq)_{p}=(\ll\cdot \qq)_{q}=2$ and $(X',D')$ is as in  \ref{def:P2n1-cuspidal}. Using $\Aut(\P^{2},\pp)$ as above we can assume that the center of $\psi'$ lies on $E_{q_{1}}$.

\subsection{Sporadic smooth $\Q$-homology planes}\label{sec:sporadic}

In this section we complete the proof of Theorem \ref{CLASS} in case $n=0$, omitted in Sections \ref{sec:A1A2} and \ref{sec:F2}.  This amounts to describe a very specific class of rational curves on the Hirzebruch surface $\F_2$.

Recall from Section \ref{sec:Fm} that we denote by $\Sec_2$ the negative section on $\F_2$, and we say that a divisor $C\subseteq \F_2$ is \emph{of type} $(a,b)$ if $C\cdot \Sec_2=a$ and $C\cdot F=b$ for a fiber $F$.

\begin{prop}\label{prop:n0}
Let $R\subseteq \F_2$ be a rational curve of type $(1,3)$, such that all cusps of $R$ have multiplicity $2$, and denote by $\boldsymbol{\delta}$ the sequence of their $\delta$-invariants, written in a non-increasing order.
	\begin{enumerate}
		\item\label{item:n0_F2} Assume that all singular points of $R$ are cusps with $\delta$-invariants at most $2$. Then such a curve $R$ exists, is unique up to an action of $\Aut(\F_2)$, and has $\boldsymbol{\delta}=(2,2,1,1)$. The surface $S\de \F_2\setminus (R+\Sec_2)$ is as in \ref{def:F2n0}.
		\item\label{item:n0_A1A2} Assume that $R$ has an ordinary node $q$, with one branch tangent to a fiber $F_q$; and all the singular points of $R\setminus \{q\}$ are cusps. Let $\phi\colon Y\to \F_2$ be blowup at $q$ and at its infinitely near point on the proper transform of $F_q$; let $E\subseteq \Exc \phi$ be the $(-1)$-curve, and let $S=Y\setminus (\pi^{*}(R+\Sec_2)\redd-E)$. Then $\boldsymbol{\delta}=(5),(4,1),(3,2)
		$ or $(3,1,1)$. The surface $S$ is as in \ref{def:A1A2_c=1}, \ref{def:A1A2_c=2_41}, \ref{def:A1A2_c=2_32} or \ref{def:A1A2_c=3}, respectively. Up to the action of  $\Aut(\F_2)$, there are exactly $3$ such curves $R$ if $\boldsymbol{\delta}=(5), (4,1)$ or $(3,2)$; and $1$ if $\boldsymbol{\delta}=(3,1,1)$.
	\end{enumerate}
In both cases, for each cusp $r\in R$ the fiber passing through $r$ satisfies $(F_r\cdot R)_{r}=2$ and $F_{r}\cap R\cap \Sec_2=\emptyset$.
\end{prop}
\begin{proof}
By Lemma \ref{lem:F2-adjunction}, we have  $\delta_{R}=6$. In case \ref{item:n0_F2} this implies that $R$ has at least three cusps. 

For $r\in \Sing R$ denote by $F_{r}$ the fiber passing through $r$. Since $F_{r}\cdot R=3$, we have  $F_{r}\neq F_{r'}$ for $r\neq r'$. Put $\eta_{r}=1$ if $F_{r}$ passes through $R\cap \Sec_2$ and $\eta_{r}=0$ otherwise. If $r\in R$ is a cusp and $F_{r}$ is tangent to $R$ at $r$, put $\epsilon_{r}=1$, otherwise put $\epsilon_{r}=0$. Note that since $R\cdot F_{r}=3$, if $\epsilon_{r}=1$ then $r\in R$ is ordinary. Put $\epsilon=\sum_{r}\epsilon_{r}$. Applying the Hurwitz formula to $\pi_{\F_{2}}|_{R}\colon R\to \P^{1}$ we infer that in case \ref{item:n0_F2} $\epsilon=0$ and $\boldsymbol{\delta}=(2,2,1,1)$ or $(2,2,2)$; and in case \ref{item:n0_A1A2} $R$ has at most $3-\epsilon$ cusps, so either $\epsilon=0$  or $\epsilon=1$ and $\boldsymbol{\delta}=(4,1)$.

Fix a cusp $p\in R$ with maximal $\delta$-invariant. Then $\delta_{p}\geq 2$, so $\epsilon_{p}=0$. Blow up over each singular point of $R$ until the proper transform of $R$ is smooth, and contract the exceptional $(-1)$-curve over $p$. This map is a composition of $\delta_{R}-1=5$ blowups over points of multiplicity $2$ on $R\setminus \Sec_2$. Next, contract the proper transforms of the fibers passing through $\Sing R$ and the $(-1)$-curves appearing in subsequent total transforms of $R$. Each contracted curve meets the proper transforms of $R$ and $\Sec_2$ once each. Denote the resulting map by $\theta$. We have $\rho(\theta(\F_{2}))=2$, so $\theta(\F_{2})$ is a Hirzebruch surface $\F_{m}$ for some $m\geq 0$. We have $(\theta_{*}R)^{2}=R^{2}-20+5=9$ and $(\theta_{*}\Sec_2)^{2}=\Sec_2^{2}+5=3$. Like on $\F_2$, the curves $\theta_{*}R$, $\theta_{*}\Sec_2$ are a  $3$-section and a $1$-section, respectively. Numerical properties of $\F_m$ imply that $m=1$ and $\theta_{*}R$, $\theta_{*}\Sec_2$ are of types $(0,3)$ and $(1,1)$, respectively.

Contract the negative section of $\F_{1}$ and denote its image by $u\in \P^{2}$. Let $\rr,\cc\subseteq \P^{2}$ the images of $\theta_{*}R$, $\theta_{*}\Sec_2$. For each $r\in \Sing R$ let $\ll_{r}$ be the image of the exceptional $(-1)$-curve over $r\in\Sing R$, and let $\hat{r},r'\in \ll_{r}$ be the image of $F_{r}$ and the point infinitely near to $r$, respectively. Note that $\hat{r}=r'$ if $\epsilon_{r}=1$ and $\hat{r}\neq r'$ otherwise. For a cusp $r\neq p$, we have $(\rr\cdot \ll_{r})_{r'}=2+\epsilon_{r}$, $(\rr\cdot\ll_{r})_{\hat{r}}=1$ and $(\rr\cdot \cc)_{\hat{r}}=\delta_{r}+\eta_{r}$. For the chosen cusp $p$, the point  $p'\in \rr$ is an ordinary cusp, and since $\epsilon_{p}=0$, we have $(\rr\cdot \ll_{p})_{p'}=2$,  $(\rr\cdot\ll_{r})_{\hat{p}}=1$ and $(\rr\cdot \cc)_{\hat{p}}=\delta_{p}+\eta_{p}-1$. Eventually, for the node $q\in R$ in case \ref{item:n0_A1A2}, we have $(\rr\cdot \ll_{q})_{q'}=1$, $(\rr\cdot\ll_{q})_{\hat{q}}=2$ and $(\rr\cdot \cc)_{\hat{q}}=1$.

The curve $\rr$ is a cuspidal cubic not passing through $u$. The curve $\cc$ is a conic such that for every $r\in \Sing R$, we have $\cc\cap \ll_{r}=\{\hat{r},u\}$ and $\hat{r}\neq u$. Moreover, $\rr\cdot (\cc\setminus \bigcup_{r}\ll_{r})\leq 1$, with equality if and only if $\eta_{r}=0$ for every $r$. 

\begin{figure}[htbp]
	\begin{tikzpicture}	
		\node[right] at (-1,4.5) {\small{$\large{\bullet}\mapsto q_1$}};
		\node[right] at (-1,4.1) {\small{$\boldsymbol{\circ}\mapsto q_2$}};
		\node[right] at (-1,3.7) {\small{$\boldsymbol{\times}\mapsto q_3$}};	
		\node[right] at (-1,3.3) {\small{$\spadesuit\mapsto q_1'$}};
		\node[right] at (-1,2.9) {\small{$\heartsuit\mapsto q_2'$}};
		\node[right] at (-1,2.5) {\small{$\diamondsuit\mapsto s_1$}};
		\node[right] at (-1,2.1) {\small{$\bigstar\mapsto r$}};
		\draw (-8,1.2) -- (0.2,1.2);
		\node at (0,1.4) {\small{$C_1$}};
		\node at (0,1) {\small{$-8$}};
		\node at (-1.4,1.2) {\small{$\spadesuit$}};
		\node at (-1.1,1.2) {\small{$\heartsuit$}};
		\node at (-0.8,1.2) {\small{$\diamondsuit$}};
		\node at (-0.5,1.2) {\small{$\bigstar$}};
		\draw (-3.4,-0.4) -- (-3.6,1);
		\node at (-3.8,0.3) {\small{$-2$}};
		\draw (-3.6,0.8) -- (-3.4,2.2); 
		\node at (-3.75,1.5) {\small{$-1$}};
		\node at (-3.2,1.5) {\small{$E_{p_3}$}};			
		\draw (-3.4,2) -- (-3.6,3.4);
		\node at (-3.8,2.7) {\small{$-3$}};
		\node at (-3.2,2.7) {\small{$L_3$}};
		\node at (-3.47,2.55) {\large{$\bullet$}};
		\node at (-3.52,2.9) {$\boldsymbol{\circ}$};
		\draw (-4.8,-0.4) -- (-5,1);
		\node at (-5.2,0.3) {\small{$-2$}};
		\draw (-5,0.8) -- (-4.8,2.2); 
		\node at (-5.15,1.5) {\small{$-1$}};
		\node at (-4.6,1.5) {\small{$E_{p_2}$}};			
		\draw (-4.8,2) -- (-5,3.4);
		\node at (-5.2,2.7) {\small{$-3$}};
		\node at (-4.6,2.7) {\small{$L_2$}};
		\node at (-4.87,2.55) {\large{$\bullet$}};
		\node at (-4.92,2.9) {$\boldsymbol{\times}$};
		\draw (-5,3.2) -- (-4.7,5.3);
		\node at (-5.2,3.9) {\small{$-2$}};
		\node at (-4.55,3.9) {\small{$L_{rq_2}$}};
		\node at (-4.75,4.95) {$\boldsymbol{\circ}$};
		\node at (-4.8,4.6) {\small{$\heartsuit$}};
		\node at (-4.85,4.25) {\small{$\bigstar$}};
		\draw (-6.2,-0.4) -- (-6.4,1);
		\node at (-6.6,0.3) {\small{$-2$}};
		\draw (-6.4,0.8) -- (-6.2,2.2); 
		\node at (-6.55,1.5) {\small{$-1$}};
		\node at (-6,1.5) {\small{$E_{p_1}$}};			
		\draw (-6.2,2) -- (-6.4,3.4);
		\node at (-6.6,2.7) {\small{$-3$}};
		\node at (-6,2.7) {\small{$L_1$}};
		\node at (-6.27,2.55) {$\boldsymbol{\circ}$};
		\node at (-6.32,2.9) {$\boldsymbol{\times}$};
		\draw (-6.4,3.2) -- (-6.1,5.3);
		\node at (-6.6,3.9) {\small{$-2$}};
		\node at (-5.95,3.9) {\small{$L_{rq_1}$}};
		\node at (-6.15,4.95) {\large{$\bullet$}};
		\node at (-6.2,4.6) {\small{$\spadesuit$}};
		\node at (-6.25,4.25) {\small{$\bigstar$}};
		\draw (-2,-0.4) -- (-2.2,1);
		\node at (-2.4,0.3) {\small{$-2$}};
		\draw (-2.2,0.8) -- (-2,2.2); 
		\node at (-2.35,1.5) {\small{$-1$}};
		\node at (-1.8,1.5) {\small{$E_{s_2}$}};
		\draw (-2,2) -- (-2.3,4.1);
		\node at (-2.4,2.7) {\small{$-3$}};
		\node at (-1.8,2.7) {\small{$C_3$}};
		\node at (-2.07,2.55) {\large{$\bullet$}};
		\node at (-2.12,2.9) {$\boldsymbol{\circ}$};
		\node at (-2.17,3.25) {$\boldsymbol{\times}$};
		\node at (-2.22,3.6) {\small{$\bigstar$}};
		\node at (-2.27,3.95) {\small{$\diamondsuit$}};
		\draw (-7.8,0.8) -- (-7.3,4.3); 
		\node at (-7.95,1.5) {\small{$-2$}};
		\node at (-7.45,1.5) {\small{$C_{2}$}};
		\node at (-7.35,3.95) {\large{$\bullet$}};
		\node at (-7.4,3.6) {$\boldsymbol{\circ}$};
		\node at (-7.45,3.25) {$\boldsymbol{\times}$};
		\node at (-7.5,2.9) {\small{$\spadesuit$}};
		\node at (-7.55,2.55) {\small{$\heartsuit$}};
		\node at (-7.6,2.2) {\small{$\diamondsuit$}};
	\end{tikzpicture}			
	\caption{Case \ref{def:F2n0}: $X\am\cong \F_2$ and $n=0$, see Proposition \ref{prop:n0}\ref{item:n0_F2} and Remark \ref{rem:Moe}}
	\label{fig:F2n0}
\end{figure}

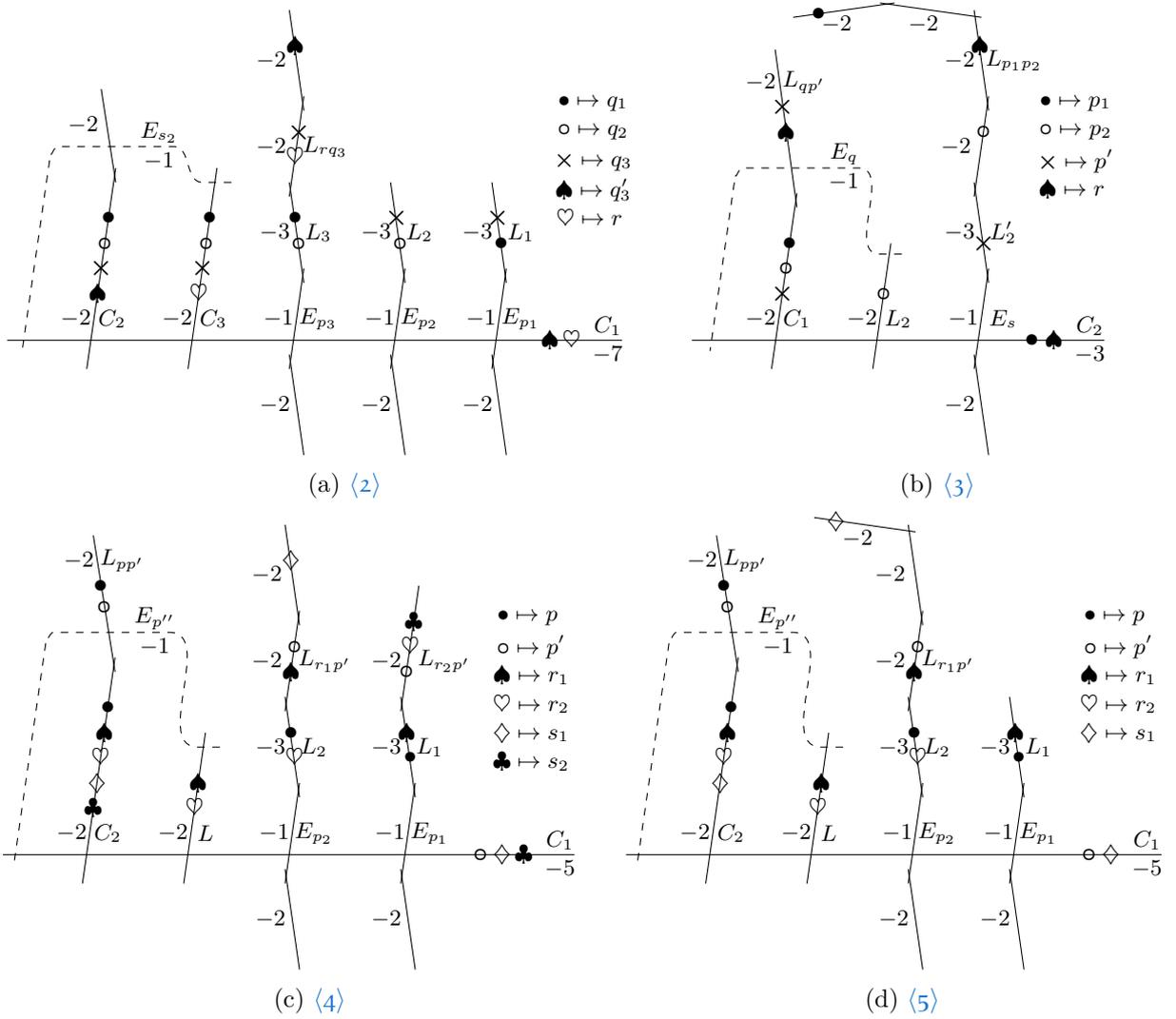
\begin{figure}[htbp]
		\begin{subfigure}[b]{0.55\textwidth}
			\begin{tikzpicture}	
				\node at (-2.2,4.5) {\small{$\large{\bullet}\mapsto q_1$}};
				\node at (-2.2,4.1) {\small{$\boldsymbol{\circ}\mapsto q_2$}};
				\node at (-2.2,3.7) {\small{$\boldsymbol{\times}\mapsto q_3$}};	
				\node at (-2.2,3.3) {\small{$\spadesuit\mapsto q_3'$}};
				\node at (-2.25,2.9) {\small{$\heartsuit\mapsto r$}};
				\draw (-10.3,1.2) -- (-1.8,1.2);
				\node at (-2,1.4) {\small{$C_1$}};
				\node at (-2,1) {\small{$-7$}};
				\node at (-2.8,1.2) {\small{$\spadesuit$}};
				\node at (-2.5,1.2) {\small{$\heartsuit$}};
				\draw (-3.4,-0.4) -- (-3.6,1);
				\node at (-3.8,0.3) {\small{$-2$}};
				\draw (-3.6,0.8) -- (-3.4,2.2); 
				\node at (-3.75,1.5) {\small{$-1$}};
				\node at (-3.2,1.5) {\small{$E_{p_1}$}};			
				\draw (-3.4,2) -- (-3.6,3.4);
				\node at (-3.8,2.7) {\small{$-3$}};
				\node at (-3.2,2.7) {\small{$L_1$}};
				\node at (-3.47,2.55) {\large{$\bullet$}};
				\node at (-3.52,2.9) {$\boldsymbol{\times}$};
				\draw (-4.8,-0.4) -- (-5,1);
				\node at (-5.2,0.3) {\small{$-2$}};
				\draw (-5,0.8) -- (-4.8,2.2); 
				\node at (-5.15,1.5) {\small{$-1$}};
				\node at (-4.6,1.5) {\small{$E_{p_2}$}};			
				\draw (-4.8,2) -- (-5,3.4);
				\node at (-5.2,2.7) {\small{$-3$}};
				\node at (-4.6,2.7) {\small{$L_2$}};
				\node at (-4.87,2.55) {$\boldsymbol{\circ}$};
				\node at (-4.92,2.9) {$\boldsymbol{\times}$};
				\draw (-6.2,-0.4) -- (-6.4,1);
				\node at (-6.6,0.3) {\small{$-2$}};
				\draw (-6.4,0.8) -- (-6.2,2.2); 
				\node at (-6.55,1.5) {\small{$-1$}};
				\node at (-6,1.5) {\small{$E_{p_3}$}};			
				\draw (-6.2,2) -- (-6.4,3.4);
				\node at (-6.6,2.7) {\small{$-3$}};
				\node at (-6,2.7) {\small{$L_3$}};
				\node at (-6.27,2.55) {$\boldsymbol{\circ}$};
				\node at (-6.32,2.9) {\large{$\bullet$}};
				\draw (-6.4,3.2) -- (-6.2,4.6);
				\node at (-6.65,3.9) {\small{$-2$}};
				\node at (-5.9,3.9) {\small{$L_{rq_3}$}};
				\node at (-6.27,4.1) {$\boldsymbol{\times}$};
				\node at (-6.32,3.75) {\small{$\heartsuit$}};
				\draw (-6.2,4.4) -- (-6.4,5.8);
				\node at (-6.65,5.1) {\small{$-2$}};
				\node at (-6.32,5.3) {\small{$\spadesuit$}};
				\draw (-7.8,0.8) -- (-7.4,3.6); 
				\node at (-7.95,1.5) {\small{$-2$}};
				\node at (-7.45,1.5) {\small{$C_{3}$}};
				\node at (-7.5,2.9) {\large{$\bullet$}};
				\node at (-7.55,2.55) {$\boldsymbol{\circ}$};
				\node at (-7.6,2.2) {$\boldsymbol{\times}$};
				\node at (-7.65,1.85) {\small{$\heartsuit$}};
				\draw (-9.2,0.8) -- (-8.8,3.6); 
				\node at (-9.35,1.5) {\small{$-2$}};
				\node at (-8.85,1.5) {\small{$C_{2}$}};
				\node at (-8.9,2.9) {\large{$\bullet$}};
				\node at (-8.95,2.55) {$\boldsymbol{\circ}$};
				\node at (-9,2.2) {$\boldsymbol{\times}$};
				\node at (-9.05,1.85) {\small{$\spadesuit$}};
				\draw (-8.8,3.4) -- (-9,4.7);
				\node at (-9.25,4.2) {\small{$-2$}};
				\draw[dashed] (-7.2,3.4) -- (-7.6,3.4) to[out=180,in=0] (-8,3.9) -- (-9.6,3.9) to[out=180,in=-80] (-9.7,3.8) -- (-10.1,1);
				\node at (-8.2,4.1) {\small{$E_{s_2}$}};
				\node at (-8.2,3.7) {\small{$-1$}};				   		
			\end{tikzpicture}	
			\caption{\ref{def:A1A2_c=3}}
			\label{fig:A1A2_c=3}
		\end{subfigure}		
		\begin{subfigure}[b]{0.4\textwidth}
			\begin{tikzpicture}	
				\node at (-5,4.5) {\small{$\large{\bullet}\mapsto p_1$}};
				\node at (-5,4.1) {\small{$\boldsymbol{\circ}\mapsto p_2$}};
				\node at (-5,3.7) {\small{$\boldsymbol{\times}\mapsto p'$}};	
				\node at (-5.05,3.3) {\small{$\spadesuit\mapsto r$}};
				\draw (-10.3,1.2) -- (-4.6,1.2);
				\node at (-4.8,1.4) {\small{$C_2$}};
				\node at (-4.8,1) {\small{$-3$}};
				\node at (-5.6,1.2) {\large{$\bullet$}};
				\node at (-5.3,1.2) {\small{$\spadesuit$}};
				\draw (-6.2,-0.4) -- (-6.4,1);
				\node at (-6.6,0.3) {\small{$-2$}};
				\draw (-6.4,0.8) -- (-6.2,2.2); 
				\node at (-6.55,1.5) {\small{$-1$}};
				\node at (-6,1.5) {\small{$E_{s}$}};			
				\draw (-6.2,2) -- (-6.4,3.4);
				\node at (-6.6,2.7) {\small{$-3$}};
				\node at (-6,2.7) {\small{$L_2'$}};
				\node at (-6.27,2.55) {$\boldsymbol{\times}$};
				\draw (-6.4,3.2) -- (-6.2,4.6);
				\node at (-6.65,3.9) {\small{$-2$}};
				\node at (-6.27,4.1) {$\boldsymbol{\circ}$};
				\draw (-6.2,4.4) -- (-6.4,5.8);
				\node at (-6.6,5.1) {\small{$-2$}};
				\node at (-5.85,5.1) {\small{$L_{p_1 p_2}$}};
				\node at (-6.32,5.3) {\small{$\spadesuit$}};
				\draw (-6.3,5.7) -- (-7.7,5.9);
				\node at (-7.1,5.6) {\small{$-2$}};
				\draw (-7.5,5.9) -- (-8.9,5.7);
				\node at (-8.3,5.6) {\small{$-2$}};
				\node at (-8.55,5.75) {\large{$\bullet$}};
				\draw (-7.8,0.8) -- (-7.55,2.55); 
				\node at (-7.95,1.5) {\small{$-2$}};
				\node at (-7.45,1.5) {\small{$L_{2}$}};
				\node at (-7.65,1.85) {$\boldsymbol{\circ}$};
				\draw (-9.2,0.8) -- (-8.85,3.25); 
				\node at (-9.35,1.5) {\small{$-2$}};
				\node at (-8.85,1.5) {\small{$C_{1}$}};
				\node at (-8.95,2.55) {\large{$\bullet$}};
				\node at (-9,2.2) {$\boldsymbol{\circ}$};
				\node at (-9.05,1.85) {$\boldsymbol{\times}$};
				\draw (-8.85,3.05) -- (-9.15,5.25);
				\node at (-9,4.1) {\small{$\spadesuit$}};
				\node at (-9.05,4.45) {$\boldsymbol{\times}$};
				\node at (-9.35,4.8) {\small{$-2$}};
				\node at (-8.75,4.8) {\small{$L_{qp'}$}};
				\draw[dashed] (-7.4,2.4) -- (-7.7,2.4) to[out=180,in=0] (-8,3.6) -- (-9.6,3.6) to[out=180,in=-80] (-9.7,3.5) -- (-10.05,1.05);
				\node at (-8.2,3.8) {\small{$E_{q}$}};
				\node at (-8.2,3.4) {\small{$-1$}};				   		
			\end{tikzpicture}
			\caption{\ref{def:A1A2_c=1}}
			\label{fig:A1A2_c=1}
		\end{subfigure}	
		\begin{subfigure}[b]{0.5\textwidth}
			\begin{tikzpicture}	
				\node at (-3.05,4.5) {\small{$\large{\bullet}\mapsto p$}};
				\node at (-3,4.1) {\small{$\boldsymbol{\circ}\mapsto p'$}};
				\node at (-3,3.7) {\small{$\spadesuit\mapsto r_1$}};	
				\node at (-3,3.3) {\small{$\heartsuit\mapsto r_2$}};
				\node at (-3,2.9) {\small{$\diamondsuit\mapsto s_1$}};
				\node at (-3,2.5) {\small{$\clubsuit\mapsto s_2$}};
				\draw (-10.3,1.2) -- (-2.4,1.2);
				\node at (-2.6,1.4) {\small{$C_1$}};
				\node at (-2.6,1) {\small{$-5$}};
				\node at (-3.7,1.2) {$\boldsymbol{\circ}$};
				\node at (-3.4,1.2) {\small{$\diamondsuit$}};
				\node at (-3.1,1.2) {\small{$\clubsuit$}};
				\draw (-4.6,-0.4) -- (-4.8,1);
				\node at (-5,0.3) {\small{$-2$}};
				\draw (-4.8,0.8) -- (-4.6,2.2); 
				\node at (-4.95,1.5) {\small{$-1$}};
				\node at (-4.4,1.5) {\small{$E_{p_1}$}};			
				\draw (-4.6,2) -- (-4.8,3.4);
				\node at (-5,2.7) {\small{$-3$}};
				\node at (-4.4,2.7) {\small{$L_1$}};
				\node at (-4.67,2.55) {$\large{\bullet}$};
				\node at (-4.72,2.9) {\small{$\spadesuit$}};
				\draw (-4.8,3.2) -- (-4.55,4.95);
				\node at (-5,3.9) {\small{$-2$}};
				\node at (-4.2,3.9) {\small{$L_{r_2 p'}$}};
				\node at (-4.72,3.75) {$\boldsymbol{\circ}$};
				\node at (-4.67,4.1) {\small{$\heartsuit$}};
				\node at (-4.62,4.45) {\small{$\clubsuit$}};
				\draw (-6.2,-0.4) -- (-6.4,1);
				\node at (-6.6,0.3) {\small{$-2$}};
				\draw (-6.4,0.8) -- (-6.2,2.2); 
				\node at (-6.55,1.5) {\small{$-1$}};
				\node at (-6,1.5) {\small{$E_{p_2}$}};			
				\draw (-6.2,2) -- (-6.4,3.4);
				\node at (-6.6,2.7) {\small{$-3$}};
				\node at (-6,2.7) {\small{$L_2$}};
				\node at (-6.27,2.55) {\small{$\heartsuit$}};
				\node at (-6.32,2.9) {\large{$\bullet$}};
				\draw (-6.4,3.2) -- (-6.2,4.6);
				\node at (-6.65,3.9) {\small{$-2$}};
				\node at (-5.85,3.9) {\small{$L_{r_1 p'}$}};
				\node at (-6.27,4.1) {$\boldsymbol{\circ}$};
				\node at (-6.32,3.75) {\small{$\spadesuit$}};
				\draw (-6.2,4.4) -- (-6.4,5.8);
				\node at (-6.65,5.1) {\small{$-2$}};
				\node at (-6.32,5.3) {\small{$\diamondsuit$}};
				\draw (-7.8,0.8) -- (-7.5,2.9); 
				\node at (-7.95,1.5) {\small{$-2$}};
				\node at (-7.5,1.5) {\small{$L$}};
				\node at (-7.6,2.2) {\small{$\spadesuit$}};
				\node at (-7.65,1.85) {\small{$\heartsuit$}};
				\draw (-9.2,0.8) -- (-8.75,3.95); 
				\node at (-9.35,1.5) {\small{$-2$}};
				\node at (-8.85,1.5) {\small{$C_{2}$}};
				\node at (-8.85,3.25) {\large{$\bullet$}};
				\node at (-8.9,2.9) {\small{$\spadesuit$}};
				\node at (-8.95,2.55) {\small{$\heartsuit$}};
				\node at (-9,2.2) {\small{$\diamondsuit$}};
				\node at (-9.05,1.85) {\small{$\clubsuit$}};
				\draw (-8.75,3.75) -- (-9.05,5.65);
				\node at (-9.25,5.3) {\small{$-2$}};
				\node at (-8.65,5.3) {\small{$L_{pp'}$}};
				\node at (-8.95,4.95) {\large{$\bullet$}};
				\node at (-8.9,4.65) {$\boldsymbol{\circ}$};
				\draw[dashed] (-7.3,2.7) -- (-7.6,2.7) to[out=180,in=0] (-8,4.3) -- (-9.6,4.3) to[out=180,in=-80] (-9.7,4.2) -- (-10.15,1.05);
				\node at (-8.2,4.5) {\small{$E_{p''}$}};
				\node at (-8.2,4.1) {\small{$-1$}};			   		
			\end{tikzpicture}
			\caption{\ref{def:A1A2_c=2_32}}
			\label{fig:A1A2_c=2_32}
		\end{subfigure}				
		\begin{subfigure}[b]{0.45\textwidth}
			\begin{tikzpicture}	
				\node at (-3.55,4.5) {\small{$\large{\bullet}\mapsto p$}};
				\node at (-3.5,4.1) {\small{$\boldsymbol{\circ}\mapsto p'$}};
				\node at (-3.5,3.7) {\small{$\spadesuit\mapsto r_1$}};	
				\node at (-3.5,3.3) {\small{$\heartsuit\mapsto r_2$}};
				\node at (-3.5,2.9) {\small{$\diamondsuit\mapsto s_1$}};
				\draw (-10.3,1.2) -- (-2.9,1.2);
				\node at (-3.1,1.4) {\small{$C_1$}};
				\node at (-3.1,1) {\small{$-5$}};
				\node at (-3.9,1.2) {$\boldsymbol{\circ}$};
				\node at (-3.6,1.2) {\small{$\diamondsuit$}};
				\draw (-4.8,-0.4) -- (-5,1);
				\node at (-5.2,0.3) {\small{$-2$}};
				\draw (-5,0.8) -- (-4.8,2.2); 
				\node at (-5.15,1.5) {\small{$-1$}};
				\node at (-4.6,1.5) {\small{$E_{p_1}$}};			
				\draw (-4.8,2) -- (-5,3.4);
				\node at (-5.2,2.7) {\small{$-3$}};
				\node at (-4.6,2.7) {\small{$L_1$}};
				\node at (-4.87,2.55) {$\large{\bullet}$};
				\node at (-4.92,2.9) {\small{$\spadesuit$}};
				\draw (-6.2,-0.4) -- (-6.4,1);
				\node at (-6.6,0.3) {\small{$-2$}};
				\draw (-6.4,0.8) -- (-6.2,2.2); 
				\node at (-6.55,1.5) {\small{$-1$}};
				\node at (-6,1.5) {\small{$E_{p_2}$}};			
				\draw (-6.2,2) -- (-6.4,3.4);
				\node at (-6.6,2.7) {\small{$-3$}};
				\node at (-6,2.7) {\small{$L_2$}};
				\node at (-6.27,2.55) {\small{$\heartsuit$}};
				\node at (-6.32,2.9) {\large{$\bullet$}};
				\draw (-6.4,3.2) -- (-6.2,4.6);
				\node at (-6.65,3.9) {\small{$-2$}};
				\node at (-5.85,3.9) {\small{$L_{r_1 p'}$}};
				\node at (-6.27,4.1) {$\boldsymbol{\circ}$};
				\node at (-6.32,3.75) {\small{$\spadesuit$}};
				\draw (-6.2,4.4) -- (-6.4,5.8);
				\node at (-6.65,5.1) {\small{$-2$}};
				\draw (-6.3,5.7) -- (-7.7,5.9);
				\node at (-7.1,5.6) {\small{$-2$}};
				\node at (-7.4,5.85) {\small{$\diamondsuit$}};
				\draw (-7.8,0.8) -- (-7.5,2.9); 
				\node at (-7.95,1.5) {\small{$-2$}};
				\node at (-7.5,1.5) {\small{$L$}};
				\node at (-7.6,2.2) {\small{$\spadesuit$}};
				\node at (-7.65,1.85) {\small{$\heartsuit$}};
				\draw (-9.2,0.8) -- (-8.75,3.95); 
				\node at (-9.35,1.5) {\small{$-2$}};
				\node at (-8.85,1.5) {\small{$C_{2}$}};
				\node at (-8.85,3.25) {\large{$\bullet$}};
				\node at (-8.9,2.9) {\small{$\spadesuit$}};
				\node at (-8.95,2.55) {\small{$\heartsuit$}};
				\node at (-9,2.2) {\small{$\diamondsuit$}};
				\draw (-8.75,3.75) -- (-9.05,5.65);
				\node at (-9.25,5.3) {\small{$-2$}};
				\node at (-8.65,5.3) {\small{$L_{pp'}$}};
				\node at (-8.95,4.95) {\large{$\bullet$}};
				\node at (-8.9,4.65) {$\boldsymbol{\circ}$};
				\draw[dashed] (-7.3,2.7) -- (-7.6,2.7) to[out=180,in=0] (-8,4.3) -- (-9.6,4.3) to[out=180,in=-80] (-9.7,4.2) -- (-10.15,1.05);
				\node at (-8.2,4.5) {\small{$E_{p''}$}};
				\node at (-8.2,4.1) {\small{$-1$}};		
			\end{tikzpicture}
			\caption{\ref{def:A1A2_c=2_41}}
			\label{fig:A1A2_c=2_41}
		\end{subfigure}	
\vspace{-.5em}
	\caption{Cases with $X_{\min}=\P(1,2,3)$ and $n=0$, see Proposition \ref{prop:n0}\ref{item:n0_A1A2}.}
	\label{fig:sporadic_F2}
\end{figure}

Consider the case \ref{item:n0_A1A2}, $\boldsymbol{\delta}=(5)$, so $\delta_{p}=5$. Perform a standard quadratic transformation centered at $p'$, $\hat{p}$ and $\hat{q}$. Let 
$\cc_{1}$, $\cc_{2}$, $\ll_{2}$ and 
$\ll_{2}'$, $\ll_{p_{1}p_{2}}$, $\ll_{rp'}$ be the images of 
$\cc$, $\rr$, $\ll_{q}$ and of the exceptional curves over 
$p'$, $\hat{p}$, $\hat{q}$. Then $\cc_{1}+\cc_{2}+\ll_{2}+\ll_{2}'+\ll_{rp'}+\ll_{p_{1}p_{2}}$ is as in Configuration \ref{conf:A1A2_c=1}, so $S$ is as in \ref{def:A1A2_c=1}. We have $\eta_{p}=(\rr\cdot \cc)_{\hat{p}}-4=(\cc_2\cdot \cc_1)_{p_1}-3=0$.

Consider the case \ref{item:n0_A1A2}, $\boldsymbol{\delta}=(3,2)$ or $(4,1)$. Let $r$ be the cusp of $R\setminus \{p\}$, respectively. Perform a standard quadratic transformation centered at $p'$, $\hat{r}$, $\hat{q}$. Let 
$\cc_{1}$, $\cc_{2}$, $\ll_{r_1p'}$, $\ll_{1}$, $\ll$ and 
$\ll_{2}$, $\ll_{r_{2}p'}$, $\ll_{pp'}$ be the images of  
$\rr$, $\cc$, $\ll_{p}$, $\ll_{r}$, $\ll_{q}$ and of the exceptional curves over 
$p'$, $\hat{r}$, $\hat{q}$. This way, we obtain Configuration \ref{conf:A1A2_c=2}\ref{item:32} if $\boldsymbol{\delta}=(3,2)$ and \ref{conf:A1A2_c=2}\ref{item:41} if $\boldsymbol{\delta}=(4,1)$. Hence the surface $S$ is as in \ref{def:A1A2_c=2_32} and \ref{def:A1A2_c=2_41}, respectively. Note that $\eta_{p}=(\cc_1\cdot \cc_2)_{s_1}-\delta_{p}+1=0$, $\eta_{r}=(\cc_1\cdot\cc_2)_{s_2}-\delta_{r}+1=0$. Moreover,  $\epsilon_{r}=(\cc_1\cdot \ll_1)_{s_1}=0$, so $\epsilon=0$.
\smallskip

In the remaining cases, let $p,r_{1},r_{2}\in R$ be cusps with $\delta_{r_1}\geq \delta_{r_2}$, and let $r$ be the remaining singular point of $R$ (if exists). As before, we perform a standard quadratic transformation centered at $p'$, $\hat{r}_{1}$, $\hat{r}_{2}$. 

Consider case \ref{item:n0_F2}. Let $\cc_1$, $\cc_2$, $\ll_{rq_1}$, $\ll_2$, $\ll_3$ and $\ll_1$, $\ll_{rq_2}$, $\ll_{rq_3}$ be the images of $\rr$, $\cc$, $\ll_{p}$, $\ll_{r_1}$, $\ll_{r_2}$ and of the exceptional curves over $p'$, $\hat{r}_{1}$, $\hat{r}_2$. Then $\cc_1+\cc_2+\ll_1+\ll_2+\ll_3+\ll_{rq_1}+\ll_{rq_2}$ is as in Configuration \ref{conf:F2n0}. 

Suppose $\boldsymbol{\delta}=(2,2,2)$. Then $(\rr\cdot \cc)_{\hat{r}_2}=2$, so $\cc_1$, $\cc_2$ and $\ll_{rq_3}$ meet at one point. In the notation of Configuration \ref{conf:F2n0}, this means that $q_{3}'\in \cc_2$, which is impossible; a contradiction.

Thus $\boldsymbol{\delta}=(2,2,1,1)$. Denoting by $\cc_3$ the image of $\ll_{r}$, we get exactly Configuration \ref{conf:F2n0}. Thus $S$ is as in \ref{def:F2n0}. The condition $q_{3}'\not\in \cc_2$ in Configuration \ref{conf:F2n0} means that $\eta_{r_2}=0$. In fact, $\eta_{p}=\eta_{r_1}=\eta_{r}=0$ because in Configuration \ref{conf:F2n0}, the conic $\cc_1$ is not tangent to $\cc_2$ or to $\cc_3\setminus \{s_2\}$.

Consider now the case \ref{item:n0_A1A2}. Let $\cc_1$, $\cc_2$, $\ll_{rq_3}$, $\ll_1$, $\ll_2$, $\cc_3$ and $\ll_3$, $\ll_{rq_1}$, $\ll_{rq_2}$ 
be the images of $\rr$, $\cc$, $\ll_{p}$, $\ll_{r_1}$, $\ll_{r_2}$, $\ll_{r}$ and of the exceptional curves over $p'$, $\hat{r}_1$, $\hat{r}_2$. Then $\cc_1+\cc_2+\cc_3+\ll_1+\ll_2+\ll_3+\ll_{rq_1}+\ll_{rq_2}+\ll_{rq_3}$ is as in Configuration \ref{conf:311}. Using the notation introduced there, we have $q_{1}'\in \cc_2$ in case $\boldsymbol{\delta}=(2,2,1)$; and $(\cc_1\cdot \cc_2)_{q_3'}=2+\eta_{r_1}$ in case $\boldsymbol{\delta}=(3,1,1)$. Thus the latter case holds; and $\eta_{r_2}=1$. As in the previous case, we get $\eta_{p}=\eta_{r_2}=0$, too.
\end{proof}

\begin{rem}
	\label{rem:Moe}
	The curve $R$ from Proposition \ref{prop:n0}\ref{item:n0_F2} is a new example of a rational cuspidal curve on $\F_2$ with four cusps. Some other examples were found in \cite{Moe_4-cusps}. Conjecture 1.2 loc.\ cit.\ asserts that a rational cuspidal curve on $\F_2$ cannot have more than four cusps; the same holds for curves on $\P^2$ by \cite{KoPa_four-cusps}.
	
	The above construction allows one to compute 
	the explicit equation of $R\subseteq \F_{2}$, or equivalently, of its image $\bar{R}\subseteq \P(1,1,2)$. In some coordinates $[x:y:z]$ of weights $1,1,2$, this curve is given by
\begin{equation*}
\begin{split}
\{0= &\ 2304\  x^{5}y^{2}+324\ x^{4}y^{3}-7680\ x^{4}yz+1980\ x^{3}y^{2}z+360\ x^{2}y^{3}z+\\
+&\ 6400\ x^{3}z^{2}-6900\ x^{2}yz^{2}
+300\ xy^{2}z^{2}+100\ y^{3}z^{2}+4500\ xz^{3}-1125\ yz^{3}\},
\end{split}
\end{equation*}
has cusps with $\delta=2$ at $[0:1:0]$ and $[1:0:0]$; and ordinary ones at
\begin{equation*}
\left[-\tfrac{61}{256}+\tfrac{5}{256}\sqrt{-15}:1:-\tfrac{79}{1280}+\tfrac{53}{3840}\sqrt{-15}\right] \mbox{ and }
\left[-\tfrac{61}{256}-\tfrac{5}{256}\sqrt{-15}:1:-\tfrac{79}{1280}-\tfrac{53}{3840}\sqrt{-15}\right].
\end{equation*}
This computation was kindly provided to us by Torguun K.\ Moe. 
\smallskip

We remark that the above equation of $\bar{R}\subseteq \P(1,1,2)$ has rational coefficients, so the complement $\P(1,1,2)\setminus \bar{R}$, i.e.\ the surface \ref{def:F2n0}, is defined over $\Q$. This fact can also be seen from an alternative construction of the curve $R$, given in Example \ref{ex:F2n0_Aut}, see Remark \ref{rem:F2n0_Aut-on-F2}. There, we construct an involution $\hat{\iota}\in \Aut(\F_2,R)$ such that $\hat{\iota}(p_1)=p_2$, $\hat{\iota}(p_1')=p_2'$, where $p_i,p_i'\in R$ are cusps with $\delta_{p_i}=2$, $\delta_{p_i'}=1$, $i\in \{1,2\}$.
\end{rem}

\section{$\Q$-homology planes with $\C^{**}$-fibrations} \label{sec:Cstst}

In Section \ref{sec:classification} we have classified \QHPs of log general type which satisfy the Negativity Conjecture \ref{conj:negativity} and admit no $\C^{**}$-fibration, see assumption \eqref{eq:assumption}. Here we go back to $\Q$HPs with $\C^{**}$-fibrations. They are much easier to understand. In fact, as we have remarked in the introduction, they are classified in \cite[Theorem 1]{MiySu-Cstst_fibrations_on_Qhp} (to be precise, loc.\ cit.\ is formulated for $\Z$HPs, but the  proof works for $\Q$HPs as well, the only difference is that one has to include types $(UC_{2-1})'$ and $(T3C_{2})$ in the statement).  It was observed in \cite{Su-hp_having_rational_pencils} that the proof can be much simplified by means of Lemma \ref{lem:no_lines}, which was not available at that time. In order to deduce Theorem \ref{CLASS} from loc.\ cit, we need to slightly adjust this result:  for example, we are interested in the minimal log smooth completions, while loc.\ cit.\ considers those completions for which some $\C^{**}$-fibration extends to a $\P^{1}$-fibration with disjoint sections in the boundary. Hence for the convenience of the reader, we give a self-contained argument below; of course relying on Lemma \ref{lem:no_lines}.
\smallskip

The proof is organized as follows. In Lemma \ref{lem:C**_fibers} we complete some $\C^{**}$-fibration of $S$ to a $\P^1$-fibration of $X$ and describe degenerate fibers of the latter. While doing it, we introduce some notation which will be used throughout in the proof. In Lemma \ref{lem:C**_formulas} we gather some useful formulas relating e.g.\ the number of fibers of each type with the shape of the horizontal part $D\hor$ of $D$. Eventually, in Proposition \ref{prop:C**} we study each possible $D\hor$ and classify the corresponding $\Q$-homology planes.

\begin{lem}\label{lem:C**_fibers}
	Let $S$ be a smooth \QHP of log general type, and let $(X,D)$ be its minimal log smooth completion. Assume that $S$ admits a $\C^{**}$-fibration. Then the following holds. 
	\begin{enumerate}
		\item\label{item:C**_extends} There is a $\P^1$-fibration $\bar{p}\colon X\to \P^1$ which restricts to a $\C^{**}$-fibration of $S$.
		\item\label{item:C**_expansion} Contract a vertical bubble in $(X,D)$, and all vertical $(-1)$-curves which are superfluous in the subsequent images of $D$. Repeat this process until the resulting pair $(X',D')$ has no vertical bubbles; and $D'$ has no vertical superfluous $(-1)$-curves. Then the resulting morphism $\psi'\colon (X,D)\to (X',D')$ is an expansion.
		\item\label{item:C**_descends} There is a $\P^1$-fibration $\bar{p}'\colon X'\to \P^1$ such that $\bar{p}=\bar{p}'\circ \psi'$. We have $D'\hor=\psi'(D\hor)$ and $\#D'\hor=\#D\hor$.
		\item\label{item:C**_F0} Let $F^{\circ}$ be a degenerate fiber of $\bar{p}'|_{X'\setminus D'}$. Then $F^{\circ}\cong \C^{*}$. The closure $\bar{F}^{\circ}$ is a nondegenerate fiber of $\bar{p}'$.
		\item\label{item:C**_fibers} Let $F$ be a degenerate fiber of $\bar{p}'$. Then $F\subseteq D'$ and one of the following holds:
		\begin{enumerate}
			\item\label{item:C**_F2} $F\redd=[2,1,2]$ and $D'\hor$ meets the first and the second component of  $F\redd$,
			\item\label{item:C**_F3} $F\redd=[3,1,2,2]$ and $D'\hor$ is a $3$-section meeting $F\redd$ only in the $(-1)$-curve.
		\end{enumerate}
	\end{enumerate}
\end{lem}
\begin{proof}
	Part \ref{item:C**_extends} is proved in \cite[Proposition 3.3]{PaPe_Cstst-fibrations_singularities}. Part \ref{item:C**_expansion} follows from Lemma \ref{lem:sequence_of_expansions}. Part \ref{item:C**_descends} is a direct consequence of the definition of $\psi'$. Part \ref{item:C**_F0} follows from \ref{item:C**_fibers}: indeed, since $\bar{F}^{\circ}\not\subseteq D'$, part \ref{item:C**_fibers} implies that $\bar{F}^{\circ}$ is nondegenerate. In particular $F^{\circ}$ is irreducible, so $F^{\circ}\cong \C^{*}$ by \cite[Corollary 2.8(a)]{PaPe_Cstst-fibrations_singularities}.
	
	It remains to prove part \ref{item:C**_fibers}. Let $L\subseteq F$ be a $(-1)$-curve. We claim that  $\beta_{(D'+L)\redd}(L)\geq 3$. Suppose the contrary. If $L\subseteq D'$ then $L$ is a superfluous, vertical $(-1)$-curve in $D$, which is impossible. Hence $L\not\subseteq D'$ and $L\cdot D'\leq 2$. Lemma \ref{lem:no_lines} gives $L\cdot D'=2$. Because $L$ is not a bubble, it meets exactly one component of $D'$, say $H$. Since the fiber $F$ is a rational tree, we have $H\not\subseteq F$, so $H\subseteq D'\hor$. It follows that $L$ meets a component $L'$ of $F$ such that $L'\not\subseteq D'$. Since $D'$ is connected, each point of $L'\cap D'$ lies in $D'\hor$ or in a connected component of $D'\vert$ meeting $D'\hor$. Hence $\#(L'\cap D')\leq (F-L)\cdot D'\hor=1$; a contradiction with Lemma \ref{lem:no_lines}.
	
	Thus $\beta_{(D'+L)\redd}(L)\geq 3$. Since $\beta_{F\redd}(L)\leq 2$, it follows that $L$ meets $D'\hor$. Let $\mu$ be the multiplicity of $L$ in $F$. If $\mu=1$ then $L$ is a tip of $F\redd$, so $L\cdot D'\hor\geq 2$. In any case, we have $2\leq \mu L\cdot D'\hor\leq F\cdot D'\hor=3$. It follows that $L$ is a unique $(-1)$-curve in $F$. In particular, $\mu>1$. Thus the above inequality gives $\mu \in \{2,3\}$ and $L\cdot D'\hor=1$, so $\beta_{F\redd}(L)\geq \beta_{(D'+L)\redd}(L)-L\cdot D'\hor \geq 2$. It follows that $\beta_{F\redd}(L)=2$ and both components of $F\redd$ meeting $L$ are contained in $D'$. Because $(F-\mu L)\cdot D'\hor\leq 1$, at most one of the connected components of $F\redd-L$ meets $D'\hor$. Since $D'$ is connected, it follows that $L\subseteq D'$. Now if $\mu=2$ then $F\redd=[2,1,2]$, so \ref{item:C**_F2} holds. If $\mu=3$ then $F\redd=[3,1,2,2]$, $F$ meets $D'\hor$ only in $L$, and all components of $F$ except possibly the $(-2)$-tip are contained in $D'$. Lemma \ref{lem:no_lines} implies that $F\subseteq D'$, so \ref{item:C**_F3} holds, as claimed.
\end{proof}

\begin{lem}\label{lem:C**_formulas}
	We keep notation from Lemma \ref{lem:C**_fibers}. Furthermore, let $n$ be the number of centers of the expansion $\psi'$. Let $n_h$ be the number of nodes of $D'\hor$. Let $\nu'$ be the number of degenerate fibers of $p'\de \bar{p}'|_{X'\setminus D'}$, see Lemma \ref{lem:C**_fibers}\ref{item:C**_F0}.  Let $\nu_2$, $\nu_3$ be the number of fibers as in Lemma \ref{lem:C**_fibers}\ref{item:C**_F2}, \ref{item:C**_F3}, respectively; and let $\nu_1$ be the number of nondegenerate fibers contained in $D'$. Then the following holds.
	\begin{enumerate}
		\item \label{item:C**_tangent} Exactly $\nu'-n_h$ fibers of $p'$ are tangent to some multi-section $H\subseteq D'\hor$. Every such fiber is tangent to $H$ in a unique point, with multiplicity $2$.
		\item \label{item:C**_Hurwitz} $0\leq \nu'-n_{h}=6-2\#D\hor-2\nu_{3}-\nu_{2}$.
		\item \label{item:C**_n} $n=3\#D\hor+2\nu_{3}+\nu_{2}-n_{h}-5$.
		\item \label{item:C**_nodes} $\nu_{1}-\nu_{3}+n_{h}-2\#D\hor+3=0$.
		\item \label{item:C**_nu3} $\nu_3\leq 1$.
	\end{enumerate}
\end{lem}
\begin{proof}
	\ref{item:C**_tangent} Let $F$ be a closure of a degenerate fiber of $p'$. By Lemma \ref{lem:C**_fibers}\ref{item:C**_F0}, $F$ is a $0$-curve not contained in $D'$. We have $\#(F\cap D')\leq F\cdot D'-1=2$, so $\#(F\cap D')=2$ by Lemma \ref{lem:no_lines}. Thus $F$ meets $D'$ in two points, with multiplicity $1$ and $2$. In particular, $F$ passes through at most one node of $D'\hor$. Since $D'$ is snc,  every node lies on a fiber of this type, so there are $n_h$ such fibers. The remaining $\nu'-n_h$ degenerate fibers of $p'$ are therefore tangent to some multi-section in $D\hor$; as needed.
	
	\ref{item:C**_Hurwitz} The inequality $\nu'-n_h\geq 0$ follows from \ref{item:C**_tangent}. If $D'\hor$ consists of $1$-sections then $\#D\hor=\#D'\hor=3$, $\nu_2=\nu_3=0$, and $\nu'-n_h=0$ by \ref{item:C**_tangent}, so \ref{item:C**_Hurwitz} holds. Assume that $D'\hor$ contains a $2$-section $H$. Then $\# D\hor=2$ and $\nu_3=0$. The restriction $\bar{p}|_{H}\colon H\to \P^1$ is a morphism of degree $2$, ramified with index $2$ on each fiber of $p'$ tangent to $H$; and on each fiber of type $[2,1,2]$. Thus by \ref{item:C**_tangent}, $\bar{p}|_{H}$ has  $(\nu'-n_h)+\nu_2$ ramification points. By Hurwitz formula, $\nu'-n_h+\nu_2=2$, which proves \ref{item:C**_Hurwitz} in this case. Eventually, assume that $D'\hor$ is a $3$-section. Then $\#D\hor=1$ and $\nu_2=2$. The restriction $\bar{p}|_{D'\hor}\colon D'\hor\to \P^1$ is a morphism of degree $3$. It is ramified with index $2$ at $\nu'-n_h$ points on fibers as in \ref{item:C**_tangent}; and with index $3$ at $\nu_3$ points on fibers of type $[3,1,2,2]$. By Hurwitz formula, we get  $\nu'-n_h+2\nu_3=4$, as needed.
	
	\ref{item:C**_n}  By \cite[Corollary 2.8]{PaPe_Cstst-fibrations_singularities}, the sum of all degenerate fibers of $p$ is a sum of $\#D\hor+1$ curves isomorphic to $\C^{*}$. If such a curve is contracted by $\psi'$ then it is a vertical bubble; otherwise it is a proper transform of a degenerate fiber of $p'$. Thus $n+\nu'=\#D\hor+1$. Substituting the formula for $\nu'$ from  \ref{item:C**_Hurwitz}, we get \ref{item:C**_n}.
	
	\ref{item:C**_nodes} Choose a contraction $(X',D')\to (\F_{m},D'')$ of degenerate fibers which does not introduce new nodes in $D''\hor$. Then $D''$ has $3\nu_{1}+2\nu_{2}+\nu_{3}+n_{h}$ nodes. On the other hand, since $D$ is a tree, the properties of expansions imply that the number of nodes in $D''$ equals $\#D''-1+n$. We have $\#D''=\nu_1+\nu_2+\nu_3+\#D\hor$. Hence $2\nu_{1}+\nu_2+n_{h}=\#D\hor-1+n$. Substituting the formula for $n$ from \ref{item:C**_n}, we get the required equality.
	
	\ref{item:C**_nu3} A fiber $F\subseteq D'$ as in Lemma \ref{lem:C**_fibers}\ref{item:C**_F3} meets $D'\hor$ in one point. Connectedness of $D$ implies that $F$ does not contain a center of $\psi'$. Therefore, the preimage of $F$ is a fiber contained in $D$. Because components of $D$ are numerically independent, $D$ contains at most one fiber, so $\nu_3\leq 1$, as claimed. 
\end{proof}

\begin{prop}[{cf.\ \cite{MiySu-Cstst_fibrations_on_Qhp}}]\label{prop:C**}
 Let $S$ be a smooth \QHP of log general type. If $S$ admits a $\C^{**}$ fibration then $S$ is obtained by tom Dieck--Petrie algorithm from data listed in Table \ref{table:C**}, see Figure \ref{fig:Cstst}.
\end{prop}
\begin{proof}
	We keep the notation introduced in Lemmas \ref{lem:C**_fibers}, \ref{lem:C**_formulas}.
	
	Assume first that $D'\hor$ consists of three $1$-sections. Then $X'$ is a Hirzebruch surface $\F_{m}$ for some $m\geq 0$; and $\nu_2=\nu_3=0$. By Lemma \ref{lem:C**_formulas}\ref{item:C**_nodes}, we have $n_{h}=3-\nu_1$. In particular, $n_{h}\leq 3$. If $n_{h}=3$ then $\nu_1=0$, but since the centers of expansion lie on the fibers in $D'$, we get $n=0$, contrary to Lemma \ref{lem:C**_formulas}\ref{item:C**_n}.
	
	Consider the case  $n_{h}=2$. Numerical properties of Hirzebruch surfaces, together with the fact that $D'\hor$ is snc, imply that $m\in \{0,2\}$. Suppose $m=0$, i.e.\ $X'\cong \P^1\times \P^1$. Then $D'\hor$ consists of a diagonal and two horizontal lines, i.e.\ one curve of type $(1,1)$ and two of type $(0,1)$. Since $\nu_1=3-n_{h}=1$, each horizontal line meets $D'$ twice. Hence the second projection $\P^1\times\P^1\to \P^1$ restricts to a $\C^{*}$-fibration of $X'\setminus D'$; a contradiction with the Iitaka easy addition theorem. 
	
	Thus $m=2$. Now, $D'\hor$ consists of the negative section and two positive ones. As before, $\nu_1=1$. By Lemma \ref{lem:C**_formulas}\ref{item:C**_n}, we have $n=2$. The centers of the expansion $\psi'$ are either two nodes of $D'\hor$, or one node of $D'\hor$ and one common point of $D'\hor$ and the fiber in $D'$. In each case, the pencil of positive sections passing through those centers restricts to a $\C^{**}$-fibration of $X'\setminus D'$, and pulls back to a $\P^1$-fibration $\tilde{p}\colon X\to \P^1$. By the Iitaka easy addition theorem, the restriction $\tilde{p}|_{S}$ is a $\C^{**}$-fibration, too. The horizontal part of $D$ with respect to $\tilde{p}$ consists of three $1$-sections, namely: the first exceptional curves over each center of $\psi'$; and the proper transform of the fiber $D'\vert$ or of one of the positive sections in $D'$. We now replace $\bar{p}$ with $\tilde{p}$, and choose $\psi'$ in such a way that the number of nodes in $D\hor$ and $D'\hor$ is the same. This way, we reduce the proof to the case $n_{h}\in \{0,1\}$.
	\smallskip
	
	Consider the case $n_{h}=0$. Then $m=0$ and $D'$ is a union of three vertical and three horizontal lines. This case covers types \cite[{$(UP_{3-1})$, $(UC_{2-1})$ $(UC_{2-1})'$}]{MiySu-Cstst_fibrations_on_Qhp}, see pp.\ 13-18 loc.\ cit. We claim that $S$ is as in \ref{def:C**_1}. 
	
	Let $p$, $q$ be two centers of $\psi'$ which do not lie on the same component of $D'$. Then the weight corresponding to, say, $p$, is different than $1$. Indeed, otherwise a proper transform on $X$ of the pencil of curves of type $(1,1)$ passing through $p$ and $q$ would induce a $\C^{*}$-fibration of an open subset of $S$, which is impossible since $\kappa(S)=2$. Let $\bl{p}\colon \tilde{X}'\to \P^{1}\times \P^{1}$ be a blowup at $p$ and let $\tilde{D}'\de (\bl{p}^{*}D')\redd$. Then $(\tilde{X}',\tilde{D}')\to (\P^{2},\pp)$ is a minimal log resolution of $\pp\subseteq \P^{2}$ as in  \ref{def:C**_1}, and $\psi'=\bl{p}\circ \psi$ for some expansion $\psi\colon(X,D)\to (\tilde{X}',\tilde{D}')$. Using the action of $\Aut(\P^{2},\pp)\cong \Z_{2}\times \Z_{2}$ we can arrange the centers of $\psi$ to be as in  \ref{def:C**_1}. Note that if one of the centers is $(L_{j}',L_{j})$ for some $j\in \{1,2\}$, then the corresponding weight cannot be $1$, because otherwise the proper transform of $L_{j}'$ would be a superfluous $(-1)$-curve in $D$, which is impossible. 
	The weights $(1,1,2,1)$ and $(1,1,2,2)$ are explicitly excluded in Table \ref{table:C**_1} because they correspond to the Fujita surfaces  $Y\{2,4,4\}$ and $Y\{2,3,6\}$, respectively, which have Kodaira dimension zero  \cite[8.64]{Fujita-noncomplete_surfaces}.

	Consider now the case $n_{h}=1$. Then $m=1$ and $D'\hor$ consists of the negative section $\Sec_1$ and two sections of type $(0,1)$. Let $\varsigma\colon \F_1\to \P^2$ be the contraction of $\Sec_1$. Then $\varsigma_{*}D'$ is a sum of four lines, as in  \ref{def:C**_1a}. If some weight of the expansion $\psi\de \varsigma\circ \psi'$ equals one then the pencil of lines through the corresponding center induces a $\C^{*}$-fibration of some open subset of $S$, which is impossible. Similarly, if two weights are $(2,\tfrac{1}{2})$ then so does a pencil of conics tangent to the corresponding lines. 
	
	By symmetry, we can assume that the centers of $\psi$ are $(\ll_{1},\ll_{2})$, $(\ll_{1}',\ll_{1})$ and either $(\ll_{2},\ll_{1}')$ or $(\ll_{2},\ll_{2}')$, see \cite[3.16]{DiPe-hp_and_alg_curves}; with weights $(u,v_{1},v_{2})$ such that $u>1$. In the former case, $S$ is as in  \ref{def:C**_1a}: the excluded weights $(2,2,2)$ correspond to the remaining Fujita surface $Y\{3,3,3\}$ of Kodaira dimension zero. In the latter case, $S$ is obtained in  \ref{def:C**_1} by expansion at $(L_{1},L_{2};1)$, $(E_{q_{1}},L_{q_1 q_2};u-1)$ and, for $j\in\{1,2\}$, at $(L_{j},E_{q_{j}};v_{j}-1)$ if $v_{j}>1$ or at $(L_{j}',L_{j};v_{j}^{-1}-1)$ otherwise, see \cite[p.\ 117]{tDieck_optimal-curves}.
	\smallskip

	Assume now that $D'$ contains a multi-section. Let $(X',D')\to (\F_{m},D'')$ be some contraction of degenerate fibers of $\bar{p}'$ such that the number of nodes in $D''\hor$ and $D'\hor$ is the same.
	
	Consider the case when $D''\hor$ consists of a $2$-section $H_2$ and a $1$-section $H_1$. Then $\nu_3=0$. By Lemma \ref{lem:C**_formulas}\ref{item:C**_Hurwitz} we have $\nu_2\leq 2$. Moreover, $H_1\cdot H_2=n_h=1-\nu_1$ by Lemma \ref{lem:C**_formulas}\ref{item:C**_nodes}, so $H_{1}\cdot H_{2}\leq 1$. Since $H_2\cong \P^1$,  adjunction formula implies that $H_2$ is of type $(1-m,2)$. In particular, $m\leq 1$. 
	
	Assume $m=0$. Then $H_2$ is of type $(1,2)$, so it meets $H_1$. Hence $n_h=H_1\cdot H_2=1$ and $\nu_1=0$. Now $D''\vert+H_2$ is the horizontal part of $D''$ for the other projection from $\P^{1}\times \P^{1}$. It consists of $\nu_2+1\leq 3$ $1$-sections. If $\nu_2\leq 1$, we get this way a $\C^{1}$- or a $\C^{*}$-fibration of an open subset of $S$, contrary to the assumption $\kappa(S)=2$. Thus  $\nu_2=2$, and we get a $\P^1$-fibration with three $1$-sections in $D''$, i.e.\ the previous case.
	
	Therefore, we can assume $m=1$. If $H_1$ is not the negative section then it is of type $(\frac{1}{2}n_{h}-1,1)$, which is impossible since $n_{h}=1-\nu_1\leq 1$. Thus $H_1=\Sec_1$, so $n_{h}=0$ and $\nu_1=1$. Let $\varsigma\colon \F_1\to\P^2$ be the contraction of $H_1$. Then $\cc_{1}\de \varsigma_{*}D''\hor$ is a conic, and $\varsigma_{*}D''\vert$ is a sum of $\nu_1+\nu_2$ lines meeting outside $\cc_{1}$. Out of these lines, $\nu_{2}\leq 2$ is tangent to $\cc_{1}$ and $\nu_{1}=1$ is not. If $\nu_2\leq 1$ then a pencil of lines through some point of $\cc_{1}\cap \varsigma(D''\vert)$ gives a $\P^{1}$-fibration of $X$ such that $D\hor$ is a sum of at most three $1$-sections, as in the previous case. Hence we can assume $\nu_{2}=2$. Then $S$ is as in  \ref{def:C**_2}. We can assume that the weight of the expansion at  $(L_{pp'}, C_1)$ is not one: indeed, otherwise the pencil of lines through that point pulls back to a $\P^1$-fibration of $X$ such that the horizontal part of $D$ consists of three one sections, namely $C_1,L_1$ and $L_2$, which is the case considered before. Now using symmetries $(p_1,p_2,p,p')\mapsto (p_2,p_1,p,p')$ and $(p_1,p_2,p,p')\mapsto (p_1,p_2,p,p'')$, we can take the expansions as in Table \ref{table:C**_2}.  In the notation of \cite{MiySu-Cstst_fibrations_on_Qhp}, we get cases $(TP_2)$ and  $(TC_{2-1})$, see pp.\ 18-20 loc.\ cit.
	\begin{figure}[ht]
	\begin{subfigure}[b]{.24\textwidth}
		\centering
		\begin{tikzpicture}[scale=0.8]
			\path[use as bounding box] (-0.4,-0.4) rectangle (4.6,4.4);
			\draw (-0.1,4) -- (4.1,4);
			\node[above] at (1,4) {\small{$L_2$}};
			\node[below] at (1,4) {\small{$0$}};
			\draw (-0.1,2) -- (4.1,2);
			\node[above] at (1,2) {\small{$L_1'$}};
			\node[below] at (1,2) {\small{$0$}};
			\draw (-0.1,0) -- (3.1,0);
			\node[above] at (1,0) {\small{$E_{r_1}$}};
			\node[below] at (1,0) {\small{$-1$}};
			\draw (0,4.1) -- (0,-0.1);
			\node[right] at (0,3) {\small{$L_1$}};
			\node[left] at (0,3) {\small{$0$}};
			\draw (2,4.1) -- (2,-0.1);
			\node[right] at (2,3) {\small{$L_2'$}};
			\node[left] at (2,3) {\small{$0$}};
			\draw (4,4.1) -- (4,0.9);
			\node[right] at (4,3) {\small{$E_{r_2}$}};
			\node[left] at (4,3) {\small{$-1$}};
			\draw (2.9,-0.1) -- (4.1,1.1);
			\node at (3.2,0.6) {\small{$-1$}};
			\node at (4.1,0.4) {\small{$L_{r_1 r_2}$}};
		\end{tikzpicture}
		\caption{\ref{def:C**_1}}
		\label{fig:C**_1}
	\end{subfigure} 
	\begin{subfigure}[b]{.2\textwidth}
		\centering
		\begin{tikzpicture}[scale=0.8]
			\path[use as bounding box] (-0.4,-0.4) rectangle (4.4,4.4);
			\draw (-0.4,0) -- (4.2,0);
			\node at (1,0.3) {\small{$\ell_{2}'$}};
			\node at (1,-0.3) {\small{$1$}};
			\draw (-0.2,-0.2) -- (1.1,4);
			\node at (-0.2,0.4) {\small{$1$}};
			\node at (0.3,0.4) {\small{$\ell_1$}};
			\draw (2.2,-0.2) -- (0.9,4);
			\node at (1.8,0.4) {\small{$1$}};
			\node at (2.3,0.4) {\small{$\ell_2$}};
			\draw (0.2,2) -- (4.2,-0.2);
			\node at (1,1.9) {\small{$\ell_{1}'$}};
			\node at (1,1.3) {\small{$1$}};
		\end{tikzpicture}
		\caption{\ref{def:C**_1a}}
		\label{fig:C**_1a}
	\end{subfigure} 
	\begin{subfigure}[b]{.26\textwidth}
	\centering
			\begin{tikzpicture}
			\path[use as bounding box] (-0.4,-0.4) rectangle (4.4,4.4);
			\draw (0.2,-0.4) -- (0,1);
			\node at (-0.2,0.3) {\small{$-2$}};
			\draw (0,0.8) -- (0.2,2.2); 
			\node at (-0.2,1.5) {\small{$-1$}};
			\node at (0.4,1.5) {\small{$E_{p_1}$}};			
			\draw (0.2,2) -- (0,3.4);
			\node at (-0.2,2.8) {\small{$-2$}};
			\node at (0.4,2.8) {\small{$L_{1}$}};
			\draw (1.6,-0.4) -- (1.4,1);
			\node at (1.2,0.3) {\small{$-2$}};
			\draw (1.4,0.8) -- (1.6,2.2); 
			\node at (1.2,1.5) {\small{$-1$}};
			\node at (1.8,1.5) {\small{$E_{p_2}$}};			
			\draw (1.6,2) -- (1.4,3.4);
			\node at (1.2,2.8) {\small{$-2$}};
			\node at (1.8,2.8) {\small{$L_{2}$}};
			\draw (2.8,3.4) -- (2.8,-0.4);
			\node at (2.7,0.3) {\small{$0$}};
			\node at (3.2,0.3) {\small{$L_{pp'}$}};
			\draw (-0.2,3.2) -- (3.6,3.2);
			\node at (3.4,3.4) {\small{$E_{p}$}};
			\node at (3.4,3) {\small{$-1$}};			
			\draw (-0.2,1.2) -- (3.4,1.2) to[out=0,in=0] (3.4,2.4) -- (2.4,2.4);
			\node at (3.6,1.8) {\small{$0$}};
			\node at (4,1.8) {\small{$C_1$}};
		\end{tikzpicture}
		\caption{\ref{def:C**_2}}
		\label{fig:C**_2}
	\end{subfigure}
	\begin{subfigure}[b]{.28\textwidth}
	\centering
		\begin{tikzpicture}
		\path[use as bounding box] (-0.4,-0.4) rectangle (4.4,4.4);
			\draw (0.2,-0.4) -- (0,1);
			\node at (-0.2,0.3) {\small{$-3$}};
			\node at (0.35,0.3) {\small{$L_1$}};	
			\node at (0.15,-0.05) {\large{$\bullet$}};
			\node at (0.1,0.3) {$\boldsymbol{\circ}$};
			\draw (0,0.8) -- (0.2,2.2); 
			\node at (-0.2,1.5) {\small{$-1$}};
			\node at (0.3,1.5) {\small{$L_{2}'$}};			
			\draw (0.2,2) -- (0,3.4);
			\node at (-0.2,2.8) {\small{$-2$}};
			\draw (0,3.2) -- (0.2,4.6);
			\node at (-0.2,4) {\small{$-2$}};
			\draw (1.6,-0.4) -- (1.4,1);
			\node at (1.2,0.3) {\small{$-2$}};
			\node at (1.55,-0.05) {\large{$\bullet$}};
			\draw (1.4,0.8) -- (1.6,2.2); 
			\node at (1.2,1.5) {\small{$-1$}};
			\node at (1.85,1.5) {\small{$L_{qp'}$}};			
			\draw (1.6,2) -- (1.35,3.75);
			\node at (1.2,2.8) {\small{$-2$}};
			\node at (1.75,2.8) {\small{$C_1$}};
			\node at (1.45,3.05) {$\boldsymbol{\circ}$};
			\draw (3,-0.4) -- (2.8,1);
			\node at (2.6,0.3) {\small{$-2$}};
			\node at (2.95,0.05) {$\boldsymbol{\circ}$};
			\draw (2.8,0.8) -- (3,2.2); 
			\node at (2.6,1.5) {\small{$-1$}};
			\node at (3.1,1.5) {\small{$L_{1}'$}};			
			\draw (3,2) -- (2.75,3.75);
			\node at (2.6,2.8) {\small{$-2$}};
			\node at (3.15,2.8) {\small{$C_{2}$}};
			\node at (2.85,3.05) {\large{$\bullet$}};			
			\draw (-0.2,1.2) -- (3.6,1.2) to[out=0,in=0] (3.6,2.4) -- (1.2,2.4);
			\node at (3.7,1.8) {\small{$-1$}};
			\node at (4.2,1.8) {\small{$E_{p'}$}};
			\draw[dashed] (-0.2,4.4) -- (0.4,4.4) to[out=0,in=180] (1.2,3.6) -- (4,3.6);
			\node at (3.7,3.8) {\small{$E_{p_2}$}};
			\node at (3.7,3.4) {\small{$-1$}};
			\node at (3.8,0.6) {\small{$\boldsymbol{\circ}\mapsto p_1$}};
			\node at (3.7,0.2) {\small{$\bullet\mapsto q$}};
		\end{tikzpicture}
		\caption{\ref{def:C**_3}}
		\label{fig:C**_3}
	\end{subfigure}
	\vspace{-.5em}
	\caption{The $\C^{**}$-fibered case, see \cite{MiySu-Cstst_fibrations_on_Qhp}.}
	\label{fig:Cstst}
\end{figure}	
	
	We are left with the case when $D\hor$ is a $3$-section. Like before, by adjunction we get that $m=0$ and $D''\hor$ is of type $(1,3)$. We have $\nu_3\leq 1$ by Lemma \ref{lem:C**_formulas}\ref{item:C**_nu3}, so $\nu_3=1$, $\nu_1=n_h=0$ by Lemma  \ref{lem:C**_formulas}\ref{item:C**_nodes}, and $\nu_2\leq 2$ by Lemma \ref{lem:C**_formulas}\ref{item:C**_Hurwitz}. Assume $\nu_2\leq 1$. Then as before, the horizontal part of $D''$ for the other projection from $\P^{1}\times \P^{1}$ consists of at most three $1$-sections, so we are reduced to the previous case. Hence we can assume that $(\nu_{1},\nu_{2},\nu_{3})=(0,2,1)$. This is the case $(T3C_2)$ from \cite[p.\ 21]{MiySu-Cstst_fibrations_on_Qhp}. We claim that $S$ is as in   \ref{def:C**_3}. 
	
	Let $F_{0}$ and $F_{i}$, for $i\in\{1,2\}$ be the proper transforms on $X'$ of the fibers meeting $D''\hor$ in one and two points, respectively; and let $H_{j}$ for $j\in \{0,1,2\}$ be the proper transform of the horizontal lines through the points of tangency of $F_{j}$ and $D''\hor$. Let $U_{j},E_{j}\subseteq X'$ be the first and the last exceptional curve over $H_{j}\cap F_{j}$, respectively, see Notation \ref{not:bl}. Now $D'+H_{0}+H_{2}+H_{3}$, and hence $S$, is as in  \ref{def:C**_3}. Indeed, the curves $F_{0}$, $F_{1}$, $F_{2}$, $E_{0}$, $E_{1}$, $E_{2}$, and $D''\hor$, $U_{0}$, $H_{1}$, $H_{2}$ defined above correspond there to the proper transforms of $\ll_{1}$, $\cc_{1}$, $\cc_{2}$, $\ll_{2}'$, $\ll_{qp'}$, $\ll_{1}'$ and the exceptional $(-1)$-curves over $p'$ $p_{2}$, $q$, $p_1$, respectively. Using the symmetry $(p_1,p_2,p',q) \mapsto(q,p_2,p',p_1)$, we can assume that the expansions are as in Table \ref{table:C**_3}.
\end{proof}

\section{Construction of planar divisors}\label{sec:constructions}

The results obtained so far allow us to conclude that all \QHPs of log general type satisfying the Negativity Conjecture \ref{conj:negativity} are obtained by the tom Dieck--Petrie algorithm from certain arrangements of lines and conics. Combinatorial types of the latter are listed in \tables. In this section, we construct arrangements realizing each combinatorial type, and count their number. The latter computation, summarized in Proposition \ref{prop:conf_uniq} below, will be important in the proof of Corollary \ref{cor:uniq}\ref{item:n}. Indeed, as we will see in Section \ref{sec:uniqueness}, different (marked) arrangements of the same type will yield different $\Q$HPs with the same weighted boundary graph (with the exception of tower \ref{def:F2_n1-cusp}, discussed in Examples \ref{ex:7} and \ref{ex:7_uniq}). 
\smallskip

To state Proposition \ref{prop:conf_uniq}, we recall technical definitions of a combinatorial type and a marking. Let $\pp\subseteq \P^2$ be an arrangement (also called: a configuration) of lines and conics. The \emph{combinatorial type} of $\pp$, see \cite[Remark 3]{ACT_Zariski-pairs}, is a pair $(\cC,\cS)$, where $\cC$ is the set of components of $\pp$, each decorated by its degree (either $1$ or $2$); and $\cS$ is the set of singular points of $\pp$, each decorated with the following data: a list of components meeting at that point, and for each pair of such components, their local intersection number. Note that since all components of $\pp$ are smooth, a decoration of a point $q\in \cS$ encodes the topological type of $\pp$ at $q$. We identify two combinatorial types $(\cC,\cS)$ and $(\cC',\cS')$ if there are bijections $\cC\cong \cC'$ and $\cS\cong \cS'$ which preserve decorations. This way, projectively equivalent configurations have the same combinatorial type. 

A \emph{marking} on a configuration $\pp$ is the choice of an order on each maximal subset of $\cS$  whose elements have the same decorations. We say that \emph{marked} configurations $\pp,\pp'$ are projectively equivalent if there is an automorphism of $\P^2$ which maps $\pp$ to $\pp'$ and preserves the marking. The reader might have in mind the following simple example, which will often appear as a sub-configuration of the ones constructed below.

\begin{ex}\label{ex:triangle}
Let $\ll_1$, $\ll_2$, $\ll_3$ be non-concurrent lines, let $\cc$ be a conic tangent to each of them, and let $\pp_0=\cc+\ll_1+\ll_2+\ll_3$. The configuration $\pp_0$ admits a unique (trivial) marking; and is uniquely determined, up to a projective equivalence, by its combinatorial type. Let now $\ll$ be a line meeting $\pp_0$ normally. The configuration $\pp\de \pp_0+\ll$ admits two markings, corresponding to the two possible orders on the set $\cc\cap \ll$. The two marked configurations are projectively equivalent if and only if those two points lie in the same orbit of $\Aut(\P^2,\pp)$: this happens if and only if the pair $(\P^2,\pp_0)$ admits an involution fixing $\ll$. 
\end{ex}

We say that a configuration $\pp$ is \emph{realized} over a number field $\kk$ if in some coordinates of $\P^2$, the equation defining $\pp$ has coefficients in $\kk$; and all singular points of $\pp$ have coordinates in $\kk$. In Example \ref{ex:triangle}, the configuration $\pp_0$ is realized over $\Q$: to see this, one can use  e.g.\ coordinates shown in Figure \ref{fig:coordinates} below. If in these coordinates the equation of $\ll$ has rational coefficients, too, then the configuration $\pp_0+\ll$ is realized over a minimal extension $\kk$ of $\Q$ containing coordinates of the points in $\cc\cap \ll$. In case $\kk\neq \Q$, the field $\kk$ is a quadratic extension of $\Q$, and the points in $\cc\cap \ll$ are conjugate by $\Aut_{\Q}(\kk)$.
\smallskip

Each Configuration \ref{conf:F2n0}--\ref{conf:P2n2_cuspidal} below specifies a combinatorial type of an arrangement of lines and conics. These types are then referred to in \tables (note that some of them, e.g.\ Configuration \ref{conf:4}, are quoted multiple times, but each time we refer to a type of a different sub-arrangement). We introduce the following notation.

\begin{notation}\label{not:P}
	For a row $\rst$ of \tables we denote by $\tst{\cP}$ the set of projective equivalence classes of marked configurations whose combinatorial type is specified in $\rst$. Moreover, we denote by $\tst{\ngr}$ the number in the column \enquote{$\ngr$}, and by $\tst{\kk}$ the number field in the column \enquote{$\kk$}.
\end{notation}

Recall that Corollaries \ref{cor:uniq}\ref{item:n} and \ref{cor:k} assert that $\tst{\ngr}$ is the number of isomorphism classes of $\Q$HPs in tower $\rst$ sharing the same boundary graph; and $\tst{\kk}$ is a field over which all $\Q$HPs in $\rst$ are defined. To prove this assertion, we first relate $\tst{\ngr}$ and $\tst{\kk}$ with the corresponding planar configurations, as follows.

\begin{prop}\label{prop:conf_uniq}
	Let $\rst$ be a row of \tables, and let $\tst{\cP}$, $\tst{\ngr}$, $\tst{\kk}$ be as in Notation \ref{not:P}.
	\begin{enumerate}
		\item\label{item:conf_number} We have $\#\tst{\cP}=\tst{\ngr}$ if $\rst\neq$ \ref{def:F2_n1-cusp}; and $\#\tref{\cP}{def:F2_n1-cusp}=2$.
		\item\label{item:conf_field} Every configuration in $\tst{\cP}$ is realized over $\tst{\kk}$ if  $\rst\neq$\ref{def:F2n0}. The configuration $\tref{\cP}{def:F2n0}$ is realized over $\Q(\sqrt{-15})$. 
		\item\label{item:conf_conj} Every two configurations in $\tst{\cP}$ are conjugate by some element of the Galois group of $\kk_{\mathfrak{\langle * \rangle}}$.
		\item\label{item:conf_diffeo} Assume that  $\#\tst{\cP}\geq 2$ and $\rst\neq$ \ref{def:A1A2_q-cn_22}, \ref{def:F2_n1-cusp}, \ref{def:F2_n1-node-3}, \ref{def:F2-5}. Then $\tst{\cP}$ contains a pair of complex-conjugate configurations. If $\#\tst{\cP}=4$ then $\tst{\cP}$ consists of two pairs of complex-conjugate configurations.
	\end{enumerate}
\end{prop}

\begin{proof}[Proof of Corollary \ref{cor:k}]
	By Definition \ref{def:TDP} of the tom Dieck--Petrie algorithm, each surface $S$ in Theorem \ref{CLASS} is obtained by taking a minimal log resolution of some $(\P^2,\pp)$, followed by an expansion. Thus if a configuration $\pp$ is realized over $\kk$, then the surface $S$ is defined over $\kk$. Hence for every $\rst\neq$ \ref{def:F2n0}, Corollary \ref{cor:k} follows from Proposition \ref{prop:conf_uniq}\ref{item:conf_field}. It remains to see that the surface \ref{def:F2n0} is defined over $\Q$: this is proved in Remark \ref{rem:Moe}. An alternative argument (relying on fewer computations) is given in Remark \ref{rem:F2n0_Q}.
\end{proof}

\begin{rem}[candidate Zariski pairs, cf.\ Remark \ref{rem:Marco}]\label{rem:candidate-Zariski}
	Consider tower \ref{def:A1A2_C2C3-node}, constructed from Configuration \ref{conf:4}, see Figure \ref{fig:4_conf}. The set $\tref{\cP}{def:A1A2_C2C3-node}$ consists of four classes of marked configurations. Viewing them as unmarked ones, we get two non-projectively equivalent configurations of the same combinatorial type. As we will see below, the field automorphism from Proposition \ref{prop:conf_uniq} which conjugates them is the generator of $\Aut_{\Q}(\Q(\sqrt{5}))$, so it does not extend to a complex conjugation.
	
	It remains an open question whether there exists a homeomorphism $\P^2\to \P^2$ which maps one of those configurations to the other. This makes the set $\tref{\cP}{def:A1A2_C2C3-node}$  a candidate for an \emph{algebraic Zariski pair}, see \cite[Definition 8.2]{ACC_algebraic-Zariski} for a definition and \cite{ACT_Zariski-pairs} for a survey. Note that the complements of both configurations in $\tref{\cP}{def:A1A2_q-cn_31}$ have the same algebraic fundamental groups, see \cite[Corollary 12.11]{AM_etale}. 
	
	A similar phenomenon occurs in triples $\tref{\cP}{def:A1A2_c=1}$, $\tref{\cP}{def:A1A2_c=2_32}$ and $\tref{\cP}{def:A1A2_c=2_41}$, see Figures \ref{fig:A1A2_c=1_conf} and \ref{fig:A1A2_c=2_conf}. In each of those cases, the three configurations are not projectively equivalent even viewed as unmarked ones (in fact, their markings are trivial). Each of those triples consists of one configuration realized over $\R$, and a pair of complex-conjugate ones. Modding out by complex conjugation, we get candidates for algebraic Zariski pairs.

	Some examples of Zariski pairs of degree $7$ conic-line arrangements were recently given in \cite{Tokunaga_Zar-deg-7,AS_Zariski-pairs-of-conics}. 
	Like in our setting, one can use them to construct towers of affine surfaces. The main difference is that those affine surfaces will have topological Euler characteristic at least $3$: this can be seen from the combinatorics of those arrangements by the same argument as in Lemma \ref{lem:expansions}\ref{item:expansions_det}.
\end{rem}

We will now construct all our configurations, proving Proposition \ref{prop:conf_uniq} along the way. 
\smallskip

From now on, by a \emph{conic} $\cc\subseteq \P^2$ we mean a \emph{smooth} curve of degree two. By a \emph{pencil of conics} we mean a line in the linear system $|\cc|$, so its degenerate members are not conics in our sense. In all configurations, we keep Notation \ref{not:C1}. As in Notation \ref{not:P2}, for two points $a,b\in \P^2$ we denote by $\ll_{ab}$ the line joining them. Moreover, for each configuration we will introduce additional notation, which will \emph{not} be consistent throughout the section.

\begin{notation}[Figure \ref{fig:coordinates}]\label{not:C1}
	Fix four points $p, p_{1}, p_{2}, p'$ in a general position. 
	For $i\in \{1,2\}$ let $\ll_{i}$ and $\ll_{i}'$ be the lines joining $p_{i}$ with $p$ and $p'$, respectively. Let $\cc_{1}$ be the unique conic tangent to $\ll_{i}$ at $p_{i}$ and passing through $p'$. Let $\ll_{p'}$ be the line tangent to $\cc_{1}$ at $p'$, and let $\tau\in \Aut(\P^{2})$ be the involution $(p,p',p_1,p_2)\mapsto (p,p',p_2,p_1)$, so $\langle \tau \rangle =\Aut(\P^2,\cc_1+\ll_1+\ll_2+\ll_1'+\ll_2')$. We fix coordinates $[x:y:z]$ on $\P^{2}$ such that 
	\begin{equation}\label{eq:coordinates}
		p=[0:1:0],\quad p_{1}=[1:0:0], \quad p_{2}=[0:0:1], \quad p'=[1:1:1].
	\end{equation}
\end{notation}
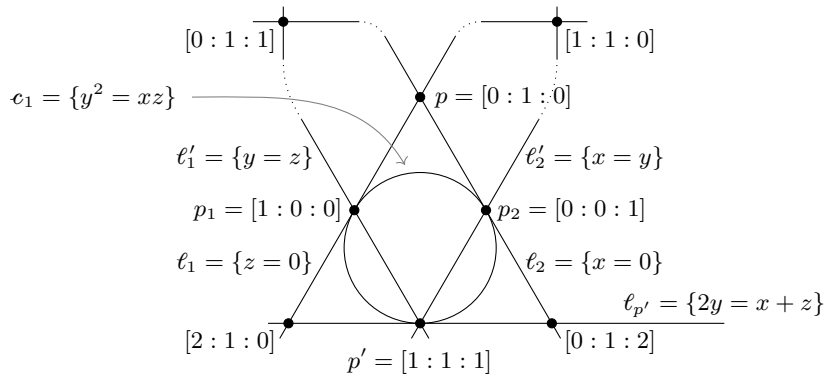
\begin{figure}[htbp]
	\begin{tikzpicture}
		\draw (0,1) circle (1);
		\filldraw (-1.732,0) circle (0.06);
		\node at (-2.5,-0.25) {\small{$[2:1:0]$}};
		\filldraw (-0.866,1.5) circle (0.06);
		\node at (-2,1.5) {\small{$p_1=[1:0:0]$}};
		\filldraw (1.732,0) circle (0.06);
		\node at (2.5,-0.25) {\small{$[0:1:2]$}};
		\filldraw (0.866,1.5) circle (0.06);
		\node at (2,1.5) {\small{$p_2=[0:0:1]$}};
		\filldraw (0,3) circle (0.06);
		\node at (1.1,3)  {\small{$p=[0:1:0]$}};
		\draw (-2,0)--(4,0);
		\node at (4,0.25) {\small{$\ll_{p'}=\{2y=x+z\}$}};
		\draw (-1.847,-0.2) -- (0.46,3.8);
		\draw[dotted] (0.46,3.8) to[out=60,in=180] (0.8,4);
		\draw (0.8,4) -- (2.2,4);
		\node at (-2.3,0.8) {\small{$\ll_{1}=\{z=0\}$}};
		\draw (1.847,-0.2) -- (-0.46,3.8);
		\draw[dotted] (-0.46,3.8) to[out=120,in=0] (-0.8,4);
		\draw (-0.8,4) -- (-2.2,4);
		\node at (2.3,0.8) {\small{$\ll_{2}=\{x=0\}$}};
		\filldraw (0,0) circle (0.06);
		\node at (0,-0.5) {\small{$p'=[1:1:1]$}}; 
		\draw (-0.115,-0.2) -- (1.558,2.7);
		\node at (2.3,2.2) {\small{$\ll_{2}'=\{x=y\}$}};
		\draw[dotted] (1.558,2.7) to[out=60,in=-90] (1.8,3.5);
		\draw (1.8,3.5) -- (1.8,4.2);
		\filldraw (1.8,4) circle (0.06);
		\node at (2.5,3.75) {\small{$[1:1:0]$}};
		\draw (0.115,-0.2) -- (-1.558,2.7);
		\node at (-2.3,2.2) {\small{$\ll_{1}'=\{y=z\}$}};
		\draw[dotted] (-1.558,2.7) to[out=120,in=-90] (-1.8,3.5);
		\draw (-1.8,3.5) -- (-1.8,4.2);
		\filldraw (-1.8,4) circle (0.06);
		\node at (-2.5,3.75) {\small{$[0:1:1]$}};
		\node at (-4.3,3)  {\small{$\cc_{1}=\{y^2=xz\}$}};
		\draw [->, black!50] (-3,3) to[out=0,in=120] (-0.2,2.1);
	\end{tikzpicture}
	\caption{Notation \ref{not:C1} in coordinates given by \eqref{eq:coordinates}.}
	\label{fig:coordinates}
\end{figure}	

To make the statements shorter, we will often abuse the notation and say that the set $\tst{\cP}$ \emph{is realized over } some field $\kk$. This means that all configurations in $\tst{\cP}$ are realized over $\kk$, and any two of them are conjugate by some element of the Galois group of $\kk$, see Proposition \ref{prop:conf_uniq}\ref{item:conf_field},\ref{item:conf_conj}. 

We denote by $\omega$ a primitive third root of unity.

\begin{conf}[Figure \ref{fig:F2n0_conf}]\label{conf:F2n0}
	Put $p_{3}=p'$, $\ll_{3}=\ll_{p'}$. 
	Write $\ll_{3}\cap \ll_{1}=\{q_{2}\}$, $\ll_{3}\cap \ll_{2}=\{q_{1}\}$, $q_3= p$. Fix a point $r\in \cc_{1}\setminus \bigcup_{i}\ll_{p_i q_i}$ and write $\cc_{1}\cap \ll_{rq_{i}}=\{r,q_{i}'\}$, $i\in \{1,2,3\}$. For a fixed $r$, define conics $\cc_2$, $\cc_3$ as follows.
	\begin{itemize}
		\item $\cc_{2}$ is the unique conic passing through $q_1,q_2,q_3,q_1',q_2'$. 
		\item $\cc_{3}$ is the unique conic passing through $q_1,q_2,q_3,r$ and tangent to $\cc_1$ at some point $s_2$. 
	\end{itemize}
	 We have  $q_3'\not \in  \cc_2$ and $(\cc_3\cdot \cc_2)_{s_2}=2$. Write $\cc_3\cap \cc_1=\{r,s_2,s_1\}$. Up to the action of $\tau$, there is a unique point $r\in \cc_1$ such that $s_1\in \cc_2$. For such $r$, we have $(\cc_{2}\cdot \cc_{1})_{s_1}=1$.
	 
	 As a consequence, $\#\tref{\cP}{def:F2n0}=1$. The configuration $\tref{\cP}{def:F2n0}$ is realized over $\Q(\sqrt{-15})$.
\end{conf}
\begin{proof}
	 Parametrize $\cc_{1}$ by $\nu\colon  \P^{1}\ni [u:v]\mapsto [u^{2}:uv:v^{2}]\in \P^{2}$. We have $\ll_{p_1 q_1}=\{2y=z\}$, so $\ll_{p_1 q_1}$ meets $\cc_{1}$ at $p_{1}=\nu[1:0]$ and at $\nu[1:2]$. Similarly, $\ll_{p_2 q_2}=\{2y=x\}$ meets $\cc_{1}$ at $p_2=\nu[0:1]$ and at $\nu[2:1]$. Eventually, $\ll_{p_3 q_3}=\{x=z\}$ meets $\cc_1$ at $p_3=\nu[1:1]$ and $\nu[1:-1]$. Thus $r=\nu[t:1]=[t:1:t^{-1}]$ for some $t\in \C\setminus \{0,1,-1,2,\frac{1}{2}\}$. We compute
	\begin{align*}
		&&\ll_{rq_{1}}&=\{2y=z+(2-t^{-1})t^{-1}x\},\qquad & \ll_{rq_{2}}&=\{2y=x+(2-t)tz\}, &\ll_{rq_{3}}&=\{t^{-1}x=tz\}, \\
		&& q_{1}'&=[(2-t^{-1})^{-1}:1:2-t^{-1}], & q_{2}'&=[2-t:1:(2-t)^{-1}],& q_{3}'&=[t:-1:t^{-1}].
	\end{align*}
	Therefore, $\ll_{q_1' q_2'}=\{2(t^2-3t+1)y=(1-2t)x+(t-2)tz\}$. The pencil of conics passing through $q_1,q_2,q_{1}',q_{2}'$ is generated by $\ll_{rq_1}+\ll_{rq_{2}}$ and $\ll_{q_1 q_2}+\ll_{q_1' q_{2}'}$. It has a unique member passing through $q_{3}=[0:1:0]$, namely:
	\begin{equation*}
	\begin{split}
		\cc_{2}=\{&(t^2-3t+1)(z+(2-t^{-1})t^{-1}x-2y)(x+(2-t)tz-2y)=\\
		&=(x+z-2y)((1-2t)x+(t-2)tz-2(t^2-3t+1)y)\}.
	\end{split}
	\end{equation*}
	Now if $q_3'=[t:-1:t^{-1}]$ lies on $\cc_2$ then $t=1$, which is impossible. Hence $q_3'\not\in \cc_2$.
	\smallskip
	
	The pencil of conics passing through $q_{1},q_{2},q_{3},r$ is generated by $\ll_{2}+\ll_{rq_2}$ and $ \ll_{1}+\ll_{rq_1}$. It induces a morphism $\cc_{1}\to \P^{1}$ of degree $3$, ramified at points which corresponds to degenerate members: $\ll_{i}+\ll_{rq_{i}}$, $i=1,2,3$. By Hurwitz formula, this morphism is ramified at exactly one other point, which corresponds to a conic $\cc_{3}$ as in the statement. Thus $\cc_3$ is unique. We compute that 
	\begin{equation}\label{eq:c3}
		\cc_3=\{(1-2t)^{3} \cdot x(x-2y+t(2-t)z)=t(t-2)^{3}\cdot  z (t^2(z-2y)+(2t-1)x)\}.
	\end{equation}
	Thus we have $\cc_1\cap \cc_{3}=\{s_1,s_2,r\}$ and $(\cc_{3}\cdot \cc_{1})_{s_1}=1$, $(\cc_{3}\cdot \cc_{1})_{s_2}=2$, where:
	\begin{equation}\label{eq:s1}
		s_{1}=[(t-2)^2:(t-2)(2t-1):(2t-1)^2],\ s_{2}=[t^{2}(t-2)^{2}:t(t-2)(1-2t):(1-2t)^{2}].
	\end{equation}
	Substituting $s_1$ to the equation of $\cc_1$, we see that $s_1\in \cc_1$ if and only if $t^{\pm 1}=\frac{1}{4}(1-\sqrt{-15})$. The two solutions are mapped to each other by the action of $\tau$; as claimed.
\end{proof}

\begin{conf}[Figure \ref{fig:311_conf}]\label{conf:311} 
	Let $r,\ll_{i},q_{i},q_{i}'$ for  $i\in \{1,2,3\}$ and $\cc_{3}$ be as in Configuration \ref{conf:F2n0}. Define $\cc_{2}$ as the unique conic passing through $q_1,q_2,q_3,q_3',s_2$. Then $q_{1}',q_{2}'\not\in\cc_{2}$. Moreover, up to the action of $\tau$ there is a unique point $r\in \cc_1$ such that $\cc_{2}$ is tangent to $\cc_{1}$ at $q_{3}'$. In this case, $(\cc_{2}\cdot \cc_{1})_{q_{3}'}=2$. 
	
	As a consequence, $\#\tref{\cP}{def:A1A2_c=3}=1$. The configuration $\tref{\cP}{def:A1A2_c=3}$ is realized over $\Q(\sqrt{21})$.
\end{conf}
\begin{proof}
	We  keep the notation from the previous proof; except for $\cc_2$ which will now be a different conic, constructed below. In particular, we have formulas \eqref{eq:c3} and \eqref{eq:s1} for $\cc_3$ and $s_2$, respectively, depending on the chosen point $r=[t:1:t^{-1}]\in \cc_1$, where $t\in \C\setminus \{0,1,-1,2,\frac{1}{2}\}$.
	
	The pencil of conics passing through $q_1,q_2,q_3,q_3'$ is generated by $\ll_{q_1 q_2}+\ll_{q_3 q_3'}$ and $\ll_{q_1 q_3}+\ll_{q_2 q_3'}$. By definition, $\cc_2$ is the unique member of this pencil which passes through $s_2$. Recall that $\ll_{q_1 q_2}= \ll_{3}=\{2y=x+z\}$, $\ll_{q_3 q_3'}=\ll_{r q_3}=\{t^{-1}x=tz\}$, $\ll_{q_1 q_3}=\ll_2=\{x=0\}$. Moreover, $q_2=[2:1:0]$, $q_{3}'=[t:-1:t^{-1}]$, so $\ll_{q_2 q_3'}=\{x-t(t+2)z=2y\}$. Substituting these equations of lines to the above pencil and evaluating at the point $s_2$ given by the formula \eqref{eq:s1}, we conclude that 
	\begin{equation*}
		\cc_{2}=\{(t-2)^2 t^2 (3t-2) (x+z-2y)(t^{-1}x-t z)=3(t-1)^3(t+1)x(x-t(t+2)z-2y)\}.
	\end{equation*}	
	Recall that $q_{1}'=[(2-t^{-1})^{-1}:1:2-t^{-1}]$, $q_{2}'=[2-t:1:(2-t)^{-1}]$. Substituting each of these points to the above formula we get $t\in \{0,1\}$, which is impossible. Hence $q_{1}',q_{2}'\not\in \cc_{2}$, as claimed. 
	
	The line tangent to $\cc_{1}$ at $q_{3}'=[t:-1:t^{-1}]$ is given by $\{x+2ty+t^2z=0\}$. This line is tangent to $\cc_2$ at $q_{3}'$ if and only if $t^{\pm 1}=\frac{1}{10}(11-\sqrt{21})$. As before, these two solutions are equivalent under the action of $\tau$. Thus we get a unique $\cc_2$, as claimed. A direct computation shows that this $\cc_2$ satisfies $(\cc_2\cdot \cc_1)_{q_3'}=2$.
\end{proof}

\begin{figure}[htbp]
	\centering
	\captionsetup{width=.32\linewidth}
	\begin{minipage}[b]{.32\textwidth}
		\centering
		\begin{tikzpicture}
			\path[use as bounding box] (-2.5,-1.5) rectangle (2.5,2.5);
			\draw[name path=c1] (0,0) circle (1);
			\coordinate (Q3) at (0,2); 
			\node at ($(Q3)+(0.3,-0.1)$) {\small{$q_3$}}; \filldraw (Q3) circle (0.06);
			\coordinate (Q2) at (-1.732,-1); 
			\node at ($(Q2)+(0.2,-0.2)$) {\small{$q_2$}}; \filldraw (Q2) circle (0.06);
			\coordinate (Q1) at (1.732,-1);
			\node at ($(Q1)+(0.3,-0.2)$) {\small{$q_1$}}; \filldraw (Q1) circle (0.06);
			\coordinate (P3) at (0,-1);
			\node at ($(P3)-(0,0.2)$) {\small{$p_3$}}; \filldraw (P3) circle (0.06);
			\coordinate (P1) at (-0.866,0.5);
			\node at ($(P1)+(-0.2,0.2)$) {\small{$p_1$}}; \filldraw (P1) circle (0.06);
			\coordinate (P2) at (0.866,0.5);
			\node at ($(P2)+(0.2,0.2)$) {\small{$p_2$}}; \filldraw (P2) circle (0.06);
			\node at (-0.7,1.2) {\small{$\ll_{1}$}};
			\node at (0.7,1.2) {\small{$\ll_{2}$}};
			\draw[add= 0.05 and 0.05, name path = L1] (Q3) to (Q1);
			\draw[add= 0.05 and 0.05, name path = L2] (Q3) to (Q2);
			\draw[add= 0.05 and 0.05, name path = L3] (Q1) to (Q2);
			\node at (0.9,-1.2) {\small{$\ll_{3}$}};
			\coordinate (R) at (-0.8,-0.6);
			\node at ($(R)+(0.1,0.3)$) {\small{$r$}}; \filldraw (R) circle (0.06);
			\draw[add= 0.1 and 0.3, name path = LR1] (Q1) to (R);
			\path [name intersections={of=c1 and LR1}] (intersection-2) coordinate (Q1');
			\node at ($(Q1')+(-0.1,0.3)$) {\small{$q_1'$}}; \filldraw (Q1') circle (0.06);
			\draw[add= 0.2 and 2.2, name path = LR2] (Q2) to (R);
			\path [name intersections={of=c1 and LR2}] (intersection-1) coordinate (Q2');
			\node at ($(Q2')+(-0.3,0.1)$) {\small{$q_2'$}}; \filldraw (Q2') circle (0.06);
			\coordinate (S1) at (0,1);
			\node at ($(S1)-(0,0.3)$) {\small{$s_1$}}; \filldraw (S1) circle (0.06);
			\coordinate (S2) at (-1,0);
			\node at ($(S2)+(0.3,0)$) {\small{$s_2$}}; \filldraw (S2) circle (0.06);
			\coordinate (X) at ($(Q1')!0.5!(Q2')$);
			\node at (X) {\small{$\cc_{1}$}};
			\coordinate (Y) at ($(X)+(0.4,-0.2)$); 
			\draw[blue] ($(Q3)+(0.1,0.4)$) to[out=-105,in=75] (Q3) to[out=-105,in=120] (S1) to[out=-60,in=120] (Q2') to[out=-60,in=60] (Y) to[out=-120,in=0] (Q1') to[out=180,in=10] (Q2) to[out=-170,in=10] ($(Q2)-(1,0.2)$);
			\draw[red] ($(Q3)+(-0.1,0.4)$) to[out=-75,in=105] (Q3) to[out=-75,in=60] (S1) to[out=-120,in=90] (S2) to[out=-90,in=60] (R) to[out=-120,in=20] (Q2) to[out=-160,in=20] ($(Q2)-(1,0.3)$);
			\node at (-0.3,2.3) {\small{\red{$\cc_3$}}};
			\node at (0.3,2.3) {\small{\blue{$\cc_2$}}};
			\draw[blue]  ($(Q1)+(0.5,-0.4)$) [partial ellipse=110:180: 0.6 and 0.8];
			\draw[red]  ($(Q1)+(0.4,-0.5)$) [partial ellipse=90:160: 0.8 and 0.6];
			\node at ($(Q1)+(0.5,0.4)$) {\small{\blue{$\cc_2$}}};
			\node at ($(Q1)+(0.6,0.1)$) {\small{\red{$\cc_3$}}};
		\end{tikzpicture}
		\caption{Configuration \ref{conf:F2n0} yields surface \ref{def:F2n0}, cf.\ Figure \ref{fig:F2n0}}
		\label{fig:F2n0_conf}
	\end{minipage}
	\begin{minipage}[b]{.32\textwidth}
		\centering
		\begin{tikzpicture}
			\path[use as bounding box] (-2.5,-1.5) rectangle (2.5,2.5);
			\draw[name path=c1] (0,0) circle (1);
			\node at (0,1.1) {\small{$\cc_{1}$}};
			\coordinate (Q3) at (0,2); 
			\node at ($(Q3)+(-0.2,0.2)$) {\small{$q_3$}}; \filldraw (Q3) circle (0.06);
			\coordinate (Q2) at (-1.732,-1); 
			\node at ($(Q2)+(0.2,-0.2)$) {\small{$q_2$}}; \filldraw (Q2) circle (0.06);
			\coordinate (Q1) at (1.732,-1);
			\node at ($(Q1)+(0.3,-0.2)$) {\small{$q_1$}}; \filldraw (Q1) circle (0.06);
			\coordinate (P3) at (0,-1);
			\node at ($(P3)-(0,0.2)$) {\small{$p_3$}}; \filldraw (P3) circle (0.06);
			\coordinate (P1) at (-0.866,0.5);
			\node at ($(P1)+(-0.2,0.2)$) {\small{$p_1$}}; \filldraw (P1) circle (0.06);
			\coordinate (P2) at (0.866,0.5);
			\node at ($(P2)+(0.2,0.2)$) {\small{$p_2$}}; \filldraw (P2) circle (0.06);
			\node at (-0.7,1.2) {\small{$\ll_{1}$}};
			\node at (0.7,1.2) {\small{$\ll_{2}$}};
			\draw[add= 0.05 and 0.05, name path = L1] (Q3) to (Q1);
			\draw[add= 0.05 and 0.05, name path = L2] (Q3) to (Q2);
			\draw[add= 0.05 and 0.05, name path = L3] (Q1) to (Q2);
			\node at (0.9,-1.2) {\small{$\ll_{3}$}};
			\coordinate (R) at (-0.8,-0.6);
			\node at ($(R)+(0.2,0.2)$) {\small{$r$}}; \filldraw (R) circle (0.06);
			\draw[add= 0.1 and 0.3, name path = LR3] (Q3) to (R);
			\path [name intersections={of=c1 and LR3}] (intersection-1) coordinate (Q3');
			\node at ($(Q3')+(0.1,-0.3)$) {\small{$q_3'$}}; \filldraw (Q3') circle (0.06);
			\coordinate (S2) at (0.866,-0.5);
			\node at ($(S2)+(0.3,0)$) {\small{$s_2$}}; \filldraw (S2) circle (0.06);
			\draw[blue] ($(Q2)-(0.3,0.3)$) to[out=45,in=-140] (Q2) to[out=40,in=-90] ($(P1)+(0.2,0)$) to[out=90,in=-165] (Q3') to[out=15,in=150] (S2) to[out=-30,in=150] (Q1) to[out=-30,in=150] ($(Q1)+(0.2,-0.1)$);
			\node at (0.2,0.1) {\small{\blue{$\cc_2$}}};
			\draw[red] ($(Q2)-(0.4,0.1)$) to[out=15,in=-160] (Q2) to[out=20,in=180] (R) to[out=0,in=-120] (S2) to[out=60,in=-60] ($(P2)-(0.3,0)$) to[out=120,in=-90] (Q3) to[out=90,in=90] ($(Q3)+(0,0.2)$);
			\node at (0,-0.6) {\small{\red{$\cc_3$}}};
			\draw[red]  ($(Q1)+(0.4,-0.5)$) [partial ellipse=90:160: 0.8 and 0.6]; 
			\node at ($(Q1)+(0.6,0.1)$) {\small{\red{$\cc_3$}}};
			\draw[blue] ($(Q3)+(0,0.6)$) [partial ellipse=-130:-50: 0.8 and 0.6]; 
			\node at ($(Q3)+(0.8,0.1)$) {\small{\blue{$\cc_2$}}};
		\end{tikzpicture}
		\caption{Configuration \ref{conf:311} yields surface \ref{def:A1A2_c=3}, cf.\ Figure \ref{fig:A1A2_c=3}}
		\label{fig:311_conf}
	\end{minipage}
	\begin{minipage}[b]{.33\textwidth}
		\centering
		\begin{tikzpicture}
			\path[use as bounding box] (-1.5,-1) rectangle (3.2,1.8);
			\draw[name path=c1] (0,0) circle (1);
			\coordinate (P) at (0,2); 
			\coordinate (P1) at (-0.866,0.5);
			\node at ($(P1)+(-0.15,0.2)$) {\small{$p_1$}}; \filldraw (P1) circle (0.06);
			\coordinate (P2) at (0.866,0.5);
			\node at ($(P2)+(0.1,0.3)$) {\small{$p_2$}}; \filldraw (P2) circle (0.06);
			\node at (0.7,1.2) {\small{$\ll_{2}$}};
			\node at (0.9,-0.8) {\small{$\cc_{1}$}};
			\draw[add= -0.2 and 0.5, name path = L2] (P) to (P2);
			\coordinate (P') at (-0.866,-0.5); 
			\node at ($(P')+(0,-0.2)$) {\small{$p'$}}; \filldraw (P') circle (0.06);
			\draw[add=0.1 and 0.6, name path=L2'] (P') to (P2);
			\node at (1.7,1.2) {\small{$\ll_{2}'$}};
			\draw (-1,0.5) -- (3.2,0.5);
			\coordinate (R) at (3,0.5);
			\node at ($(R)+(0.1,0.2)$) {\small{$r$}}; \filldraw (R) circle (0.06);
			\draw[add=0.05 and 0.05,name path=L] (P') to (R);
			\path [name intersections={of=L2 and L}] (intersection-1) coordinate (Q);
			\node at ($(Q)+(0,0.2)$) {\small{$q$}}; \filldraw (Q) circle (0.06);
			\coordinate (S) at (0,0);
			\node at ($(S)+(0,0.2)$) {\small{$s$}}; \filldraw (S) circle (0.06);
			\draw[blue] (P1) to[out=-120,in=-150] (0,0) to[out=30,in=150] (Q) to[out=-30,in=-90] (R) to[out=90,in=0] ($(P2)+(0,1.2)$) to[out=180,in=60] (P1);
			\node at (2,-0.4) {\small{\blue{$\cc_{2}$}}}; 
		\end{tikzpicture}
		\caption{Configuration \ref{conf:A1A2_c=1} yields surface \ref{def:A1A2_c=1}, cf.\ Figure \ref{fig:A1A2_c=1}}
		\label{fig:A1A2_c=1_conf}
	\end{minipage}
\end{figure}

\begin{conf}[Figure \ref{fig:A1A2_c=1_conf}]\label{conf:A1A2_c=1}
	Fix $r\in \ll_{p_{1}p_{2}}\setminus \{p_{1},p_{2}\}$. Write $\ll_{2}\cap \ll_{rp'}=\{q\}$ and let $\cc_{2}$ be the unique conic passing through $q$, $r$ and tangent to $\cc_{1}$ at $p_{1}$ with multiplicity at least $3$. There exist exactly three points $r$ such that $(\cc_{2}\cdot \ll_{2}')_{s}=2$ for some $s\neq p_{2},p'$. For these points, $(\cc_{2}\cdot \cc_{1})_{p_{1}}=3$.
	
	We have $\#\tref{\cP}{def:A1A2_c=1}=3$, and $\tref{\cP}{def:A1A2_c=1}$ is realized over the splitting field of the polynomial $\tref{g}{def:A1A2_c=1}(t)\de 4t^3-8t^2+4t-1\in \Q[t]$. Moreover, for every root $t_0$ of $\tref{g}{def:A1A2_c=1}$, some configuration in $\tref{\cP}{def:A1A2_c=1}$ is realized over $\Q(t_0)$. In particular, $\tref{\cP}{def:A1A2_c=1}$ contains a configuration realized over $\R$, and a pair of complex-conjugate ones.
\end{conf}
\begin{proof}
	 We have $\ll_{p_1p_2}=\{y=0\}$, so $r=[1:0:t]$ for some $t\in \C^{*}$. Then $\ll_{rp'}=\{z=tx+(1-t)y\}$, so $q=[0:1:1-t]$. The pencil of conics passing through $q,r$ and tangent to $\cc_1$ at $p_1$ is generated by $\ll_{1}+\ll_{qr}$ and $\ll_{rp_1}+\ll_{qp_1}$. Recall that $\ll_1=\{z=0\}$, $\ll_{qr}=\ll_{rp'}=\{z=tx+(1-t)y\}$, $\ll_{rp_1}=\ll_{p_1p_2}=\{y=0\}$ and $\ll_{qp_1}=\{z=(1-t)y\}$. Hence the unique member $\cc_2$ of the above pencil satisfying $(\cc_2\cdot \cc_1)_{p_1}\geq 3$ is 
	\begin{equation*}
		\cc_2=\{(1-t)\cdot z \cdot (tx+(1-t)y-z)=t\cdot y\cdot ((1-t)y-z)\}.
	\end{equation*}
	For any $t\in \C^{*}$, we have $(\cc_2\cdot \cc_1)_{p_1}=3$. Now $\cc_2$ is tangent to $\ll_{2}'=\{y=x\}$ if and only if $4t(t-1)^2=1$, i.e.\ $4t^3-8t^2+4t-1=0$. This equation has three different solutions, which correspond to three different conics $\cc_2$. We have $s=[1:1:-\frac{1}{2}(t-1)]$, so $s\neq p_2,p'$, as claimed.
\end{proof}

\begin{conf}[Figure \ref{fig:A1A2_c=2_conf}]\label{conf:A1A2_c=2}
	Write $\ll_{pp'}\cap\cc_{1}=\{p',p''\}$. Fix a line $\ll\neq \ll_{pp'},\ll_{p_{1}p''},\ll_{p_{2}p''}$ passing through $p''$, not tangent to $\cc_{1}$. For $i\in \{1,2\}$ write $\ll\cap\ll_{i}=\{r_{i}\}$ and $\ll_{r_{i}p'}\cap\cc_{1}=\{p',s_{i}\}$. 
	 Assume $s_{1}\neq p_{2}$. 
	Let $\cc_{2}$ be the unique conic passing through $p$, $r_{1}$, $r_{2}$ and tangent to $\cc_{1}$ at $s_{1}$. Then
	\begin{enumerate}
		\item\label{item:32} There are exactly three lines $\ll$ such that $s_{2}\in \cc_{2}$. In this case, $(\cc_1\cdot \cc_2)_{s_1}=2$, $(\cc_1\cdot \cc_2)_{s_2}=1$.
		\item\label{item:41} There are exactly three lines $\ll$ such that $(\cc_{2}\cdot \cc_{1})_{s_{1}}\geq 3$. In this case, $(\cc_{2}\cdot \cc_{1})_{s_{1}}=3$, $s_2\not\in \cc_2$.
	\end{enumerate}
	For $\rst=$\ref{def:A1A2_c=2_32},\ref{def:A1A2_c=2_41} we have  $\#\tst{\cP}=3$, and $\tst{\cP}$ is realized over the splitting field of $\tst{g}$ where $\tref{g}{def:A1A2_c=2_32}(t)= t^3 + 7 t^2 + 15 t + 5$ and $\tref{g}{def:A1A2_c=2_41}(t)= t^3 + 3 t^2 + 3 t + 5$. Each configuration in $\tst{\cP}$ is realized over an extension of $\Q$ by a different root of $\tst{g}$. In particular, $\tst{\cP}$ contains a configuration realized over $\R$, and a pair of complex-conjugate ones.
\end{conf}
\begin{proof}
	 We have $p''=[1:-1:1]$, $\ll_{pp'}=\{x=z\}$, $\ll_{p_{1}p''}=\{y=-z\}$, $\ll_{p_{2}p''}=\{y=-x\}$ and the line tangent to $\cc_{1}$ at $p''$ is given by $\{-2y=x+z\}$. Thus $\ll=\{x+(1+t)y+tz=0\}$ for some $t\in \C\setminus \{0,1,-1\}$. We compute
	\begin{align*}
	& r_{1}=[t+1:-1:0],& & 
	\ll_{r_{1}p'}=\{(t+1)y+x=(t+2)z\}, &  &
	s_{1}=[(t+2)^{2}:-(t+2):1],\\
	& r_{2}=[0:-1:t^{-1}+1], & &
	\ll_{r_{2}p'}=\{(t^{-1}+1)y+z=(t^{-1}+2)x\}, &&
	s_{2}=[1:-(t^{-1}+2):(t^{-1}+2)^2].
	\end{align*}
	Note that $t\neq -2$ because $s_{1}\neq p_{2}$. We have $\ll_{r_1s_1}=\ll_{r_1 p'}=\{(t+1)y+x=(t+2)z\}$ and
	\begin{equation*}
	\ll_{r_2 s_1}=\{(t+2)^2(tz+(1+t)y)+(t^2+2t+2)x=0\}.	
	\end{equation*}
	The pencil of conics passing through $p,r_1,r_2,s_1$ is generated by $\ll_{2}+\ll_{r_1 s_1}$ and $\ll_{1}+\ll_{r_2s_1}$. Substituting a parametrization $[u:v]\mapsto [u^2:uv:v^2]$ of $\cc_1$ to the above equations, we compute that the unique member of this pencil which is tangent to $\cc_1$ at $s_1$ is given by  
	\begin{equation*}
	\cc_{2}=\{(t^2+t+2)x(x+(1+t)y-(2+t)z)=(t+2)(t+3)z((t+2)^2(tz+(1+t)y)+(t^2+2t+2)x)\}.
	\end{equation*}
	Once again, we substitute the parametrization $[u:v]\mapsto [u^2:uv:v^2]$ of $\cc_1$ to the equation of $\cc_2$, and denote the resulting polynomial by $f_{t}\in \C[u,v]$. Since $(\cc_2\cdot \cc_1)_{s_1}\geq 2$, we have $(u+(t+2)v)^2|f_{t}$. Put $g_{t}(u)=f_{t}(u,1)\cdot (u+t+2)^{-2}\in \C[u]$. We have 
	\begin{equation*}
	\begin{split}
	s_{2}\in \cc_{2} &\iff g_{t}(-t^{-1}-2)=0 \iff t^3 + 7 t^2 + 15 t + 5=0,\\
	(\cc_{2}\cdot\cc_{1})_{s_{1}}\geq 3 &\iff g_{t}(t+2)=0 \iff  t^3 + 3 t^2 + 3 t + 5 =0.
	\end{split}
	\end{equation*}
	Each of these polynomials has three different roots, which proves the first statement of \ref{item:32} and \ref{item:41}.	Moreover, the above polynomials have no roots in common, so $(\cc_1\cdot \cc_2)_{s_1}=2$ in case \ref{item:32} and $s_2\not\in \cc_2$ in case \ref{item:41}. Eventually, neither of those polynomials has a common root with the discriminant of $g_{t}$, which implies that $(\cc_{1}\cdot \cc_{2})_{s_2}=2$ in case \ref{item:32} and $(\cc_{1}\cdot \cc_{2})_{s_3}=3$ in case \ref{item:41}, as claimed.
\end{proof}

\begin{figure}[htbp]
	\captionsetup{width=.3\linewidth}
	\centering
	\begin{minipage}[tb]{.65\textwidth}
		\centering
		\begin{subfigure}{.48\textwidth}
			\centering
			\begin{tikzpicture}
				\path[use as bounding box] (-2.6,-2) rectangle (1.8,2.5);
				\draw[name path=c1] (0,0) circle (1);
				\coordinate (P) at (0,2); 
				\node at ($(P)+(0,0.3)$) {\small{$p$}}; \filldraw (P) circle (0.06);
				\coordinate (P1) at (-0.866,0.5);
				\node at ($(P1)+(-0.15,0.2)$) {\small{$p_1$}}; \filldraw (P1) circle (0.06);
				\coordinate (P2) at (0.866,0.5);
				\node at ($(P2)+(0.2,0.2)$) {\small{$p_2$}}; \filldraw (P2) circle (0.06);
				\node at (-0.7,1.2) {\small{$\ll_{1}$}};
				\node at (0.7,1.2) {\small{$\ll_{2}$}};
				\node at (0.65,-0.5) {\small{$\cc_{1}$}};
				\node at (-2,-0.5) {\small{\blue{$\cc_{2}$}}};
				\draw[add= 0.1 and 1.6, name path = L1] (P) to (P1);
				\draw[add= 0.1 and 0.8, name path = L2] (P) to (P2);
				\coordinate (P') at (0,1); \node at ($(P')+(-0.15,0.2)$) {\small{$p'$}}; \filldraw (P') circle (0.06);
				\coordinate (P'') at (0,-1); \node at ($(P'')+(0.2,-0.2)$) {\small{$p''$}}; \filldraw (P'') circle (0.06);
				\draw[add= 0.05 and 0.1, name path = LPP'] (P) to (P'');
				\coordinate (R1) at ($(P)!2.5!(P1)$); 
				\node at ($(R1)+(-0.2,0.2)$) {\small{$r_1$}}; \filldraw (R1) circle (0.06);
				\draw[add = 0.1 and 0.8, name path=L] (R1) to (P'');
				\node at ($(R1)+(1.1,0.2)$) {\small{$\ll$}};
				\path [name intersections={of=L and L2}] (intersection-1) coordinate (R2); 
				\node at ($(R2)+(0.1,0.2)$) {\small{$r_2$}}; \filldraw (R2) circle (0.06);
				\draw[add = 0.1 and 0.22, name path=LR1] (R1) to (P');
				\path [name intersections={of=c1 and LR1}] (intersection-2) coordinate (S1); 
				\node at ($(S1)+(0.3,0)$) {\small{$s_1$}}; \filldraw (S1) circle (0.06);
				\draw[add = 0.12 and 0.3, name path=LR2] (R2) to (P');
				\path [name intersections={of=c1 and LR2}] (intersection-2) coordinate (S2); 
				\node at ($(S2)-(0.2,0.1)$) {\small{$s_2$}}; \filldraw (S2) circle (0.06);
				\coordinate (P1') at ($(P')!0.6!(S1)$); 
				\coordinate (P2') at ($(P')!0.6!(S2)$); 
				\draw[blue] ($(P2')+(-0.1,0)$) to[out=180,in=0] ($(P1')+(0.1,0)$);
				\draw[blue] ($(P1')+(-0.1,0)$) to[out=180,in=105] (S1) to[out=-75,in=90] (-0.5,-0.7) to[out=-90,in=0] (-0.7,-0.9)  to[out=180,in=40] (R1) to[out=-140,in=180] (P) to[out=0,in=-30] (R2) to[out=160,in=-70] (S2) to[out=110,in=0] ($(P2')+(0.1,0)$);
			\end{tikzpicture}
			\caption{yields surface \ref{def:A1A2_c=2_32}, cf.\ Figure \ref{fig:A1A2_c=2_32}}
			\label{fig:A1A2_c=2_32_conf}
		\end{subfigure}
		\begin{subfigure}{.48\textwidth}
			\centering
			\begin{tikzpicture}
				\path[use as bounding box] (-2.6,-2) rectangle (1.8,2.5);
				\draw[name path=c1] (0,0) circle (1);
				\coordinate (P) at (0,2); 
				\node at ($(P)+(0,0.3)$) {\small{$p$}}; \filldraw (P) circle (0.06);
				\coordinate (P1) at (-0.866,0.5);
				\node at ($(P1)+(-0.15,0.2)$) {\small{$p_1$}}; \filldraw (P1) circle (0.06);
				\coordinate (P2) at (0.866,0.5);
				\node at ($(P2)+(0.2,0.2)$) {\small{$p_2$}}; \filldraw (P2) circle (0.06);
				\node at (-0.7,1.2) {\small{$\ll_{1}$}};
				\node at (0.7,1.2) {\small{$\ll_{2}$}};
				\node at (0.65,-0.5) {\small{$\cc_{1}$}};
				\node at (-2,-0.5) {\small{\blue{$\cc_{2}$}}};
				\draw[add= 0.1 and 1.6, name path = L1] (P) to (P1);
				\draw[add= 0.1 and 0.8, name path = L2] (P) to (P2);
				\coordinate (P') at (0,1); \node at ($(P')+(-0.15,0.2)$) {\small{$p'$}}; \filldraw (P') circle (0.06);
				\coordinate (P'') at (0,-1); \node at ($(P'')+(0.2,-0.2)$) {\small{$p''$}}; \filldraw (P'') circle (0.06);
				\draw[add= 0.05 and 0.1, name path = LPP'] (P) to (P'');
				\coordinate (R1) at ($(P)!2.5!(P1)$); 
				\node at ($(R1)+(-0.2,0.2)$) {\small{$r_1$}}; \filldraw (R1) circle (0.06);
				\draw[add = 0.1 and 0.8, name path=L] (R1) to (P'');
				\node at ($(R1)+(1.1,0.2)$) {\small{$\ll$}};
				\path [name intersections={of=L and L2}] (intersection-1) coordinate (R2); 
				\node at ($(R2)+(0.1,0.2)$) {\small{$r_2$}}; \filldraw (R2) circle (0.06);
				\draw[add = 0.1 and 0.22, name path=LR1] (R1) to (P');
				\path [name intersections={of=c1 and LR1}] (intersection-2) coordinate (S1); 
				\node at ($(S1)+(0.3,0)$) {\small{$s_1$}}; \filldraw (S1) circle (0.06);
				\coordinate (P1') at ($(P')!0.6!(S1)$); 
				\draw[blue] ($(P1')+(-0.1,0)$) to[out=180,in=105] (S1) to[out=-75,in=40] (R1) to[out=-140,in=180] (P) to[out=0,in=-30] (R2) to[out=160,in=0] ($(P1')+(-0.1,0)$);
			\end{tikzpicture}
			\caption{yields \ref{def:A1A2_c=2_41}, cf.\ Figure \ref{fig:A1A2_c=2_41}}
			\label{fig:A1A2_c=2_41_conf}
		\end{subfigure}
	\vspace{-.5em}
		\caption{Configuration \ref{conf:A1A2_c=2}}
		\label{fig:A1A2_c=2_conf}
	\end{minipage}
	\begin{minipage}[tb]{.33\textwidth}
		\centering
		\begin{tikzpicture}
			\path[use as bounding box] (-2.5,-1.1) rectangle (2.4,2.6);
			\draw[name path=c1] (0,0) circle (1); \node at (-0.7,0.2) {\small{$\cc_{1}$}};
			\draw[blue, name path=c2] (0,0.5) circle (1.5); \node at (-1.7,0.2) {\small{\blue{$\cc_{2}$}}};
			\draw[name path = L2] (-2,1) -- (2.4,1);
			\node at (2.2,0.8) {\small{$\ll_{2}$}};
			\coordinate (P1) at (0,-1); \node at ($(P1)+(-0.1,0.2)$) {\small{$p_1$}}; \filldraw (P1) circle (0.06);
			\coordinate (P2) at (0,1); \node at ($(P2)+(0.1,0.2)$) {\small{$p_2$}}; \filldraw (P2) circle (0.06);
			\coordinate (R) at (-1.414,1); \node at ($(R)+(-0.1,0.2)$) {\small{$r$}}; \filldraw (R) circle (0.06);
			\coordinate (Q) at (1.414,1); \node at ($(Q)+(0.1,0.3)$) {\small{$q$}}; \filldraw (Q) circle (0.06);
			\draw[add= 0.1 and 0.1, name path = L1'] (Q) to (P1);
			\node at (0.2,-0.3) {\small{$\ll_{1}'$}};
			\path [name intersections={of=L1' and c1}] (intersection-1) coordinate (P');
			\node at ($(P')-(0.3,0.1)$) {\small{$p'$}}; \filldraw (P') circle (0.06);
			\draw[add= 0.4 and 0.4, name path = LRS] (R) to (P');
			\path [name intersections={of=LRS and c2}] (intersection-2) coordinate (S);
			\node at ($(S)+(0.2,0.1)$) {\small{$s$}}; \filldraw (S) circle (0.06);
			\draw[add= 2 and 1, name path = LQS] (Q) to (S);
			\draw[add= 1.2 and 1.4, name path = L2'] (P2) to (P');
			\node at (2.2,-0.3) {\small{$\ll_{2}'$}};
			\path [name intersections={of=L2' and c2}] (intersection-1) coordinate (T);
			\node at ($(T)+(0,0.2)$) {\small{$t$}}; \filldraw (T) circle (0.06);
			\draw[add= 0.8 and 0.1] (T) to (Q);
			\coordinate (R') at ($(R)!-0.4!(P')$); \coordinate (R'') at ($(R')+(-0.4,0.6)$); 
			\draw[dotted] (R') to[out=160,in=-90] (R'');
			\draw (R'') -- ($(R'')+(0,0.6)$);
			\draw ($(R'')+(0.8,0.3)$) [partial ellipse=-190:-30: 0.5 and 0.5];
			\node at ($(R'')+(0.6,-0.3)$) {\small{$\cc_1$}};
			\draw (1.1,2) [partial ellipse=80:-70: 0.5 and 0.5];
			\node at (1.8,2) {\small{$\cc_1$}};
		\end{tikzpicture}
		\caption{Configuration \ref{conf:4} yields towers \ref{def:A1A2_C2C3-node}, \ref{def:F2-5}, \ref{def:F2-5_cn},  cf.\ Figures \ref{fig:A1A2_C2C3-node}, \ref{fig:F2-5}, \ref{fig:F2-5_cn}.}
		\label{fig:4_conf}
	\end{minipage}
\end{figure}

\begin{conf}[Figure \ref{fig:4_conf}]\label{conf:4}
	Write $\ll_{2}\cap\ll_{1}'=\{q\}$. Let $\cc_{2}$ be the unique conic tangent to $\cc_{1}$ with multiplicity $4$ at $p_{1}$ and passing through $q$. Then the following holds.
	\begin{enumerate}
		\item\label{item:4-l_2} The line $\ll_{2}$ meets $\cc_{2}$ in some point $r\neq q$.	
		\item\label{item:4-l_2'} The line $\ll_{2}'$ is not tangent to $\cc_{1}$, $\cc_{2}$. The set  $\tref{\cP}{def:F2-5}$ has two elements, and is realized over $\Q(\sqrt{5})$. 
		\item\label{item:4-l_rp'} The line $\ll_{rp}'$ is not tangent to $\cc_{1}$, $\cc_{2}$.  Write $\ll_{rp'}\cap\cc_{2}=\{r,s\}$. The line $\ll_{qs}$ meets $\cc_1$ in two point. The set  $\tref{\cP}{def:F2-5_cn}$ has two elements, and is realized over $\Q(\sqrt{-15})$. 
		\item\label{item:4-l_qt} Choose $t\in \cc_{2}\cap\ll_{2}'$. The line $\ll_{qt}$ meets $\cc_1$ in two complex-conjugate   points. The set $\tref{\cP}{def:A1A2_C2C3-node}$ consists of four configurations, two of them are realized over $\Q(\sqrt{5},\alpha_{+})$, and two over $\Q(\sqrt{5},\alpha_{-})$, where  $\alpha_{\pm}=(-5\pm 2\sqrt{5})^{1/2}$. All four configurations are conjugate by $\Aut_{\Q}(\Q(\sqrt{5},\alpha_{-},\alpha_{+}))$. 
	\end{enumerate}
\end{conf}
\begin{proof}
	\ref{item:4-l_2}  We have $q=[0:1:1]$, see Figure \ref{fig:coordinates}. The pencil of conics tangent to $\cc_1$ in $p_1$ with multiplicity $4$ is generated by $2\ll_{1}=\{z^2=0\}$ and $\cc_{1}=\{y^2=xz\}$. Its unique member passing through $q=[0:1:1]$ is  $\cc_{2}=\{y^{2}-xz=z^2\}$. Substituting the equation $\ll_{2}=\{x=0\}$ to the equation of $\cc_2$, we get that $\ll_{2}\cap \cc_{2}=\{q,r\}$, where $r=[0:1:-1]\neq q$. 
	
	\ref{item:4-l_2'} Recall that $\ll_{2}'\cap \cc_{1}=\{p_{2},p'\}$, see Figure \ref{fig:coordinates}, so $\ll_{2}'$ is not tangent to $\cc_1$. Substituting the equation $\ll_{2}'=\{x=y\}$ to the equation of $\cc_2$, we get $\ll_{2}'\cap \cc_{2}=\{[2:2:\sqrt{5}-1],[2:2:-\sqrt{5}-1]\}$, so $\ll_{2}'$ is not tangent to $\cc_2$. The resulting configuration $\tref{\pp}{def:F2-5}\de \cc_1+\cc_2+\ll_2+\ll_1+\ll_2'$ is realized over $\Q(\sqrt{5})$. Without a marking, $\tref{\pp}{def:F2-5}$ is unique up to a projective equivalence, and $\Aut(\P^2,\tref{\pp}{def:F2-5})$ is trivial: indeed, any automorphism of $(\P^2,\tref{\pp}{def:F2-5})$ sends a quadruple $(p,p_1,p_2,p')$ of points in general position to itself, hence is the identity. Therefore, the two possible orders of the set $\ll_2'\cap \cc_2$ give two non-equivalent markings of $\tref{\pp}{def:F2-5}$. Thus $\#\tref{\cP}{def:F2-5}=2$, as claimed.
	
	\ref{item:4-l_rp'} Since $r=[0:1:-1]$, $p'=[1:1:1]$, we have  $\ll_{rp'}=\{2x=y+z\}$, so $\ll_{rp'}\cap \cc_{1}= \{p', [1:-2:4]\}$ and $\ell_{rp'}\cap \cc_2=\{r,s\}$, where $s=[5:6:4]$. Now   $\ll_{qs}=\{2x=5(y-z)\}$, so $\ll_{qs}\cap\cc_{1}=\{[\alpha^2:\alpha:1],[\bar{\alpha}^2:\alpha:1]\}$, where $\alpha=\frac{1}{4}(5-\sqrt{-15})$. Like before, we see that the configuration $\tref{\pp}{def:F2-5_cn}\de \cc_1+\cc_2+\ll_{2}+\ll_{1}'+\ll_{rp'}+\ll_{qs}$ is realized over $\Q(\sqrt{-15})$, and admits two non-equivalent markings corresponding to the order of $\ll_{qs}\cap \cc_1$. 
	
	\ref{item:4-l_qt} We have seen in \ref{item:4-l_2'} that two choices of $t$ lead to two unmarked configurations 
	$\tref{\pp}{def:A1A2_C2C3-node}\de \cc_1+\cc_2+\ll_{2}+\ll_{1}'+\ll_{2}'+\ll_{qt}$, which are not projectively equivalent, but conjugate by $\Aut_{\Q}(\Q(\sqrt{5}))$. Take $t=[2:2:\sqrt{5}-1]$. Then $\ll_{qt}=\{(3-\sqrt{5})x=2(y-z)\}$. Substituting this to the equation of $\cc_1$ we see that $\ll_{qt}$ meets $\cc_1$ in two points, which are conjugate by $\Aut_{\Q(\sqrt{5})}(\Q(\alpha_{-}))$. Choosing the order of those two points gives two non-equivalent markings on $\tref{\pp}{def:A1A2_C2C3-node}$. Thus $\#\tref{\cP}{def:A1A2_C2C3-node}=4$, and $\tref{\cP}{def:A1A2_C2C3-node}$ is realized over $\Q(\sqrt{5},\alpha_{-},\alpha_{+})$. Note that since $\alpha_{\pm}\not \in \R$, the two markings are complex-conjugate.
\end{proof}

\begin{conf}[Figure \ref{fig:31_conf}]\label{conf:31}
	There is a unique conic $\cc_{2}\neq \cc_{1}$ meeting $\cc_{1}$ at $p_{2}$, $p'$ with multiplicity $3$, $1$ and tangent to $\ll_{1}$. Write $\cc_{2}\cap\ll_{1}=\{q\}$. We have the following configurations.
	\begin{enumerate}
		\item \label{item:C**_3} We have $\#\tref{\cP}{def:C**_3}=1$. The configuration $\tref{\cP}{def:C**_3}$ is realized over $\Q$ and admits an involution.
		\item \label{item:A1A2_C2C3-cusp-41} Write $\cc_{1}\cap\ll_{qp'}=\{p',r\}$. There is a unique conic $\cc_{3}$ passing through $p_2,q,r$, which meets $\cc_1$ and $\cc_2$ at three points each, is tangent to $\cc_2$ at $q$ and to $\cc_1$ at $r$. We have $\#\tref{\cP}{def:A1A2_C2C3-cusp-41}=1$, and $\tref{\cP}{def:A1A2_C2C3-cusp-41}$ is realized over $\Q$.
		\item \label{item:A1A2_C2C3-cusp-32} Like in \ref{item:A1A2_C2C3-cusp-41}, write $\cc_{1}\cap\ll_{qp'}=\{p',r\}$. There is a unique conic $\cc_{3}$ passing through $p_2,q,r$, which meets $\cc_1$ and $\cc_2$ at three points each, is tangent to $\cc_2$ at $q$ and to $\cc_1$ at some point  $r'\neq r,p_2$. Thus $\#\tref{\cP}{def:A1A2_C2C3-cusp-32}=1$, and $\tref{\cP}{def:A1A2_C2C3-cusp-32}$ is realized over $\Q$.
		\item\label{item:A1A2_3-cusp} There are exactly two conics $\cc_{3}$ such that $(\cc_3\cdot \cc_2)_{q}=3$, $(\cc_3\cdot \cc_2)_{p_2}=1$ and $(\cc_3\cdot \cc_1)_{r}=2$ for some $r\not\in\{ p_{1},p'\}$. 
		We have $\#\tref{\cP}{def:A1A2_3-cusp}=2$, and $\tref{\cP}{def:A1A2_3-cusp}$ is realized over $\Q(\sqrt{-7})$. 
		\item\label{item:A1A2_2n1c} There is a unique conic $\cc_{3}$ such that $(\cc_{3}\cdot \cc_{1})_{p'}=3$ and $(\cc_{3}\cdot\cc_{2})_{q}=2$. Write $\cc_{2}\cap\cc_{3}=\{p',q,r\}$. Then $r\not\in \cc_{1}$, and the line $\ll_{qr}$ is not tangent to $\cc_1$. 
		We have $\#\tref{\cP}{def:A1A2_2n1c}=2$, and $\tref{\cP}{def:A1A2_2n1c}$ is realized over $\Q(\sqrt{-7})$.
		\item\label{item:A1A2_21} Let $\ll_{q}\neq \ll_{1}$ be the other line tangent to $\cc_{1}$ passing through $q$. Write $\cc_{2}\cap\ll_{q}=\{q,r\}$. Then there is a unique conic $\cc_{3}$ passing through $r$, $p_{2}$, such that $(\cc_3\cdot\cc_2)_{q}=2$ and $(\cc_3\cdot \cc_1)_{s}=2$ for some  $s\not\in\{p',p_{1},p_{2}\}$. We have $\#\tref{\cP}{def:A1A2_21}=\#\tref{\cP}{def:F2n2-cuspidal}=1$, and $\tref{\cP}{def:A1A2_21}$, $\tref{\cP}{def:F2n2-cuspidal}$ are realized over $\Q$.
	\end{enumerate}
\end{conf}
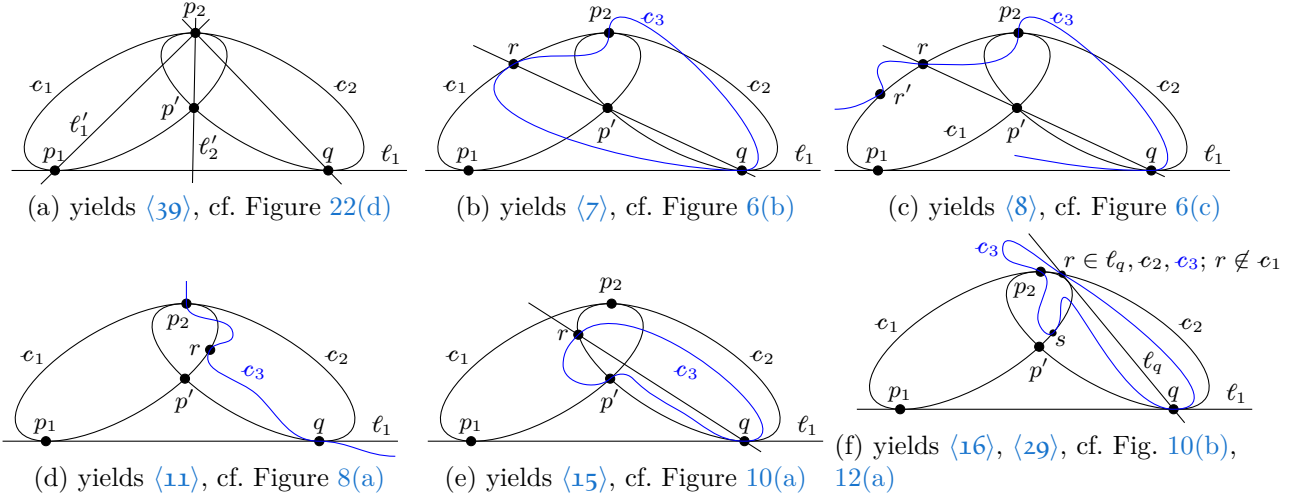
\begin{figure}[htbp]
	\begin{subfigure}[b]{.32\linewidth}
		\centering
		\begin{tikzpicture}
			\path[use as bounding box] (-2.4,-1) rectangle (2.8,1.4);
			\draw[rotate around={-60:(-0.9,0)}, name path=c1] (-0.9,0) ellipse (0.6 and 1.5);
			\node at (-2,0.2) {\small{$\cc_1$}};
			\draw[rotate around={60:(0.9,0)}, name path=c2] (0.9,0) ellipse (0.6 and 1.5);
			\node at (2,0.2) {\small{$\cc_2$}};
			\draw[name path = L1] (-2.4,-0.912) to (2.8,-0.912);
			\node at (2.6,-0.7) {\small{$\ll_1$}};
			\path [name intersections={of=L1 and c1}] (intersection-1) coordinate (P1);
			\node at ($(P1)+(0,0.2)$) {\small{$p_1$}}; \filldraw (P1) circle (0.06);
			\path [name intersections={of=L1 and c2}] (intersection-1) coordinate (Q);
			\node at ($(Q)+(0,0.2)$) {\small{$q$}}; \filldraw (Q) circle (0.06);
			\path [name intersections={of=c1 and c2}] 
			(intersection-1) coordinate (P') 
			(intersection-2) coordinate (P2') 
			(intersection-3) coordinate (P2'');
			\coordinate (P2) at ($(P2')!0.5!(P2'')$);
			\node at ($(P2)+(0,0.3)$) {\small{$p_2$}}; \filldraw (P2) circle (0.06);
			\node at ($(P')-(0.3,0)$) {\small{$p'$}}; \filldraw (P') circle (0.06);
			\draw[add= 0.2 and 1, name path = L2'] (P2) to (P');
			\node at (0.2,-0.6) {\small{$\ll_{2}'$}};
			\draw[add= 0.1 and 0.1, name path = L1'] (P1) to (P2);
			\node at (-1.5,-0.3) {\small{$\ll_{1}'$}};
			\draw[add= 0.1 and 0.1] (Q) to (P2);
		\end{tikzpicture}
		\caption{yields \ref{def:C**_3}, cf.\ Figure \ref{fig:C**_3}}
		\label{fig:C**_3_conf}	
	\end{subfigure}
	\begin{subfigure}[b]{.31\linewidth}
		\begin{tikzpicture}
			\path[use as bounding box] (-2.4,-1) rectangle (2.8,1.3);
			\draw[rotate around={-60:(-0.9,0)}, name path=c1] (-0.9,0) ellipse (0.6 and 1.5);
			\node at (-2,0.2) {\small{$\cc_1$}};
			\draw[rotate around={60:(0.9,0)}, name path=c2] (0.9,0) ellipse (0.6 and 1.5);
			\node at (2,0.2) {\small{$\cc_2$}};
			\draw[name path = L1] (-2.4,-0.912) to (2.8,-0.912);
			\node at (2.6,-0.7) {\small{$\ll_1$}};
			\path [name intersections={of=L1 and c1}] (intersection-1) coordinate (P1);
			\node at ($(P1)+(0,0.2)$) {\small{$p_1$}}; \filldraw (P1) circle (0.06);
			\path [name intersections={of=L1 and c2}] (intersection-1) coordinate (Q);
			\node at ($(Q)+(0,0.2)$) {\small{$q$}}; \filldraw (Q) circle (0.06);
			\path [name intersections={of=c1 and c2}] 
			(intersection-1) coordinate (P') 
			(intersection-2) coordinate (P2') 
			(intersection-3) coordinate (P2'');
			\coordinate (P2) at ($(P2')!0.5!(P2'')$);
			\node at ($(P2)+(-0.1,0.25)$) {\small{$p_2$}}; \filldraw (P2) circle (0.06);
			\node at ($(P')-(0,0.3)$) {\small{$p'$}}; \filldraw (P') circle (0.06);
			\draw[add=0.1 and 1, name path=L] (Q) to (P');
			\path [name intersections={of=c1 and L}] (intersection-2) coordinate (R); 
			\node at ($(R)+(0,0.2)$) {\small{$r$}}; \filldraw (R) circle (0.06);
			\draw[blue] (Q) to[out=0,in=90] (P2) to[out=-90,in=30] (R) to[out=-150,in=180] (Q); 
			\node at (0.6,1.1) {\blue{\small{$\cc_3$}}}; 
		\end{tikzpicture}
		\caption{yields \ref{def:A1A2_C2C3-cusp-41}, cf.\ Figure \ref{fig:A1A2_C2C3-cusp-41}}
		\label{fig:A1A2_C2C3-cusp-41_conf}
	\end{subfigure}
	\begin{subfigure}[b]{.34\linewidth}
		\begin{tikzpicture}
			\path[use as bounding box] (-2.4,-1) rectangle (2.8,1.3);
			\draw[rotate around={-60:(-0.9,0)}, name path=c1] (-0.9,0) ellipse (0.6 and 1.5);
			\node at (-0.8,-0.4) {\small{$\cc_1$}};
			\draw[rotate around={60:(0.9,0)}, name path=c2] (0.9,0) ellipse (0.6 and 1.5);
			\node at (2,0.2) {\small{$\cc_2$}};
			\draw[name path = L1] (-2.4,-0.912) to (2.8,-0.912);
			\node at (2.6,-0.7) {\small{$\ll_1$}};
			\path [name intersections={of=L1 and c1}] (intersection-1) coordinate (P1);
			\node at ($(P1)+(0,0.2)$) {\small{$p_1$}}; \filldraw (P1) circle (0.06);
			\path [name intersections={of=L1 and c2}] (intersection-1) coordinate (Q);
			\node at ($(Q)+(0,0.2)$) {\small{$q$}}; \filldraw (Q) circle (0.06);
			\path [name intersections={of=c1 and c2}] 
			(intersection-1) coordinate (P') 
			(intersection-2) coordinate (P2') 
			(intersection-3) coordinate (P2'');
			\coordinate (P2) at ($(P2')!0.5!(P2'')$);
			\node at ($(P2)+(-0.1,0.25)$) {\small{$p_2$}}; \filldraw (P2) circle (0.06);
			\node at ($(P')-(0,0.3)$) {\small{$p'$}}; \filldraw (P') circle (0.06);
			\draw[add=0.1 and 1, name path=L] (Q) to (P');
			\path [name intersections={of=c1 and L}] (intersection-2) coordinate (R); 
			\node at ($(R)+(0,0.2)$) {\small{$r$}}; \filldraw (R) circle (0.06);
			\coordinate (R') at (-1.8,0.1);
			\node at ($(R')+(0.3,0)$) {\small{$r'$}}; \filldraw (R') circle (0.06); 
			\draw [blue] ($(Q)+(-1.8,0.2)$) to[out=-10,in=180] (Q) to[out=0,in=90] (P2) to[out=-90,in=0] (R) to[out=180,in=90] ($(R)-(0.6,0.1)$) to[out=-90,in=30] (R') to[out=-150,in=0] ($(R')-(0.6,0.2)$);
			\node at (0.6,1.1) {\blue{\small{$\cc_3$}}}; 
		\end{tikzpicture}
		\caption{yields \ref{def:A1A2_C2C3-cusp-32}, cf.\ Figure \ref{fig:A1A2_C2C3-cusp-32}}
		\label{fig:A1A2_C2C3-cusp-32_conf}
	\end{subfigure}

	\begin{subfigure}[b]{.32\linewidth}
		\begin{tikzpicture}
			\path[use as bounding box] (-2.4,-1) rectangle (2.8,1.3);
			\draw[rotate around={-60:(-0.9,0)}, name path=c1] (-0.9,0) ellipse (0.6 and 1.5);
			\node at (-2,0.2) {\small{$\cc_1$}};
			\draw[rotate around={60:(0.9,0)}, name path=c2] (0.9,0) ellipse (0.6 and 1.5);
			\node at (2,0.2) {\small{$\cc_2$}};
			\draw[name path = L1] (-2.4,-0.912) to (2.8,-0.912);
			\node at (2.6,-0.7) {\small{$\ll_1$}};
			\path [name intersections={of=L1 and c1}] (intersection-1) coordinate (P1);
			\node at ($(P1)+(0,0.2)$) {\small{$p_1$}}; \filldraw (P1) circle (0.06);
			\path [name intersections={of=L1 and c2}] (intersection-1) coordinate (Q);
			\node at ($(Q)+(0,0.2)$) {\small{$q$}}; \filldraw (Q) circle (0.06);
			\path [name intersections={of=c1 and c2}] 
			(intersection-1) coordinate (P') 
			(intersection-2) coordinate (P2') 
			(intersection-3) coordinate (P2'');
			\coordinate (P2) at ($(P2')!0.5!(P2'')$);
			\node at ($(P2)-(0.1,0.25)$) {\small{$p_2$}}; \filldraw (P2) circle (0.06);
			\node at ($(P')-(0,0.3)$) {\small{$p'$}}; \filldraw (P') circle (0.06);
			\coordinate (R) at (0.33,0.3);
			\node at ($(R)-(0.2,0)$) {\small{$r$}}; \filldraw (R) circle (0.06);
			\draw[blue] ($(Q)+(1,-0.2)$) to[out=180,in=0] (Q) to[out=180,in=-45] ($(Q)+(-0.8,0.6)$) to[out=135,in=-120] (R) to[out=60,in=-90] ($(R)+(0.3,0.2)$) to[out=90,in=-90] (P2) to[out=90,in=-90] ($(P2)+(0,0.3)$);
			\node at (0.9,0) {\small{\blue{$\cc_3$}}};
		\end{tikzpicture}
		\caption{yields \ref{def:A1A2_3-cusp}, cf.\ Figure \ref{fig:A1A2_3-cusp}}
		\label{fig:A1A2_3-cusp_conf}
	\end{subfigure}
	\begin{subfigure}[b]{.31\linewidth}
	\centering
		\begin{tikzpicture}
			\path[use as bounding box] (-2.4,-1) rectangle (2.8,1.3);
			\draw[rotate around={-60:(-0.9,0)}, name path=c1] (-0.9,0) ellipse (0.6 and 1.5);
			\node at (-2,0.2) {\small{$\cc_1$}};
			\draw[rotate around={60:(0.9,0)}, name path=c2] (0.9,0) ellipse (0.6 and 1.5);
			\node at (2,0.2) {\small{$\cc_2$}};
			\draw[name path = L1] (-2.4,-0.912) to (2.8,-0.912);
			\node at (2.6,-0.7) {\small{$\ll_1$}};
			\path [name intersections={of=L1 and c1}] (intersection-1) coordinate (P1);
			\node at ($(P1)+(0,0.2)$) {\small{$p_1$}}; \filldraw (P1) circle (0.06);
			\path [name intersections={of=L1 and c2}] (intersection-1) coordinate (Q);
			\node at ($(Q)+(0,0.2)$) {\small{$q$}}; \filldraw (Q) circle (0.06);
			\path [name intersections={of=c1 and c2}] 
			(intersection-1) coordinate (P') 
			(intersection-2) coordinate (P2') 
			(intersection-3) coordinate (P2'');
			\coordinate (P2) at ($(P2')!0.5!(P2'')$);
			\node at ($(P2)+(0,0.25)$) {\small{$p_2$}}; \filldraw (P2) circle (0.06);
			\node at ($(P')-(0,0.3)$) {\small{$p'$}}; \filldraw (P') circle (0.06);
			\coordinate (R) at (-0.42,0.5);
			\node at ($(R)-(0.2,0)$) {\small{$r$}}; \filldraw (R) circle (0.06);
			\draw[add = 0.3 and 0.1] (R) to (Q);
			\coordinate (P'') at ($(P')+(0.3,0)$); 
			\coordinate (Q'') at ($(Q)+(-0.85,0.4)$); 
			\draw[blue] (Q) to[out=180, in=-30] (Q'') to[out=150,in=-45] (P'') to[out=135,in=42] (P') to[out=-138,in=-90] ($(R)-(0.2,0.4)$) to[out=90,in=-135] (R) to[out=45,in=0] (Q);
			\node at (1,0) {\small{\blue{$\cc_3$}}};
		\end{tikzpicture}
		\caption{yields \ref{def:A1A2_2n1c}, cf.\ Figure \ref{fig:A1A2_2n1c}}
		\label{fig:A1A2_2n1c_conf}
	\end{subfigure}
	\begin{subfigure}[b]{.34\linewidth}
	\centering
		\begin{tikzpicture}
			\path[use as bounding box] (-2.4,-1) rectangle (2.8,1.5);
			\draw[rotate around={-60:(-0.9,0)}, name path=c1] (-0.9,0) ellipse (0.6 and 1.5);
			\node at (-2,0.2) {\small{$\cc_1$}};
			\draw[rotate around={60:(0.9,0)}, name path=c2] (0.9,0) ellipse (0.6 and 1.5);
			\node at (2,0.2) {\small{$\cc_2$}};
			\draw[name path = L1] (-2.4,-0.912) to (2.8,-0.912);
			\node at (2.6,-0.7) {\small{$\ll_1$}};
			\path [name intersections={of=L1 and c1}] (intersection-1) coordinate (P1);
			\node at ($(P1)+(0,0.2)$) {\small{$p_1$}}; \filldraw (P1) circle (0.06);
			\path [name intersections={of=L1 and c2}] (intersection-1) coordinate (Q);
			\node at ($(Q)+(0,0.2)$) {\small{$q$}}; \filldraw (Q) circle (0.06);
			\path [name intersections={of=c1 and c2}] 
			(intersection-1) coordinate (P') 
			(intersection-2) coordinate (P2') 
			(intersection-3) coordinate (P2'');
			\coordinate (P2) at ($(P2')!0.5!(P2'')$);
			\node at ($(P2)+(-0.2,-0.25)$) {\small{$p_2$}}; \filldraw (P2) circle (0.06);
			\node at ($(P')-(0,0.3)$) {\small{$p'$}}; \filldraw (P') circle (0.06);
			\coordinate (R) at (0.3,0.88);
			\node[above right] at ($(R)-(0.1,0.1)$) {\small{$r\in \ll_q,\cc_2,\blue{\cc_3}$; $r\not\in \cc_1$}}; \filldraw (R) circle (0.04);
			\draw[add = 0.3 and 0.1] (R) to (Q);
			\coordinate (S) at (0.18,0.1);
			\node at ($(S)+(0.1,-0.1)$) {\small{$s$}}; \filldraw (S) circle (0.04);
			\draw[blue] (Q) to[out=180,in=90] ($(S)+(0.1,0.3)$) to[out=-90,in=45] (S) to[out=-135,in=-90] ($(S)+(-0.2,0.2)$) to[out=90,in=-30] (P2) to[out=150,in=-90] ($(P2)+(-0.5,0.3)$) to[out=90,in=150] (R) to[out=-30,in=0] (Q);
			\node at (1.5,-0.3) {\small{$\ll_{q}$}};
			\node at ($(P2)+(-0.7,0.3)$) {\small{\blue{$\cc_3$}}};
		\end{tikzpicture}
		\caption{yields \ref{def:A1A2_21}, \ref{def:F2n2-cuspidal}, cf.\ Fig.\ \ref{fig:A1A2_21}, \ref{fig:F2n2-cuspidal}}
		\label{fig:A1A2_21_conf}	
	\end{subfigure}
	\caption{Configuration \ref{conf:31}}
	\label{fig:31_conf}
\end{figure}
\begin{proof}
	The pencil of conics passing through $p'$ and tangent to $\cc_1$ at $p_2$ with multiplicity $3$ is generated by $\ll_{2}+\ll_{2}'=\{x(x-y)\}$ and $\cc_{1}=\{y^2=xz\}$, see Figure \ref{fig:coordinates}. It has two members tangent to $\ll_{1}=\{z=0\}$, namely $\cc_1$ and $\cc_{2}=\{4x(x-y)=xz-y^2\}=\{(2x-y)^{2}=xz\}$. Now, $q=[1:2:0]$.
	
	\ref{item:C**_3} We have $\ll_{qp_2}=\{2x=y\}$. The required involution is given by $(p',q,p_1,p_2,q)\mapsto (p',p_1,q_,p_2)$.
	
	\ref{item:A1A2_C2C3-cusp-41}, \ref{item:A1A2_C2C3-cusp-32} Since $p'=[1:1:1]$, we have $\ll_{qp'}=\{2x=y+z\}$, so $r=[1:-2:4]$. The pencil of conics  tangent to $\ll_{1}$ at $q$ and passing through $p_{2}$, $r$ is generated by $\ll_{1}+\ll_{rp_{2}}$ and $\ll_{qr}+\ll_{qp_2}$, so it is given by $\{\lambda z (2x+y)=\mu(2x-y-z)(2x-y)\}_{[\lambda:\mu]\in \P^1}$. Substituting a parametrization $u\mapsto [1:u:u^2]$ of $\cc_1\setminus \{p_2\}$ to the above equation, we conclude that this pencil has exactly two members tangent to $\cc_1$ away from $p_2=[0:0:1]$. These members correspond to $[\lambda:\mu]=[3:1]$ and $[1:-8]$, and are tangent to $\cc_1$ and $\cc_2$ and at $r'\de [9:12:16]\neq r$, respectively. Both of these members meet $\cc_1$ and $\cc_2$ in three points each (including $p_2$), as required. Indeed, the above pencil induces a morphism $\cc_2\to \P^1$ of degree $1$; and $\cc_1\to \P^1$ of degree $2$, ramified at $r$, $r'$. By Hurwitz formula, neither of these morphisms can have further ramification points.
	
	\ref{item:A1A2_3-cusp} The pencil of conics meeting $\cc_{2}$ at $q$, $p_{2}$ with multiplicity $3$, $1$, respectively, is generated by $\ll_{1}+\ll_{qp_2}$ and $\cc_2$, so it is given by 
	is given by $\{\lambda z(2x-y)=\mu((2x-y)^{2}-xz)\}_{[\lambda:\mu]\in \P^{1}}$. Apart from the above generators, this pencil has two members tangent to $\cc_1$. They correspond to $[\lambda:\mu]=[13+\sqrt{-7}:-16]$ and $[13-\sqrt{-7}:-16]$. They tangent to $\cc_1$ with multiplicity two. Indeed, the above pencil induces a morphism $\cc_{1}\to \P^{1}$ of degree $3$, ramified at $p_1$, $p_2$ and the above tangency points, so no further ramification is possible by Hurwitz formula.
	
	\ref{item:A1A2_2n1c} The pencil of conics tangent to $\cc_1$ at $p'$ and to $\cc_2$ at $q$ is generated by $2\ll_{qp'}=\{(2x-y-z)^2=0\}$ and $\ll_{1}+\ll_{p'}=\{z(2y-x-z)=0\}$, see Figure \ref{fig:coordinates}. Its unique member $\cc_3$ satisfying $(\cc_{3}\cdot \cc_{1})_{p'}\geq 3$ is given by $\{9z(2y-x-z)+(2x-y-z)^2=0\}$. The induced map $\cc_1\to \P^1$ of degree $2$ is ramified at the common points of $\cc_1$ with $\ll_{1}$ and $\ll_{qp'}$, so by Hurwitz formula, it is not ramified at $p'$. This means that $(\cc_3\cdot \cc_1)_{p_1}=3$. We compute that $\cc_3$ meets $\cc_{2}\setminus \{p',q\}$ at $r\de [4:22:49]\neq p_2$, so $r\not\in \cc_1$. The line $\ll_{qr}=\{7y=14x+2z\}$ meets $\cc_1=\{y^2=xz\}$ at two points $[1:\alpha:\alpha^2]$ and $[1:\bar{\alpha}:\bar{\alpha}^2]$, where $\alpha=\frac{1}{4}(7-3\sqrt{-7})$. The two possible orderings of those points give two non-equivalent marked configurations in $\tref{\cP}{def:A1A2_2n1c}$, as claimed.
	
	\ref{item:A1A2_21} The pencil of conics tangent to $\ll_{1}$ at $q$ and passing through $p_{2}$, $r$ gives a morphism $\cc_{1}\to \P^{1}$ of degree three, ramified at the points corresponding to $\cc_{2}$ and to the degenerate members $\ll_{q}+\ll_{qp_2}$, $\ll_{1}+\ll_{rp_2}$. By Hurwitz formula, there is exactly one ramification point more, corresponding to the required $\cc_{3}$.
\end{proof}

\begin{conf}[Figure \ref{fig:P2n1-nodal_conf}]\label{conf:P2n1-nodal}
	There is a unique conic $\cc_{2}$ such that $(\cc_1\cdot \cc_2)_{p'}=4$ and $p\in \cc_2$. Write $\ll_{1}\cap \cc_{2}=\{p,q_{1}\}$, $\ll_{2}\cap \cc_{2}=\{p,q_{2}\}$. The following holds.
	\begin{enumerate}
		\item \label{item:P2n1-nodal} The line $\ll_{q_1 q_2}$ is not tangent to $\cc_1$. We have $\#\tref{\cP}{def:P2n1-nodal}=1$, and $\tref{\cP}{def:P2n1-nodal}$ is realized over $\Q(\sqrt{3})$. 		
		\item \label{item:A1A2_q-nnc} Write  $\ll_{q_{2}p_{1}}\cap \cc_{2}=\{q_{2},r\}$. The line $\ll_{pr}$ is not tangent to $\cc_1$. We have  $\#\tref{\cP}{def:A1A2_q-nnc}=2$ and $\tref{\cP}{def:A1A2_q-nnc}$ is realized over $\Q(\omega)$, where $\omega$ is a primitive third root of unity.
	\end{enumerate}
\end{conf}
\begin{proof}
	 The pencil of conics tangent to $\cc_1$ at $p'$ with multiplicity $4$ is generated by $\cc_1$ and $2\ll_{p'}$, see Figure \ref{fig:coordinates}. Its unique member passing through $p$ is $\cc_2=\{4(y^2-xz)=(2y-x-z)^2\}$. Hence $q_1=[4:1:0]$, $q_2=[0:1:4]$. 
	 
	 \ref{item:P2n1-nodal} The line $\ll_{q_1 q_2}=\{4y=x+z\}$ meets $\cc_1$ in $[\alpha^{-1}:1:\alpha]$ and $[\alpha:1:\alpha^{-1}]$, where $\alpha=2\pm \sqrt{3}$. The two possible markings on $\cc_1+\cc_2+\ll_1+\ll_2+\ll_{q_1 q_2}$, corresponding to the orders of $\ll_{q_1 q_2}\cap \cc_1$, are equivalent by the action of $\tau\colon [x:y:z]\mapsto [z:y:x]$. Thus $\#\tref{\cP}{def:P2n1-nodal}=1$, as claimed.
	 
	 \ref{item:A1A2_q-nnc} We have $\ll_{q_2 p_1}=\{4y=z\}$, so $r=[-12:1:4]$. Hence $\ll_{pr}=\{x+3z=0\}$ meets $\cc_1$ in $[1:\sqrt{-3}:-3]$ and $[1:-\sqrt{-3}:-3]$. Ordering those two points gives two non-equivalent markings on $\cc_1+\ll_2+\ll_1+\ll_2+\ll_{q_2 p_1}+\ll_{pr}$. Hence $\#\tref{\cP}{def:A1A2_q-nnc}=2$, and $\tref{\cP}{def:A1A2_q-nnc}$ is realized over $\Q(\sqrt{-3})=\Q(\omega)$, as claimed.
\end{proof}

\begin{conf}[Figure \ref{fig:A1A2_q-nc_conf}]\label{conf:A1A2_q-nc}
	There is a unique conic $\cc_{2}$ tangent to $\ll_{2}'$ at some point $r$, such that $(\cc_{2}\cdot\cc_{1})_{p_{1}}=4$.  
	\begin{enumerate}
		\item\label{item:F2n2-nodal}  The line $\ll_2$ is not tangent to $\cc_2$. We have $\#\tref{\cP}{def:F2n2-nodal}=2$, and $\tref{\cP}{def:F2n2-nodal}$ is realized over $\Q(\imath)$.
		\item\label{item:A1A2_q-nc} Choose $q\in \ll_2\cap \cc_2$. Then $\ll_{qr}$ is not tangent to $\cc_1$. We have $\tref{\cP}{def:A1A2_q-nc}=4$, and $\tref{\cP}{def:A1A2_q-nc}$ is realized over $\Q(\imath,\beta_{+},\beta_{-})$, where $\beta_{\pm}=\sqrt{1\pm 2\imath}$. In fact, two configurations in $\tref{\cP}{def:A1A2_q-nc}$ are realized over $\Q(\imath,\beta_{+})$, and the other two are realized over $\Q(\imath,\beta_{-})$. These two pairs are complex-conjugate.
	\end{enumerate}
\end{conf}
\begin{proof}
	  The pencil of conics tangent to $\cc_1$ at $p_1$ with multiplicity $4$ is generated by $2\ll_{1}$ and $\cc_1$. It has two members tangent to $\ll_{1}'=\{x=y\}$, namely $2\ll_{1}$ and $\cc_2\de \{z^2=4(xz-y^2)\}$. 
	  
	  \ref{item:F2n2-nodal} The line $\ll_{2}=\{x=0\}$ meets $\cc_2$ at $[0:1:\pm 2\imath]$. Ordering $\ll_2\cap \cc_2$ gives two marked configurations in $\tref{\cP}{def:F2n2-nodal}$. 
	  
	  \ref{item:A1A2_q-nc} Take $q=[0:1:2\imath]$. Now $\ll_{qr}=\{(2\imath-2)x-2\imath y+z=0\}$ meets $\cc_1$ at $[1:\beta:\beta^2]$, where $\beta=\imath\pm\beta_{-}$. Ordering those two points gives -- for each of the two choices of $q$ -- two marked configurations in $\tref{\cP}{def:A1A2_q-nc}$. 
\end{proof}

\begin{figure}[htbp]
	\captionsetup{width=.3\linewidth}
	\begin{minipage}[b]{.33\textwidth}
		\centering
		\begin{tikzpicture}
			\path[use as bounding box] (-2.7,-1) rectangle (2,2.1);	
			\draw[name path=c1] (0,0) circle (1);
			\coordinate (P) at (0,2); 
			\node at ($(P)-(0,0.3)$) {\small{$p$}}; \filldraw (P) circle (0.06);
			\coordinate (P') at (0,-1);
			\node at ($(P')+(0,0.25)$) {\small{$p'$}}; \filldraw (P') circle (0.06);
			\coordinate (P1) at (-0.866,0.5);
			\node at ($(P1)+(0.15,-0.25)$) {\small{$p_1$}}; \filldraw (P1) circle (0.06);
			\coordinate (P2) at (0.866,0.5);
			\node at ($(P2)+(-0.2,-0.2)$) {\small{$p_2$}}; \filldraw (P2) circle (0.06);
			\node at (-1.85,-0.8) {\small{$\ll_{1}$}};
			\node at (1.85,-0.8) {\small{$\ll_{2}$}};
			\node at (0,0.8) {\small{$\cc_{1}$}};
			\node at (1.7,0.6) {\small{\blue{$\cc_{2}$}}};
			\draw[add= 0.1 and 1, name path = L1] (P) to (P1);
			\draw[add= 0.1 and 1, name path = L2] (P) to (P2);
			\draw[blue, name path=c2] (0,0.5) circle (1.5);
			\path [name intersections={of=L1 and c2}] (intersection-2) coordinate (Q1);
			\node at ($(Q1)+(0,-0.35)$) {\small{$q_1$}}; \filldraw (Q1) circle (0.06);
			\path [name intersections={of=L2 and c2}] (intersection-2) coordinate (Q2);
			\node at ($(Q2)+(0,-0.35)$) {\small{$q_2$}}; \filldraw (Q2) circle (0.06);
			\draw[add= 0.55 and 0.1, name path = L] (Q1) to (Q2);
			\draw[add= 0.4 and 0.1, name path = L'] (P1) to (Q2);
			\path [name intersections={of=L' and c2}] (intersection-1) coordinate (R);
			\node at ($(R)+(-0.2,0.2)$) {\small{$r$}}; \filldraw (R) circle (0.06);
			\draw[add= 0.1 and 0.8, name path = L'] (P) to (R);
			\draw (-2.3,0.3) [partial ellipse=30:-120: 0.4 and 0.4];
			\node at (-2.1,0.5) {\small{$\cc_1$}};
		\end{tikzpicture}
		\caption{Configuration  \ref{conf:P2n1-nodal} yields \ref{def:A1A2_q-nnc}, \ref{def:P2n1-nodal} cf.\ Figures \ref{fig:A1A2_q-nnc}, \ref{fig:P2n1-nodal}}
		\label{fig:P2n1-nodal_conf}
	\end{minipage}
	\begin{minipage}[b]{.32\textwidth}
		\centering
		\begin{tikzpicture}
			\path[use as bounding box] (-2.3,-0.9) rectangle (2.1,2.8);
			\draw[blue, name path=c2] (0,0) circle (1); \node at (-0.7,0.2) {\small{\blue{$\cc_{2}$}}};
			\draw[name path=c1] (0,0.5) circle (1.5); \node at (1.7,0.2) {\small{$\cc_{1}$}};
			\draw[name path = L2'] (-2,1) -- (2,1);
			\coordinate (P1) at (0,-1); \node at ($(P1)+(0,0.2)$) {\small{$p_1$}}; \filldraw (P1) circle (0.06);
			\coordinate (R) at (0,1); \node at ($(R)+(0,0.2)$) {\small{$r$}}; \filldraw (R) circle (0.06);
			\coordinate (P2) at (-1.414,1);
			\coordinate (P') at (1.414,1);
			\node at ($(P2)+(-0.1,0.2)$) {\small{$p_2$}}; \filldraw (P2) circle (0.06);
			\node at ($(P')+(0.2,0.2)$) {\small{$p'$}}; \filldraw (P') circle (0.06);
			\coordinate (Q) at (-1.06,2); \node at ($(Q)+(-0.05,0.25)$) {\small{$q$}}; \filldraw (Q) circle (0.06);
			\draw[add= 1.8 and 0.9, name path = L2] (P2) to (Q);
			\node at (-2.1,-0.4) {\small{$\ll_{2}$}};
			\draw[add= 0.1 and 0.8] (Q) to (R);
			\draw[add= -2 and 1.8] (Q) to (R);
			\node at (1.8,-0.4) {\small{$\ll_{2}'$}};
			\draw[blue] (-1.2,2.47) [partial ellipse=50:-90: 0.5 and 0.5];
			\node at ($(Q)+(0.6,0.5)$) {\small{\blue{$\cc_{2}$}}};
		\end{tikzpicture}
		\caption{Configuration \ref{conf:A1A2_q-nc} yields \ref{def:A1A2_q-nc}, \ref{def:F2n2-nodal} cf.\ Figures \ref{fig:A1A2_q-nc}, \ref{fig:F2n2-nodal}}
		\label{fig:A1A2_q-nc_conf}
	\end{minipage}
	\begin{minipage}[b]{.33\textwidth}
		\centering
		\begin{tikzpicture}
			\path[use as bounding box] (-2.2,0.5) rectangle (1.3,3.4);
			\draw (0,2) circle (1); \node at (0.5,1.3) {\small{$\cc_{1}$}};
			\draw[blue, rotate around={-15:(0.5,1.5)}] (0.5,1.5) ellipse (0.44 and 1);
			\node at (0.95,0.8) {\small{\blue{$\cc_2$}}};
			\coordinate (P') at (0,1); \filldraw (P') circle (0.06); \node at ($(P')-(0.2,0.2)$) {\small{$p'$}};
			\coordinate (P1) at (0,3); \filldraw (P1) circle (0.06); \node at ($(P1)+(-0.2,0.2)$) {\small{$p_1$}};
			\coordinate (P2) at (1,2); \filldraw (P2) circle (0.06); \node at ($(P2)+(0.2,-0.2)$) {\small{$p_2$}};
			\coordinate (R) at (-1,2); \filldraw (R) circle (0.06); \node at ($(R)+(0.2,0)$) {\small{$r$}};
			\draw[name path =L1] (-2.2,3) -- (1.3,3); \node at (1.2,3.2) {\small{$\ll_1$}}; 
			\draw[add= 0.1 and 0.2, name path = L1'] (P1) to (P'); \node at ($(P')+(-0.2,1)$) {\small{$\ll_{1}'$}};
			\draw[add= 0.2 and 1.2, name path = L] (P') to (R);
			\draw[add= 0.2 and 1.2, name path = L'] (P1) to (R);
			\coordinate (Q) at (-2,3); \filldraw (Q) circle (0.06); \node at ($(Q)+(0,0.2)$) {\small{$q$}};
			\draw[blue] (-1.48,2.7) [partial ellipse=160:20: 0.6 and 0.6];
			\node at (-0.9,3.2) {\small{\blue{$\cc_2$}}};
			\draw[blue] (-1.5,1.1) [partial ellipse=210:60: 0.6 and 0.6];
			\node at (-2,1.7) {\small{\blue{$\cc_2$}}};
		\end{tikzpicture}
		\caption{Configuration \ref{conf:A1A2_3-node} yields \ref{def:A1A2_3-node}, \ref{def:F2_n2-transversal}, cf.\ Figures \ref{fig:A1A2_3-node}, \ref{fig:F2_n2-transversal}}
		\label{fig:A1A2_3-node_conf}
	\end{minipage}
\end{figure}

\begin{conf}[Figure \ref{fig:A1A2_3-node_conf}]\label{conf:A1A2_3-node}
	Let $\cc_{2}$ be the unique conic such that $(\cc_2\cdot \cc_1)_{p_2}=3$, $(\cc_2\cdot \ll_{1}')_{p'}=2$. 
	\begin{enumerate}
		\item \label{item:F2_n2-transversal} The line $\ll_{1}$ is not tangent to $\cc_2$. We have $\#\tref{\cP}{def:F2_n2-transversal}=2$, and $\tref{\cP}{def:F2_n2-transversal}$ is realized over $\Q(\omega)$.
		\item \label{item:A1A2_3-node} Choose $q\in \ll_{1}\cap \cc_2$ and write  $\ll_{qp'}\cap \cc_1=\{p',r\}$. Then $r\neq p'$, and the line $\ll_{rp_1}$ is not tangent to $\cc_2$. We have $\#\tref{\cP}{def:A1A2_3-node}=4$, and $\tref{\cP}{def:A1A2_3-node}$ is realized over $\Q(\omega,\zeta_{+},\zeta_{-})$, where $\zeta_{\pm}=(\omega^{\pm 1}-4)^{1/2}$. Moreover,  $\tref{\cP}{def:A1A2_3-node}$ consists of two complex-conjugate pairs of configurations, one realized over $\Q(\omega,\zeta_{+})$ and the other over $\Q(\omega,\zeta_{-})$.
	\end{enumerate}
\end{conf}
\begin{proof}
	 The pencil of conics tangent to $\cc_1$ at $p_2$ and to $\ll_{1}'$ at $p'$ is generated by $\ll_{2}+\ll_{1}'=\{x(z-y)=0\}$ and $2\ll_{2}'=\{(x-y)^2=0\}$. Its unique member $\cc_2$ satisfying $(\cc_2\cdot \cc_1)_{p_2}\geq 3$ is given by $\{x(z-y)=(x-y)^2\}$. 
	 
	 \ref{item:F2_n2-transversal} We have $\cc_{2}\cap \ll_{1}=\{[1:-\omega:0],[1:-\omega^2:0]\}$, where $\omega$ is a primitive third root of unity. Since the group $\Aut(\P^2,\cc_1+\cc_2+\ll_{1}+\ll_{1}')$ is trivial, those two points give two non-equivalent marked configurations in $\tref{\cP}{def:F2_n2-transversal}$.
	 
	 \ref{item:A1A2_3-node} Take $q=[1:-\omega:0]$. Then $\ll_{qp'}=\{x+\omega^2 y+\omega z=0\}$ and $r=[\omega^2:\omega:1]$, hence $\ll_{rp_{1}}=\{y=\omega z\}$. This line meets $\cc_2$ in points $[\xi:1:\omega^2]$, where $\xi$ satisfies $\xi^2+\omega\xi+1=0$, so $\xi\in \Q(\omega,\sqrt{\omega^2-4})$. Ordering those points gives, for each of the two choices of $q$, two marked configurations in $\tref{\cP}{def:A1A2_3-node}$.
\end{proof}

\begin{conf}[Figure \ref{fig:A1A2_q-cn_conf}]\label{conf:A1A2_q-cn}
	Fix a point $r\in \ll_{2}$ not lying on $\cc_{1},\ll_{1}$ or $\ll_{p'}$. Define $\cc_{2}$ as the unique conic passing through $p$ and tangent to $\ll_{rp_1}$ and $\cc_1$ at $r$ and $p'$, respectively. Then
	\begin{enumerate}
		\item\label{item:A1A2_q-cn_31} There are exactly two points $r$ such that $(\cc_{2}\cdot\cc_{1})_{p'}=3$;  $\#\tref{\cP}{def:A1A2_q-cn_31}=2$, and $\tref{\cP}{def:A1A2_q-cn_31}$ is realized over $\Q(\sqrt{-15})$. 
		\item\label{item:A1A2_q-cn_22} There are exactly two points $r$ such that $(\cc_{2}\cdot\cc_{1})_{p''}=2$ for some $p''\neq p'$. In this case, the two unmarked configurations $\pp\de \ll_1+\ll_2+\cc_1+\cc_2+\ll_{rp_1}$ corresponding to two points $r$ are projectively equivalent. 
		We have $\#\tref{\cP}{def:A1A2_q-cn_22}=2$ and $\tref{\cP}{def:A1A2_q-cn_22}$ is realized over $\Q(\sqrt{5})$.
	\end{enumerate} 
\end{conf}
\begin{proof}
	We have $r=[0:1:\alpha]$ for some $\alpha\in \C\setminus \{0,2\}$, see Figure \ref{fig:coordinates}. The pencil of conics tangent to $\ll_{rp_1}$ at $r$ and to $\cc_1$ at $p'$ is generated by $\ll_{rp_1}+\ll_{p'}=\{(\alpha y-z)(2y-x-z)=0\}$ and by $2\ll_{rp'}=\{((1-\alpha)x+\alpha y-z)^2=0\}$. Its unique member passing through $p=[0:1:0]$ is given by $\cc_{2}\de \{\alpha(\alpha y- z)(2y-x-z)=2((1-\alpha)x+\alpha y-z)^{2}\}$. 
	
	\ref{item:A1A2_q-cn_31} Substituting a parametrization $u\mapsto [1:u:u^2]$ of $\cc_1\setminus \{p_2\}$ to the equation of $\cc_2$, we get that $(\cc_{2}\cdot \cc_{1})_{p'}=3$ if and only if $\alpha=\tfrac{1}{6}(9\pm\sqrt{-15})$. Thus we get two possible configurations in $\tref{\cP}{def:A1A2_q-cn_31}$, as claimed.
	
	\ref{item:A1A2_q-cn_22} As before, substituting a parametrization of $\cc_1$ to the equation of $\cc_2$, we get that $\cc_2$ is tangent to $\cc_1$ at some $p''\neq p'$ if and only if $\alpha\in\{\alpha_{-},\alpha_{+}\}$, where $\alpha_{\pm}=-1\pm\sqrt{5}$. 
	
	Denote by $r_{\pm}$, $p''_{\pm}$ the corresponding points, and by $\pp_{\pm}$ the corresponding configurations $\cc_{1}+\cc_{2}+\ll_{1}+\ll_{2}+\ll_{rp'}$, used in \ref{def:A1A2_q-cn_22}. Then $r_{\pm}=[0:1:\alpha_{\pm}]$ and $p_{\pm}''=[1:\beta_{\pm}:\beta_{\pm}^2]$, where $\beta_{\pm}=\tfrac{1}{2}\cdot (-3\pm\sqrt{5})$. We note that $\beta_{+}\cdot \alpha_{-}=\alpha_{+}$. 
	
	Put $h[x:y:z]=[x:\beta_{+}\cdot y:\beta_{+}^2\cdot z]$. Then $h$ maps $(p,p_1,p_2)$ to itself, $h(p')=p''_{+}$ and  $h(r_{-})=[0:\beta_{+}:\beta_{+}^2\alpha_{-}]=[0:1:\beta_{+}\alpha_{-}]=[0:1:\alpha_{+}]=r_{+}$. Thus $h(\pp_{-})=\pp_{+}$.
	
	We have shown that the unmarked configuration $\pp_{-}\subseteq \P^2$ is unique up to a projective equivalence, and is realized over $\Q(\sqrt{5})$. We claim that the two possible markings on $\pp_{-}$, corresponding to the orders of $\{p',p''_{-}\}$, are not projectively equivalent. Suppose there is $g\in \Aut(\P^2,\pp_{-})$ such that $g(p')=p''_{-}$. Then $g$ maps $(p,p_1,p_2,p')$ to $(p,p_1,p_2,p''_{-})$, so $g[x:y:z]=[x:\beta_{-} y:\beta_{-}^2 z]$. But then $g(r_{-})=[0:1:\alpha_{-} \beta_{-}]\neq r_{-}$, a contradiction.
	
	Hence $\#\tref{\cP}{def:A1A2_q-cn_22}=2$. It remains to prove that the two marked configurations in $\tref{\cP}{def:A1A2_q-cn_22}$, are conjugate over $\Q(\sqrt{5})$. Consider a diagonal coordinate change $\phi\in \Aut(\P^2)$ given by $\phi[x:y:z]=[-\alpha_{-} x:y:\alpha_{+} z]$. Then $\phi$ maps $(p,p_1,p_2)$ to itself and $\phi(r_{-})=[0:1:1]$. Moreover, $\phi(p')=[-\alpha_{-}:1:\alpha_{+}]$, $\phi(p''_{-})=[-\alpha_{-}:\beta_{-}:\beta_{-}^2\alpha_{+}]=[-\alpha_{-}\beta_{-}^{-1}:1:\beta_{-}\alpha_{+}]$. We have $\beta_{-}^{-1}=\beta_{+}$, so $\alpha_{-}\beta_{-}^{-1}=\alpha_{-}\beta_{+}=\alpha_{+}$, and similarly $\alpha_{+}\beta_{-}=\alpha_{-}$. Thus $\phi(p''_{-})=[-\alpha_{+}:1:\alpha_{-}]$ is the conjugate of $\phi(p')$, and the claim follows.
\end{proof}

\begin{figure}[htbp]
	\captionsetup{width=.3\linewidth}
	\centering
	\begin{minipage}[tb]{0.7\textwidth}
		\begin{subfigure}{.44\textwidth}
			\centering
			\begin{tikzpicture}
				\path[use as bounding box] (-1.8,-1) rectangle (2.1,2.2);
				\draw[name path=c1] (0,0) circle (1);
				\coordinate (P) at (0,2); 
				\node at ($(P)-(0,0.3)$) {\small{$p$}}; \filldraw (P) circle (0.06);
				\coordinate (P') at (0,1);
				\node at ($(P')-(0,0.3)$) {\small{$p'$}}; \filldraw (P') circle (0.06);
				\coordinate (P1) at (-0.866,0.5);
				\node at ($(P1)+(-0.1,0.25)$) {\small{$p_1$}}; \filldraw (P1) circle (0.06);
				\coordinate (P2) at (0.866,0.5);
				\node at ($(P2)+(0.1,0.25)$) {\small{$p_2$}}; \filldraw (P2) circle (0.06);
				\node at (-1.5,-0.2) {\small{$\ll_{1}$}};
				\node at (1.5,-0.2) {\small{$\ll_{2}$}};
				\node at (-0.7,-0.2) {\small{$\cc_{1}$}};
				\draw[add= 0.1 and 0.8, name path = L1] (P) to (P1);
				\draw[add= 0.1 and 1, name path = L2] (P) to (P2);
				\coordinate (R) at ($(P)!1.8!(P2)$);
				\node at ($(R)+(0.15,0.1)$) {\small{$r$}}; \filldraw (R) circle (0.06);
				\draw[add= 0.2 and 0.1, name path = L] (R) to (P1);
				\draw[blue] (P) to[out=0,in=-20] (R) to[out=160,in=0] (P') to[out=180,in=-90] (-0.5,1.5) to[out=90,in=180] (P);
				\node at ($(P2)+(0.9,0.25)$) {\small{\blue{$\cc_2$}}};
			\end{tikzpicture}
			\caption{yields \ref{def:A1A2_q-cn_31}, cf.\ Figure \ref{fig:A1A2_q-cn_31}}
			\label{fig:A1A2_q-cn_31_conf}
		\end{subfigure}
		\begin{subfigure}{.44\textwidth}
			\centering
			\begin{tikzpicture}
				\path[use as bounding box] (-2.2,-1) rectangle (2.1,2.2);
				\draw[name path=c1] (0,0) ellipse (1.5 and 1);
				\coordinate (P) at (0,2); 
				\node at ($(P)-(0,0.3)$) {\small{$p$}}; \filldraw (P) circle (0.06);
				\coordinate (P') at (0,1);
				\node at ($(P')-(0,0.25)$) {\small{$p'$}}; \filldraw (P') circle (0.06);
				\coordinate (P'') at (-0.5,0.943);
				\node at ($(P'')-(0,0.25)$) {\small{$p''$}}; \filldraw (P'') circle (0.06);
				\coordinate (P1) at (-1.2,0.6);
				\node at ($(P1)-(-0.2,0.2)$) {\small{$p_1$}}; \filldraw (P1) circle (0.06);
				\coordinate (P2) at (1.2,0.6);
				\node at ($(P2)-(0.15,0.2)$) {\small{$p_2$}}; \filldraw (P2) circle (0.06);
				\node at (-2,0) {\small{$\ll_{1}$}};
				\node at (2,-0) {\small{$\ll_{2}$}};
				\node at (1.4,-0.7) {\small{$\cc_{1}$}};
				\draw[add= 0.1 and 0.7, name path = L1] (P) to (P1);
				\draw[add= 0.1 and 0.7, name path = L2] (P) to (P2);
				\coordinate (R) at ($(P)!0.7!(P2)$);
				\node at ($(R)+(0.3,-0.1)$) {\small{$r$}}; \filldraw (R) circle (0.06);
				\draw[add= 0.3 and 0.3, name path = L] (R) to (P1);
				\draw[blue] (P) to[out=0,in=10] (R) to[out=-170,in=0] (0.2,1.2) to[out=180,in=0] (P') to[out=180,in=0] (-0.2,1.2) to[out=180,in=10] (P'') to[out=-170,in=-90] (-1,1.2) to[out=90,in=180] (P);
				\node at (1.1,1.5) {\small{\blue{$\cc_2$}}};
			\end{tikzpicture}
			\caption{yields \ref{def:A1A2_q-cn_22}, cf.\ Figure \ref{fig:A1A2_q-cn_22}}
			\label{fig:A1A2_q-cn_22_conf}		
		\end{subfigure}
	\vspace{-.5em}
		\caption{Configuration \ref{conf:A1A2_q-cn}}
		\label{fig:A1A2_q-cn_conf}
	\end{minipage}
	\begin{minipage}[tb]{.28\textwidth}
		\centering
		\begin{tikzpicture}
			\path[use as bounding box] (-2,-1.5) rectangle (3,2.2);
			\draw[name path=c1] (0,0) circle (1);
			\coordinate (P) at (0,2); 
			\node at ($(P)-(0.2,0)$) {\small{$p$}}; \filldraw (P) circle (0.06);
			\coordinate (P1) at (-0.866,0.5);
			\node at ($(P1)+(-0.25,-0.1)$) {\small{$p_1$}}; \filldraw (P1) circle (0.06);
			\coordinate (P2) at (0.866,0.5);
			\node at ($(P2)+(0.3,-0.1)$) {\small{$p_2$}}; \filldraw (P2) circle (0.06);
			\node at (-0.7,1.2) {\small{$\ll_{1}$}};
			\node at (0.7,1.2) {\small{$\ll_{2}$}};
			\node at (1.1,-0.5) {\small{$\cc_{1}$}};
			\draw[add= 0.1 and 1.1, name path = L1] (P) to (P1);
			\draw[add= 0.1 and 1.4, name path = L2] (P) to (P2);
			\draw[add= 0.6 and 0.6, name path = L] ($(P1)-(0,1.4)$) to ($(P2)-(0,1.4)$);
			\path [name intersections={of=L and L1}] (intersection-1) coordinate (S);
			\node at ($(S)+(0,0.2)$) {\small{$s$}}; \filldraw (S) circle (0.06);
			\path [name intersections={of=L and c1}] (intersection-1) coordinate (P')
			(intersection-2) coordinate (T);
			\node at ($(P')+(0.1,0.25)$) {\small{$p'$}}; \filldraw (P') circle (0.06);
			\node at ($(T)+(0,0.2)$) {\small{$t$}}; \filldraw (T) circle (0.06);
			\draw[add= 0.1 and 0.1, name path = LQ] (S) to (P2);
			\path [name intersections={of=LQ and c1}] (intersection-2) coordinate (Q);
			\node at ($(Q)+(0.05,0.25)$) {\small{$q$}}; \filldraw (Q) circle (0.06);
			\draw[add= 0.5 and 1.4, name path = LR] (Q) to (T);
			\path [name intersections={of=LR and L2}] (intersection-1) coordinate (R);
			\node at ($(R)+(0.25,-0.25)$) {\small{$r$}}; \filldraw (R) circle (0.06);
			\draw[blue] ($(P)+(0,0.2)$) -- (P) to[out=-90,in=60] (Q) to[out=-120,in=90] ($(Q)-(0.1,0.1)$) to[out=-90,in=150] (P'.center) to[out=-30,in=0] ($(P')-(0.4,0.4)$) to[out=180,in=-45] (S) to[out=135,in=0] ($(S)+(-0.4,0.2)$);
			\draw[blue] ($(R)+(0.5,-0.35)$) [partial ellipse=160:70: 1 and 0.4];
			\node at ($(R)+(0.7,0.2)$) {\small{\blue{$\cc_2$}}};
			\node at (-0.1,0.5) {\small{\blue{$\cc_2$}}};
			\draw[add= 0.15 and 0.2, name path = LSR] (S) to (R);
			\draw ($(T)-(0,0.6)$) [partial ellipse=160:0: 0.8 and 0.4];
			\node at ($(T)-(0.55,0.6)$) {\small{$\cc_1$}};
		\end{tikzpicture}
		\caption{Configuration \ref{conf:A1A2_2c1n} yields \ref{def:A1A2_2c1n}, cf.\ Figure \ref{fig:A1A2_2c1n}}
		\label{fig:A1A2_2c1n_conf}
	\end{minipage}
\end{figure}

\begin{conf}[Figure \ref{fig:A1A2_2c1n_conf}]\label{conf:A1A2_2c1n}
	Fix a point $s\in \ll_{1}\setminus \{p,p_{1}\}$ and write $\ll_{sp_{2}}\cap\cc_{1}=\{p_{2},q\}$. Let $\cc_{2}$ be the unique conic passing through $p,q,s$ and tangent to $\cc_{1}$ at $p'$. Write $\cc_{2}\cap \ll_{2}=\{p,r\}$ and $\cc_{1}\cap\ll_{rq}=\{t,q\}$. Then there exists a unique point $s$ such that the points $p'$, $s$, $t$ are colinear. For such $s$, the line $\ll_{sr}$ meets $\cc_1$ in two points.
	
	We have $\#\tref{\cP}{def:A1A2_2c1n}=2$, and $\tref{\cP}{def:A1A2_2c1n}$ is realized over $\Q(\sqrt{-7})$.
\end{conf}
\begin{proof}
	We have $s=[\alpha:1:0]$ for some $\alpha\in \C^{*}$. We have $\ll_{sp_2}=\{x=\alpha y\}$, hence $q=[\alpha^{2}:\alpha:1]$. The pencil of conics passing through $p,s$ and tangent to $\cc_1$ at $p'$ is generated by $\ll_{1}+\ll_{p'}=\{z(2y-x-z)=0\}$ and $\ll_{pp'}+\ll_{sp'}=\{(z-x)(x-\alpha y-(1-\alpha)z)=0\}$, see Figure \ref{fig:coordinates}. Its unique member passing through $q$ is $\cc_{2}=\{(\alpha+1)z(2y-x-z)=(z-x)(x-\alpha y-(1-\alpha)z)\}$. Hence $r=[0:2\alpha:3\alpha+2]$, and therefore $\ll_{rq}=\{(3\alpha+2)y=3x+2\alpha z\}$, $t=[4:6:9]$. Thus $p'$, $s$ and $t$ are colinear if and only if $\alpha=\tfrac{5}{3}$.
	
	For $\alpha=\frac{5}{3}$ we have $s=[5:3:0]$, $r=[0:10:21]$, so $\ll_{sr}=\{105y=63x+50z\}$. This line meets $\cc_1=\{y^2-xz\}$ in two points $[1:\beta:\beta^2]$, where $\beta=\frac{3}{20}(7\pm \sqrt{-7})$.
\end{proof}

\begin{conf}[Figure \ref{fig:F2_n1-node_conf}]\label{conf:F2_n1-cusp}
	Let $\ll_{p'}$ be the line tangent to $\cc_{1}$ at $p'$ and write $\ll_{p'}\cap \ll_{2}=\{r\}$. There are exactly two conics $\cc_{2}$ passing through $p$, $p_{1}$, tangent to $\ll_{p'}$ at $r$ and to $\cc_1$ at some $q\neq p_1$. They both satisfy $(\cc_{2}\cdot\cc_{1})_{q}=2$. We have $\#\tref{\cP}{def:F2_n1-cusp}=2$, and $\tref{\cP}{def:F2_n1-cusp}$ is realized over $\Q(\sqrt{5})$.
\end{conf}
\begin{proof}
	We have $r=[0:1:2]$, see Figure \ref{fig:coordinates}. The pencil of conics tangent to $\ll_{p'}$ at $r$ and passing through $p_{1}$, $p$ is generated by $\ll_{1}+\ll_{p'}$ and $\ll_{2}+\ll_{rp_1}$, so it is given by $\{\lambda z(2y-x-z)=\mu x(2y-z)\}_{[\lambda:\mu]\in \P^1}$. Substituting a parametrization $u\mapsto[u^2:u:1]$ of $\cc_1\setminus \{p_1\}$ to the above equation, we conclude that this pencil has two nondegenerate members tangent to $\cc_{1}\setminus \{p_1\}$. They correspond to $[\lambda:\mu]=[-(11\pm 5\sqrt{5}):2]$, and meet $\cc_1\setminus \{p_1\}$ at two points each, with multiplicities $2$, $1$.
\end{proof}

\begin{figure}[htbp]
	\centering
	\captionsetup{width=.4\linewidth}
	\begin{minipage}[b]{.45\textwidth}
		\centering
		\begin{tikzpicture}
			\path[use as bounding box] (-2,-1.2) rectangle (3,2.5);
			\draw[name path=c1] (0,0) circle (1);
			\coordinate (P) at (0,2); 
			\node at ($(P)+(0,0.3)$) {\small{$p$}}; \filldraw (P) circle (0.06);
			\coordinate (R) at (1.732,-1);
			\coordinate (R') at (-1.732,-1);
			\node at ($(R)+(0.1,0.2)$) {\small{$r$}}; \filldraw (R) circle (0.06);
			\coordinate (P') at (0,-1);
			\node at ($(P')+(0,0.3)$) {\small{$p'$}}; \filldraw (P') circle (0.06);
			\coordinate (Q) at (0,1);
			\node at ($(Q)+(0,0.2)$) {\small{$q$}}; \filldraw (Q) circle (0.06);
			\coordinate (P1) at (-0.866,0.5);
			\node at ($(P1)+(-0.3,0)$) {\small{$p_1$}}; \filldraw (P1) circle (0.06);
			\coordinate (P2) at (0.866,0.5);
			\node at ($(P2)+(0.3,0)$) {\small{$p_2$}}; \filldraw (P2) circle (0.06);
			\node at (-1.5,-0.2) {\small{$\ll_{1}$}};
			\node at (1.5,-0.2) {\small{$\ll_{2}$}};
			\node at (-0.7,-0.2) {\small{$\cc_{1}$}};
			\draw[add= 0.05 and 0.05, name path = L1] (P) to (R');
			\draw[add= 0.05 and 0.05, name path = L2] (P) to (R);
			\draw[add= 0.05 and 0.3, name path = LP'] (R') to (R);
			\node at (2.6,-0.8) {\small{$\ll_{p'}$}};
			\draw[blue] (P) to[out=180,in=135] (P1) to[out=-45,in=-120]
			($(Q)+(-0.4,-0.4)$)
			to[out=60,in=180]
			(Q) 
			to[out=0,in=100]
			($(Q)+(0.4,-0.4)$) 
			to[out=-80,in=180] (R) to[out=0,in=0] (P);
			\node at ($(P2)+(1.3,0)$) {\small\blue{{$\cc_{2}$}}};
		\end{tikzpicture}
		\caption{Configuration \ref{conf:F2_n1-cusp} yields \ref{def:F2_n1-cusp}, cf.\ Figure \ref{fig:F2_n1-cusp} and Example \ref{ex:7}}
		\label{fig:F2_n1-node_conf}
	\end{minipage}
	\begin{minipage}[b]{.45\textwidth}
		\centering
		\begin{tikzpicture}
			\path[use as bounding box] (-2,-1.5) rectangle (4,2.5);
			\draw[name path=c1] (0,0) circle (1);
			\coordinate (P) at (0,2); 
			\node at ($(P)+(0,0.3)$) {\small{$p$}}; \filldraw (P) circle (0.06);
			\coordinate (R) at (1.732,-1);
			\coordinate (R') at (-1.732,-1);
			\node at ($(R)+(0.1,0.2)$) {\small{$r$}}; \filldraw (R) circle (0.06);
			\coordinate (P') at (0,1);
			\node at ($(P')+(0,0.2)$) {\small{$p'$}}; \filldraw (P') circle (0.06);
			\coordinate (P1) at (-0.866,0.5);
			\node at ($(P1)+(-0.3,0)$) {\small{$p_1$}}; \filldraw (P1) circle (0.06);
			\coordinate (P2) at (0.866,0.5);
			\node at ($(P2)+(0.25,0)$) {\small{$p_2$}}; \filldraw (P2) circle (0.06);
			\node at (-1.6,-0.4) {\small{$\ll_{1}$}};
			\node at (1.5,-0.2) {\small{$\ll_{2}$}};
			\node at (-0.8,0) {\small{$\cc_{1}$}};
			\draw[add= 0.05 and 0.05, name path = L1] (P) to (R');
			\draw[add= 0.05 and 0.05, name path = L2] (P) to (R);
			\draw[add= 0.05 and 0.6, name path = LR'] (R') to (R);
			\node at (3.8,-0.8) {\small{$\ll_{r}'$}};
			\coordinate (Q) at (2.8,-1);
			\node at ($(Q)+(0,0.2)$) {\small{$q$}}; \filldraw (Q) circle (0.06);
			\draw[add= 0.05 and 0.15, name path =L] (P) to (Q);
			\draw[blue] (P) to[out=180,in=135] (P1) to[out=-45,in=-120]
			($(P')+(-0.4,-0.4)$)
			to[out=60,in=180]
			(P') 
			to[out=0,in=120]
			($(0.96,-0.28)+(-0.05,0.1)$);
			\draw[blue] ($(0.96,-0.28)+(0.05,-0.1)$)
			to[out=-60,in=165]
			(R) to[out=-15,in=-165] (Q) to[out=15,in=0] (P);
			\node at ($(P2)+(2.1,0)$)  {\small\blue{{$\cc_{2}$}}};
			\draw[add= 0.45 and -0.1, name path =LR] (R) to ($(R)+(-3.732,1)$);
			\node at (0,-0.3) {\small{$\ll_{r}$}};
			\draw ($(P2)+(0.5,0)$) [partial ellipse=140:-60: 0.5 and 0.5];
			\node at ($(P2)+(1.25,0)$) {\small{$\cc_1$}};
		\end{tikzpicture}
		\caption{Configuration \ref{conf:F2_n1} yields \ref{def:F2_n1-node}, \ref{def:F2_n1-node-3}, cf.\ Figures \ref{fig:F2_n1-node}, \ref{fig:F2_n1-node-3}}
		\label{fig:F2_n1_conf}
	\end{minipage}
\end{figure}

\begin{conf}[Figure \ref{fig:F2_n1_conf}]\label{conf:F2_n1}
	Let $\cc_{2}$ be the unique conic meeting $\cc_{1}$ at $p'$, $p_{1}$ with multiplicity $3$, $1$ and passing through $p$. Write $\ll_{2}\cap\cc_{2}=\{p,r\}$. Let $\ll_{r}$ be the line tangent to $\cc_{2}$ at $r$, and let $\ll_{r}'\neq\ll_{2}$ be the other line tangent to $\cc_{1}$ and passing through $r$.  Write $\ll_{r}'\cap \cc_{2}=\{q,r\}$.
	\begin{enumerate}
		\item \label{item:node} The line $\ll_{r}$ meets $\cc_1$ in two points. We have $\#\tref{\cP}{def:F2_n1-node}=2$, and $\tref{\cP}{def:F2_n1-node}$ is realized over $\Q(\sqrt{-15})$. 
		\item \label{item:node-3} The line $\ll_{pq}$ meets $\cc_1$ in two points. We have $\#\tref{\cP}{def:F2_n1-node-3}=2$, and $\tref{\cP}{def:F2_n1-node-3}$ is realized over $\Q(\sqrt{5})$.
	\end{enumerate}
\end{conf}
\begin{proof}
	 The pencil of conics passing through $p,p_1$ and tangent to $\cc_1$ at $p'$ is generated by $\ll_{1}+\ll_{p'}=\{z(x+z-2y)=0\}$ and $\ll_{1}'+\ll_{pp'}=\{(y-z)(x-z)=0\}$, see Figure \ref{fig:coordinates}. Its unique member $\cc_2$ satisfying $(\cc_2\cdot \cc_1)_{p'}=3$ is given by $\{2z(x+z-2y)=(y-z)(x-z)\}$. Now $\ll_{2}$ meets $\cc_2$ in $p$ and $r=[0:1:3]$. 
	
	\ref{item:node} The line $\ll_{r}=\{8x=9y-3z\}$ meets $\cc_{1}$ at two points $[\alpha^2:\alpha:1]$, where $\alpha=\frac{1}{16}(9\pm\sqrt{-15})$. 
	
	\ref{item:node-3} We have $\ll_{r}'=\{12y=9x+4z\}$, $q=[20:27:36]$. The line  $\ll_{pq}=\{9x=5z\}$ meets $\cc_1$ at two points $[5:\pm 3\sqrt{5}:9]$.
\end{proof}

\begin{conf}[Figure \ref{fig:F2_n1-cusp-hor_conf}]\label{conf:F2_n1-cusp-hor}
	Write $\ll_{2}\cap \ll_{1}'=\{q\}$. 
	\begin{enumerate}
		\item\label{item:F2_32} There is a unique conic $\cc_{2}$ tangent to $\ll_{2}'$, such that $(\ll_2\cdot \cc_2)_{q}=1$, $(\cc_{1}\cdot\cc_{2})_{p_{1}}=2$ and $(\cc_{1}\cdot \cc_{2})_{s}=2$ for some $s\neq p_{1}$. We have $\#\tref{\cP}{def:F2_n1-cusp-hor_32}=1$, and $\tref{\cP}{def:F2_n1-cusp-hor_32}$ is realized over $\Q$.
		\item\label{item:F2_41} There are exactly two conics $\cc_{2}$ tangent to $\ll_{2}'$, such that $(\ll_2\cdot \cc_2)_{q}=1$ and $(\cc_{1}\cdot\cc_{2})_{p_{1}}=3$. We have $\#\tref{\cP}{def:F2_n1-cusp-hor_41}=2$, and $\tref{\cP}{def:F2_n1-cusp-hor_41}$ is realized over $\Q(\imath)$.
	\end{enumerate}
\end{conf}
\begin{proof}
	 We have $q=[0:1:1]$, see Figure \ref{fig:coordinates}. Choose a point $r\in \ll_{2}'\setminus (\ll_1\cup\cc_1)$, so $r=[1:1:\alpha]$ for some $\alpha\in \C\setminus \{0,1\}$. The pencil of conics tangent to $\ll_{1}$ at $p_{1}$ and to $\ll_{2}'$ at $r$ is generated by $\ll_{1}+\ll_{2}'=\{z(y-x)=0\}$ and $2\ll_{rp_{1}}=\{(\alpha y-z)^2=0\}$. Its unique member passing through $q=[0:1:1]$ is $\cc_2=\{(\alpha-1)^{2} z(y-x)=(\alpha y-z)^{2}\}$. We have $\cc_2\cap \ll_2=\{q,[0:1:\alpha^2]\}$, so $(\cc_2\cdot \ll_{2})_{q}=1$. 
	
	Substituting a parametrization $u\mapsto [u^2:u:1]$ of $\cc_1\setminus \{p_1\}$ to the equation of $\cc_2$, we see that $\cc_2$ is tangent to $\cc_{1}\setminus \{p_{1}\}$ if and only if $\alpha=-3$, which proves \ref{item:F2_32}; and $\cc_2$ satisfies $(\cc_2\cdot \cc_1)_{p_1}=3$ if and only if $\alpha=\tfrac{1}{2}(1\pm \imath)$, which proves \ref{item:F2_41}.
\end{proof}

\begin{figure}[htbp]
	\centering
	\begin{subfigure}{.45\textwidth}
		\centering
		\begin{tikzpicture}
			\path[use as bounding box] (-1,-1.2) rectangle (3.2,1.6);
			\draw (0,0) circle (1);
			\coordinate (P1) at (0,1);
			\node at (-0.8,0) {\small{$\cc_1$}};
			\node at ($(P1)-(0,0.3)$) {\small{$p_1$}}; \filldraw (P1) circle (0.06);
			\coordinate (P2) at (0,-1);
			\node at ($(P2)+(0,0.25)$) {\small{$p_2$}}; \filldraw (P2) circle (0.06);
			\coordinate (P') at (1,0);
			\node at ($(P')+(0.3,0)$) {\small{$p'$}}; \filldraw (P') circle (0.06);
			\coordinate (Q) at (2,-1);
			\node at ($(Q)+(0,0.25)$) {\small{$q$}}; \filldraw (Q) circle (0.06);
			\draw (-0.2,1.2) -- (2.2,-1.2);
			\node at (1.7,-0.4) {\small{$\ll_{1}'$}};
			\draw (-0.2,-1.2) -- (2.2,1.2);
			\node at (1.7,0.4) {\small{$\ll_{2}'$}};
			\draw (-1,-1) -- (3,-1);
			\node at (2.8,-0.8) {\small{$\ll_{2}$}};
			\draw[blue] (-0.6,1.4) to[out=-30,in=180] (P1) to[out=0,in=180] (0.3,1.2) to[out=0,in=120] (0.64,0.8) to[out=-60,in=-135] (1.8,0.8) to[out=45,in=180] (2.6,1.6) to[out=0,in=45] (Q) to[out=-135,in=-45] (1.4,-1) to[out=135,in=0] (1,-0.8);
			\node at (3,0) {\small{\blue{$\cc_2$}}};
			\filldraw (0.64,0.8) circle (0.06);
			\node at (0.8,0.9) {\small{$s$}};
		\end{tikzpicture}
		\caption{yields \ref{def:F2_n1-cusp-hor_32}, cf.\ Figure \ref{fig:F2_n1-cusp-hor_32}}
		\label{fig:F2_n1-cusp-hor_32_conf}
	\end{subfigure}
	\begin{subfigure}{.45\textwidth}
		\centering
		\begin{tikzpicture}
			\path[use as bounding box] (-1,-1.2) rectangle (3.2,1.6);
			\draw (0,0) circle (1);
			\coordinate (P1) at (0,1);
			\node at (-0.8,0) {\small{$\cc_1$}};
			\node at ($(P1)-(0,0.3)$) {\small{$p_1$}}; \filldraw (P1) circle (0.06);
			\coordinate (P2) at (0,-1);
			\node at ($(P2)+(0,0.25)$) {\small{$p_2$}}; \filldraw (P2) circle (0.06);
			\coordinate (P') at (1,0);
			\node at ($(P')+(0.3,0)$) {\small{$p'$}}; \filldraw (P') circle (0.06);
			\coordinate (Q) at (2,-1);
			\node at ($(Q)+(0,0.25)$) {\small{$q$}}; \filldraw (Q) circle (0.06);
			\draw (-0.2,1.2) -- (2.2,-1.2);
			\node at (1.7,-0.4) {\small{$\ll_{1}'$}};
			\draw (-0.2,-1.2) -- (2.2,1.2);
			\node at (1.7,0.4) {\small{$\ll_{2}'$}};
			\draw (-1,-1) -- (3,-1);
			\node at (2.8,-0.8) {\small{$\ll_{2}$}};
			\draw[blue] (-0.6,1.2) to[out=0,in=180] (P1) to[out=0,in=180] (0.8,0.64) to[out=0,in=-135] (1.9,0.9) to[out=45,in=180] (2.6,1.6) to[out=0,in=45] (Q) to[out=-135,in=-45] (1.4,-1) to[out=135,in=0] (1,-0.8);
			\node at (3,0) {\small{\blue{$\cc_2$}}};
		\end{tikzpicture} 
		\caption{yields \ref{def:F2_n1-cusp-hor_41}, cf.\ Figure \ref{fig:F2_n1-cusp-hor_41}}
		\label{fig:F2_n1-cusp-hor_41_conf}
	\end{subfigure}
\vspace{-.5em}
	\caption{Configuration \ref{conf:F2_n1-cusp-hor}}
	\label{fig:F2_n1-cusp-hor_conf}
\end{figure}

\begin{conf}[Figure \ref{fig:F2-hor-ccc-41_conf}]\label{conf:F2-hor-ccc-41}
	Write $\cc_{1}\cap\ell_{pp'}=\{p',p''\}$. Fix a line $\ll\neq \ll_{pp'}$ passing through $p''$, which is not tangent to $\cc_1$ and does not pass through $p_1$ or $p_2$. Write $\ll\cap \ll_{1}=\{q_1\}$, $\ll\cap \ll_{2}=\{q_2\}$. Let $\cc_2$ be the unique conic passing through $p$, $q_1$, $q_2$ and tangent to $\cc_{1}$ at $p'$. Then, up to the action of $\tau\in \Aut(\P^2)$, there is exactly one line $\ll$ such that $\#(\cc_1\cap \cc_2)=2$. In this case, $(\cc_1\cdot \cc_{2})_{p'}=3$.
	
	We have $\#\tref{\cP}{def:F2-hor-ccc-41}=1$, and $\tref{\cP}{def:F2-hor-ccc-41}$ is realized over $\Q(\sqrt{3})$.
\end{conf}
\begin{proof}
	 We have $\ll_{pp'}=\{x=z\}$, so $p''=[1:-1:1]$ and $\ll=\{(\alpha+1)x+2y+(1-\alpha)z=0\}$ for some $\alpha\in \C\setminus \{0,1,-1\}$. We have $q_1=[-2:\alpha+1:0]$, $q_2=[0:1-\alpha:-2]$. The pencil of conics passing through $p,p',q_1,q_2$ is generated by $\ll_{1}+\ll_{q_2p'}=\{z((\alpha-3)x+2y+(1-\alpha)z)=0\}$ and $\ll_{2}+\ll_{q_1p'}=\{x((\alpha+1)x+2y-(\alpha+3)z)=0\}$. Its unique member tangent to $\cc_1$ at $p'$ is
	\begin{equation*}
		\cc_2=
		\{
		(\alpha+2)\cdot z((\alpha-3)x+2y+(1-\alpha)z)
		=
		(\alpha-2)\cdot x((\alpha+1)x+2y-(\alpha+3)z)
		\}
	\end{equation*} 
	We have $\#(\cc_1\cap \cc_2)=2$ if and only if $\alpha=\pm 2$ or $\pm \sqrt{3}$. In the first case, $\cc_2$ is reducible. In the second case, we have $(\cc_2\cdot \cc_1)_{p'}=3$, as needed. The two choices of $\alpha$ are equivalent by the action of $\tau$.
\end{proof}

\begin{figure}[htbp]
	\captionsetup{width=.4\linewidth}
	\begin{minipage}[b]{.4\textwidth}
		\centering
		\begin{tikzpicture}
			\path[use as bounding box] (-1.8,-1.2) rectangle (2.2,2.2);
			\draw[name path=c1] (0,0) circle (1);
			\coordinate (P) at (0,2); 
			\node at ($(P)-(0.3,0)$) {\small{$p$}}; \filldraw (P) circle (0.06);
			\coordinate (P') at (0,-1);
			\node at ($(P')+(0.2,0.2)$) {\small{$p'$}}; \filldraw (P') circle (0.06);
			\coordinate (P1) at (-0.866,0.5);
			\node at ($(P1)+(-0.2,0.2)$) {\small{$p_1$}}; \filldraw (P1) circle (0.06);
			\coordinate (P2) at (0.866,0.5);
			\node at ($(P2)+(0.2,0.2)$) {\small{$p_2$}}; \filldraw (P2) circle (0.06);
			\node at (-1.4,-0.9) {\small{$\ll_{1}$}};
			\node at (1.5,-0.9) {\small{$\ll_{2}$}};
			\node at (2,-0.6) {\small{$\ll$}};
			\node at (0.6,-0.5) {\small{$\cc_{1}$}};
			\draw[add= 0.1 and 1, name path = L1] (P) to (P1);
			\draw[add= 0.1 and 1, name path = L2] (P) to (P2);
			\draw[add= 0.05 and 0.05, name path = L''] (P) to (P');
			\path [name intersections={of=L'' and c1}] (intersection-1) coordinate (P'');
			\node at ($(P'')+(0.2,0.2)$) {\small{$p''$}}; \filldraw (P'') circle (0.06);
			\coordinate (Q1) at ($(P1)!0.57!(P)$); 
			\node at ($(Q1)+(-0.2,-0.1)$) {\small{$q_1$}}; \filldraw (Q1) circle (0.06);
			\draw[add=  1 and 5.5, name path = L] (Q1) to (P'');
			\path [name intersections={of=L and L2}] (intersection-1) coordinate (Q2);
			\node at ($(Q2)+(0.3,0)$) {\small{$q_2$}}; \filldraw (Q2) circle (0.06);
			\draw[blue] (P') to[out=180,in=-110] (Q1) to[out=70,in=200] (P) to[out=20,in=70] (Q2) to[out=-110,in=0] (P');
			\node at ($(Q2)+(0.1,1.5)$) {\small{\blue{$\cc_{2}$}}};
		\end{tikzpicture}
		\caption{Configuration \ref{conf:F2-hor-ccc-41} yields \ref{def:F2-hor-ccc-41}, cf.\ Figure \ref{fig:F2-hor-ccc-41}}
		\label{fig:F2-hor-ccc-41_conf}
	\end{minipage}
	\begin{minipage}[b]{.5\textwidth}
		\centering
		\begin{tikzpicture}
			\path[use as bounding box] (-2.2,-2) rectangle (4.5,2.2);
			\draw[name path=c1] (0,0) circle (1);
			\draw[name path=c2, blue] (0,0) circle (2);
			\coordinate (P) at (0,2); 
			\node at ($(P)-(0,0.3)$) {\small{$p$}}; \filldraw (P) circle (0.06);
			\coordinate (R1) at (-1.732,-1); 
			\node at ($(R1)+(-0.3,-0.2)$) {\small{$r_1$}}; \filldraw (R1) circle (0.06);
			\coordinate (R2) at (1.732,-1);
			\node at ($(R2)+(0.3,-0.2)$) {\small{$r_2$}}; \filldraw (R2) circle (0.06);
			\coordinate (P') at (0,-1);
			\node at ($(P')-(0,0.3)$) {\small{$p'$}}; \filldraw (P') circle (0.06);
			\coordinate (P1) at (-0.866,0.5);
			\node at ($(P1)+(-0.2,0.2)$) {\small{$p_1$}}; \filldraw (P1) circle (0.06);
			\coordinate (P2) at (0.866,0.5);
			\node at ($(P2)+(0.2,0.2)$) {\small{$p_2$}}; \filldraw (P2) circle (0.06);
			\node at (-1.5,-0.2) {\small{$\ll_{1}$}};
			\node at (1.5,-0.2) {\small{$\ll_{2}$}};
			\node at (2.2,-0.2) {\small{\blue{$\cc_{2}$}}};
			\node at (1.1,-0.6) {\small{$\cc_{1}$}};
			\draw[add= 0.05 and 0.05, name path = L1] (P) to (R1);
			\draw[add= 0.05 and 0.05, name path = L2] (P) to (R2);
			\draw[add= 0.05 and 0.05, name path = LP'] (R1) to (R2);
			\node at (0.9,-1.2) {\small{$\ll_{p'}$}};
			\draw (4,2.2) -- (4,-2);
			\draw (4,1.3) [partial ellipse=140:40: 0.5 and 0.5];
			\draw[blue] (4,2.3) [partial ellipse=-140:-40: 0.5 and 0.5];
			\node at (3.5,1.4) {\small{$\cc_1$}};
			\node at (3.5,2.1) {\small{\blue{$\cc_2$}}};
			\draw (4,0) [partial ellipse=140:40: 0.5 and 0.5];
			\draw[blue] (4,1) [partial ellipse=-140:-40: 0.5 and 0.5];
			\node at (3.5,0.2) {\small{$\cc_1$}};
			\node at (3.5,0.8) {\small{\blue{$\cc_2$}}};
			\draw (3.7,-0.7) -- (4.5,-0.7);
			\node at (3.5,-0.7) {\small{$\ll_{1}$}};
			\draw (3.7,-1.2) -- (4.5,-1.2);
			\node at (3.5,-1.2) {\small{$\ll_{2}$}};
			\draw (3.7,-1.7) -- (4.5,-1.7);
			\node at (3.5,-1.7) {\small{$\ll_{p'}$}};
			\node at (4.2,-0.2) {\small{$\ll$}};
			\filldraw (4,1.8) circle (0.06); \node at (4.2,1.6) {\small{$q_1$}};
			\filldraw (4,0.5) circle (0.06); \node at (4.2,0.2) {\small{$q_2$}};
		\end{tikzpicture}
		\caption{Configuration \ref{conf:P2n1-cuspidal} yields \ref{def:P2n1-cuspidal}, \ref{def:P2n3}, cf.\ Figures \ref{fig:P2n1-cuspidal}, \ref{fig:P2n3}.}
		\label{fig:P2n1-cuspidal_conf}
	\end{minipage}
\end{figure}

\begin{conf}[Figure \ref{fig:P2n1-cuspidal_conf}] \label{conf:P2n1-cuspidal}
	Write $p_3=p'$, $\ll_{3}=\ll_{p'}$ and $\{r_i\}=\ll_{j}\cap \ll_{k}$ for $\{i,j,k\}=\{1,2,3\}$. There is a unique conic $\cc_{2}$ passing through $r_1,r_2,r_3$ and meeting $\cc_{1}$ in exactly two points, say $q_{1}, q_{2}$. 
	
	Moreover, $\tau(\cc_{2})=\cc_{2}$ and $\tau(q_{1})=q_{2}$, so  $(\cc_{1}\cdot\cc_{2})_{q_{1}}=(\cc_{1}\cdot\cc_{2})_{q_{2}}=2$ and $\#\tref{\cP}{def:P2n1-cuspidal}=1$. The configuration $\tref{\cP}{def:P2n1-cuspidal}$ is realized over $\Q(\omega)$. Furthermore, $\#\tref{\cP}{def:P2n3}=1$ and $\tref{\cP}{def:P2n3}$ is realized over $\Q$. 
\end{conf}
\begin{proof}
	Write $\ll_{p_{1}r_{2}}\cap\ll_{p_{2}r_{1}}=\{q\}$ and let $\sigma\in \Aut\P^{2}$ be the automorphism mapping $(r_{1},r_{2},r_3,q)$ to $(r_3,r_{1},r_{2},q)$. Then the group $\langle \tau, \sigma\rangle\cong S_{3}$ permutes $(r_{1},r_{2},r_3)$. We have $\sigma(\cc_{1})=\cc_{1}$, so since $\sigma^{3}=\id$, $\sigma$ has two fixed points on $\cc_{1}$, say  $q_{1}$, $q_{2}$. Since $\tau$ acts non trivially on $\cc_{1}$, we have $\tau(q_{1})=q_{2}$. Let $\cc_{2}$ be the unique conic passing through  $q_{1}$, $q_{2}$, $r_{1}$, $r_{2}$, $r_3$. Then $\sigma(\cc_{2})=\cc_{2}$, so $\cc_{1}\cap \cc_{2}=\{q_{1},q_{2}\}$, hence $\cc_{2}$ is as in the statement.
	
	To see the uniqueness, suppose that $\cc_{2}'$ is another conic as in the statement. The pencil of conics generated by $\cc_{2}$ and $\cc_{2}'$ induces a morphism $\cc_{1}\to \P^{1}$ of degree $4$, ramified with index $2$ at $p_1,p_2,p_3,q_1,q_2$, and with indices $e_1'$, $e_2'$ satisfying $e_1'+e_2'=4$ at $\cc_1\cap \cc_2'$. This is a contradiction with the Hurwitz formula.
	
	The configuration $\tref{\pp}{def:P2n1-cuspidal}=\cc_1+\cc_2+\ll_1+\ll_2+\ll_3+\ll_{q_1q_2}$ is realized over $\Q(\omega)$ because the eigenvalues of $\sigma$ are $1,\omega,\omega^2$. All possible markings on $\tref{\pp}{def:P2n1-cuspidal}$ are equivalent by the action of $\langle \tau,\sigma \rangle$, so $\#\tref{\cP}{def:P2n1-cuspidal}=1$. The configuration $\tref{\pp}{def:P2n3}=\cc_1+\ll_1+\ll_2+\ll_3$ is a conic inscribed in a triangle; it is unique and realized over $\Q$.
\end{proof}

\begin{conf}[Figure \ref{fig:F2_n2-tangent_conf}]\label{conf:F2_n2-tangent}
	Let $\cc_2$ be the unique conic passing through $p_2$, which is tangent to $\ll_1$ at $p$ and to $\cc_1$ at $p'$. Then $(\cc_2\cdot \cc_{1})_{p'}=2$. We have $\#\tref{\cP}{def:F2_n2-tangent}=1$, and $\tref{\cP}{def:F2_n2-tangent}$ is realized over $\Q$. 
\end{conf}
\begin{proof}
	Clearly, $\cc_2$ is unique, and $\ll_1+\ll_2+\cc_1+\cc_2$ is realized over $\Q$. We need to show that $(\cc_2\cdot \cc_{1})_{p'}=2$. 
	The pencil of conics tangent to $\ll_{1}$ at $p$ and passing through $p_{2}$, $p'$ induces a map $\cc_{1}\to \P^{1}$ of degree two, which by Hurwitz formula is ramified only at the points corresponding to the degenerate members $\ll_{1}+\ll_{p_{2}p'}$ and $\ll_{2}+\ll_{pp'}$, so it is not ramified at the point corresponding to $\cc_{2}$.
\end{proof}

\begin{conf}[Figure \ref{fig:nodal-cubic_P2_conf}]\label{conf:nodal-cubic_P2}
	Write $\ll_{1}\cap \ll_{2}'=\{r_1\}$, $\ll_{2}\cap \ll_{1}'=\{r_2\}$.  
	Let $\ll\neq \ll_{1}$ be the line tangent to $\cc_{1}$ passing through $r_{1}$.
	The line $\ll_{r_1 r_2}$ meets $\cc_1$ in two points, which are interchanged by $\tau$.
	
	We have $\#\tref{\cP}{def:nodal-cubic-P2_un}=2$ and $\#\tref{\cP}{def:nodal-cubic_P2}=\#\tref{\cP}{def:C**_1}=\#\tref{\cP}{def:C**_1a}=1$. The configurations in  $\tref{\cP}{def:nodal-cubic-P2_un},\tref{\cP}{def:nodal-cubic_P2}$ are realized over $\Q(\omega)$, and $\tref{\cP}{def:C**_1},\tref{\cP}{def:C**_1a}$ are realized over $\Q$.
\end{conf}
\begin{proof}
	The configuration in $\tref{\cP}{def:C**_1a}$ consists of four lines in a general position, so it is clearly unique and realized over $\Q$. The configuration in $\tref{\cP}{def:C**_1}$ is obtained from it by adding a line through two singular points, so it is again unique and realized over $\Q$. It remains to prove the assertions about $\tref{\cP}{def:nodal-cubic-P2_un}$ and $\tref{\cP}{def:nodal-cubic_P2}$. 
	
	We have $r_1=[1:1:0]$, $r_2=[0:1:1]$, so $\ll_{r_1 r_2}=\{y=x+z\}$. This line intersects the conic $\cc_1=\{y^2=xz\}$ in two points $[\omega^{-1}:-1:\omega]$ and $[\omega:-1:\omega^{-1}]$, which are interchanged by $\tau\colon [x:y:z]\mapsto [z:y:x]$. Thus the marked configuration $\tref{\pp}{def:nodal-cubic_P2}\de \cc_1+\ll_1+\ll_2+\ll_1'+\ll_2'+\ll_{r_1r_2}$ is unique up to a projective equivalence, and realized over $\Q(\omega)$. We have $\ll=\{z=4(y-x)\}$ and $\ll\cap \cc_1=\{[1:2:4]\}$, so $\tref{\pp}{def:nodal-cubic-P2_un}\de \tref{\pp}{def:nodal-cubic_P2}+\ll$ is realized over $\Q(\omega)$, too, but since $\tau(\ll)\neq \ll$, the two markings on $\tref{\pp}{def:nodal-cubic-P2_un}$ are not projectively equivalent. Hence $\#\tref{\cP}{def:nodal-cubic-P2_un}=2$.
\end{proof}

\begin{conf}[Figure \ref{fig:P2n2_cuspidal_conf}]\label{conf:P2n2_cuspidal}
	Write $\ll_{pp'}\cap\cc_{1}=\{p',q\}$ and $\ll_{qp_{2}}\cap\ll_{1}=\{r\}$. We have $\#\tref{\cP}{def:P2n2_cuspidal}=\#\tref{\cP}{def:C**_2}=1$, and $\tref{\cP}{def:P2n2_cuspidal},\tref{\cP}{def:C**_2}$ are realized over $\Q$. \qed
\end{conf}

\begin{figure}[htbp]
	\captionsetup{width=.3\linewidth}
\begin{minipage}[b]{.32\textwidth}
	\centering
	\begin{tikzpicture}
		\path[use as bounding box] (-1.7,-1) rectangle (1.7,2.6);
		\draw[name path=c1] (0,0) circle (1);
		\coordinate (P) at (0,2); 
		\node at ($(P)-(0.2,0)$) {\small{$p$}}; \filldraw (P) circle (0.06);
		\coordinate (P') at (0,-1);
		\node at ($(P')+(0,0.3)$) {\small{$p'$}}; \filldraw (P') circle (0.06);
		\coordinate (P1) at (-0.866,0.5);
		\node at ($(P1)+(-0.2,0.1)$) {\small{$p_1$}}; \filldraw (P1) circle (0.06);
		\coordinate (P2) at (0.866,0.5);
		\node at ($(P2)+(0.3,0.1)$) {\small{$p_2$}}; \filldraw (P2) circle (0.06);
		\node at (-1.5,-0.2) {\small{$\ll_{1}$}};
		\node at (1.5,-0.2) {\small{$\ll_{2}$}};
		\node at (0.9,-0.8) {\small{$\cc_{1}$}};
		\draw[add= 0.4 and 0.8, name path = L1] (P) to (P1);
		\draw[add= 0.4 and 0.8, name path = L2] (P) to (P2);
		\draw[rotate around={-10:(0.3,0.8)}, blue] (0.3,0.8) ellipse (0.63 and 1.81);
		\node at ($(P)+(1.2,0)$) {\small{\blue{$\cc_2$}}};
	\end{tikzpicture}
	\caption{Configuration \ref{conf:F2_n2-tangent}, yields \ref{def:F2_n2-tangent}, cf.\ Figure \ref{fig:F2_n2-tangent}.}
	\label{fig:F2_n2-tangent_conf}
\end{minipage}
\begin{minipage}[b]{.33\textwidth}
	\centering
	\begin{tikzpicture}
		\path[use as bounding box] (-2.2,-1.3) rectangle (2.2,3.7);
		\draw[name path=c1] (0,0) circle (1);
		\coordinate (P) at (0,2); 
		\node at ($(P)+(0,0.3)$) {\small{$p$}}; \filldraw (P) circle (0.06);
		\coordinate (R1) at (-1.732,-1); 
		\node at ($(R1)+(-0.2,0.2)$) {\small{$r_1$}}; \filldraw (R1) circle (0.06);
		\coordinate (R2) at (1.732,-1);
		\coordinate (P1) at (-0.866,0.5);
		\node at ($(P1)+(-0.2,0.1)$) {\small{$p_1$}}; \filldraw (P1) circle (0.06);
		\coordinate (P2) at (0.866,0.5);
		\node at ($(P2)+(0.2,0.3)$) {\small{$p_2$}}; \filldraw (P2) circle (0.06);
		\node at (-0.5,1.5) {\small{$\ll_{1}$}};
		\node at (0.5,1.5) {\small{$\ll_{2}$}};
		\node at (1.1,-0.6) {\small{$\cc_{1}$}};
		\node at (0,0.3) {\small{$\ll_{2}'$}};
		\draw[add= 0.05 and 0.05, name path = L1] (P) to (R1);
		\draw[add= 0.55 and 0.05, name path = L2] (P) to (R2);
		\draw[add= 0.05 and 0.05, name path = L] (R1) to (R2);
		\draw[add= 0.05 and 0.05, name path=LP'] (R1) to (P2);
		\path [name intersections={of=c1 and LP'}] (intersection-2) coordinate (P'); 
		\node at ($(P')+(0.2,0.3)$) {\small{$p'$}}; \filldraw (P') circle (0.06);
		\draw[add= 0.7 and 3.2, name path=L1'] (P') to (P1);
		\node at (-0.6,2.3) {\small{$\ll_{1}'$}};
		\path [name intersections={of=L2 and L1'}] (intersection-1) coordinate (R2');
		\node at ($(R2')+(0.3,0)$) {\small{$r_2$}}; \filldraw (R2') circle (0.06); 
		\node at (0.9,-1.2) {\small{$\ll$}};
		\draw[add= 0.05 and 0.05, name path=L12] (R1) to (R2');
		\draw[name path=c1'] (-1.6,1.5) [partial ellipse=60:-120: 0.5 and 0.5];
		\node at (-1.6,1.2) {\small{$\cc_1$}};
	\end{tikzpicture}
	\caption{Configuration \ref{conf:nodal-cubic_P2} yields \ref{def:nodal-cubic-P2_un},  \ref{def:nodal-cubic_P2}, \ref{def:C**_1}, \ref{def:C**_1a}, cf.\ Figures \ref{fig:nodal-cubic-P2_un}, \ref{fig:nodal-cubic_P2}, \ref{fig:C**_1}, \ref{fig:C**_1a}.}
	\label{fig:nodal-cubic_P2_conf}
\end{minipage}
\begin{minipage}[b]{.32\textwidth}
	\centering
	\begin{tikzpicture}
		\path[use as bounding box] (-1.8,-1.4) rectangle (1.8,3.8);
		\draw[name path=c1] (0,0) circle (1);
		\coordinate (P) at (0,2); 
		\node at ($(P)+(0.2,0)$) {\small{$p$}}; \filldraw (P) circle (0.06);
		\coordinate (P') at (0,-1);
		\node at ($(P')+(0.3,-0.2)$) {\small{$p'$}}; \filldraw (P') circle (0.06);
		\coordinate (P1) at (-0.866,0.5);
		\node at ($(P1)+(-0.2,0.2)$) {\small{$p_1$}}; \filldraw (P1) circle (0.06);
		\coordinate (P2) at (0.866,0.5);
		\node at ($(P2)+(0.2,0.2)$) {\small{$p_2$}}; \filldraw (P2) circle (0.06);
		\node at (-1.5,-0.2) {\small{$\ll_{1}$}};
		\node at (1.5,-0.2) {\small{$\ll_{2}$}};
		\node at (0.9,-0.8) {\small{$\cc_{1}$}};
		\draw[add= 0.1 and 0.8, name path = L1] (P) to (P1);
		\draw[add= 1.1 and 0.8, name path = L2] (P) to (P2);
		\draw[add= 0.1 and 0.1, name path = L] (P) to (P');
		\path [name intersections={of=L and c1}] (intersection-1) coordinate (Q);
		\node at ($(Q)+(0.2,0.2)$) {\small{$q$}}; \filldraw (Q) circle (0.06);
		\draw[add= 0.9 and 0.35, name path = L'] (Q) to (P2);
		\path [name intersections={of=L' and L1}] (intersection-1) coordinate (R);
		\node at ($(R)+(-0.2,0.3)$) {\small{$r$}}; \filldraw (R) circle (0.06);
		\draw[add= 0.1 and 1.1] (P') to (R);
	\end{tikzpicture}
	\caption{Configuration \ref{conf:P2n2_cuspidal} yields \ref{def:P2n2_cuspidal}, \ref{def:C**_2} cf.\ Figures  \ref{fig:P2n2_cuspidal}, \ref{fig:C**_2}.}
	\label{fig:P2n2_cuspidal_conf}
\end{minipage}
\end{figure}

\section{Uniqueness and automorphisms}\label{sec:uniqueness}

 In this section we prove Corollaries \ref{cor:uniq}\ref{item:n} and \ref{cor:aut}. The first result counts the numbers of non-isomorphic \QHPs in Theorem \ref{CLASS} whose minimal log smooth completions $(X,D)$ share the same weighted graph of $D$. 
 Its proof relies on a simple observation that each surface in a tom Dieck--Petrie tower is obtained by a blowup whose center is uniquely determined by the geometry of the preceding surface. 

Nonetheless, the correspondence between isomorphism classes of $\Q$HPs and the combinatorial data 
they are constructed from, is not exactly one to one. Roughly speaking, at each step of the tom Dieck--Petrie algorithm, see Definition \ref{def:TDP}, we get the following ambiguities:
\begin{itemize}
	\item We choose $\pp\subseteq \P^{2}$ and $P\subseteq \Sing \pp$, which may not be unique up to a projective equivalence.
	\item We choose centers of expansions: they may depend not only on the geometry of $\pp$, but also on its marking, e.g.\ when two components of $\pp$ meet transversally in two points, and we choose one but not the other.
	\item Nonetheless, some of the above choices might be covered by automorphisms of $S$,  see Example \ref{ex:7_uniq}. 
\end{itemize}

In order to make the above discussion precise, we need to introduce some notation. We will illustrate it by explicit computations in Examples \ref{ex:7_bis}, \ref{ex:33} below. 

We keep Notation \ref{not:P}. In particular, $\tst{\cP}$ denotes the set of projective equivalence classes of marked configurations of lines and conics, whose combinatorial type is specified by the row $\rst$ of \tables. 

Fix a representative $\pp$ of a class in $\tst{\cP}$. As in Section \ref{sec:strategy}, let $\pi\colon X'\to \P^{2}$ be a minimal resolution of $\pp$; let $E'$ be the sum of $(-1)$-curves in $\pi^{-1}(P)$; and $D'=(\pi^{*}\pp)\redd-E'$. The combinatorial data in row in the last columns of $\rst$ specify centers and weights of an expansion $\psi\colon (X,D)\to(X',D')$. By Remark \ref{rem:expansion}\ref{item:1/v}, they uniquely determine the isomorphism class of $(X,D)$, hence of $S\de X\setminus D$. Thus we get a well defined map 
\begin{equation}\label{eq:Phi}
\Phi\colon \tst{\cP} \ni \pp \mapsto [S] \in \{\mbox{$\Q$-homology planes}\}/[\mbox{isomorphism}].
\end{equation}
To prove Corollary \ref{cor:uniq}\ref{item:n}, we will show that $\#\Phi(\tst{\cP})=\tst{\ngr}$.
\smallskip

Fix $[S]\in \Phi(\tst{\cP})$, so $S$ is a \QHP whose (unique) log smooth completion is constructed by $\theta=\pi\circ\psi\colon (X,D)\to (\P^2,\pp)$ as above. By our construction, $E\de (\theta^{*}\pp)\redd-D$ is a sum of $(-1)$-curves.

Let $\GAut(D)$ be the automorphism group of the weighted graph of $D$. Since the components of $D$ form a basis of $\NS_{\Q}(X)$, every $\sigma\in \GAut(D)$ induces an automorphism of $\NS_{\Q}(X)$ preserving the intersection form. We denote this automorphism by $\sigma$, too. We define a subset $\cG(S)$ of $\GAut(D)$ as
\begin{equation*}
	\cG(S)=\{\sigma\in \GAut(D): \mbox{for every component $L$ of $E$, the class $\sigma[L]$ is represented by a $(-1)$-curve}\}
\end{equation*}

By Lemma \ref{lem:unique_completion} and Proposition \ref{prop:k=2}, the log smooth completion $(X,D)$ of $S$ is unique, hence $\Aut(S)=\Aut(X,D)\subseteq \GAut(D)$. This group acts on $\cG(S)$ by $\Aut(S)\times \cG(S)\ni (\tau,\sigma)\mapsto \tau\circ \sigma \in \cG(S)$.

\begin{lem}\label{lem:fiber_formula}
	We have 
	$\#\Phi^{-1}[S]=\# \cG(S)/\Aut(S)$. Moreover, the action of $\Aut(S)$ on $\cG(S)$ is free.
\end{lem}
\begin{proof}
	Assume that $[S]=\Phi(\pp_{i})$ for $i\in \{1,2\}$. Write $\theta_{i}\colon (X,D)\to(\P^{2},\pp_{i})$, and  $E_{i}=(\theta_{i}^{*}\pp_i)\redd-D$: it is a sum of $(-1)$-curves, one in each connected component $G$ of $\Exc\theta_{i}$. The marking on $\pp_i$ gives an order on the set of components of $E_i$. Moreover, writing $\theta_{i}$ as a sequence of blowups naturally orders the components of each $G$. Thus we get a unique $\sigma\in \cG(S)$ mapping $\Exc\theta_{1}$ to $\Exc\theta_{2}$ which preserves the above order. 
	Applying inductively the universal property of blowing up, we see that the marked configurations $\pp_1$ and $\pp_2$ are projectively equivalent if and only if $\sigma$ comes from an automorphism of $(X,D)$, hence of $S$. 
	
	Thus every marked configuration $\pp\in \Phi^{-1}[S]$ can be obtained from a fixed element of $\Phi^{-1}[S]$ by an action of some $\sigma_{\pp}\in \cG(S)$, and $\pp$ is equivalent to $\pp'$ if and only if $\sigma_{\pp'}=\tau\circ \sigma_{\pp}$ for some $\tau\in \Aut(S)$, as needed.
	
	This proves the first statement. To prove the second one, assume that $\tau\circ \sigma=\sigma$ for some $\tau\in \Aut(S)$, $\sigma\in \cG(S)$. Then $\tau$ induces the trivial automorphism of the weighted graph of $D$, hence the identity on $\NS_{\Q}(X)$. In particular, $\tau$ maps $D+E$ to itself componentwise. It follows that $\tau$ descends to an automorphism of $\P^2$ preserving the marked configuration $\pp$. Thus it fixes a quadruple of points in general position, namely $(p,p_{1},p_{2},p')$ as in Notation \ref{not:C1}. Therefore, $\tau=\id$.
\end{proof}

\begin{ex}
	\label{ex:7_bis}
	Let $S$ be as in \ref{def:F2_n1-cusp}. By Proposition \ref{prop:conf_uniq}\ref{item:conf_number}, we have $\#\tref{\cP}{def:F2_n1-cusp}=2$. We will compute $\Aut(S)$ and show that  $\#\Phi(\tref{\cP}{def:F2_n1-cusp})=1$, i.e.\ for each center, $S$ is unique up to an isomorphism.
	
	 Assume first that our center is $(C_1,C_2;1)$, so $S$ is as in Example \ref{ex:7}, see Figure \ref{fig:7}. In Example \ref{ex:reduction}, we have seen that there is a morphism $\tau\colon (X,D)\to (\P^2,\Qb)$, where $\Sing \Qb=\{q_1,q_2,q_3\}$ are cusps of multiplicity sequences $(2,2)$. The group $\GAut(D)\cong S_3$ permutes $\{q_1,q_2,q_3\}$.
	 
	 We claim that $\cG(S)=\GAut(D)$. Figure \ref{fig:7} shows that  $E=\Exc\psi+E_{p}+E_{p_1}+E_{r}$, and $\tau$ maps these curves to $\ll'_{3}$, $\cc'_{1}$, $\ll'_{13}$ and $\cc'_{2}$; respectively. Here $\ll_{j}'$ is the line tangent to $\Qb$ at $q_j$, $\cc_{j}'$ is the conic passing through $q_j$ and tangent to $\Qb$ at the remaining cusps, and $\ll_{ij}'$ is the line joining $q_i$ with $q_j$. Hence for any component $E_0$ of $E$ and any $\sigma\in \GAut(D)$, the class $\sigma[E_0]\in \NS_{\Q}(X)$ is represented by some $\ll_{j}'$, $\cc_{j}'$ or $\ll_{ij}'$, which is a $(-1)$-curve. Thus $\cG(S)=\GAut(D)=S_3$, as claimed.
	 
	 We claim that $\Aut(S)=\Z_3$. The inclusion $\Aut(S)\supseteq \Z_3$ was shown in \cite[Proposition 2.9]{tDieck_optimal-curves}. We give an independent argument, similar to \cite[Remark 4.8]{PaPe_delPezzo}, but not relying on an explicit parametrization of $\Qb$.
	 
	 By Lemma \ref{lem:fiber_formula}, $\#\Aut(S)\geq \#\cG(S)/\#\Phi^{-1}[S]\geq \#\cG(S)/\#\tref{\cP}{def:F2_n1-cusp}=6/2=3$, so $\Aut(S)=\Z_3$ or $S_{3}$. Suppose $\Aut(S)=S_{3}$. Then some $\sigma\in \Aut(S)$ descends to $\bar{\sigma}\in \Aut(\P^2,\Qb)$ such that $\bar{\sigma}\colon (q_1,q_2,q_3)\mapsto(q_2,q_1,q_3)$. Then $\bar{\sigma}$ fixes three points of $\Qb$, namely $q_3$, $\ll_3'\cap \Qb\reg$, and $\ll_{12}'\cap \Qb\reg$. These points are pairwise distinct: indeed, if $\ll_{12}'\cap\ll_{3}'\cap \Qb\reg=\{p\}$ then we infer from Figure \ref{fig:7} that $(\pi\circ \psi)_{*}(\tau^{-1}_{*}\ll_{12}')$ is a line passing through $r$ and tangent to $\cc_1$ (at $\cc_1\cap \cc_2\setminus \{q,p_1\}$), other than $\ll_{2}$ and $\ll_{p'}$, which is impossible. Let $\nu \colon \P^1\to \Qb$ be the normalization, and let $\sigma^{\nu}\in \Aut(\P^1)$ be the lift of $\bar{\sigma}|_{\Qb}$. Since $\nu$ is a bijection, $\sigma^{\nu}\in \Aut(\P^1)$ fixes three points, hence $\sigma^{\nu}=\id_{\P^1}$. Thus $\bar{\sigma}|_{\Qb}=\id_{\Qb}$, but $\bar{\sigma}(q_1)=q_2$; a contradiction. 
	 \smallskip

	We conclude that, for $S$ obtained by an expansion at $(C_1,C_2;1)$, we have 
	\begin{equation*}
		\cG(S)=\GAut(D)=S_{3},\quad \Aut(S)=\Z_3, \quad\mbox{so by Lemma \ref{lem:fiber_formula}}\quad \#\Phi^{-1}[S]=\# \cG(S)/\Aut(S)=2.
	\end{equation*}

	Assume now that our center is not $(C_1,C_2;1)$. Then $\GAut(D)=\Z_2$ is generated by the involution interchanging the total transforms $[2,3,1,2]$ of $\tau^{-1}(q_j)$, $j\in \{1,2\}$. Hence, as in the previous case, we get $\GAut(D)=\cG(S)$. Moreover, every automorphism of $S$ descends to $\Aut(\P^2,\pp)=\{\id\}$, so
	\begin{equation*}
	\cG(S)=\GAut(D)=\Z_2,\quad \Aut(S)=\{\id\}, \quad\mbox{so by Lemma \ref{lem:fiber_formula}}\quad \#\Phi^{-1}[S]=\# \cG(S)/\Aut(S)=2.
	\end{equation*}
	Thus we have shown that $\#\Phi^{-1}[S]=2$ for every $[S]\in \Phi(\tref{\cP}{def:F2_n1-cusp})$. It follows that 
	\begin{equation*}
	\# \Phi(\tref{\cP}{def:F2_n1-cusp})=\tfrac{1}{2}\#\tref{\cP}{def:F2_n1-cusp}=1,
	\end{equation*}
	that is, each combinatorial datum in \ref{def:F2_n1-cusp} leads to a unique isomorphism class of a \QHP.
\end{ex}

\begin{ex}\label{ex:33}	
	Let $S$ be as in \ref{def:F2_n2-transversal}, see Figure \ref{fig:F2_n2-transversal}. Then $\pp=\cc_1+\cc_1'+\ll_1+\ll_1'\subseteq \P^2$ is as in Configuration \ref{conf:A1A2_3-node}. It admits two possible markings, corresponding to the orders of the pair $\{q,q'\}=\ll_1\cap \cc_2$. Those two marked configurations are not projectively equivalent, in fact, $\#\tref{\cP}{def:F2_n2-transversal}=2$ by Proposition \ref{prop:conf_uniq}\ref{item:conf_number}.
	
	We have $\GAut(D)=\Z_2$ if $S$ is obtained by expansions at $(C_2,L_1,v)$, $(C_2,L_1,v^{-1})$ for some $v>1$; and $\GAut(D)=\{\id\}$ otherwise. Example \ref{ex:33_Aut} below shows that in either case, $\GAut(D)$ is generated by $\Aut(S)$. Conversely, if $\sigma\in \Aut(S)$ fixes $D$ componentwise, then $\sigma$ descends to $\Aut(\P^2,\pp)=\{\id\}$. 
	
	Hence $\Aut(S)=\GAut(D)$. In particular, $\cG(S)=\Aut(S)$, so by Lemma \ref{lem:fiber_formula}, $\#\Phi^{-1}[S]=1$ for every $[S]\in  \Phi(\tref{\cP}{def:F2_n2-transversal})$. It follows that 
	\begin{equation*}
	\# \Phi(\tref{\cP}{def:F2_n2-transversal})=\#\tref{\cP}{def:F2_n2-transversal}=2,
	\end{equation*}	
	that is, each combinatorial datum in \ref{def:F2_n2-transversal} leads to exactly two isomorphism classes of \QHPs.
\end{ex}

\begin{ex}\label{ex:33_Aut}	
	Let again $S$ be as in \ref{def:F2_n2-transversal}, with expansions at $(C_2,L_1,v)$, $(C_2,L_1,v^{-1})$ for some $v>1$. A particular case $v=2$ is shown in Figure \ref{fig:33_Aut}. We will now construct an involution $\iota\in \Aut(S)$ which realizes the generator of $\GAut(D)\cong \Z_2$.
	
	Recall that the pair $(X,D)$ is constructed by expansion from a pair $(X',D')$. The graph of $D'$ is shown in Figure \ref{fig:F2_n2-transversal}. The \enquote{symmetric} choice of weights guarantees that  $\GAut(D)=\GAut(D')$. Thus to construct $\iota$, it is enough to construct an involution $\iota'\in \Aut(X',D')$ realizing the generator of $\GAut(D')$.
	
	To this end, we will construct a birational map $\bar{\pi}\colon (X',D')\map (\P^2,\bar{\pp})$, where $\bar{\pp}\subseteq \P^2$ is a configuration of lines and conics with an involution $\tau\in \Aut(\P^2,\bar{\pp})$, such that $\bar{\pi}^{-1}\circ \tau \circ \bar{\pi}$ extends to the required involution $\iota'$.
	
	The map $\bar{\pi}$ will be a composition $\sigma\circ \pi$, where $\sigma\colon (\P^2,\bar{\pp})\map (\P^2,\bar{\pp})$ is a Cremona map chosen in such a way that $\bar{\pi}=\tilde{\pi}\circ \beta^{-1}$, where $\tilde{\pi}$ is a minimal log resolution of $(\P^2,\bar{\pp})$, and $\beta$ is a blowup at the common point of the branching $(-2)$-curve and the twig $[2]$ of $D'$, see Figure \ref{fig:33_Aut}. The maps $\tilde{\pi}$ and $\beta$ will be $\Z_2$-equivariant.
	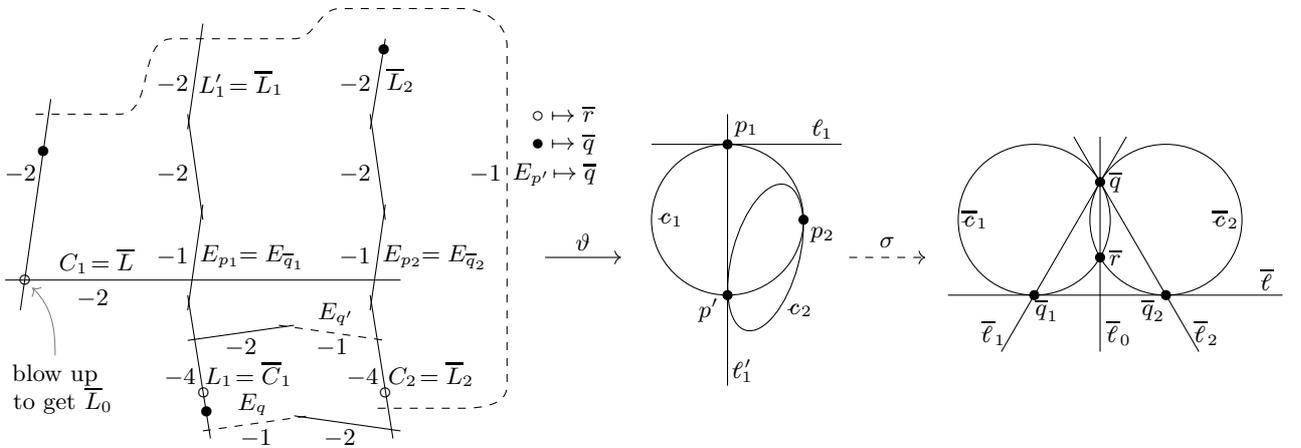
\begin{figure}[htbp]
			\begin{tikzpicture}
			\begin{scope}
				\draw[name path=B] (-2.2,0.8) -- (-1.8,3.6); 
				\node at (-2.2,2.6) {\small{$-2$}};
				\node at (-1.9,2.9) {$\bullet$};
				\draw[dashed] (-2,3.4) -- (-0.9,3.4) to[out=0,in=180] (-0.2,4.4) -- (1.4,4.4)  to[out=0,in=180] (2,4.8) --  (3.6,4.8) to[out=0,in=90] (4.2,4.3) -- (4.2, -0.2) to[out=-90,in=0] (3.6,-0.5) -- (2.4,-0.5);
				\node[left] at (4.25,2.6) {\small{$-1$}};
				\node[right] at (4.15,2.6) {\small{$E_{p'}\! \mapsto \bar{q}$}};
				\node[right] at (4.4,3) {\small{$\bullet\mapsto \bar{q}$}};
				\node[right] at (4.4,3.4) {\small{$\circ\mapsto \bar{r}$}};
				\draw (0.3,-0.9) -- (0,1);
				\node at (-0.1,-0.1) {\small{$-4$}};
				\node at (0.8,-0.05) {\small{$L_1\! =\bar{C}_1$}};
				\node at (0.21,-0.3) {$\circ$};%
				\node at (0.25,-0.55) {$\bullet$};					
				\draw (0,0.8) -- (0.2,2.2); 
				\node at (-0.2,1.5) {\small{$-1$}};
				\node at (0.85,1.5) {\small{$E_{p_1}\!\! =E_{\bar{q}_1}$}};			
				\draw (0.2,2) -- (0,3.4);
				\node at (-0.2,2.6) {\small{$-2$}};
				\draw (0,3.2) -- (0.2,4.6);
				\node at (-0.2,3.8) {\small{$-2$}};
				\node at (0.7,3.8) {\small{$L_{1}'\! =\bar{L}_1$}};	
				\draw (0,0.4) -- (1.4,0.6);
				\node at (0.7,0.3) {\small{$-2$}};
				\draw[dashed] (1.2,0.6) -- (2.6,0.4);
				\node at (1.9,0.3) {\small{$-1$}};
				\node at (1.95,0.75) {\small{$E_{q'}$}};				
				\draw[dashed] (0.2,-0.8) -- (1.6,-0.6);
				\node at (0.9,-0.9) {\small{$-1$}};
				\node at (0.85,-0.45) {\small{$E_q$}};
				\draw (1.4,-0.6) -- (2.8,-0.8);
				\node at (2,-0.9) {\small{$-2$}};			
				\draw (2.7,-0.9) -- (2.4,1);
				\node at (2.3,-0.1) {\small{$-4$}};
				\node at (3.2,-0.05) {\small{$C_2\! =\bar{L}_2$}};
				\node at (2.6,-0.3) {$\circ$};%
				\draw (2.4,0.8) -- (2.6,2.2);  
				\node at (2.2,1.5) {\small{$-1$}};
				\node at (3.25,1.5) {\small{$E_{p_2}\!\! =E_{\bar{q}_2}$}};			
				\draw (2.6,2) -- (2.4,3.4);
				\node at (2.2,2.6) {\small{$-2$}};
				\draw (2.4,3.2) -- (2.6,4.4);
				\node at (2.2,3.8) {\small{$-2$}};
				\node at (2.8,3.85) {\small{$\bar{L}_2$}};
				\node at (2.58,4.25) {$\bullet$};				
				\draw[name path=A] (-2.4,1.2) -- (2.8,1.2);
				\node at (-1.2,1.45) {\small{$C_1\! =\bar{L}$}};
				\node at (-1.25,0.95) {\small{$-2$}};
				\draw[->] (4.7,1.5) -- (5.7,1.5);
				\node at (5.2,1.7) {\small{$\theta$}};
				\path [name intersections={of=A and B}] (intersection-1) coordinate (U);
				\node at (U) {$\circ$};
				\draw[->, gray] ($(U)+(0.4,-1)$) to[out=90,in=-45] ($(U)+(0.1,-0.1)$);
				\node[below] at ($(U)+(0.4,-1)$) {\small{blow up}};
				\node[below] at ($(U)+(0.5,-1.3)$) {\small{to get $\bar{L}_0$}};
			\end{scope}
			\begin{scope}[shift={(7.1,0)}]			
				\draw (-1,3) -- (1.5,3);
				\node at (1.25,3.2) {\small{$\ll_1$}}; 
				\draw (0,3.4) -- (0,-0.2);
				\node at (0.2,0) {\small{$\ll_{1}'$}};
				\draw (0,2) circle (1);
				\node at (-0.75,2) {\small{$\cc_{1}$}};
				\draw[rotate around={-15:(0.5,1.5)}] (0.5,1.5) ellipse (0.44 and 1);
				\node at (0.95,0.8) {\small{$\cc_2$}};
				\filldraw (0,1) circle (0.06);
				\node at (-0.25,0.8) {\small{$p'$}};
				\filldraw (1,2) circle (0.06);
				\node at (1.25,1.8) {\small{$p_2$}};
				\filldraw (0,3) circle (0.06);
				\node at (0.25,3.2) {\small{$p_1$}};
				\draw[dashed, ->] (1.6,1.5) -- (2.6,1.5);
				\node at (2.1,1.7) {\small{$\sigma$}};
			\end{scope}
			\begin{scope}[shift={(12,1)}]
				\draw (-2,0) -- (2.4,0);
				\node at (2.2,0.2) {\small{$\bar{\ll}$}};
				\draw (-0.866,1) circle (1);
				\node at (-1.65,1) {\small{$\bar{\cc}_1$}};
				\draw (0.866,1) circle (1);
				\node at (1.65,1) {\small{$\bar{\cc}_2$}};
				\draw (-1.299,-0.75) -- (0.3464,2.1);
				\node at (-1.4,-0.5) {\small{$\bar{\ll}_1$}};
				\draw (1.299,-0.75) -- (-0.3464,2.1);
				\node at (1.4,-0.5) {\small{$\bar{\ll}_2$}};
				\draw (0,-0.75) -- (0,2.1);
				\node at (0.25,-0.5) {\small{$\bar{\ll}_0$}};
				\filldraw (-0.866,0) circle (0.06);
				\node at (-0.7,-0.2) {\small{$\bar{q}_1$}};
				\filldraw (0.866,0) circle (0.06);
				\node at (0.7,-0.2) {\small{$\bar{q}_2$}};
				\filldraw (0,0.5) circle (0.06);
				\node at (0.2,0.5) {\small{$\bar{r}$}};
				\filldraw (0,1.5) circle (0.06);
				\node at (0.2,1.5) {\small{$\bar{q}$}};
			\end{scope}
			\end{tikzpicture}
	\caption{Construction of $\iota\in \Aut(S)$ in Example \ref{ex:33_Aut} via a symmetric planar divisor. Points $q,q'\in \ll_1\cap \cc_2$, and their images in $\bar{\cc}_1\cap \bar{\cc}_2$ are not visible in the real picture.}
	\label{fig:33_Aut}
\end{figure}
	
	The arrangement $\pp=\cc_1+\cc_2+\ll_1+\ll_1'$ is constructed in Configuration \ref{conf:A1A2_3-node}, and shown in the middle of Figure \ref{fig:33_Aut}. We define $\sigma\colon \P^2\map \P^2$ as a blowup at $p_2$, $p'$ and its infinitely near point $\hat{p}'$ on the proper transform of $\cc_1$, followed by the contraction of the proper transforms of $\ll_{2}'$, $\ll_{p'}$ and the first exceptional curve over $p'$. In other words, $\sigma$ is a quadratic transformation centered at $p_2$, $p'$ and $\hat{p}'$. 
	Let $\bar{\cc}_1$, $\bar{\cc}_2$, $\bar{\ll}$, $\bar{\ll}_1$, $\bar{q}$, $\bar{r}$, 
	and 
	$\bar{\ll}_2$, $\bar{\ll}_0$
	be the images of 
	$\ll_1$, $\cc_2$, $\cc_1$, $\ll_1$, $\ll_{2}'$, $\ll_{p'}$, $p_1$, 
	and of the exceptional curves over
	$p_2$, $p'$. 
	Furthermore, let $\bar{q}_1$ be the image of $p_1$, and let $\bar{q}_2$ be the point infinitely near to $p_2$ on the proper transform of $\cc_2$. Now $\bar{\cc}_1$, $\bar{\cc}_2$ are conics meeting at $\bar{q}$, $\bar{r}$, and at the image of $\ll_1\cap \cc_1$. For $i\in \{1,2\}$, the conic $\bar{\cc}_i$ is tangent to the line $\bar{\ll}$ at $\bar{q}_i$ and to the line $\bar{\ll}_{3-i}$ at $\bar{q}$. The line $\bar{\ll}_0$ passes through points $\bar{q}$, $\bar{r}$. The involution $\tau\in \Aut(\P^2)$ given by $(\bar{q}_1,\bar{q}_2,\bar{q},\bar{r})\mapsto (\bar{q}_2,\bar{q}_1,\bar{q},\bar{r})$ fixes the line $\bar{\ll}_0$ and the whole configuration $\bar{\pp}\de \bar{\cc}_1+\bar{\cc}_2+\bar{\ll}+\bar{\ll}_0+\bar{\ll}_1+\bar{\ll}_2$, shown in the right of Figure \ref{fig:33_Aut}. 
	
	Let $\tilde{\pi}\colon (\tilde{X},\tilde{D}'')\to (\P^2,\bar{\pp})$ be the minimal log resolution. Define $\tilde{D}'$ as $\tilde{D}''$ minus the sum of $(-1)$-curves over $\bar{r}$ and $\bar{q}$. The proper transform of $\bar{\ll}_0$, say $\bar{L}_0$, is  a superfluous $(-1)$-curve in $\tilde{D}'$, which is fixed by the involution $\tilde{\pi}^{-1}\circ\tau\circ \tilde{\pi}$. Let $\beta\colon (\tilde{X},\tilde{D}')\to (\bar{X}',\bar{D}')$ be the contraction of $\bar{L}_0$, and put $\bar{\pi}=\tilde{\pi}\circ \beta^{-1}$. Then $\bar{\pi}^{-1}\circ \sigma \circ\pi$ extends to an isomorphism $(X',D')\to (\bar{X}',\bar{D}')$, and $\bar{\pi}^{-1}\circ \tau\circ \bar{\pi}$ gives the required involution $\iota'$.
	\smallskip
	
	The divisor $D$ (for weight $v=2$) is shown in the left of Figure \ref{fig:33_Aut}, where we use Notation \ref{not:P2} for the proper transforms of components $\cc_1,\ll_1\dots$ of $\pp$ and $\bar{\ll}_1,\bar{\cc}_1,\dots$ of $\bar{\pp}$.  
	Note that over the point $\bar{q}$, there are two $(-1)$-curves: one meets $\bar{L}_1$, and the other meets $\bar{L}_2$. The first one is the same as $E_{p'}$ in Figure \ref{fig:F2_n2-transversal}. The second one is represented by a \enquote{$\bullet$}, see Remark \ref{rem:notation-graphs}. Over the point $\bar{r}$ there is only one $(-1)$-curve, marked by \enquote{$\circ$}: it is the proper transform of the line $\ll_{2}'$ joining the points $p_2,p'\in\pp$. 
	
	We remark that by composing $\bar{\pi}$ with expansions as in \ref{def:F2_n2-transversal}, we get an alternative construction of the tower \ref{def:F2_n2-transversal}. This construction is not exactly as in tom Dieck--Petrie algorithm, because the map $\bar{\pi}$ is not regular.
\end{ex}

\begin{rem}\label{rem:33}
	Let $S$ be as in Example \ref{ex:33_Aut}. It might be tempting to believe that, by symmetry, the choice of the marking on $\pp$, i.e.\ the order of $\ll_{1}\cap \cc_2=\{q,q'\}$, does not matter, hence $S$ is unique up to an isomorphism. This is not true:   
	to get an isomorphism between surfaces obtained from two different markings, we need to find an element of $\GAut(D)$ whose extension to $\NS_{\Q}(X)$ not only maps $(E_{q},E_{q'})$ to $(E_{q'},E_{q})$, but also fixes $E_{p'}$, and there is no such. 
%
%
\end{rem}

\begin{ex}\label{ex:F2n0_Aut}
	Let $S$ be the surface \ref{def:F2n0}, and let $(X,D)$ be its minimal log smooth completion. The graph of $D$ is shown in Figure \ref{fig:F2n0_Aut} below, cf.\ Figure \ref{fig:F2n0}. We will now construct an involution $\iota\in \Aut(X,D)$ such that $\iota(T_1)=T_2$, $\iota(T_2')=T_2'$, where $T_{i}$ and  $T_{i}'$ for $i\in \{1,2\}$ are the subchains of $D$ of type $[2,3,1,2]$ and $[3,1,2]$, respectively, meeting the remaining part of $D$ in the $(-1)$-curve. In Proposition \ref{prop:aut} we will see that the involution $\iota|_{S}$ generates the group $\Aut(S)\cong \Z_2$.
	
	Like in Example  \ref{ex:33_Aut}, we will obtain $\iota$ by constructing a birational map $\bar{\pi}\colon (X,D)\map (\P^2,\bar{\pp})$, where $\bar{\pp}$ is a configuration of lines and conics admitting an involution $\tau\in \Aut(\P^2,\bar{\pp})$; such that $\iota\de \bar{\pi}^{-1}\circ \tau \circ\bar{\pi}$ is regular, see Figure \ref{fig:F2n0_Aut}. This gives an alternative construction of the surface \ref{def:F2n0}. However, since the map $\bar{\pi}$ will not be a morphism, this construction cannot be used to infer tom Dieck--Petrie Conjecture \ref{conj:classical}\ref{item:tDP} in this case.
	\smallskip
	
\begin{figure}[htbp]
	\begin{tikzpicture}	
		\node[right] at (-1,4.9) {\small{$\large{\bullet}\mapsto \bar{q}_1$}};
		\node[right] at (-1,4.5) {\small{$\boldsymbol{\circ}\mapsto \bar{q}_2$}};
		\node[right] at (-1,4.1) {\small{$\spadesuit\mapsto \bar{q}_1'$}};	
		\node[right] at (-1,3.7) {\small{$\heartsuit\mapsto \bar{q}_2'$}};
		\node[right] at (-1,3.3) {\small{$\diamondsuit\mapsto \bar{s}_1$}};
		\node[right] at (-1,2.9) {\small{$\clubsuit\mapsto \bar{s}_2$}};
		\node[right] at (-1,2.5) {\small{$\boldsymbol{\rtimes}\mapsto \bar{p}$}};
		\node[right] at (-1,2.1) {\small{$\boldsymbol{\ltimes}\mapsto \bar{p}'$}};
		\draw[name path=bC1] (-8,1.2) -- (0.2,1.2);
		\node at (0,1.4) {\small{$\bar{C}_1$}};
		\node at (0,1) {\small{$-8$}};
		\node at (-1.4,1.2) {\small{$\spadesuit$}};
		\node at (-1.1,1.2) {\small{$\heartsuit$}};
		\node at (-0.8,1.2) {\small{$\diamondsuit$}};
		\node at (-0.5,1.2) {\small{$\clubsuit$}};
		\draw (-3.4,-0.4) -- (-3.6,1);
		\node at (-3.8,0.3) {\small{$-2$}};
		\draw (-3.6,0.8) -- (-3.4,2.2); 
		\node at (-3.75,1.5) {\small{$-1$}};
		\draw (-3.4,2) -- (-3.7,4.1);
		\node at (-3.8,2.7) {\small{$-3$}};
		\node at (-3.15,2.7) {\small{$\bar{C}_2'$}};
		\node at (-3.47,2.55) {\large{$\bullet$}};
		\node at (-3.52,2.9) {$\boldsymbol{\circ}$};
		\node at (-3.57,3.25) {$\boldsymbol{\ltimes}$};
		\node at (-3.62,3.6) {$\boldsymbol{\rtimes}$};
		\node at (-3.67,3.95) {\small{$\clubsuit$}};		
		\draw (-4.8,-0.4) -- (-5,1);
		\node at (-5.2,0.3) {\small{$-2$}};
		\draw (-5,0.8) -- (-4.8,2.2); 
		\node at (-5.15,1.5) {\small{$-1$}};
		\draw (-4.8,2) -- (-5,3.4);
		\node at (-5.2,2.7) {\small{$-3$}};
		\node at (-4.55,2.7) {\small{$\bar{L}_1$}};
		\node at (-4.87,2.55) {\large{$\bullet$}};
		\node at (-4.92,2.9) {$\boldsymbol{\rtimes}$};
		\draw (-5,3.2) -- (-4.7,5.3);
		\node at (-5.2,3.9) {\small{$-2$}};
		\node at (-4.55,3.9) {\small{$\bar{L}_{2}''$}};
		\node at (-4.75,4.95) {$\boldsymbol{\circ}$};
		\node at (-4.8,4.6) {\small{$\heartsuit$}};
		\node at (-4.85,4.25) {\small{$\ltimes$}};
		\draw (-6.2,-0.4) -- (-6.4,1);
		\node at (-6.6,0.3) {\small{$-2$}};
		\draw (-6.4,0.8) -- (-6.2,2.2); 
		\node at (-6.55,1.5) {\small{$-1$}};
		\draw (-6.2,2) -- (-6.4,3.4);
		\node at (-6.6,2.7) {\small{$-3$}};
		\node at (-5.95,2.7) {\small{$\bar{L}_2$}};
		\node at (-6.27,2.55) {$\boldsymbol{\circ}$};
		\node at (-6.32,2.9) {$\boldsymbol{\rtimes}$};
		\draw (-6.4,3.2) -- (-6.1,5.3);
		\node at (-6.6,3.9) {\small{$-2$}};
		\node at (-5.95,3.9) {\small{$\bar{L}_1''$}};
		\node at (-6.15,4.95) {\large{$\bullet$}};
		\node at (-6.2,4.6) {\small{$\spadesuit$}};
		\node at (-6.25,4.25) {\small{$\ltimes$}};
		\draw (-2,-0.4) -- (-2.2,1);
		\node at (-2.4,0.3) {\small{$-2$}};
		\draw (-2.2,0.8) -- (-2,2.2); 
		\node at (-2.35,1.5) {\small{$-1$}};
		\draw (-2,2) -- (-2.3,4.1);
		\node at (-2.4,2.7) {\small{$-3$}};
		\node at (-1.75,2.7) {\small{$\bar{C}_1'$}};
		\node at (-2.07,2.55) {\large{$\bullet$}};
		\node at (-2.12,2.9) {$\boldsymbol{\circ}$};
		\node at (-2.17,3.25) {$\boldsymbol{\ltimes}$};
		\node at (-2.22,3.6) {$\boldsymbol{\rtimes}$};
		\node at (-2.27,3.95) {\small{$\diamondsuit$}};
		\draw[name path=bC2] (-7.8,0.8) -- (-7.3,4.3); 
		\node at (-7.95,1.5) {\small{$-2$}};
		\node at (-7.45,1.5) {\small{$\bar{C}_{2}$}};
		\node at (-7.35,3.95) {\large{$\bullet$}};
		\node at (-7.4,3.6) {$\boldsymbol{\circ}$};
		\node at (-7.45,3.25) {\small{$\spadesuit$}};
		\node at (-7.5,2.9) {\small{$\heartsuit$}};
		\node at (-7.55,2.55) {\small{$\diamondsuit$}};
		\node at (-7.6,2.2) {\small{$\clubsuit$}};
		\node at (-7.65,1.85) {$\boldsymbol{\rtimes}$};
		\path [name intersections={of=bC1 and bC2}] (intersection-1) coordinate (U);
		\draw[->, gray] ($(U)+(0.4,-1)$) to[out=90,in=-45] ($(U)+(0.1,-0.1)$);
		\node[below] at ($(U)+(0.4,-1)$) {\small{blow up}};
		\node[below] at ($(U)+(0.5,-1.3)$) {\small{to get $\bar{L}$}};
		\draw[->, dashed] (0.9,3.3) -- (2.5,3.3);
		\node at (1.7,3.5) {\small{$\bar{\pi}$}}; 
	\begin{scope}[shift={(5,2)}, scale=0.5];
	\coordinate (P) at (0,4);
	\coordinate (P') at (0,1);
	\draw[add= 0.2 and 1.8, name path = L] (P) to (P');
	\draw[name path = C1] (0,0) ellipse (4 and 1);
	\draw[name path = C2] (0,0) ellipse (2 and 4);
	\coordinate (P1) at (-3.873,0.25); 
	\coordinate (P2) at (3.873,0.25); 
	\draw[add= .2 and .4, name path = L1] (P) to (P1);
	\draw[add= .2 and .4, name path = L2] (P) to (P2);
	\path [name intersections={of=C1 and C2}]
		(intersection-2) coordinate (S1) 
		(intersection-1) coordinate (S2)
		(intersection-4) coordinate (Q1') 
		(intersection-3) coordinate (Q2');
	\filldraw (P) circle (0.1);
	\filldraw (P') circle (0.1);
	\filldraw (S1) circle (0.1);
	\filldraw (S2) circle (0.1);
	\filldraw (Q1') circle (0.1);
	\filldraw (Q2') circle (0.1);
	\filldraw (P1) circle (0.06);
	\filldraw (P2) circle (0.06);
	\draw[add= 1 and 1, name path = L1'] (P') to (Q1');
	\draw[add= 1 and 1, name path = L2'] (P') to (Q2');
	\path [name intersections={of=L1 and L1'}] (intersection-1) coordinate (Q1); 
	\path [name intersections={of=L2 and L2'}] (intersection-1) coordinate (Q2);
	\filldraw (Q1) circle (0.1);
	\filldraw (Q2) circle (0.1);
	\coordinate (R1) at (-0.8,-0.98);
	\filldraw (R1) circle (0.06);
	\coordinate (R2) at (0.8,-0.98);
	\filldraw (R2) circle (0.06);
	\draw[red] (P) to[out=-120,in=0] (Q1) to[out=180,in=135] ($(S1)+(-0.8,0.8)$);
	\draw[red] ($(S1)+(-0.5,0.5)$) -- (S1) to[out=-45,in=100] ($(S1)+(0.6,-0.9)$);
	\draw[red] ($(S1)+(0.7,-1.3)$) to[out=-80,in=175] (R1) to[out=-5,in=-100] ($(R1)+(0.35,0.4)$) to[out=80,in=-120] (P') to[out=60,in=-160] (Q2) to[out=20,in=-45] (3,6) to[out=135,in=60] (P);
	\draw[blue] (P) to[out=-60,in=180] (Q2) to[out=0,in=45] ($(S2)+(0.8,0.8)$);
	\draw[blue] ($(S2)+(0.5,0.5)$) -- (S2) to[out=-135,in=80] ($(S2)+(-0.6,-0.9)$);
	\draw[blue] ($(S2)+(-0.7,-1.3)$) to[out=-100,in=5] (R2) to[out=-175,in=-80] ($(R2)+(-0.35,0.4)$) to[out=100,in=-60] (P') to[out=120,in=-20] (Q1) to[out=160,in=-135] (-3,6) to[out=45,in=120] (P);
	\draw[dotted] ($(Q2')!-1!(P')$) to[out=-135,in=0] (-5,-3.5);
	\draw (-5,-3.5) -- (-6.4,-3.5);
	\draw[dotted] ($(P1)!-0.4!(P)$) to[out=-135,in=90] (-6,-2.5);
	\draw (-6,-2.5) -- (-6,-3.9);
	\draw[dotted] ($(Q1')!-1!(P')$) to[out=-45,in=180] (5,-3.5);
	\draw (5,-3.5) -- (6.4,-3.5);
	\draw[dotted] ($(P2)!-0.4!(P)$) to[out=-45,in=90] (6,-2.5);
	\draw (6,-2.5) -- (6,-3.9);
	\node at (-4.6,-1.1) {\small{$\bar{\ll}_1$}};
	\node at (-3.2,-2.7) {\small{$\bar{\ll}''_2$}};
	\node at (4.7,-1.1) {\small{$\bar{\ll}_2$}};
	\node at (3.1,-2.7) {\small{$\bar{\ll}''_1$}};
	\node at (1.4,-3.9) {\small{$\bar{\cc}_2$}};
	\node at (3.5,-1) {\small{$\bar{\cc}_1$}};
	\node at (0.4,-2.7) {\small{$\bar{\ll}$}};
	\node at (-4,5) {\textcolor{blue}{\small{$\bar{\cc}'_2$}}};
	\node at ($(Q2')+(0.5,-0.4)$) {\small{$\bar{q}'_2$}};
	\node at ($(S1)+(-0.35,-0.4)$) {\small{$\bar{s}_1$}};
	\node at ($(Q1)+(-0.7,0.1)$) {\small{$\bar{q}_1$}};
	\node at (4,5) {\textcolor{red}{\small{$\bar{\cc}'_1$}}};
	\node at ($(Q1')+(-0.45,-0.4)$) {\small{$\bar{q}'_1$}};
	\node at ($(S2)+(0.4,-0.4)$) {\small{$\bar{s}_2$}};
	\node at ($(Q2)+(0.8,0.1)$) {\small{$\bar{q}_2$}};
	\node at ($(P)+(0,0.8)$) {\small{$\bar{p}$}};
	\node at ($(P')+(0.8,0.3)$) {\small{$\bar{p}'$}};
	\end{scope}
	\end{tikzpicture}			
	\caption{Construction of the surface \ref{def:F2n0} in Example \ref{ex:F2n0_Aut} via a symmetric planar divisor.}
	\label{fig:F2n0_Aut}
\end{figure}	
	
	First, we construct the configuration $\bar{\pp}\subseteq \P^2$, shown in the right of Figure \ref{fig:F2n0_Aut} above. We use Notation \ref{not:C1} and coordinates \eqref{eq:coordinates}, but we put a bar over each symbol, so that the notation does not conflict with the one used in Configuration \ref{conf:F2n0} and Figure  \ref{fig:F2n0}. Let us recall this notation here. We fix coordinates $[x:y:z]$ on $\P^2$ and put $\bar{p}'=[1:1:1]$, $\bar{\ll}_1=\{z=0\}$, $\bar{\ll}_2=\{x=0\}$ and $\bar{\cc}_1=\{y^2=xz\}$. The lines $\bar{\ll}_1$, $\bar{\ll}_2$ meet at $\bar{p}\de [0:1:0]$. The conic $\bar{\cc}_1$ passes through $\bar{p}'$ and is tangent to $\bar{\ll}_1$, $\bar{\ll}_2$ at $[1:0:0]$ and $[0:0:1]$, respectively. We define the involution $\tau\colon \P^2\to \P^2$ by $[x:y:z]\mapsto[z:y:x]$. Then  $\tau(\bar{p})=\bar{p}$, $\tau(\bar{p}')=\bar{p}'$, $\tau(\bar{\cc}_1)=\bar{\cc}_1$ and  $\tau(\bar{\ll}_1)=\bar{\ll}_2$.
	
	Let $\bar{q}_1=[3:1:0]\in \bar{\ll}_1$, $\bar{q}_2=\tau(\bar{q}_1)=[0:1:3]\in \bar{\ll}_2$. For $i\in \{1,2\}$ let $\bar{\ll}''_{i}$ be the line joining $\bar{q}_i$ with $\bar{p}'$. We have  $\bar{\ll}''_1=\{3y=x+2z\}$, $\bar{\ll}_2''=\{3y=z+2x\}$. For $i\in \{1,2\}$ write $\bar{\ll}_i''\cap \bar{\cc}_1=\{\bar{p}',\bar{q}_{i}\}$, so $\bar{q}_{1}'=[4:2:1]$, $\bar{q}_2'=[1:2:4]$. Let $\bar{\cc}_2$ be the conic passing through $\bar{p}$, $\bar{q}_1$, $\bar{q}_2$, $\bar{q}_1'$, $\bar{q}_2'$, that is
	\begin{equation*}
		\bar{\cc}_2=\{144\cdot (3y-x-2z)(3y-z-2x)=9\cdot (12y-4x-5z)(12y-4z-5y)\}.
	\end{equation*}
	We have $\bar{\cc}_1\cap \bar{\cc}_2=\{\bar{q}_1',\bar{q}_2',\bar{s}_1,\bar{s}_2\}$, where $\bar{s}_1=[\alpha^2:\alpha:1]$, $\bar{s}_2=[1:\alpha:\alpha^2]$ for $\alpha=\frac{1}{4}(1+\sqrt{-15})$. Now for $i\in \{1,2\}$ let $\bar{\cc}_i'$ be the conic passing through $\bar{p},\bar{p}',\bar{q}_1,\bar{q}_2$ and $\bar{s}_i$. Hence $\bar{\cc}_2'=\tau(\bar{\cc}_1')$ and
	\begin{equation*}
		\bar{\cc}_1'=\{\alpha^2(3\alpha-\alpha^2-2)\cdot z\cdot (3y-z-2x)=(3\alpha-1-2\alpha^2)\cdot x\cdot (3y-x-2z)\}.
	\end{equation*}
	Substituting a parametrization $\P^1\ni [s:t]\mapsto [s^2:st:t^2]\in \P^2$ of $\cc_1$ to the above equation of $\bar{\cc}_1'$, we compute that $\bar{\cc}_1'$ is tangent to $\bar{\cc}_1$ at $[1:-\alpha^2:\alpha^4]\neq \bar{p}',\bar{s}_1$. Applying $\tau$, we get that $\bar{\cc}_2'$ is tangent to $\bar{\cc}_1$ at $[\alpha^4:-\alpha^2:1]$.
	\smallskip
	
	Let $\bar{\ll}$ be the line joining $\bar{p}$ with $\bar{p}'$. We put $\bar{\pp}=\bar{\ll}+\sum_{i=1}^2 \bar{\cc}_i+\bar{\ll}_i+\bar{\ll}_i''+\bar{\cc}_i'$. Then $\tau(\bar{\pp})=\bar{\pp}$ and $\tau(\bar{\ll})=\bar{\ll}$. 
	
	Let $\tilde{\pi}\colon (\tilde{X},\tilde{D}')\to (\P^2,\bar{\pp})$ be the minimal log resolution. Define $\tilde{D}$ as $\tilde{D}'$ minus the $(-1)$-curves over $\bar{p}$, $\bar{p}'$, $\bar{q}_i$, $\bar{q}_i'$, $\bar{s}_i$, $i\in \{1,2\}$. Now $\tilde{\pi}^{-1}\circ\tau\circ \tilde{\pi}$ extends to a regular involution $\tilde{\iota} \in \Aut(\tilde{X},\tilde{D})$
	
	The proper transform of $\bar{\ll}$, say $\tilde{L}$, is a superfluous $(-1)$-curve in $\tilde{D}$. Since $\tau(\bar{\ll})=\bar{\ll}$, we have $\tilde{\iota}(\tilde{L})=\tilde{L}$. Let $\beta\colon (\tilde{X},\tilde{D})\to (\bar{X},\bar{D})$ be the contraction of $\tilde{L}$, and let  $\bar{\pi}=\tilde{\pi}\circ \beta^{-1}\colon \bar{X}\map\P^2$. Now, the birational map $\beta\circ \tilde{\iota}\circ \beta^{-1}=\bar{\pi}^{-1}\circ\tau\circ \bar{\pi}\colon \bar{X}\map \bar{X}$ extends to a  regular involution $\iota\in \Aut(\bar{X},\bar{D})$.
	
	Note that $\iota\in \Aut(\bar{X},\bar{D})$  interchanges the subchains $\bar{T}_i=[2,3,1,2]$ of $\bar{D}$. Indeed, after renaming $\bar{T}_{i}$'s if necessary, for $i\in \{1,2\}$ the subchain $\bar{T}_i$ contains the proper transform of $\bar{\ll}_i+\bar{\ll}_{3-i}''$, see Figure \ref{fig:F2n0_Aut}. Hence $\iota(\bar{T}_1)=\bar{T}_{2}$. Similarly, $\iota$ interchanges the subchains $[3,1,2]$ containing the proper transforms of $\bar{\cc}_{1}',\bar{\cc}_2'$. 
	\smallskip
	
	It remains to show that the pair $(\bar{X},\bar{D})$ is isomorphic to the minimal log smooth completion $(X,D)$ of the surface \ref{def:F2n0}. Note that the weighted graphs of $D$ and $\bar{D}$ agree, see Figures \ref{fig:F2n0} and \ref{fig:F2n0_Aut}. Thus the claim can be inferred from the classification Theorem \ref{CLASS}. We will now give a direct argument.
	
	 Let $\sigma\colon \P^2\map \P^2$ be a standard quadratic transformation centered at $\bar{p}$, $\bar{p}'$, $\bar{s}_2$.  
	Let 
	$\cc_1$, $\cc_2$, $\cc_3$, $\ll_1$, $\ll_2$, $\ll_3$, $\ll_{rq_1}$, $\ll_{rq_2}$, 
	$r$, $q_3$ 
	and 
	 $q_1$, $q_2$, $q_1'$, $q_2'$, $s_1$ be the images of 
	$\bar{\cc}_1$, $\bar{\cc}_2$, $\bar{\cc}_1'$, $\bar{\ll}_2$, $\bar{\ll}_1$, $\bar{\cc}_2'$, $\bar{\ll}_1''$, $\bar{\ll}_2''$,
	$\ll_{\bar{p}\bar{s}_2}$, $\ll_{\bar{p}'\bar{s}_2}$ and  
	$\bar{q}_1$, $\bar{q}_2$, $\bar{q}_1'$, $\bar{q}_2'$, $\bar{s}_1$.
	Then $\pp\de \cc_1+\cc_2+\cc_3+\ll_1+\ll_2+\ll_3+\ll_{rq_1}+\ll_{rq_2}$ is as in Configuration \ref{conf:F2n0}. Since the latter is unique up to a projective equivalence, we can identify it with $\pp$. Recall that the pair $(X,D)$ is constructed in \ref{def:F2n0} as follows: take the minimal log resolution $\pi\colon (X,D')\to (\P^2,\pp)$, and define $D$ as $D'$ minus the $(-1)$-curves over $q_1,q_2,q_3,q_1',q_2',r,s_1$. Now, the map $\pi^{-1}\circ \sigma\circ \bar{\pi}$ extends to the required isomorphism $(\bar{X},\bar{D})\to (X,D)$.
\end{ex}

\begin{rem}\label{rem:F2n0_Aut-on-F2}
	Let again $(X,D)$ be a minimal log smooth completion of the surface \ref{def:F2n0}. Let $\upsilon\colon X\to \F_2$ be the contraction of $D-\bar{C}_1-\bar{C}_2$, where $\bar{C}_1$, $\bar{C}_2$ are as in Figure \ref{fig:F2n0_Aut}. Then $\upsilon(\bar{C}_2)$ is the negative section, and $R\de \upsilon(\bar{C}_1)$ is as in Proposition \ref{prop:n0}\ref{item:n0_F2}, that is, $R$ is a rational curve of type $(1,3)$, with four cusps of multiplicity two, say $p_1,p_2,p_1',p_2'$, such that $\delta_{p_i}=2$, $\delta_{p_i'}=1$, $i\in \{1,2\}$. The involution $\iota\in \Aut(X,D)$ constructed in Example \ref{ex:F2n0_Aut} induces an involution $\hat{\iota}\de \upsilon\circ\iota\circ \upsilon^{-1}\in \Aut(\F_2,R)$ such that $\hat{\iota}(p_1)=p_2$, $\hat{\iota}(p_1')=p_2'$.  
\end{rem}

\begin{rem}\label{rem:F2n0_Q}
	The construction of the surface \ref{def:F2n0} given in Example \ref{ex:F2n0_Aut} is invariant under the action of $\Aut_{\Q}(\Q(\sqrt{-15}))$. Therefore, the surface \ref{def:F2n0} is defined over $\Q$. This fact also follows from Remark \ref{rem:Moe}, which views $S$ as a complement of a curve $\bar{R}\subseteq \P(1,1,2)$ given by an equation with rational coefficients.
\end{rem}

\begin{ex}\label{ex:P2n3}
	Let $S$ be as in \ref{def:P2n3}, see Figure \ref{fig:P2n3}. The group $\GAut(D)$ is nontrivial if and only if $\psi$ is centered at $(L_1,L_2;v)$, $(L_2,L_3;v)$, $(L_3,L_1;v)$ for some $v>1$. We claim that in this case, $\cG(S)=\Aut(S)\cong \Z_3$.
	
	We have $\GAut(D)=S_3$. The configuration $\pp$ consists of a conic $\cc_1$ inscribed a triangle $\ll_1+\ll_2+\ll_3$, so $\Aut(\P^2,\pp)\cong S_3$. Clearly, the $3$-cycle in $\Aut(\P^2,\pp)$ lifts to an automorphism of $(X,D)$, hence of $S$. Suppose that a transposition $\tau\in \GAut(D)$ lies in $\cG(S)$. Say that $\tau(L_1)=L_2$, $\tau(L_3)=L_3$. Then $\tau$ maps the bubble $A$ over $\ll_{1}\cap \ll_3$ to a bubble $\tau(A)$ meeting $D$ in the preimages of $\ll_1\cap \ll_3$ and $\ll_1\cap \ll_3$. The image of $\tau(A)$ on $\P^2$ meets $\pp$ only in the vertices of the triangle $\ll_1+\ll_2+\ll_3$,  so it is disjoint from $\cc_1$; a contradiction.
\end{ex}

We now proceed with the proof of Corollaries \ref{cor:uniq}\ref{item:n} and \ref{cor:aut}. 
Lemma \ref{lem:fiber_formula} shows that to prove Corollary \ref{cor:uniq}\ref{item:n}, we need to describe $\cG(S)$ and $\Aut(S)$ for \QHPs $S$ as in Theorem \ref{CLASS}. This is done in Proposition \ref{prop:aut} below. The following observation will be useful.

\begin{lem}\label{lem:Aut_easy}
	\begin{enumerate}
		\item\label{item:aut_three} Fix $\sigma\in \Aut(X,D)$. Let $C$ be a component of $D$. If $\sigma$ fixes three points of $C$ then $\sigma|_{C}=\id_{C}$.
		\item\label{item:aut_twigs} Fix $\sigma\in \cG(S)$. Let $T$ be a $(-2)$-twig of $D$ and let $A\subseteq E$ be a $(-1)$-curve meeting $T$ only once, in a tip of $D$. Assume that $\sigma(B)=B$ for every component $B$ of $D-T$ meeting $A+T$. Then $\sigma(T)=T$.		
	\end{enumerate}
\end{lem}
\begin{proof}
	Part \ref{item:aut_three} follows from the fact that $C\cong \P^1$. Suppose \ref{item:aut_twigs} fails, so  $\sigma(T)\neq T$. Then $\sigma(T)$ is another $(-2)$-twig of $D$, disjoint from $T$. By definition of $\cG(S)$, the class $\sigma(A)$ is represented by a $(-1)$-curve. Since $\sigma(T)\neq T$, we have $\sigma(A)\neq A$, so $\sigma(A)\cdot A\geq 0$. Let $\phi\colon X\to Z$ be the contraction of $T+\sigma(T)$ and let $\sigma'$ be the induced automorphism of $\NS_{\Q}(Z)$. By assumption, we have $\sigma'(\phi(A))\equiv \phi(A)$ and $A\cdot \sigma(T)=0$. Since $A+T$ is negative definite, we get   $0>\phi(A)^{2}=\sigma'(\phi(A))\cdot \phi(A)\geq \sigma(A)\cdot A\geq 0$; a contradiction.
\end{proof}

We can now give a description of $\Aut(S)$ and $\cG(S)$, which implies Corollary \ref{cor:aut}.

\begin{prop}\label{prop:aut} 
	Let $S$ be one of the $\Q$-homology planes listed in Theorem \ref{CLASS}. Then the group $\Aut(S)$ is nontrivial if and only if one of the following holds.
	\begin{enumerate}
		\item\label{item:quartic} We have $\Aut(S)\cong S_{3}$, and $S$ is constructed in \ref{def:C**_2} by expansions at $\{(C_{1},L_{pp'};2), (E_{p},L_{1},1),(E_{p},L_{2},1)\}$. Here $S$ is the complement of the tricuspidal quartic, see e.g. \cite[Proposition 2.9]{tDieck_optimal-curves} or \cite[Lemma 4.5]{PaPe_delPezzo}.
		\item \label{item:aut_3} We have $\Aut(S)\cong \Z_3$, $S$ is constructed in \ref{def:F2_n1-cusp} by an  expansion at $(C_{1},C_{2},1)$, and $\Aut(S)$ is described in Example \ref{ex:7_bis}, cf.\ Example \ref{ex:7_uniq}.
		\item \label{item:aut_3-1} We have $\Aut(S)\cong \Z_3$, the generator of $\Aut(S)$ is induced by the rotation of $\P^2$, defined using Notation \ref{not:C1} by $(p,p_{1},p_{2},p')\mapsto (\ll_{2}\cap\ll_{p'},p_{2},p',p_{1})$; and $S$ is one of the following:
		\begin{enumerate}[leftmargin=1.5em]
			\item[\ref{def:P2n1-cuspidal}] with any expansion, see \cite[Theorem 1]{tDieck_symmetric_hp}.
			\item[\ref{def:P2n3}] with expansions at  $(L_{1},L_{2};v), (L_{2},L_{3};v), (L_{3},L_{1};v)$ for some $v>1$, see Example \ref{ex:P2n3}.
			\item[\ref{def:C**_1a}] with expansions at $(\ll_{1},\ll_{2};v),(\ll_{2},\ll_{1}';v),(\ll_{1}',\ll_{1};v)$ for some $v>1$.
		\end{enumerate}
		\item\label{item:aut_2} We have $\Aut(S)=\langle \iota\rangle \cong \Z_{2}$, and $(S,\iota)$ is one of the following.
		\begin{enumerate}[leftmargin=1.5em]
			\item[\ref{def:F2n0}.] Here $\iota$ is constructed in Example \ref{ex:F2n0_Aut}.
			\item[\ref{def:F2_n2-transversal}] with expansions at $(C_2,L_1,v)$, $(C_2,L_1,v^{-1})$ for some $v\neq 1$. Here $\iota$ is as in Example \ref{ex:33_Aut}.
			\item[\ref{def:C**_1}] with expansions at  $(L_{1},L_{2};t)$, $(E_{q_{1}},L_{q_{1}q_{2}},u)$, $(L_{j}',L_{j},(u+1)^{3-2j})$, $(L_{3-j},E_{q_{3-j}},t)$ for some $t,u$ and $j\in \{1,2\}$. To see $\iota$, let $\phi\colon X'\to \P^{1}\times \P^{1}$ be the contraction of $L_{q_{1}q_{2}}$. Now $\iota$ comes from an involution of $\P^{1}\times \P^{1}$ mapping vertical lines $\phi(L_{1})$, $\phi(L_{2}')$, $\phi(E_{q_{2}})$ to $\phi(E_{q_{2}})$, $\phi(L_{2}')$, $\phi(L_{1})$; and horizontal lines $\phi(L_{2})$, $\phi(L_{1}')$, $\phi(E_{q_{2}})$ to $\phi(L_{2})$, $\phi(E_{q_{1}})$, $\phi(L_{1}')$.
			\item[\ref{def:C**_2}] with expansions at $(C_{1},L_{pp'},w)$ and, for both $j\in \{1,2\}$, at $(L_{j},E_{p_{j}},v)$ or $(C_{1},E_{p_{j}},v)$ or $(E_{p},L_{j},v)$, for some $v,w$; in the latter case $(v,w)\neq (1,2)$. Here $\iota$ comes from a reflection $\tau\in \Aut(\P^2)$ given by $\tau\colon (p,p_{1},p_{2},p')\mapsto (p,p_{2},p_{1},p')$.
			\item[\ref{def:C**_3}] with expansions at one of: $(L_{1}', E_{p'}; v)$, $(L_{qp'}', E_{p'}; v)$ or $(E_{p'}, C_{1}; v)$, $(E_{p'},C_{2};v)$ or $(C_{2}, L_{1}'; v)$, $(C_{1}, L_{qp'}'; v)$ for some $v$. Here $\iota$ comes from a reflection of $\P^{2}$ mapping  $(p_{1},p_{2},p',q)$ to $(q,p_{2},p',p_{1})$, see Figure \ref{fig:C**_3_conf}.
		\end{enumerate}
	\end{enumerate}
	The quotient $\cG(S)/\Aut(S)$ is trivial in all cases except \ref{def:F2_n1-cusp}, 
	where $\cG(S)/\Aut(S)=\Z_{2}$, see Example \ref{ex:7_bis}.
\end{prop}

\begin{proof}
	We check directly that nontrivial elements of $\GAut(D)$ which satisfy Lemma \ref{lem:Aut_easy}\ref{item:aut_twigs} occur only in towers \ref{def:F2n0}, \ref{def:A1A2_c=3}, \ref{def:F2_n1-cusp}, \ref{def:F2-hor-ccc-41}, \ref{def:P2n1-nodal}, \ref{def:P2n1-cuspidal}, \ref{def:F2_n2-transversal}, \ref{def:nodal-cubic_P2} and \ref{def:P2n3}--\ref{def:C**_3}. Cases \ref{def:F2_n1-cusp}, \ref{def:F2_n2-transversal}, \ref{def:P2n3} are settled in Examples \ref{ex:7_bis}, \ref{ex:33_Aut}, \ref{ex:P2n3}. Consider all the remaining ones together. In these cases, we have $\#\tst{\cP}=1$ by Proposition \ref{prop:conf_uniq}\ref{item:conf_number}, so $\cG(S)=\Aut(S)$ by Lemma \ref{lem:fiber_formula}. Recall that $\Aut(S)=\Aut(X,D)$ by Lemma \ref{lem:unique_completion}. Now, the elements of $\Aut(X,D)$ which satisfy Lemma \ref{lem:Aut_easy}\ref{item:aut_three} are exactly the ones listed in the statement. 
\end{proof}

\begin{rem}
	\label{rem:sym_examples}
	Exactly two of the symmetric towers from Proposition \ref{prop:aut} contain \ZHPs, namely \ref{def:P2n1-cuspidal} with $\Aut=\Z_3$ and \ref{def:C**_2} with $\Aut=\Z_2$. These examples were first constructed by tom Dieck in \cite[Theorems 2 and 6]{tDieck_symmetric_hp}. The latter was also obtained in \cite[\S 4]{MiySu-Cstst_fibrations_on_Qhp} by a different method. 
	
	The surface \ref{def:F2n0} is a new example of a symmetric $\Q$-homology plane. All the remaining ones were previously known, see \cite[Remark 4]{tDieck_symmetric_hp}. The towers \ref{def:C**_1a}, \ref{def:P2n3} with $\Aut=\Z_{3}$ were constructed e.g.\ in \cite[$A$, $B$]{tDieck_optimal-curves}. The towers  \ref{def:F2_n2-transversal}, \ref{def:C**_3} with $\Aut=\Z_{2}$ were constructed in \cite[$F,N$]{tDieck_optimal-curves}; the latter also in \cite[$(T3C_{2})$]{MiySu-Cstst_fibrations_on_Qhp}. The remaining surfaces are isomorphic to $\P^2\setminus \bar{E}$, where $\bar{E}$ is a rational tricuspidal curve: in case \ref{def:F2_n1-cusp}, $\bar{E}$ is the quintic $\Qb$, see Example \ref{ex:7}; in case \ref{def:C**_2}, $\bar{E}$ is the quartic as in Proposition \ref{prop:aut}\ref{item:quartic}, see \cite[Remark 3]{tDieck_symmetric_hp}.
\end{rem}

\begin{proof}[Proof of Corollary \ref{cor:uniq}\ref{item:n}]
	We check directly 
	that different combinatorial data from \tables lead to different weighted graphs of $D$. Thus to prove Corollary \ref{cor:uniq}\ref{item:n}, it remains to show that for every row $\rst$ of \tables, we have $\#\Phi(\tst{\cP})=\tst{\ngr}$, see Notation \ref{not:P} and formula \eqref{eq:Phi}.
	
	By Lemma \ref{lem:fiber_formula}, for $[S]\in\Phi(\tst{\cP})$ we have $\#\Phi^{-1}[S]=\#\cG/\Aut(S)$, which by Proposition \ref{prop:aut} equals $2$ for $\rst=$ \ref{def:F2_n1-cusp} and $1$ otherwise. Hence 
	$\#\Phi(\tref{\cP}{def:F2_n1-cusp})=\tfrac{1}{2}\#\tref{\cP}{def:F2_n1-cusp}$ and $\#\Phi(\tst{\cP})=
	\#\tst{\cP}$ for $\rst\neq$ \ref{def:F2_n1-cusp}. By Proposition \ref{prop:conf_uniq}\ref{item:conf_number}, we get $\#\Phi(\tst{\cP})=\tst{\ngr}$, as claimed.
\end{proof}

We conclude this section with the following result, which implies Corollary \ref{cor:Marco}. 

\begin{prop}\label{prop:Marco}
	Let $\tst{\cS}$ be the set of isomorphism classes of $\Q$HPs obtained by tom Dieck--Petrie algorithm from data given by the row $\rst$ of \tables, including fixed weights of expansions. Then the following holds. 
	\begin{enumerate}
		\item\label{item:same-graphs} Let $(X,D)$ be the minimal log smooth completion of a surface $S\in \tst{\cS}$. Then the weighted graph of $D$ does not depend on $S$.
		\item\label{item:number} We have $\#\tst{\cS}=\tst{\ngr}$, where $\tst{\ngr}$ is the number in the column \enquote{$\ngr$} of $\rst$.
		\item\label{item:conj} All surfaces in $\tst{\cS}$ are conjugate by $\Aut_{\Q}(\tst{\kk})$, where $\tst{\kk}$ is a finite field extension of $\Q$, specified in column \enquote{$\kk$} of $\rst$. In particular, their fundamental groups have the same profinite completions.
		\item\label{item:diffeo-pairs} If $\tst{\ngr}=2$ and $\rst\neq$\ref{def:A1A2_q-cn_22},\ref{def:F2_n1-node-3},\ref{def:F2-5} then the two surfaces in $\tst{\cS}$ are complex-conjugate, hence diffeomorphic.
		\item\label{item:diffeo-3} If $\tst{\ngr}=3$ then $\tst{\cS}$ consist of a pair of complex-conjugate surfaces, and a surface defined over $\R$. 
		\item \label{item:diffeo-4} If $\tst{\ngr}=4$ then $\tst{\cS}$ consists of two pairs of complex-conjugate surfaces.
	\end{enumerate}	
\end{prop}
\begin{proof}
	Part \ref{item:same-graphs} follows from the construction of $(X,D)$ by the tom Dieck--Petrie algorithm, see Definition \ref{def:TDP}. Part \ref{item:number} is a restatement of Corollary \ref{cor:uniq}\ref{item:n}, proved above. The remaining parts follow from Proposition \ref{prop:conf_uniq}; for the assertion about fundamental groups see \cite[Corollary 12.11]{AM_etale}.
\end{proof}

\begin{rem}[cf.\ Remark \ref{rem:candidate-Zariski}]\label{rem:Marco}
	In view of Proposition \ref{prop:Marco}, it is natural to pose the following question. Are there two non-diffeomorphic $\Q$HPs of log general type, whose minimal log smooth completion share the same weighted boundary graph?
	
	This question is open. Proposition \ref{prop:Marco} implies that if such $\Q$HPs exist and satisfy the Negativity Conjecture \ref{conj:negativity}, then they are Galois conjugate, and in fact belong to the set $\tst{\cS}$ with $\tst{\ngr}>2$ or $\rst=$ \ref{def:A1A2_q-cn_22}, \ref{def:F2_n1-node-3}, \ref{def:F2-5}. We remark that 
	examples of non-diffeomorphic, but Galois conjugate varieties do exist, they were constructed by various methods e.g.\ in \cite{Serre_conjugate-non-diffeo,Abelson_conjugate-non-diffeo,Shimada_non-homeo}.
\end{rem}

\begin{rem}\label{rem:fake-projectve-planes}
	Projective surface with the same Betti numbers as $\P^2$ are classified in \cite{CS-fake,PY-fake}. Like $\Q$HPs in Proposition \ref{prop:Marco}\ref{item:diffeo-pairs}, those \emph{fake projective planes} come in complex-conjugate pairs, there is $50$ of them. 
\end{rem}

\section{Negativity Conjecture implies Rigidity Conjecture}\label{sec:rig}

In this section we complete the proof of Theorem \ref{thm:conjectures}, by proving that \QHPs in Theorem \ref{CLASS} satisfy the Rigidity Conjecture \ref{conj:classical}\ref{item:strong_rig}, that is, $H^{i}(\lts{X}{D})=0$ for $i\geq 0$. We prove this result in Proposition \ref{prop:strong_rig}, after some preparations. Our argument is based on \cite[Proposition 2.2]{FlZa_cusps_d-3}.

Recall that for a log smooth pair $(V,T)$, the logarithmic tangent sheaf $\lts{V}{T}$ is a subsheaf of $\mathcal{T}_{V}$ consisting of those derivations which preserve the ideal sheaf of $T$. Its cohomology controls the deformation theory of $(V,T)$, see \cite{FZ-deformations,Kawamata_deformations}. Below, we list some of its properties. 

 \begin{lem}
 	\label{lem:rig}  
 	Let $V_{0}$ be a smooth surface and let $(V,T)$ be some log smooth completion of $V_{0}$. Then
 	\begin{enumerate}
 		\item\label{item:rig_bl}
 		The number $h^{2}(\lts{V}{T})$ depends only on $V_{0}$. 
 		\item\label{item:rig_surg}
 		If $L\subseteq T$ is a $(-1)$-curve then $h^{i}(\lts{V}{(T-L)})=h^{i}(\lts{V}{T})$ for all $i\geq 0$. 
		\item\label{item:rig_chi_QHP} If $V_{0}$ is a \QHP then $\chi(\lts{V}{T})=K_{V}\cdot (K_{V}+T)$.
		\item\label{item:rig_weak} If $V_{0}$ is a \QHP of log general type then $\chi(\lts{V}{T})=h^0(2K_{V}+T)\geq 0$.
 	\end{enumerate}
 \end{lem}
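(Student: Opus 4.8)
The backbone of all four parts is the normal/residue exact sequence attached to a log smooth pair. Writing $T=\sum_i T_i$ for the decomposition into (smooth rational) components, there is a short exact sequence
\begin{equation*}
0 \to \lts{V}{T} \to \cT_V \to \bigoplus_i \cO_{T_i}(T_i) \to 0,
\end{equation*}
from which one reads $c_1(\lts{V}{T}) = c_1(\cT_V) - [T] = -(K_V+T)$, while $c_2(\lts{V}{T}) = c_2(\Omega^1_V(\log T)) = \etop(V)-\etop(T) = \etop(V_0)$ by logarithmic Gauss--Bonnet and the identity $c_2(\mathcal E)=c_2(\mathcal E^\vee)$ for a rank-two bundle. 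The plan is to deduce \ref{item:rig_chi_QHP} from Riemann--Roch, to prove \ref{item:rig_surg} from a second residue sequence, to prove \ref{item:rig_bl} from invariance of log tangent cohomology under log blowups, and finally to derive \ref{item:rig_weak} by comparing $\chi(\lts{V}{T})$ with $\chi(2K_V+T)$.

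For \ref{item:rig_chi_QHP} I apply Riemann--Roch to the rank-two bundle:
\begin{equation*}
\chi(\lts{V}{T}) = 2\chi(\cO_V) + \tfrac12 c_1(c_1-K_V) - c_2 = 2\chi(\cO_V) + \tfrac12(K_V+T)(2K_V+T) - \etop(V_0).
\end{equation*}
Since a \QHP is rational, $\chi(\cO_V)=1$ and $\etop(V_0)=1$. By Remark \ref{rem:D_rat-tree}, $T$ is a connected rational tree, so by adjunction $(K_V+T_i)\cdot T_i=-2$, and $\sum_i T_i\cdot(T-T_i)=2(\#T-1)$ by the edge count of a tree; hence $(K_V+T)\cdot T = -2\#T + 2(\#T-1) = -2$. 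Substituting $(K_V+T)(2K_V+T)=2K_V(K_V+T)+(K_V+T)\cdot T = 2K_V(K_V+T)-2$ collapses the expression to $K_V\cdot(K_V+T)$, which is \ref{item:rig_chi_QHP}.

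For \ref{item:rig_surg}, the inclusions $\lts{V}{T}\subseteq \lts{V}{(T-L)}\subseteq \cT_V$ give, by a local computation at the smooth points and nodes of $L$, the short exact sequence
\begin{equation*}
0 \to \lts{V}{T} \to \lts{V}{(T-L)} \to \cO_L(L) \to 0.
\end{equation*}
As $L$ is a $(-1)$-curve, $\cO_L(L)\cong \cO_{\P^1}(-1)$ is acyclic, so the long exact sequence yields $h^i(\lts{V}{(T-L)})=h^i(\lts{V}{T})$ for all $i$. For \ref{item:rig_bl}, recall that any two log smooth completions of $V_0$ are dominated by a common one through blowups centered on the boundary (the statement underlying \cite[Corollary 3.36]{FKZ-weighted-graphs} used in Lemma \ref{lem:unique_completion}). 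For such a log blowup $\sigma\colon(\tilde V,\tilde T)\to(V,T)$ one has $\sigma_*\lts{\tilde V}{\tilde T}=\lts{V}{T}$ and $R^1\sigma_*\lts{\tilde V}{\tilde T}=0$ (standard, cf.\ \cite{Kawamata_deformations}), so Leray identifies all cohomology groups; in particular $h^2(\lts{V}{T})$ is the same on every completion and depends only on $V_0$.

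Finally, for \ref{item:rig_weak}, Riemann--Roch for the line bundle $2K_V+T$ gives $\chi(2K_V+T)=\chi(\cO_V)+\tfrac12(2K_V+T)(K_V+T)=1+K_V(K_V+T)-1=K_V(K_V+T)$, which by \ref{item:rig_chi_QHP} equals $\chi(\lts{V}{T})$. It then remains to show $\chi(2K_V+T)=h^0(2K_V+T)$, i.e.\ $h^1(2K_V+T)=h^2(2K_V+T)=0$. By Serre duality $h^2(2K_V+T)=h^0(-(K_V+T))=0$, since $V_0$ of log general type forces $K_V+T$ to be big. The vanishing $h^1(2K_V+T)=0$, which I expect to be the main obstacle, follows from a logarithmic Kawamata--Viehweg vanishing applied to $2K_V+T=K_V+(K_V+T)$ via the Zariski decomposition of the big divisor $K_V+T$ (equivalently, this is exactly \cite[4.3(i)]{Palka-minimal_models}); note the completion need not be minimal, so this step must be carried out for an arbitrary $(V,T)$. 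Granting it, $\chi(\lts{V}{T})=h^0(2K_V+T)\geq 0$, proving \ref{item:rig_weak}.
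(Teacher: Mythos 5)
Your parts \ref{item:rig_surg} and \ref{item:rig_chi_QHP} are correct, and they are honest proofs of facts the paper only cites from \cite{FZ-deformations} (Proposition 1.7(3) and Lemma 1.3(5) there): the residue sequence $0\to\lts{V}{T}\to\lts{V}{(T-L)}\to\cO_{L}(L)\to 0$ with $\cO_{L}(L)\cong\cO_{\P^{1}}(-1)$ acyclic gives \ref{item:rig_surg}, and your Chern class/Riemann--Roch computation, using $\chi(\cO_{V})=1$, $\etop(V_{0})=1$ and $(K_{V}+T)\cdot T=-2$ for a connected rational tree (Remark \ref{rem:D_rat-tree}), gives \ref{item:rig_chi_QHP}. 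Part \ref{item:rig_weak} follows the same route as the paper (Riemann--Roch plus vanishing for $2K_{V}+T$), but the decisive step $h^{1}(2K_{V}+T)=0$ is left as a citation. What actually makes Kawamata--Viehweg applicable is Lemma \ref{lem:no_lines}: it forces the negative part $\mathcal{N}$ of the Zariski--Fujita decomposition of $K_{V}+T$ to be $\Bk_{T}T'$, with $T'$ the sum of maximal twigs, whence $\lfloor\mathcal{N}\rfloor=0$, $\lceil\mathcal{P}\rceil=K_{V}+T$, and $h^{i}(2K_{V}+T)=h^{i}(K_{V}+\lceil\mathcal{P}\rceil)=0$ for $i>0$. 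The paper reproduces exactly this argument; without it, your \ref{item:rig_weak} is a restatement of \cite[4.3(i)]{Palka-minimal_models} rather than a proof of it in the required generality.

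The genuine gap is \ref{item:rig_bl}, where the argument is not merely incomplete but false as stated. The claim that $\sigma_{*}\lts{\tilde V}{\tilde T}=\lts{V}{T}$ and $R^{1}\sigma_{*}\lts{\tilde V}{\tilde T}=0$ holds for blowups at \emph{nodes} of $T$, but fails when the center $p$ is a smooth point of $T$: in local coordinates with $T=\{y=0\}$ and blowup chart $x=u$, $y=uv$, the field $\partial_{x}\in\lts{V}{T}$ lifts to $\partial_{u}-(v/u)\partial_{v}$, which has a pole along $E$; hence $\sigma_{*}\lts{\tilde V}{\tilde T}$ is the \emph{proper} subsheaf of log fields whose component transverse to $T$ vanishes at $p$, with skyscraper cokernel. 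Such blowups cannot be avoided when comparing completions: $(\P^{2},\ll)$ and $(\F_{1},\ll'+E)$, the blowup at a point of $\ll$, are both log smooth completions of $\C^{2}$, and the first has no boundary nodes at all. Moreover, your Leray argument, if valid, would show that \emph{all} $h^{i}(\lts{V}{T})$ depend only on $V_{0}$, which is false: $h^{0}(\lts{\P^{2}}{\ll})=\dim\Aut(\P^{2},\ll)=6$, while $h^{0}(\lts{\F_{1}}{(\ll'+E)})=\dim\Aut(\F_{1},\ll'+E)=5$ (correspondingly $\chi$ drops from $6$ to $5$, consistent with your own formula in \ref{item:rig_chi_QHP}). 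Only $h^{2}$ is an invariant of $V_{0}$, and the actual proof (\cite[Lemma 1.5(5)]{FZ-deformations}) goes through Serre duality, $h^{2}(\lts{V}{T})=h^{0}(\Omega^{1}_{V}(\log T)\otimes\cO_{V}(K_{V}))$, showing that this space of twisted log forms is unchanged under boundary blowups, in the spirit of the birational invariance of log plurigenera. Your approach could be repaired: the inclusion $\sigma_{*}\lts{\tilde V}{\tilde T}\into\lts{V}{T}$ has skyscraper cokernel, which does not affect $H^{2}$, so together with $R^{1}\sigma_{*}=0$ (which then must be verified) one still gets equality of the $h^{2}$'s --- but as written the argument proves too much and is therefore wrong.
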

\begin{proof}Parts \ref{item:rig_bl},  \ref{item:rig_surg} and \ref{item:rig_chi_QHP} are proved in \cite{FZ-deformations}, Lemma 1.5(5), Proposition 1.7(3) and Lemma 1.3(5), respectively. Part \ref{item:rig_weak} is shown in \cite[Lemma 4.3(i)]{Palka-minimal_models} for $V_0$ being a complement of a planar rational cuspidal curve; however, the argument works in our generality, too. Let us recall it here for completeness. By \ref{item:rig_chi_QHP} and by the Riemann--Roch theorem, we have $\chi(\lts{V}{T})=K_{V}\cdot (K_{V}+T)=\chi(\cO_{V}(2K_{V}+T))$. Let $K_{V}+T=\mathcal{P}+\mathcal{N}$ be the Zariski--Fujita decomposition.  Lemma \ref{lem:no_lines} implies that $\mathcal{N}=\Bk_{T}T'$, where $T'$ is the sum of maximal twigs of $T$, see \cite[Lemma 2.1.(iv)]{Palka-minimal_models}. We have $\lfloor \mathcal{N} \rfloor =\lfloor \Bk_{T}T' \rfloor=0$ by \cite[II.3.5.2]{Miyan-OpenSurf}, so $\lceil \mathcal{P} \rceil=K_{V}+T$. Because $\mathcal{P}$ is big and nef, the Kawamata-Viehweg vanishing theorem \cite[9.1.18]{Lazarsfeld-positivity_2} gives $h^{i}(2K_{V}+T)=h^{i}(K_{V}+\lceil \mathcal{P} \rceil )=0$ for $i>0$, which ends the proof.
\end{proof}

To prove that the Negativity Conjecture \ref{conj:negativity} implies the Rigidity Conjecture  \ref{conj:classical}\ref{item:strong_rig}, we need to show that the number $h^2\de H^2(\lts{X}{D})$ is zero for pairs $(X,D)$ in Theorem \ref{CLASS}. To do this, we use specific $\C^{(t*)}$-fibrations of $X\setminus D$, where $t\leq 3$. If $t\leq 2$, then $h^2=0$ by \cite[Proposition 6.2]{FZ-deformations}. If $t=3$, \cite[Proposition 2.2]{FlZa_cusps_d-3} gives $h^2=0$ provided $D$ is a particular configuration on $X=\F_m$. We will need this result in a more general setting, where the same argument works. For the readers' convenience, we repeat it here in detail.

\begin{lem}[{cf.\ \cite[Proposition 6.2]{FZ-deformations} or  \cite[Proposition 2.2]{FlZa_cusps_d-3}}]
	\label{lem:rig_fibr} Let $S$ be a smooth surface which admits a $\C^{(t*)}$-fibration for some $t\leq 3$. If $t=3$, assume additionally that some fiber has a component $F_0$ isomorphic to $\C^{**}$. Then $H^{2}(\lts{V}{T})=0$ for any log smooth completion $(V,T)$ of $S$.
\end{lem}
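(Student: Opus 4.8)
The plan is to reduce to a completion adapted to the fibration and then compute $H^2$ through $\bar f$. By Lemma~\ref{lem:rig}\ref{item:rig_bl} the number $h^2(\lts{V}{T})$ depends only on $S$, so I may replace $(V,T)$ by any convenient log smooth completion. I choose one in which the $\C^{(t*)}$-fibration $f\colon S\to B$ extends to a $\P^1$-fibration $\bar f\colon V\to\bar B$ with $T=T\hor+T\vert$ an snc divisor and a general fiber $F\cong\P^1$ meeting the boundary in $T\cdot F=t+1$ points. The case $t\le 2$ is \cite[Proposition 6.2]{FZ-deformations}, so I concentrate on $t=3$, where $T\cdot F=4$. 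Since $(V,T)$ is log smooth, $\lts{V}{T}$ is locally free with dual $\Omega^1_V(\log T)$, and Serre duality gives
\[
H^2(\lts{V}{T})\cong H^0\bigl(V,\Omega^1_V(\log T)\otimes\omega_V\bigr)^{\vee},
\]
so it suffices to show the right-hand $H^0$ vanishes.

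To this end I use the relative logarithmic cotangent sequence
\[
0\to \bar f^{*}\Omega^1_{\bar B}(\log\Delta)\to \Omega^1_V(\log T)\to \Omega^1_{V/\bar B}(\log T)\to 0,
\]
with $\Delta\subseteq\bar B$ the discriminant, tensored by $\omega_V$. The subsheaf contributes $H^0\bigl(\bar B,\Omega^1_{\bar B}(\log\Delta)\otimes \bar f_*\omega_V\bigr)$, which vanishes because $\bar f_*\omega_V=\omega_{\bar B}\otimes \bar f_*\omega_{V/\bar B}=0$: the line bundle $\omega_{V/\bar B}$ restricts to $\mathcal O_{\P^1}(-2)$ on a general fiber, so its direct image is torsion-free of generic rank $0$, hence zero. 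Therefore
\[
H^0\bigl(\Omega^1_V(\log T)\otimes\omega_V\bigr)\into H^0\bigl(V,\Omega^1_{V/\bar B}(\log T)\otimes\omega_V\bigr)=H^0\bigl(\bar B,\mathcal M\otimes\omega_{\bar B}\bigr),
\]
where $\mathcal M=\bar f_*\bigl(\Omega^1_{V/\bar B}(\log T)\otimes\omega_{V/\bar B}\bigr)$. On a general fiber the integrand restricts to $\Omega^1_{\P^1}(\log(T\cap F))\otimes\omega_{\P^1}=\mathcal O_{\P^1}(t-1)\otimes\mathcal O_{\P^1}(-2)=\mathcal O_{\P^1}(t-3)$, which for $t=3$ is trivial; hence $\mathcal M$ is a line bundle on $\bar B$, and the problem is reduced to the vanishing of $H^0(\bar B,\mathcal M\otimes\omega_{\bar B})$.

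The main step is this last vanishing. In the situations of interest $S$ is rational, so $\bar B\cong\P^1$ and $\omega_{\bar B}=\mathcal O(-2)$; it then suffices to prove the degree estimate $\deg\mathcal M\le 1$. This is exactly where the hypothesis on $F_0$ is used: the degree of $\mathcal M$ is computed fiber by fiber, the general fibers contributing in a uniform way, while the local contribution of the distinguished degenerate fiber carrying a $\C^{**}$-component supplies the decisive correction term that pins $\deg\mathcal M$ below the threshold (equivalently, it forces any relative log form in the image to acquire behaviour at $F_0$ incompatible with the geometry there, hence to vanish). I expect this local analysis of the degenerate fibers — identifying precisely how a $\C^{**}$-component alters the restriction of $\Omega^1_{V/\bar B}(\log T)\otimes\omega_{V/\bar B}$ and hence the jump of $\bar f_*$ — to be the only real obstacle; the first two reductions above are formal. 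This is the content of \cite[Proposition 2.2]{FlZa_cusps_d-3}, carried out there for a configuration on a Hirzebruch surface, and the same computation transfers once the fiberwise restrictions and the reductions are in place.
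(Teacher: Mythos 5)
Your reductions (Serre duality, the relative logarithmic cotangent sequence, the projection formula) are formal and essentially fine, but the proof stops exactly where the content of the lemma begins. The crux is the vanishing $H^{0}(\bar{B},\mathcal{M}\otimes\omega_{\bar{B}})=0$, i.e.\ your degree estimate $\deg\mathcal{M}\leq 1$, and for this you offer only the expectation that the local analysis \enquote{transfers} from \cite[Proposition 2.2]{FlZa_cusps_d-3}. That reference proves the statement for one specific configuration on a Hirzebruch surface; the lemma you are asked to prove is precisely the assertion that the argument extends to an arbitrary log smooth completion of an arbitrary surface with a $\C^{(3*)}$-fibration having a $\C^{**}$-component in some fiber. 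Deferring the key step back to the reference is therefore circular: you must actually show how the geometry of the closure of $F_{0}$ forces the vanishing. The paper does this concretely, by a fiberwise tangent-field argument rather than a pushforward: it contracts the degenerate fibers to get $(V',T')$ in such a way that the closure $F'$ of $F_{0}$ survives (it has multiplicity one in its fiber because $F'\cdot T'>\tfrac{1}{2}F_{\mathrm{gen}}\cdot T'$), notes that $F'$ meets $T'$ in three points with one tangency since $T'\cdot F'=4$, observes that the normal-bundle component of any section $\xi$ of $\lts{V'}{T'}\otimes\cO_{V'}(2K_{V'}+T')$ lies in $H^{0}(\cO_{\P^{1}}(t-3))=H^{0}(\cO_{\P^{1}})$ and vanishes at the tangency point, hence vanishes identically; then $\xi|_{F'}\in H^{0}(\cO_{\P^{1}}(2))$ vanishes at the two transversal intersection points, and an explicit local computation at the tangency point (with $F'=\{x=0\}$, $T'=\{x=y^{2}\}$) produces a third zero --- a contradiction. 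None of this local analysis, which is exactly where the hypothesis on $F_{0}$ enters, appears in your sketch.

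There is also a structural problem with your route: it needs $\bar{B}\cong\P^{1}$ (\enquote{in the situations of interest $S$ is rational}), but the lemma is stated for an arbitrary smooth surface $S$ admitting a $\C^{(t*)}$-fibration, whose base may have any genus; for $g(\bar{B})\geq 1$ the bound $\deg\mathcal{M}\leq 1$ no longer yields vanishing of $H^{0}(\mathcal{M}\otimes\omega_{\bar{B}})$. The paper's argument avoids this entirely because it is local over the base: it shrinks $\pi'(V')$ before dividing by the equation of the special fiber, so no global degree count on $\bar{B}$ is ever needed. (A further minor point you should address: $\Omega^{1}_{V/\bar{B}}(\log T)$ can have torsion along degenerate fibers not contained in $T$, so $\mathcal{M}$ being a line bundle, and the identification of its degree, require justification.)
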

\begin{proof}
	By Lemma \ref{lem:rig}\ref{item:rig_bl}, the number $h^{2}(\lts{V}{T})$ does not depend on the choice of the log smooth completion $(V,T)$. Choose one such that the $\C^{(t*)}$-fibration of $S$ extends to a $\P^1$-fibration $\pi$ of $V$; so $F_{\mathrm{gen}}\cdot T=t+1$ for a general fiber $F_{\mathrm{gen}}$ of $\pi$. Let $F$ be the closure of $F_0$ if $t=3$ and $F=F_{\mathrm{gen}}$ if $t<3$. In any case, $F\cdot T>\tfrac{1}{2} F_{\mathrm{gen}}\cdot T$, so $F$ has multiplicity one in the fiber. Hence there is a morphism $\phi\colon (V,T)\to (V',T')$ which contracts the fibers of $\pi$ to $0$-curves, such that $F\not\subseteq \Exc\phi$. Put $F'=\phi(F)$, and denote by $\pi'$ the induced $\P^1$-fibration of $V'$. 
	
	By Serre duality, we have an isomorphism $H^{2}(\lts{V}{T})\cong H^{0}(\Omega_{V}^{1}(\log T)\otimes \cO_{V}(K_{V}))$. Using the  perfect pairing $\Omega^{1}_{V}(\log T)^{\otimes 2} 
	\to \cO_{V}(K_{V}+T)$, we get 
	\begin{equation*}
	\Omega^{1}_{V}(\log T)\cong Hom(\Omega^{1}_{V}(\log T),\cO_{V}(K_{V}+T))\cong \lts{V}{T}\otimes \cO_{V}(K_{V}+T),
	\end{equation*} hence $\Omega^{1}_{V}(\log T) \otimes \cO_{V}(K_{V})\cong \lts{V}{T}\otimes \cO_{V}(2K_{V}+T)$.
	It follows that 
	\begin{equation*}
	H^{2}(\lts{V}{T})\cong H^{0}(\lts{V}{T}\otimes \cO_{V}(2K_{V}+T))\subseteq H^{0}(\lts{V'}{T'}\otimes \cO_{V}(2K_{V'}+T')).
	\end{equation*}
	Suppose the latter space contains a vector field $\xi\neq 0$. 	Let $\xi_0$ be the image of $\xi$ under the map $\mathcal{T}_{V'}\to \cN_{F'/V'}$, where $\cN_{F'/V'}\cong \cO_{F'}$ is the normal bundle to $F'$ in $V'$. Hence  $ \xi_0\in H^{0}(\cN_{F'/V'}\otimes \cO_{F'}((2K_{V'}+T')\cdot F'))=H^{0}(\cO_{\P^1}(t-3))$. If $t<3$ then $\xi_0\equiv 0$. If $t=3$ then by assumption $F'$ is tangent to $T'$ at some $r\in F'$, hence $\xi_0|_{r}=0$, so since $\xi_0\in H^0(\cO_{\P^1})$ is constant, we get $\xi_0\equiv 0$, too.
	
	Therefore, $\xi$ is tangent to $F'$, so $\xi|_{F'}\in H^{0}(\mathcal{T}_{F'}\otimes  \cO_{F'}((2K_{V'}+T')\cdot F'))=H^{0}(\cO_{\P^1}(t-1))$. After shrinking $\pi'(V')$ if needed, we can divide $\xi$ by the equation of $F'$ and assume $\xi|_{F'}\not\equiv 0$. 
	
	Suppose $t<3$. Then $F'$ is a general fiber, so it meets $T'$ transversally, at $t+1$ points. Since $\xi$ is tangent to $T'+F'$, each of these points is a zero of $\xi|_{F'}\in H^0(\cO_{\P^{1}}(t-1))$, a contradiction. 
		
	Therefore, $t=3$, so $\xi|_{F'}$ has two zeros. By definition, $F'$ meets $T'$ in three points, say $p,q,r$. We have $T'\cdot F'=4$, so, say, $(T'\cdot F')_{p}=(T'\cdot F)_{q}=1$, $(T'\cdot F')_{r}=2$. Since $\xi$ is tangent to $T'+F'$, we have $\xi|_{p}=\xi|_{q}=0$. To get a contradiction, we will show that $\xi|_{r}=0$, too. We argue as in \cite[Lemma 6.5]{FZ-deformations}. If $r\in \Sing T'$ then clearly $\xi|_{r}=0$, so assume $r\not\in \Sing T'$. In some local analytic coordinates $(x,y)$ at $r$, we have $F'=\{x=0\}$, $T'=\{x=y^2\}$, and $\xi=f\tfrac{\d}{\d x}+g\tfrac{\d}{\d y}$ for some $f,g\in \C[\![x,y]\!]$. Since $\xi$ is tangent to $F'$ and $T'$, we have $\xi(x)=f=x\cdot a$ and $\xi(y^2-x)=2yg-f=(y^2-x)\cdot b$ for some $a,b\in \C[\![x,y]\!]$. The equality $f=xa$ implies that $f(0,0)=0$. Moreover, substituting $f=xa$ to the equality $2yg-f=(y^2-x)b$, we infer that $y|(a-b)$ and $g=y\cdot \tfrac{1}{2}b+x\cdot\tfrac{a-b}{2y}$, so $g(0,0)=0$, too. Thus $\xi|_{r}=0$, as required.
\end{proof}

\begin{prop}\label{prop:strong_rig}
	Let $S$ be a $\Q$-homology plane of log general type, and let $(X,D)$ be its log smooth completion. Assume that $\kappa(K_{X}+\frac{1}{2}D)=-\infty$. Then for every $i\geq 0$ we have $H^{i}(\lts{X}{D})=0$.
\end{prop}
\begin{proof} 
Put $h^{i}\de h^{i}(\lts{X}{D})$. Recall that $\kappa(X\setminus D)=2$ implies that $h^{0}=0$. Indeed, by \cite[Theorem 11.12]{Iitaka_AG} in this case the group $\Aut(X,D)$ is finite, so $(X,D)$ admits no infinitesimal automorphisms. Hence $h^{2}-h^{1}=\chi(\lts{X}{D})\geq 0$ by Lemma \ref{lem:rig}\ref{item:rig_weak}. It remains to show $h^{2}=0$. 

By Lemma \ref{lem:rig_fibr} we can assume that $S$ has no $\C^{**}$-fibration, so $(X,\tfrac{1}{2}D)$ has an almost minimal model $(X\am,\frac{1}{2}D\am)$ as in Section \ref{sec:classification}. We use the notation introduced there. In particular, $(X',D')\to (X\am,D\am)$ is the minimal log resolution. Lemma \ref{lem:rig} implies that $h^{2}=h^{2}(\lts{X}{(\psi^{*}D')\redd})=h^{2}(\lts{X'}{D'})$. 
\smallskip

Consider the case $X\am\cong \P^{2}$, see Section \ref{sec:P2}. Then $\deg D\am=5$. Assume that $D\am$ has an ordinary node, say $p$. Let $\pi\colon Y\to X'$ be a blowup at the preimage of $p$. Then $h^{2}=h^{2}(\lts{Y}{(\pi^{-1}_{*}D')})$ by Lemma \ref{lem:rig}. The pencil of lines through $p$ pulls back to a $\P^{1}$-fibration of $Y$ with a fiber $F$ such that $F\cdot\pi^{-1}_{*}D'=\pi(F)\cdot D'-2=\deg D\am-2=3$, so $h^{2}=0$ by Lemma \ref{lem:rig_fibr}. Thus we can assume that $D\am$ does not have ordinary nodes. 

In this case, $D\am$ has at most one line. Using condition \eqref{eq:P2_ordinary}, we infer that $D\am=\ll+\qq$, where $\ll$ is a line tangent to a quartic $\qq$ at two points; and $\qq$ has three ordinary singularities. By Lemma \ref{lem:R}\ref{item:R}, the singular points of $\qq$ are cusps. Let $\F_{1}\to \P^{1}$ be a blowup at a cusp $p\in \qq$. The fibration $\pi_{\F_1}$ is given by the pencil of lines through $p$. By Hurwitz formula, neither of these lines is tangent to $\qq\setminus \{p\}$. Hence the line joining $p$ with some other cusp of $\qq$ meets $D\am$ in exactly three points. Thus $h^{2}=0$ by Lemma \ref{lem:rig_fibr}.
\smallskip

Consider the remaining cases together. 
In case $X_{\min}\cong \P(1,2,3)$, let $\phi\colon X\am\to \F_{2}$ be the contraction of the $(-1)$-curve $U\subseteq \Exc\theta$ and the short $(-2)$-twig in $\Delta\am$, see Figure \ref{fig:A1+A2}; and put $R\de \phi_{*}R\am$. We have $\phi_{*}\Delta\am=\Sec_2+F_{0}$ for some fiber $F_0$, such that $F_0\cap R$ is an ordinary node of $R$. In case $X_{\min}\cong \P(1,1,2)$ put $U=0$, $F_0=0$ and $R= R\am$. Now in both cases, the divisor $R\subseteq \F_2$ is of type $(1,3)$. The pair $(X',D'+U)$ is a log smooth completion of $S'\de \F_{2}\setminus (R+\Sec_2+F_{0})$, and $h^{2}=h^{2}(\lts{X'}{(D'+U)})$ by Lemma \ref{lem:rig}\ref{item:rig_surg}.

 Write $R\cap\Sec_2=\{p\}$. For a point $r\in \F_{2}$, we denote by $F_{r}$ the fiber passing through $r$. 

Let $r$ be a singular point of $R\hor\setminus F_{p}$. Since $R$ has no triple points, we have $F_r\not\subseteq R$. Hence $\#(F_{r}\cap R)\leq F_r\cdot R-1=2$. If $\#(F_{r}\cap R)=2$ then by construction $F_{r}\neq F_0$, and since $F_{r}\neq F_{p}$, we have $\#(F_{r}\cap (R\cup \Sec_2))=3$. Hence $F_{r}\cap S'\cong \C^{**}$, so  $h^2=0$ by Lemma \ref{lem:rig_fibr}. Therefore, we can assume that 
\begin{equation}\label{eq:no_C**_strong}
\mbox{for every $r\in \Sing R\hor \setminus F_{p}$ we have $F_r\cap R=\{r\}$.}	
\end{equation}

Suppose $R\hor$ is reducible. Then $R\hor$ contains a $1$-section, say $H_1$. Put $H_2=R\hor-H_1$. Numerical properties of $\F_2$ imply that $H_1\cdot H_2\geq 4$. Moreover, if $H_2$ is irreducible then $\delta_{H_2}\geq 1$; so in any case, $H_2$ is singular. Since all singularities of $R\hor$ are double points, we have $H_1\cap \Sing H_2=\emptyset$. In particular, $R\hor$ has at least two singular points. Every fiber contains at most one singular point of $R\hor$, so $\Sing R\hor \setminus F_p\neq \emptyset$.

Fix $r\in \Sing R\hor \setminus F_p$. Condition \eqref{eq:no_C**_strong} implies that $r$ is a common point of all components of $R\hor$. In particular, $H_2$ is irreducible, $(F_r\cdot H_2)_{r}=2$ and $r\in H_{2}\reg$. It follows that $\Sing H_2\subseteq F_p$. Since $H_2$ is a $2$-section, we have $H_2\cap F_p=\Sing H_2$, so $H_2\cap H_1\subseteq \F_2\setminus F_p$. Thus at every point $r\in H_2\cap H_1$, the fiber $F_r$ is tangent to $H_2$.  Since $H_1$ is a $1$-section, it is not tangent to $F_r$, hence $(H_1\cdot H_2)_{r}=1$ Applying Hurwitz formula to $\pi_{\F_2}|_{H_2}$ we infer that $2\geq \#(H_2\cap H_1)=H_2\cdot H_1\geq 4$; a contradiction.

Therefore, $R\hor$ is irreducible. Suppose $F_{p}\subseteq R$. Then $R\hor$ is of type $(0,3)$, so $\delta_{R\hor}=4$ by Lemma \ref{lem:F2-adjunction}. Moreover, singular points of $R\hor$ do not lie on $F_{p}$.  Condition \eqref{eq:no_C**_strong} implies that for every $r\in \Sing R\hor$, we have $(R\hor \cdot F_{r})_{r}=3$, so $\delta_r=1$. By Lemma \ref{lem:R}\ref{item:R}, $D\am$ has only two nodes, which are mapped to $\Sec_2\cap F_p$ and $R\hor\cap F_p$. Hence the only possible node of $R\hor$ is the one on $F_0$. Since $\delta_{R\hor}=4$, it follows that $R\hor$ has at least $3$ cusps, each tangent to a fiber. This is a contradiction with the Hurwitz formula.

Thus $F_{p}\not\subseteq R\hor$, so $R=R\hor$. Now, $R$ is as in Proposition \ref{prop:n0}. Its last statement  implies that condition \eqref{eq:no_C**_strong} fails in this case; a contradiction.
\end{proof}

\section{Surfaces in Theorem \ref{CLASS} are of log general type}\label{sec:k=2}

In this section, we complete the proof of Theorem \ref{CLASS}, by showing the following proposition.

\begin{prop}\label{prop:k=2}
	Let $(X,D)$ be a pair obtained by tom Dieck--Petrie algorithm from data in one of the rows of \tables. Then the surface $S\de X\setminus D$ is of log general type.
\end{prop}
\begin{proof}
A direct check shows that $D$ is snc-minimal, and all non-branching components of $D$ have negative self-intersection numbers. Hence by Lemma \ref{lem:unique_completion}\ref{item:graph}, $(X,D)$ is the unique minimal log smooth completion of $S$. 

Suppose $\kappa(S)=-\infty$. Then by \cite[III.1.3.2]{Miyan-OpenSurf} (see Proposition \ref{prop:kappa<2}) $S$ admits a $\C^{1}$-fibration over $\C^1$. The fiber at infinity of a minimal completion of such a fibration is a non-branching $0$-curve in $D$, but we have seen that $D$ contains no such; a contradiction.

Suppose now $\kappa(S)\in \{0,1\}$. We check directly that $S$ is not a Fujita surface. Indeed, these surfaces are constructed in \cite[8.64]{Fujita-noncomplete_surfaces} by expansion as in \ref{def:C**_1a}, with weights $(2,2,2)$, or as in \ref{def:C**_1}, with weights $(1,1,2,1)$ and $(1,1,2,2)$. These weights are explicitly excluded in Tables \ref{table:C**} and \ref{table:C**_1}. 

Therefore,  \cite[III.1.7.1]{Miyan-OpenSurf} 
implies that $S$ has a $\C^*$-fibration. By \cite[Proposition 4.2]{PaPe_Cstst-fibrations_singularities} we may choose one which extends to a $\P^{1}$-fibration $p\colon X\to\P^{1}$. Suppose $D$ contains a fiber of $p$, say $F$. Since all $0$-curves in $D$ are branching, \cite[7.5]{Fujita-noncomplete_surfaces} implies that $F=[2,1,2]$ and $D\hor$ meets $F$ once, in the middle component, say $C$. Put $C'=\psi(C)$. If $\psi$ is an isomorphism in some neighborhood of $C$ then $\beta_{D'}(C')=3$ and $C'$ meets two twigs of $D'$ of type $[2]$: we check directly that this is not the case. Hence $(C')^{2}\geq 0$, so $D$ is as in Figure \ref{fig:Cstst}. Suppose $(C')^2=0$. Then $C'$ contains only one center of $\psi$. Since $\beta_{D'}(C')=3$, it follows that  both components of $F\redd-C$ are proper transforms of components of $D'$ meeting $C'$. We check directly (see Figure \ref{fig:Cstst}) that those components are disjoint and branching in $D'$, so each of them contains two other centers of $\psi$. This is impossible, since $\#\Bs\psi^{-1}=n\leq 4$. Thus $(C')^{2}\geq 1$. It follows that $S$ is as in \ref{def:C**_1a} with expansions of weights $2$ and $\tfrac{1}{2}$. Those weights are excluded in Table \ref{table:C**}; a contradiction.

Thus $D$ contains no fibers. 
Since $\kappa(S)\geq 0$, the description of $\C^{*}$-fibrations in \cite[Theorem 4.9(C)]{Palka-classification2_Qhp} implies that $D\hor$ consists of two $1$-sections, say $H_{1}$, $H_{2}$; $\beta\de \beta_{D}(H_{1})=\beta_{D}(H_{2})\geq 3$ and each $H_{i}$ meets $\beta-1$ twigs of $D$, say $T_{i,1},\dots, T_{i,\beta-1}$, such that for $j\in \{1,\dots, \beta-1\}$, the chain $[\rev{T_{1,j}},1,T_{2,j}]$ is a fiber of $p$. In particular, $T_{2,j}=T_{1,j}^{*}$. Here we write $\rev{T}$ for the chain $T$ with the opposite order, and $T^{*}$ for the chain adjoint to $T$, see \cite[Section 4]{Fujita-noncomplete_surfaces} for definitions.

By Lemma \ref{lem:kappa_expansion}\ref{item:kappa_expansion} we can assume that all expansions have weight $1$ (if such weight is allowed).

Consider case  \ref{def:C**_1a}. Then $D$ contains a $(+1)$-curve $L_2'$ meeting all branching components of $D$, so, say, $L_2'=H_1$. Thus $\beta=3$, and, say, $L_{j}\subseteq T_{1,j}$. In particular, $\beta_{D}(L_{j})=2$, so for some $i\in \{1,2\}$, the chain $[1,T_{2,i}^{*}]$ is the preimage of one of the centers of $\psi$. Hence $T_{2,i}^{*}$ is a $(-2)$-chain, so $T_{2,i}$ is irreducible, i.e.\ $T_{2,i}=L_{i}$. Thus the corresponding weight equals one, which is excluded in Table \ref{table:C**}; a contradiction.

Consider case \ref{def:C**_1} with center $(L_1,E_{q_1})$. Then $L_1'$ is the only branching component of $D$; a contradiction. 

Now, consider all the remaining cases. Then $D$ contains a branching $(-1)$-curve $C$ meeting at most one other branching component of $D$. Such $C$ is horizontal, say, $C=H_{1}$. Indeed, suppose $C$ is a component of a fiber $F$. Then $\beta_{D}(C)=3$ implies that $C$ meets $D\hor$, hence $C$ has multiplicity $1$ in $F$. This is possible only if $C$ is a tip of $F$, so $C$ meets $H_{1}$ and $H_{2}$, but the latter are branching in $D$; a contradiction. 

Now we check directly that in each case one of the following conditions hold:
\begin{enumerate}
	\item\label{item:bad_H2} There is another $(-1)$-curve $C'\neq C$ meeting at most one other branching component of $D$ (hence $C'=H_{2}$), and $\beta_{D}(C')\neq \beta_{D}(C)$ or $\beta_{D}(C)=\beta_{D}(C')$, but $C'$ does not meet twigs of type $T_{1,j}^{*}$. 
	\item\label{item:no_H2} There is no branching component of $D$ meeting twigs of type $T_{1,j}^{*}$. 
\end{enumerate}
Either one yields a contradiction. Note that \ref{item:bad_H2} holds e.g. when $R\am$ has at least two cusps, and we take $C$, $C'$ in their resolutions; or when the only twigs meeting $C$, $C'$ are $(-2)$-twigs, at least one reducible. In turn, 
\ref{item:no_H2} holds e.g.\ when $R\am$ has a non-ordinary cusp, and all twigs of $D$ off its resolution are $(-2)$-twigs.

Let us follow this argument more closely in one of the more complicated cases, say \ref{def:P2n2_cuspidal}, see Figure \ref{fig:P2n2_cuspidal}. In all other cases we argue in a similar, straightforward manner. We use Notation \ref{not:P2}, and the same letters for the curves on $X'$ and their proper transforms on $X$. Choose $C=E_{p_{1}}$, so $T_{1,1}=[2]$, $T_{1,2}=[3]$. We can assume that one expansion is centered on $E_{p_{2}}$, for otherwise $C'=E_{p_{2}}$ does not meet a twig of type $[2]^{*}=[2]$, so \ref{item:bad_H2} holds. If the other center is $(E_{p'},L_{rp'})$ then \ref{item:no_H2} holds, and if it is $(C_{1},E_{p'})$ then $C'$ meets a twig of type $[3]$ or $[2,2,2,2]$, and  \ref{item:bad_H2} holds. Hence the other expansion is centered at $(C_{1},L_{rp'})$. We cannot take it with weight $1$, so let us denote this weight by $w$. Now \ref{item:bad_H2} holds for $C'=E_{p'}$: indeed, if $w\not\in \Z$ then $C'$ meets two branching components of $D$, and if $w\in \Z$ then $C'$ meets twigs of type $[2]$ and either $[3,w+1]$ or $[2,2,2,2,w+1]$, neither of which is of type $[3]^{*}=[2,2]$. 
\end{proof}

\section*{Tables}

We now list the combinatorial data $(\pp,P,\cC,\boldsymbol{v})$ needed to construct all $\Q$-homology planes in Theorem \ref{CLASS}. For planar divisors $\pp\subseteq \P^2$ and points in $P\subseteq \Sing \pp$, we use notation introduced in Section \ref{sec:constructions}. In particular, in each case: $\cc_{1}$ is a conic, $\ll_1,\ll_2,\ll_{p'}$ are lines tangent to $\cc_1$ at points $p_1$, $p_2$, $p'$; $\ll_{i}'$ are lines joining $p_i$ with $p'$; and for given points $a,b$, we denote by $\ll_{ab}$ the line joining them. Notation for the remaining components and points of $\pp$ is introduced separately for each particular configuration from Section \ref{sec:constructions}, specified in the column \enquote{config.}. 

The towers are arranged  according to the number $n$ of centers of expansions used in the tom Dieck--Petrie algorithm, see Definition \ref{def:TDP}. Towers with no $\C^{**}$-fibrations are listed in Tables \ref{table:smooth_0}--\ref{table:smooth_23}: in this case, results of Section \ref{sec:classification} show that $n\leq 3$. The $\C^{**}$-fibered case, studied in Section \ref{sec:Cstst} is summarized in Table \ref{table:C**}, here $n\leq 4$. 

In case $n>0$, possible centers and weights $\boldsymbol{v}=(v,w,\dots)$ are listed in the second-to-last column: if the indicated components meet at two points, we chose one of them. The restrictions on weights guarantee that the resulting surface is a $\Q$HP, see Lemma \ref{lem:expansions}\ref{item:expansions_det}. 

The column \enquote{$\ngr$} shows the number of non-isomorphic $\Q$HPs obtained from the same combinatorial data (including fixed weights of the expansions), hence sharing the same weighted graph of $D$, see Corollary \ref{cor:uniq}\ref{item:n}. 

The column \enquote{$\kk$} shows a finite extension of $\Q$ over which the corresponding $\Q$HPs are defined. We denote by $\omega\in \C$ a primitive third root of unity. Recall that by Proposition \ref{prop:Marco}\ref{item:conj}, the $\ngr$-tuples of $\Q$HPs obtained using the same combinatorial data are conjugate by the Galois group of $\kk$. In particular, if $\kk\not\subseteq \R$, we get pairs of complex-conjugate, hence diffeomorphic $\Q$HPs, see Proposition \ref{prop:Marco}\ref{item:diffeo-pairs}--\ref{item:diffeo-4}.

In the column \enquote{$\Z$} we put a \enquote{$\tick$} when the corresponding tower contains infinitely many $\Z$HPs, and a \enquote{$\none$} if it contains none, see Remark \ref{rem:ZHP}. For towers 	\ref{def:F2n2-cuspidal}, \ref{def:F2_n2-tangent}, \ref{def:P2n2_cuspidal}, \ref{def:P2n3}, \ref{def:C**_1}, \ref{def:C**_2}, \ref{def:C**_3} the corresponding symbols are listed in Tables \ref{table:F2n2-cuspidal}--\ref{table:C**_3}. If a cell is entirely crossed out (with a large \enquote{\ \  \diagbox{}{}\slashbox{}{}\ \ }) then the corresponding set of centers does not occur.
	
\medskip

\begin{small}
		\begin{longtable}{c|c|c|c|c|rl|c}\phantomsection\label{table:smooth_0}
			& Fig.
			& $\ngr$
			& $\kk$
			& $\# H_{1}(S;\Z) $
			& config.
			& planar divisor $\pp \subseteq \P^{2}$
			& $P$
			\endhead
			\caption{Smooth $\Q$HPs with $\kappa(K_{X}+D)=2$, $\kappa(K_X+\frac{1}{2}D)=-\infty$, and no $\C^{**}$-fibration, case $n=0$. 
				The field $\tst{\kk}$ is an extension of $\Q$ by a root of a cubic polynomial $\tst{g}$, where $\tref{g}{def:A1A2_c=1}(t)=4t^3-8t^2+4t-1$, $\tref{g}{def:A1A2_c=2_32}(t)=t^3+7t^2+15t+5$, $\tref{g}{def:A1A2_c=2_41}(t)=t^3+3t^2+3t+5$.}
			\endfoot
			\hline
			\nextconstr\label{def:F2n0} & \ref{fig:F2n0}
			& 1 & $\Q$ & $7$
			& \ref{conf:F2n0}
			& $\cc_{1}+\cc_{2}+\cc_{3}+\ll_{1}+\ll_{2}+\ll_{3}+\ll_{rq_{1}}+\ll_{rq_{2}}$
			& $q_{1}, q_{2}, q_{3}, q_{1}', q_{2}',r,s_{1}$ 
			\\ \hline 
			\nextconstr\label{def:A1A2_c=3} & \ref{fig:A1A2_c=3}
			& 1 & $\Q(\sqrt{21})$ & $11$
			& \ref{conf:311}
			& $\cc_{1}+\cc_{2}+\cc_{3}+\ll_{1}+\ll_{2}+\ll_{3}+\ll_{rq_{3}}$
			& $q_{1}, q_{2}, q_{3}, q_{3}',r,s_{2}$ 
			\\ \hline 
			\nextconstr\label{def:A1A2_c=1} & \ref{fig:A1A2_c=1}
			& 3 & $\tref{\kk}{def:A1A2_c=1}$ & $11$
			& \ref{conf:A1A2_c=1}
			& $\cc_{1}+\cc_{2}+\ll_{2}+\ll_{2}'+\ll_{rp'}+\ll_{p_{1}p_{2}}$
			& $p_{1}, p_{2}, p', q, r$ 
			\\ \hline
			\nextconstr\label{def:A1A2_c=2_32} & \ref{fig:A1A2_c=2_32}
			& 3 & $\tref{\kk}{def:A1A2_c=2_32}$ & $11$
			& \ref{conf:A1A2_c=2}\ref{item:32}
			& $\cc_{1}+\cc_{2}+\ll_{1}+\ll_{2}+\ll_{pp'}+\ll+\ll_{r_{1}p'}+\ll_{r_{2}p'}$
			& $p, p', p'', r_{1}, r_{2}, s_{1}, s_{2}$ 
			\\ \hline
			\nextconstr\label{def:A1A2_c=2_41} & \ref{fig:A1A2_c=2_41}
			& 3 & $\tref{\kk}{def:A1A2_c=2_41}$ &  $11$
			& \ref{conf:A1A2_c=2}\ref{item:41}
			& $\cc_{1}+\cc_{2}+\ll_{1}+\ll_{2}+\ll_{pp'}+\ll+\ll_{r_{1}p'}$ 
			& $p, p', p'', r_{1}, r_{2}, s_{1}$ 
	\end{longtable}
\smallskip
	
	\begin{longtable}{c|c|c|c|c|rl|c|cc|c}\hlabel{table:smooth_1}
			& Fig.
			& $\ngr$
			& $\kk$
			& $\Z$
			& config.
			& planar divisor $\pp \subseteq \P^{2}$
			& $P$
			& \multicolumn{2}{c|}{expansion w/weight $v$}
			& \cite{tDieck_optimal-curves}  
			\endhead
			\caption{(continued)}
			\endlastfoot 
			\caption{Smooth $\Q$HPs with $\kappa(K_{X}+D)=2$, $\kappa(K_X+\frac{1}{2}D)=-\infty$, and no $\C^{**}$-fibration, case $n=1$. Here $\omega$ is a primitive third root of unity, $\alpha_{\pm}=(-5\pm 2\sqrt{5})^{1/2}$, $\beta_{\pm}=(1\pm 2\imath)^{1/2}$ and $\zeta_{\pm}=(\omega^{\pm 1}-4)^{1/2}$.}
			\endfoot	
			\hline
			\nextconstr\label{def:A1A2_C2C3-node} & \ref{fig:A1A2_C2C3-node}
			& 4 & $\Q(\sqrt{5},\alpha_{\pm})$ & $\none$ 
			& \ref{conf:4}
			& $\cc_{1}+\cc_{2}+\ll_{2}+\ll_{1}'$
			& $p',p_{2},$
			& $(C_{1},L_{qt})$ 
			& & \\*
			&&& &&&
			$\phantom{\cc_{1}}+\ll_{2}'+\ll_{qt}$& $q,t$ &&&
			\\ \hline
			\nextconstr\label{def:A1A2_C2C3-cusp-41} & \ref{fig:A1A2_C2C3-cusp-41}
			& 1 & $\Q$ & $\tick$
			&\ref{conf:31}\ref{item:A1A2_C2C3-cusp-41}
			& $\cc_{1}+\cc_{2}+\cc_{3}+\ll_{1}+\ll_{qp'}$
			& $p_{2},p',q$
			& $(C_{1},E_{r})$ & $v \neq 4/5$  & \\*
			&&&& $\none$ &&	&& $(E_{r},C_{3})$ &  & \\*
			&&&& $\tick$ &&&& $(C_{3},C_{1})$ & $v\neq 5/6,1$ 
			& \\ \hline
			\nextconstr\label{def:A1A2_C2C3-cusp-32} & \ref{fig:A1A2_C2C3-cusp-32}
			& 1 & $\Q$ & $\tick$
			&\ref{conf:31}\ref{item:A1A2_C2C3-cusp-32}
			& $\cc_{1}+\cc_{2}+\cc_{3}+\ll_{1}+\ll_{qp'}$
			& $p_{2},p',q$
			& $(C_1,E_{r})$ & $v\neq 3/5$ & \\*
			&&&& $\tick$ && && $(C_{1},E_{r'})$ & $v\neq 2/5$ & \\*
			&&&& $\none$ &&&& $(C_3,E_{r})$ &  & \\*
			&&&& $\none$ &&&& $(C_{3},E_{r'})$ &  & 
			\\ \hline
			\nextconstr\label{def:A1A2_q-nnc} & \ref{fig:A1A2_q-nnc}
			& 2 & $\Q(\omega)$ & $\none$
			& \ref{conf:P2n1-nodal}
			& $\cc_{1}+\cc_{2}+\ll_{1}+\ll_{2}$ 
			& $p,p_{1},$
			& $(C_{1},L_{pr})$ &
			& \\* 
			&&&&&& $\phantom{\cc_{1}}+\ll_{q_{2}p_{1}}+\ll_{pr}$ & $q_{2},r$ &&& \\ \hline
			\nextconstr\label{def:A1A2_q-nc} & \ref{fig:A1A2_q-nc}
			& 4 & $\Q(\imath,\beta_{\pm})$ & $\none$
			& \ref{conf:A1A2_q-nc}
			& $\cc_{1}+\cc_{2}+\ll_{2}+\ll_{2}'+\ll_{rq}$
			& $p_{2},r,q$
			& $(C_{1},L_{qr})$ &
			& \\ \hline 
			\nextconstr\label{def:A1A2_3-cusp} & \ref{fig:A1A2_3-cusp}
			& 2 & $\Q(\sqrt{-7})$ & $\none$
			& \ref{conf:31}\ref{item:A1A2_3-cusp}
			& $\cc_{1}+\cc_{2}+\cc_{3}+\ll_{1}$
			& $p_{2},q$
			& $(C_{1},E_{r})$ & $v\neq 7$  & \\*
			&&&& $\none$ &&&& $(C_3,E_{r})$ & & \\* 
			&&&& $\tick$ &&&& $(C_{1},C_{3})$ & $v\neq 9/2$ 
			& \\ \hline
			\nextconstr\label{def:A1A2_3-node} & \ref{fig:A1A2_3-node}
			& 4 & $\Q(\omega,\zeta_{\pm})$ & $\none$
			& \ref{conf:A1A2_3-node}
			& $\cc_{1}+\cc_{2}+\ll_{1}+\ll_{1}'$
			& $p_{1}, p',$
			& $(C_{2},L_{rp_{1}})$ &
			& \\* 
			&&&&&& $\phantom{\cc_{1}}+\ll_{qp'}+\ll_{rp_{1}}$ & $q, r$ &&& \\ \hline
			\nextconstr\label{def:A1A2_q-cn_31} & \ref{fig:A1A2_q-cn_31}
			& 2 & $\Q(\sqrt{-15})$ & $\tick$
			& \ref{conf:A1A2_q-cn}\ref{item:A1A2_q-cn_31}
			& $\cc_{1}+\cc_{2}+\ll_{1}+\ll_{2}+\ll_{rp_{1}}$
			& $p,p_{1},r$
			& $(C_{1},C_{2})$ & $v\neq 8/3, 1$ & \\*
			&&&& $\none$ && && $(C_{1},E_{p'})$ & $v\neq 5$ & \\*
			&&&& $\none$ &&&& $(C_2,E_{p'})$ &  & \\ \hline
			\nextconstr\label{def:A1A2_q-cn_22} & \ref{fig:A1A2_q-cn_22}
			& 2 & $\Q(\sqrt{5})$ & $\tick$
			& \ref{conf:A1A2_q-cn}\ref{item:A1A2_q-cn_22}
			& $\cc_{1}+\cc_{2}+\ll_{1}+\ll_{2}+\ll_{rp_{1}}$
			& $p,p_{1},r$
			& $(C_{1},E_{p'})$ & $v\neq 10/3$ & \\*
			&&&& $\none$ &&&& $(C_{2},E_{p'})$ & & \\*
			\hline
			\nextconstr\label{def:A1A2_2n1c} & \ref{fig:A1A2_2n1c}
			& 2 & $\Q(\sqrt{-7})$ & $\none$
			& \ref{conf:31}\ref{item:A1A2_2n1c}
			& $\cc_{1}+\cc_{2}+\cc_{3}+\ll_{1}+\ll_{qr}$
			& $p',q,r$
			& $(C_{1},L_{qr})$ &
			& \\ \hline
			\nextconstr\label{def:A1A2_21} & \ref{fig:A1A2_21}
			& 1 & $\Q$ & $\none$
			& \ref{conf:31}\ref{item:A1A2_21}
			& $\cc_{1}+\cc_{2}+\cc_{3}+\ll_{1}+\ll_q$
			& $p_{2},q,r$
			& $(C_{1},E_{s})$ & $v\neq 7$ & \\*
			&&&& $\none$ &&&& $(C_3,E_{s})$ & & \\*
			&&&& $\tick$ &&&& $(C_{1},C_{3})$ & $v\neq 9/2$ 
			& \\ \hline
			\nextconstr\label{def:A1A2_2c1n} & \ref{fig:A1A2_2c1n}
			& 2 & $\Q(\sqrt{-7})$ & $\none$
			& \ref{conf:A1A2_2c1n}
			& $\cc_{1}+\cc_{2}+\ll_{1}+\ll_{2}+\ll_{qp_{2}}$
			& $p,p',q,$
			& $(C_{1},L_{sr})$ & &  \\*
			&&&&&& $\phantom{\cc_{1}}+\ll_{sp'}+\ll_{rq}+\ll_{sr}$ & $r,s,t$ &&&
			\\ \hline
			\nextconstr\label{def:F2_n1-node} & \ref{fig:F2_n1-node}
			& 2 & $\Q(\sqrt{-15})$ & $\none$
			& \ref{conf:F2_n1}\ref{item:node}
			& $\cc_{1}+\cc_{2}+\ll_{1}+\ll_{2}+\ll_{r}$
			& $p,p_{1},r$
			& $(C_{1},L_{r})$ &
			& \\ \hline			
			\nextconstr\label{def:F2_n1-cusp} & \ref{fig:F2_n1-cusp}
			& 1 & $\Q(\sqrt{5})$ & $\tick$
			& \ref{conf:F2_n1-cusp}
			& $\cc_{1}+\cc_{2}+\ll_{1}+\ll_{2}+\ll_{p'}$
			& $p,p_{1},r$ 
			& $(C_{1},C_{2})$ & $v\neq 6$  & $L$ \\*
			&&&& $\tick$ &&&& $(C_{1},E_{q})$ & $v\neq 10$  & \\*
			&&&& $\none$ &&&& $(C_{2},E_{q})$ &  &
			\\ \hline
			\nextconstr\label{def:F2_n1-node-3} & \ref{fig:F2_n1-node-3}
			& 2 & $\Q(\sqrt{5})$ & $\none$
			& \ref{conf:F2_n1}\ref{item:node-3}
			& $\cc_{1}+\cc_{2}+\ll_{1}+\ll_{2}$
			& $p,p_{1},$
			& $(C_{1},L_{pq})$ &
			& \\* 
			&&&&&& $\phantom{\cc_1}+\ll_{r}'+\ll_{pq}$ & $q,r$ &&& \\ \hline
			\nextconstr\label{def:F2-5} & \ref{fig:F2-5}
			& 2 & $\Q(\sqrt{5})$ & $\none$
			& \ref{conf:4}
			& $\cc_{1}+\cc_{2}+\ll_{2}+\ll_{1}'+\ll_{2}'$
			& $p,q,p_{2}$
			& $(C_{2},L_{2}')$ &
			& \\ \hline
			\nextconstr\label{def:F2_n1-cusp-hor_32} & \ref{fig:F2_n1-cusp-hor_32}
			& 1 & $\Q$ & $\none$
			&  \ref{conf:F2_n1-cusp-hor}\ref{item:F2_32}
			& $\cc_{1}+\cc_{2}+\ll_{2}+\ll_{1}'+\ll_{2}'$
			& $p_{2},p',q$
			& $(C_{1},E_{s})$ & & \\*
			&&&& $\none$ &&&& $(E_{s},C_{2})$ & $v\neq 1/3$ & \\*
			&&&& $\tick$  &&&& $(C_{2},E_{p_{1}})$ & $v\neq 9/2$ & \\*
			&&&& $\none$ &&&& $(E_{p_{1}},C_{1})$ &   
			& \\ \hline
			\nextconstr\label{def:F2_n1-cusp-hor_41} & \ref{fig:F2_n1-cusp-hor_41}
			& 2 & $\Q(\imath)$ &  $\tick$
			& \ref{conf:F2_n1-cusp-hor}\ref{item:F2_41}
			& $\cc_{1}+\cc_{2}+\ll_{2}+\ll_{1}'+\ll_{2}'$
			& $p_{2},p',q$
			& $(C_{1},C_{2})$ & $v\neq 2/5$ & \\*
			&&&& $\none$ &&&& $(C_{2},E_{p_1})$ & $v\neq 6$ & \\*
			&&&& $\none$ &&&& $(E_{p_1},C_{1})$ &  
			& \\ \hline
			\nextconstr\label{def:F2-5_cn} & \ref{fig:F2-5_cn}
			& 2 & $\Q(\sqrt{-15})$ & $\none$
			& \ref{conf:4}
			& $\cc_{1}+\cc_{2}+\ll_{2}+\ll_{1}'$
			& $p',q,$
			& $(C_{1},L_{qs})$ &
			& \\* 
			&&&&&& $\phantom{\cc_1}+\ll_{rp'}+\ll_{qs}$ & $r,s$ &&& \\ \hline
			\nextconstr\label{def:F2-hor-ccc-41} & \ref{fig:F2-hor-ccc-41}
			& 1 & $\Q(\sqrt{3})$ & $\tick$
			& \ref{conf:F2-hor-ccc-41}
			& $\cc_{1}+\cc_{2}+\ll_{1}+\ll_{2}$
			& $p,p'',$
			& $(C_{1},C_{2})$ & $v\neq 5/2$& $J$\\*
			&&&& $\none$ && $\phantom{\cc_1}+\ll_{pp'}+\ll$ &$q_{1},q_{2}$ & $(C_{2},E_{p'})$ &  & \\*
			&&&& $\none$ &&&& $(E_{p'},C_{1})$ & $v\neq 1/6$ &\\ \hline
			\nextconstr\label{def:P2n1-nodal} & \ref{fig:P2n1-nodal}
			& 1 & $\Q(\sqrt{3})$ & $\none$
			& \ref{conf:P2n1-nodal}
			& $\cc_{1}+\cc_{2}+\ll_{1}+\ll_{2}+\ll_{q_{1}q_{2}}$
			&  $p, q_{1}, q_{2}$
			& $(C_{1},L_{q_{1}q_{2}})$ &
			& $K$ \\ \hline
			\nextconstr\label{def:P2n1-cuspidal} & \ref{fig:P2n1-cuspidal}
			& 1 & $\Q(\omega)$ & $\none$
			& \ref{conf:P2n1-cuspidal}
			& $\cc_{1}+\cc_{2}+\ll_{1}+\ll_{2}+\ll_{3}$
			& $r_1,r_{2},r_{3}$
			& $(C_2,E_{q_{1}})$ & & $I$ \\*
			&&&& $\tick$ &&&& $(C_1,E_{q_{1}})$ & $v\neq 6$  
			&	
		\end{longtable}

		\begin{longtable}{c|c|c|c|c|rl|c|ccc|c}	\hlabel{table:smooth_23}
			&&&&&&&&\multicolumn{3}{c|}{expansions} & \\
			& Fig.\ 
			& $\ngr$
			& $\kk$
			& $\Z$
			& config.
			& planar divisor $\pp \subseteq \P^{2}$
			& $P$
			& weight $v$ & weight $w$ && 
			\cite{tDieck_optimal-curves}
			\endhead
			\caption{Smooth $\Q$HPs with $\kappa(K_{X}+D)=2$, $\kappa(K_X+\frac{1}{2}D)=-\infty$, and no $\C^{**}$-fibration, case $n\geq 2$.}
			\endfoot
			 \hline
			\nextconstr\label{def:nodal-cubic-P2_un} & \ref{fig:nodal-cubic-P2_un}
			& 2 & $\Q(\omega)$ & $\none$ 
			& \ref{conf:nodal-cubic_P2}
			& $\cc_{1}+\ll_{1}+\ll_{2}+\ll_{1}'$
			& $p_{1}, p',$
			& $(L_{r_{1}r_{2}},C_{1})$ & $(L_{2},L)$ & $w\neq 3/2$& \\*
			&&&& $\none$ &&$\phantom{\cc_1}+\ll_{2}'+\ll_{r_{1}r_{2}}+\ll$& $r_{1}, r_{2}$ && $(L,E_{s})$ && \\*
			&&&& $\none$ &&&&& $(E_{s},C_{1})$ & & \\*
			&&&& $\none$ &&&&& $(C_{1},E_{p_{2}})$ & & \\*
			&&&& $\none$ &&&&& $(E_{p_{2}},L_{2})$ & &
			\\ \hline
			\nextconstr\label{def:F2n2-cuspidal} & \ref{fig:F2n2-cuspidal}
			& 1 & $\Q$ &
			& \ref{conf:31}\ref{item:A1A2_21}
			& $\cc_{1}+\cc_{2}+\ll_{1}+\ll_{q}$
			& $q$
			& \multicolumn{3}{c|}{Table \ref{table:F2n2-cuspidal}}
			& $H$ \\ \hline				
			\nextconstr\label{def:F2n2-nodal} & \ref{fig:F2n2-nodal}
			& 2 & $\Q(\imath)$ & $\none$
			& \ref{conf:A1A2_q-nc}
			& $\cc_{1}+\cc_{2}+\ll_{2}+\ll_{2}'$
			& $p_{2}$
			& $(L_{2},C_{2})$ & $(L_{2}',E_{r})$ & & $M$ \\*
			&&&& $\none$ &&&&& $(C_{2},E_{r})$ & & \\*
			&&&& $\none$ &&&&& $(C_{2},E_{p_{1}})$ & & \\*
			&&&& $\none$ &&&&& $(C_1,E_{p_{1}})$ & $vw+4v\neq $ & \\*
			&&&&&&&&&&$\neq 2w-2$ & \\*
			&&&& $\none$ &&&&& $(C_{1},L_{2}')$ & $w\neq 2$ &   \\ \hline
			\nextconstr\label{def:F2_n2-tangent} & \ref{fig:F2_n2-tangent}
			& 1 & $\Q$ &
			& \ref{conf:F2_n2-tangent}
			& $\cc_{1}+\cc_{2}+\ll_{1}+\ll_{2}$
			& $p_{2}$
			& \multicolumn{3}{c|}{Table  \ref{table:F2_n2-tangent}}
			& $E$ \\ \hline
			\nextconstr\label{def:F2_n2-transversal} & \ref{fig:F2_n2-transversal}
			& 2 & $\Q(\omega)$ & $\none$
			& \ref{conf:A1A2_3-node}
			& $\cc_{1}+\cc_{2}+\ll_{1}+\ll_{1}'$
			& $p'$
			& $(C_{2},L_{1})$ & $(C_{2},L_{1})$ & $v\geq w\neq 1$ & $F$ \\*
			&&&& $\none$ &&&& $v\neq 1$ & $(L_{1},E_{p_{1}})$ & & \\*
			&&&& $\tick$ &&&&& $(E_{p_{1}},C_{1})$ & $3wv+2\neq $& \\*
			&&&&&&&&&&$\neq 2v+6w$ & \\ \hline
			\nextconstr\label{def:nodal-cubic_P2} & \ref{fig:nodal-cubic_P2}
			& 1 & $\Q(\omega)$ & $\none$
			& \ref{conf:nodal-cubic_P2}
			& $\cc_{1}+\ll_{1}+\ll_{2}+\ll_{1}'$
			& $p',r_{1}, r_{2}$
			& $(L_{r_{1}r_{2}},C_{1})$ & $(C_1,E_{p_{1}})$ & & $G$ \\*
			&&&&  $\none$&&$\phantom{\cc_1}+\ll_{2}'+\ll_{r_{1}r_{2}}$&&& $(E_{p_{1}},L_{1})$ && \\*
			&&&&  $\none$ &&&&& $(L_{1},L_{2})$ & $w>1$ 
			&  \\ \hline
			\nextconstr\label{def:P2n2_cuspidal} & \ref{fig:P2n2_cuspidal}
			& 1 & $\Q$ & 
			& \ref{conf:P2n2_cuspidal}
			& $\cc_{1}+\ll_{1}+\ll_{2}+\ll_{pp'}$ 
			& $p, q, r$
			& \multicolumn{3}{c|}{Table \ref{table:P2n2_cuspidal}}
			& $D$ \\* 
			&&&&&& $\phantom{\cc_1}+\ll_{qp_{2}}+\ll_{rp'}$ &&&&&
			\\ \hline  \hline	
			\nextconstr\label{def:P2n3} & \ref{fig:P2n3}
			& 1 & $\Q$ &
			& \ref{conf:P2n1-cuspidal}
			& $\cc_{1}+\ll_{1}+\ll_{2}+\ll_{3}$
			&
			&  \multicolumn{3}{c|}{Table \ref{table:P2n3}} & $B$
	\end{longtable}
\smallskip
	
	\begin{longtable}{c|c|c|c|c|rl|c|c|c|c}\hlabel{table:C**}
		& Fig.
		& $\ngr$
		& $\kk$
		& $\Z$
		& config.
		& planar divisor $\pp \subseteq \P^{2}$
		& $P$
		& $n$
		& expansions
		& \cite{tDieck_optimal-curves}
		\endhead
		\caption{Smooth \QHPs of log general type admitting $\C^{**}$-fibrations, see \cite{MiySu-Cstst_fibrations_on_Qhp}.}
		\endfoot
		\hline
		\nextconstr\label{def:C**_1} & \ref{fig:C**_1}
		& 1 & $\Q$ & 
		& \ref{conf:nodal-cubic_P2}
		& $\ll_{1}+\ll_{2}+\ll_{1}'+\ll_{2}'+\ll_{r_{1}r_{2}}$
		&& 4
		& Table \ref{table:C**_1}
		& $A$\\ \hline
		\nextconstr\label{def:C**_1a} & \ref{fig:C**_1a}
		& 1 & $\Q$ & $\none$
		& \ref{conf:nodal-cubic_P2}
		& $\ll_{1}+\ll_{2}+\ll_{1}'+\ll_{2}'$
		&& 3
		& $(\ll_{1},\ll_{2}),\ (\ll_{1}',\ll_{1}),\ (\ll_{2},\ll_{1}')$
		& $A$ \\
		&&&&&&&&& weights $u\geq v \geq w$; $u\geq 1$, & \\
		&&&&&&&&& $\{1\},\{2,\tfrac{1}{2}\}\not\subseteq \{u,v,w\}$ & \\
		&&&&&&&&& $(u,v,w)\neq (2,2,2)$  
		& \\ \hline
		\nextconstr\label{def:C**_2} & \ref{fig:C**_2}
		& 1 & $\Q$ & 
		& \ref{conf:P2n2_cuspidal}
		& $\cc_{1}+\ll_{1}+\ll_{2}+\ll_{pp'}$
		&& 3
		& 
		Table \ref{table:C**_2}
		& $C$ \\ \hline
		\nextconstr\label{def:C**_3} & \ref{fig:C**_3}
		& 1 & $\Q$ & $\none$
		& \ref{conf:31}\ref{item:C**_3}
		& $\cc_{1}+\cc_{2}+\ll_{1}+\ll_{2}'+\ll_{qp'}+\ll_{1}'$
		& $p_{1},p_{2},q$
		& 2
		& Table \ref{table:C**_3}
		& $N$
	\end{longtable}
\smallskip
	
		\begin{longtable}{c||c|c|c|c}
		\caption{Expansions in  \ref{def:F2n2-cuspidal}.}
	\endfoot \hlabel{table:F2n2-cuspidal}
			\diagbox[width=2cm]{$v$}{weight}{$w$} & 
			$(C_{2},C_{1})$ &
			$(C_{1},E_{r})$ &
			$(E_{r},L_{q})$ & 
			$(L_{q},C_{2})$ \\* \hline\hline
			\multirow{2}{*}{$(C_{2},C_{1})$} & \multirow{2}{*}{\diagbox{}{}\slashbox{}{}} & $vw+4\neq 6v$ & $3vw+2w+1\neq 2v$ & $4vw+1\neq 2w, (v,w)\neq (1,1)$\\*
			&& $\tick$ & $\none$ & $\tick$ \\* \hline
			\multirow{2}{*}{$(C_{1},E_{p_2})$} & $vw+3w\neq 3$ & $2vw+6\neq 4v+3w$ & $v(2w+1)\neq 3(w+1)$ &  $2vw\neq 6w+v+3$ \\*
			& $\tick$ & $\tick$ & $\none$ & $\tick$ \\* \hline
			\multirow{2}{*}{$(E_{p_{2}},C_{2})$} & $3vw\neq 3v+1$ & $6(v+1)\neq w(3v+1)$ & $\any$ & $\any$ \\*
			& $\tick$ & $\tick$ & $\none$ &$\none$ 
		\end{longtable}
\smallskip

	\begin{longtable}{c||c|c|c|c|c} %
		\caption{Expansions in  \ref{def:F2_n2-tangent}.}
		\endfoot\hlabel{table:F2_n2-tangent}
			\diagbox[width=2cm]{$v$}{weight}{$w$} & 
			$(C_{1},C_{2})$ &
			$(C_{2},E_{p})$ &
			$(E_{p},L_{1})$ & 
			$(L_{1},E_{p_1})$ &
			$(E_{p_{1}},C_{1})$ \\* \hline\hline
			\multirow{2}{*}{$(C_{1},C_{2})$} & \multirow{2}{*}{\diagbox{}{}\slashbox{}{}} & $2vw+6\neq 3v$ & $3vw+3v\neq 6w+2$ & $3vw+6v\neq 2w$ & $\any$ \\*
			&& $\tick$ & $\tick$ & $\tick$ & $\none$ \\* \hline
			\multirow{2}{*}{$(C_{2},E_{p'})$} & $2vw+2w\neq 2$ & $\any$ & $3vw+3w+v\neq 1$ &  $vw\neq w+6$ & $\any$ \\*
			& $\none$ & $\none$ & $\none$ & $\none$ & $\none$ \\* \hline
			\multirow{2}{*}{$(E_{p'},C_{1})$} & $2vw\neq 2v+1$ & $4vw+6v+2w\neq 3$ & $6vw\neq 2v+3w+3$ & $\any$ & $\any$ \\*
			& $\none$ & $\tick$ & $\tick$ & $\none$ & $\none$ 
		\end{longtable}
\smallskip

\begin{longtable}{c||c|c|c|c} %
	\caption{Expansions in  \ref{def:P2n2_cuspidal}.}
	\endfoot\hlabel{table:P2n2_cuspidal}
		\diagbox[width=2cm]{$v$}{weight}{$w$} & 
		$(L_{rp'},L_{2}), w\neq 1$ &
		$(L_{2},E_{p_2})$ &
		$(E_{p_2},C_{1})$ & 
		$(C_{1},L_{rp'}),\ w\neq 1$ \\* \hline\hline
		$(C_{1},L_{rp'})$ & $\any$ & $vw+3v\neq 3w+6$ & $3vw+1\neq 6w$ & \multirow{2}{*}{\diagbox{}{}\slashbox{}{}}\\*
		$v \neq 1$ & $\none$ & $\tick$ & $\tick$ & \\* \hline
		\multirow{2}{*}{$(L_{rp'},E_{p'})$} & $\any$ & $\any$ & $6vw+6w\neq v+2$ &  $vw+2w\neq 2$ \\*
		& $\none$ & $\none$ & $\tick$ & $\tick$ \\* \hline
		\multirow{2}{*}{$(E_{p'},C_{1})$} & $2vw+4v+w\neq 1$ & $4vw+6v\neq w+3$ & $6vw\neq 2v+3w$ & $2vw\neq 1+2v$ \\*
		& $\tick$ & $\tick$ & $\tick$ &$\tick$ 
	\end{longtable}

\smallskip

\begin{longtable}{c||c|c|c|c|c} %
	\caption{Expansions in \ref{def:P2n3} other than the one at $(L_1,L_2;u)$, $u\neq 1$.}
	\endfoot	\hlabel{table:P2n3}
		\diagbox[width=2cm]{$v$}{weight}{$w$} & 
		$(L_{3},L_{1})$, $w\neq 1$ &
		$(E_{p_2},C_1)$ &
		$(E_{p_2},L_{2})$ & 
		$(E_{p_3},C_1)$ & 
		$(E_{p_3},L_{3})$ \\* \hline\hline
		$(L_{2},L_{3})$ & $u,v> 1$,\ $u\geq v,w$ & \multirow{2}{*}{\diagbox{}{}\slashbox{}{}} & \multirow{2}{*}{\diagbox{}{}\slashbox{}{}} & $2uvw+2uw+u\neq uv+2w+1$ & $uvw+uv+uw\neq v$ \\*
		$v\neq 1$ & $\none$ &&& $\tick$ & $\none$ \\* \hline
		\multirow{3}{*}{$(E_{p_1},C_1)$} & \multirow{3}{*}{\diagbox{}{}\slashbox{}{}} & \multirow{2}{*}{$u>1$} & \multirow{2}{*}{$\any$} & \multirow{2}{*}{$2uvw+uw\neq v+w$} & $4uvw+2uv+2uw+ $ \\*
		&&&&& $\ +\ u+2v\neq 2w+1$ \\*
		&& $\none$ & $\none$ & $\none$ & $\tick$ \\* \hline
		\multirow{2}{*}{$(E_{p_1},L_{1})$} &  \multirow{2}{*}{\diagbox{}{}\slashbox{}{}} & \multirow{2}{*}{\diagbox{}{}\slashbox{}{}} & $u>1$ & $4uvw+2w\neq 2v+1$ & $\any$  \\*
		&&& $\none$ & $\tick$ & $\none$ 
\end{longtable}

\smallskip
	
		\begin{longtable}{c||cc|cc}%
			\caption{Expansions in  \ref{def:C**_1} other than the ones at $(L_{1},L_{2};t)$ and $(E_{q_{1}},L_{q_{1}q_{2}};u)$.}
			\endfoot		\hlabel{table:C**_1}
			\diagbox[width=2cm]{$w$}{weight}{$v$} & 
			\multicolumn{2}{c|}{$(L_{1}',L_{1})\ \ v\neq 1$} &
			\multicolumn{2}{c}{$(L_{1},E_{r_1})$} \\* \hline\hline 
			$(L_{2}',L_{2})$ & 
			$tuvw + tvw+w\neq tv+uw+vw$ & \multirow{3}{*}{\cite[$(UP_{3-1})$]{MiySu-Cstst_fibrations_on_Qhp}}
			& \multirow{2}{*}{$w^{-1}\geq u+1$} & \multirow{3}{*}{\cite[$(UC_{2-1})'$]{MiySu-Cstst_fibrations_on_Qhp}} \\*
			$w\neq 1$ & $(t,u,v,w)\neq (1,1,2,2)$ & & & \\* 
			& $\tick$ && $\none$ & \\* \hline 
			\multirow{3}{*}{$(L_{2},E_{r_2})$} & $v\geq u+1$
			& \multirow{3}{*}{\cite[$(UC_{2-1})'$]{MiySu-Cstst_fibrations_on_Qhp}} 
			& $tw\neq uv+v$ & \multirow{3}{*}{\cite[$(UC_{2-1})$]{MiySu-Cstst_fibrations_on_Qhp}} \\*
			& $(t,u,v,w)\neq (1,1,2,1)$ && $w\geq u+1\geq v^{-1}$ & \\* 
			& $\none$ && $\tick$ & 
		\end{longtable}

\smallskip
	
	\begin{longtable}{c|c||c|c|c}%
		\caption{Expansions in  \ref{def:C**_2} other than the one at $(L_{pp'},C_{1};u)$, $u\neq 1$.}
		\endfoot\hlabel{table:C**_2}
			\cite{MiySu-Cstst_fibrations_on_Qhp} & 
			\diagbox[width=2cm]{$w$}{weight}{$v$} & 
			$(E_{p},L_{2})$ &
			$(L_{2},E_{p_{2}})$ &
			$(C_{1},E_{p_{2}})$
			\\* \hline\hline 
			\multirow{6}{*}{$(TP_{2})$}
			& \multirow{2}{*}{$(E_{p},L_{1})$}
			& $2uv+2uw+2u \neq 1, v\geq w$
			&  \multirow{2}{*}{\diagbox{}{}\slashbox{}{}}
			&  \multirow{2}{*}{\diagbox{}{}\slashbox{}{}}  \\*
			&& $\tick$ && \\* \cline{2-5}
			& \multirow{2}{*}{$(L_{1},E_{p_{1}})$}
			& $2uvw + 4uv + 2uw + 2u \neq w + 2$ 
			& $2uvw + 2uv + 2uw \neq  vw + 2v + 2w + 4, v\geq w$
			&  \multirow{2}{*}{\diagbox{}{}\slashbox{}{}}  \\*
			&& $\tick$ & $\tick$ & \\* \cline{2-5}
			& \multirow{2}{*}{$(C_{1},E_{p_{1}})$}
			& $4uv+2u\neq uw+2$
			& $uvw  + 2uw + 2v + 4\neq 2uv$
			& $v\geq w$  \\*
			&& $\tick$ & $\tick$ & $\none$  \\* \hline
			\multirow{4}{*}{$(TC_{2-1})$}
			& \multirow{2}{*}{$(E_{p},L_{pp'})$}
			& $\any$ & $\any$ & $\any$  \\*
			&& $\none$ & $\none$ & $\none$  \\* \cline{2-5}
			& $(L_{pp'},C_{1})$ & $u>w$ & $u>w$ & $u>w$  \\*
			& $w\neq 1$ & $\tick$ & $\tick$ & $\tick$
		\end{longtable}
\smallskip
	
	\begin{longtable}{c||c|c|c}
			\caption{Expansions in  \ref{def:C**_3} \cite[{$(T3C_2)$}]{MiySu-Cstst_fibrations_on_Qhp}.}
			\endfoot	\hlabel{table:C**_3}
			\diagbox[width=2cm]{$w$}{weight}{$v$} &
			$(C_{2},L_{1}')$ & 
			$(L_{1}',E_{p'})$ &
			$(E_{p'},C_{2})$  \\* \hline\hline 
			\multirow{2}{*}{$(C_{1},L_{qp'})$}
			& $v\geq w$ & \multirow{2}{*}{\diagbox{}{}\slashbox{}{}} & \multirow{2}{*}{\diagbox{}{}\slashbox{}{}} \\*
			& $\none$ & & \\* \hline
			\multirow{2}{*}{$(L_{qp'},E_{p'})$}
			& $\any$ & $v\geq w$ & \multirow{2}{*}{\diagbox{}{}\slashbox{}{}} \\*
			& $\none$ & $\none$ &  \\* \hline
			\multirow{2}{*}{$(E_{p'},C_{1})$}
			& $\any$ & $\any$ & $v\geq w$  \\*
			& $\none$ & $\none$ & $\none$ 
		\end{longtable}
\smallskip
					{\renewcommand{\arraystretch}{1.3}					
						\begin{longtable}{r|c|ccccc}\hlabel{table:rcc}
							curve
							& complement
							& \multicolumn{4}{c}{centers and weights of expansions}
							\endhead
							\caption{Complements of planar rational cuspidal curves in Theorem \ref{CLASS}. \\
						We use the notation from \cite{PaPe_Cstst-fibrations_singularities,PaPe_delPezzo}. In particular, $\gamma,p,s,k$ are some integer parameters and $(F_{j})_{j=0}^{\infty}$ is the Fibonacci sequence, that is $F_{0}=0$, $F_{1}=1$, $F_{j+2}=F_{j+1}+F_{j}$ for $j\geq 0$. }
							\endlastfoot
							%
							 \hline
							\multirow{2}{*}{$\Qb$} 
							
							& \multirow{2}{*}{\ref{def:F2_n1-cusp}}
							& $(C_1,C_2)$ &&&
							\\
							&& $1$ &&&	
							\\ \hline
							\multirow{2}{*}{$\Qa$}
							
							& \multirow{2}{*}{\ref{def:P2n1-cuspidal}}
							& $(C_1,E_{q_{1}})$ &&& 
							\\
							&& $1$ &&&		
							\\ \hline
							\multirow{2}{*}{$\FZa(d,k)$} 
							& \multirow{2}{*}{\ref{def:C**_2}}
							& $(C_{1},L_{pp'})$
							& $(E_{p},L_{1})$ 
							& $(E_{p},L_{2})$ &
							\\
							& 
							& $d-2$
							& $k$
							& $d-k-2$ & 
							\\ \hline
							\multirow{2}{*}{$\FZb(\gamma)$} 
							& \multirow{2}{*}{\ref{def:P2n2_cuspidal}}
							& $(L_{2},L_{rp'})$
							& $(C_{1},L_{rp'})$ &&
							\\
							&&  $\tfrac{\gamma-1}{\gamma-2}$
							& $\gamma-3$ && 
							\\ \hline
							\multirow{2}{*}{$\FE(\gamma)$}
							& \multirow{2}{*}{\ref{def:F2n2-cuspidal}}
							& $(L_{1},C_{2})$
							& $(C_{1},C_{2})$ &&
							\\
							&& $\tfrac{\gamma-2}{2\gamma-5}$
							& $\gamma-3$ &&
							\\ \hline
							
							& \multirow{5}{*}{\ref{def:C**_1}}						
							& $(L_{1},L_{2})$
							& $(E_{p_{1}'},L_{p_{1}'p_{2}'})$
							& $(L_{1}',L_{1})$ 
							& $(L_{2}',L_{2})$						
							\\
							$\cA(\gamma,p,s)$
							&& $\tfrac{\gamma s+s-1}{\gamma+1}$
							& $\tfrac{ps-s+1}{s}$
							& $\gamma$
							& $p$ 
							\\
							$\cB(\gamma,p,s)$
							&& $\tfrac{\gamma s+s-1}{\gamma+1}$
							& $p-1$
							& $\gamma$
							& $\tfrac{s}{ps+p+s-1}$ 
							\\
							$\cC(\gamma,p,s)$
							&& $s$
							& $\tfrac{(\gamma+1)(ps-s+1)+p-1}{\gamma s+s+1}$
							& $\gamma$
							& $\tfrac{1}{p}$ 
							\\ 
							$\cD(\gamma,p,s)$
							&& $s$
							& $p-1$
							& $\gamma$
							& $\tfrac{\gamma s +s+1}{(\gamma+1)(ps-1)+p}$ 
							\\
							&& \multicolumn{4}{c}{(for $\gamma=1$ $(X,D)$ is not snc-minimal, see note \ref{note:rcc} on p.\ \pageref{note:rcc})}
							\\ \hline
							
							& \multirow{3}{*}{\ref{def:C**_2}}
							& $(C_{1},L_{pp'})$
							& $(C_{1},L_{pp'})$
							& $(C_{1},E_{p_{2}})$ 
							& 						
							\\
							$\cE(k)$
							&& $\tfrac{2k+1}{4k+4}$
							& $\tfrac{k+1}{2k+1}$
							& $1$
							& 
							\\ 
							$\cF(k)$
							&& $\tfrac{2k+1}{4k}$
							& $\tfrac{k}{2k+1}$
							& $1$
							& 
							\\  \hline 
							\multirow{2}{*}{$\cG(\gamma)$}
							& \multirow{2}{*}{\ref{def:C**_2}}
							& $(C_{1},L_{pp'})$
							& $(E_{p},L_{2})$
							& $(C_{1},E_{p_{1}})$ 
							& 						
							\\
							& 
							& $\gamma-1$
							& $\gamma-2$
							& $1$
							& 
							\\ \hline
							\multirow{2}{*}{$\cH(\gamma)$}
							& \multirow{2}{*}{\ref{def:P2n3}}
							& $(E_{2},L_{2})$
							& $(E_{3},C)$
							& $(L_{1},L_{2})$ &						
							\\
							&& $\tfrac{\gamma}{2\gamma-1}$ 
							& $1$
							& $\gamma-1$ &	
							\\ \hline
							\multirow{2}{*}{$\cI$}
							
							& \multirow{2}{*}{\ref{def:F2_n2-tangent}}
							& $(C_{2},C_{1})$
							& $(C_{2},E_{p'})$ &&
							\\
							&& $2$
							& $\tfrac{3}{5}$ &&		
							\\ \hline
							\multirow{2}{*}{$\cJ(k)$}
							& \multirow{2}{*}{\ref{def:P2n3}}
							& $(L_{2},L_{3})$
							& $(E_{3},C)$ 						
							& $(L_{1},L_{2})$ &
							\\
							&& $1+\tfrac{1}{k}$
							& $1$						
							& $k$ &
							\\ \hline
							\multirow{2}{*}{$\cORa(k)$}
							& \multirow{2}{*}{\ref{def:C**_1}}
							& $(L_{1},L_{2})$
							& $(E_{p_{1}'},L_{p_{1}'p_{2}'})$
							& $(L_{1}',L_{1})$ 
							& $(L_{2}',L_{2})$						
							\\
							&& $1$
							& $1$
							& $\tfrac{F_{4k+4}}{F_{4k}}-3$
							& $2$ 
							\\ \hline
							\multirow{2}{*}{$\cORb(k)$}
							& \multirow{2}{*}{\ref{def:C**_2}}
							& $(C_{1},L_{pp'})$
							& $(C_{1},E_{p_{2}})$
							& $(E_{p},L_{pp'})$ 
							& 						
							\\
							&& $\tfrac{F_{4k+4}}{F_{4k}}-1$
							& $1$
							& $1$
							&			
						\end{longtable}
					}
	
\end{small}

\bibliographystyle{amsalpha}
\bibliography{bibl2019}
\end{document}